\theoremstyle{plain} 
\newtheorem{theorem}{Theorem}[section]
\newtheorem{proposition}[theorem]{Proposition} 
\newtheorem{lemma}[theorem]{Lemma}
\theoremstyle{definition}
\newtheorem{definition}[theorem]{Definition}
\theoremstyle{remark}
\newtheorem{remark}[theorem]{Remark} 
\definecolor{niceblue}{rgb}{0.25, 0.4, 0.96}
\newcommand{\C}{\mathbb{C}}
\newcommand{\N}{\mathbb{N}}
\newcommand{\R}{\mathbb{R}} 
\newcommand{\Hso}{\mathbb{H}}
\DeclareMathOperator{\dive}{\mathbf{div}}
\DeclareMathOperator{\pa}{\partial}
\newcommand{\norm}[1]{\left\lVert#1\right\rVert}
\newcommand{\scalar}[1]{\langle#1\rangle}
\newcommand{\ic}{\mathrm{i}} 
\newcommand{\sigmar}{\sigma_{\mathrm{r}}}
\newcommand{\sigmatz}{\tilde{\sigma}_0}
\thanks[sfn]{Inria, UMA, ENSTA Paris, Institut Polytechnique de Paris}%
\begin{document}
\makeRR   

\newpage
\tableofcontents

\newpage
	\section{Introduction}

For large-scale inverse problems, which often arise in real life applications, the solution of the corresponding forward and adjoint problems is generally computed using an iterative solver, such as (preconditioned) fixed point or Krylov subspace methods. Indeed, the corresponding linear systems could be too large to be handled with direct solvers (e.g.~LU-type solvers), and iterative solvers are easier to parallelize on many cores. Naturally this leads to the idea of \emph{one-step one-shot methods}, which iterate at the same time on the forward problem solution (the state variable), the adjoint problem solution (the adjoint state) and on the inverse problem unknown (the parameter or design variable).  
If two or more inner iterations are performed on the state and adjoint state before updating the parameter (by starting from the previous iterates as initial guess for the state and adjoint state), we speak of \emph{multi-step one-shot methods}. 
Our goal is to rigorously analyze the convergence of such inversion methods. In particular, we are interested in those schemes where the inner iterations on the direct and adjoint problems are incomplete, i.e.~stopped before achieving convergence. Indeed, solving the forward and adjoint problems exactly by direct solvers or very accurately by iterative solvers could be very time-consuming with little improvement in the accuracy of the inverse problem solution.   

The concept of one-shot methods was first introduced by Ta'asan \cite{taasan91} for optimal control problems. Based on this idea, a variety of related methods, such as the all-at-once methods, where the state equation is included in the misfit functional, were developed for aerodynamic shape optimization, see for instance \cite{taasan92, shenoy97, hazra05, schulz09, gauger09} and the literature review in the introduction of \cite{schulz09}. All-at-once approaches to inverse problems for parameter identification were studied in, e.g., \cite{haber01, burger02, kaltenbacher14}. An alternative method, called Wavefield Reconstruction Inversion (WRI), was introduced for seismic imaging in \cite{leeuwen13}, as an improvement of the classical Full Waveform Inversion (FWI) \cite{tarantola82}. WRI is a penalty method which combines the advantages of the all-at-once approach with those of the reduced approach (where the state equation represents a constraint and is enforced at each iteration, as in FWI), and was extended to more general inverse problems in \cite{leeuwen15}. 

Few convergence proofs, especially for the multi-step one-shot methods, are available in literature. In particular, for non-linear design optimization problems, Griewank \cite{griewank06} proposed a version of one-step one-shot methods where a Hessian-based preconditioner is used in the design variable iteration. 
The author proved conditions to ensure that the real eigenvalues of the Jacobian of the coupled iterations are smaller than $1$, but these are just necessary and not sufficient conditions to exclude real eigenvalues smaller than $-1$. In addition, no condition to also bound complex eigenvalues below $1$ in modulus was found, and multi-step methods were not investigated. 
In \cite{hamdi09, hamdi10, gauger12} an exact penalty function of doubly augmented Lagrangian type was introduced to coordinate the coupled iterations, and global convergence of the proposed optimization approach was proved under some assumptions. In \cite{guenther16} this particular one-step one-shot approach was extended to time-dependent problems. 

In this work, we consider two variants of multi-step one-shot methods where the forward and adjoint problems are solved using fixed point methods and the inverse problem is solved using gradient descent methods. This is a preparatory work where we focus on (discretized) linear inverse problems. Note that the present analysis in the linear case implies also local convergence in the non-linear case. The only basic assumptions we require are the inverse problem uniqueness and the convergence of the fixed point iteration for the forward problem. To analyze the convergence of the coupled iterations we study the real and complex eigenvalues of the block iteration matrices. We prove that if the descent step is small enough then the considered multi-step one-shot methods converge. Moreover, the upper bounds for the descent step in these sufficient conditions are explicit in the number of inner iterations and in the norms of the operators involved in the problem. In the particular scalar case (Appendix~\ref{app:1D}), we establish sufficient and also necessary convergence conditions on the descent step. 

This paper is structured as follows. In Section~\ref{sec:intro-k-shot}, we introduce the principle of multi-step one-shot methods and define two variants of these algorithms. Then, in Section~\ref{sec:one-step}, respectively Section~\ref{sec:multi-step}, we analyze the convergence of one-step one-shot methods, respectively multi-step one-shot methods: first, we establish eigenvalue equations for the block matrices of the coupled iterations, then we derive sufficient convergence conditions on the descent step by studying both real and complex eigenvalues. In Section~\ref{sec:complex_extension} we show that the previous analysis can be extended to the case where the state variable is complex. Finally, in Section~\ref{sec:num-exp} we test numerically the performance of the different algorithms on a toy 2D Helmholtz inverse problem.  

Throughout this work, $\scalar{\cdot,\cdot}$ indicates the usual Hermitian scalar product in $\C^n$, that is $\scalar{x,y} \coloneqq \overline{y}^\intercal x, \forall x,y\in\C^n$, and $\norm{\cdot}$ the vector/matrix norms induced by $\scalar{\cdot,\cdot}$. We denote by $A^*=\overline{A}^\intercal$ the adjoint operator of a matrix $A\in\C^{m\times n}$, and likewise by $z^*=\overline{z}$ the conjugate of a complex number $z$. 
The identity matrix is always denoted by $I$, whose size is understood from context. 
Finally, for a matrix $T\in\C^{n\times n}$ with $\rho(T)<1$, we define
\[
s(T) \coloneqq \sup_{z\in\C, |z|\ge 1}\norm{\left(I-T/z\right)^{-1}}
\]
which is further studied in Appendix \ref{app:lems}.

	\section{Multi-step one-shot inversion methods}
\label{sec:intro-k-shot}

We focus on (discretized) linear inverse problems, which correspond to a \emph{direct (or forward) problem} of the form: find $u \equiv u(\sigma)$ such that 
\begin{equation}
	\label{pbdirect}
	u=Bu+M\sigma+F
\end{equation}
where $u\in\R^{n_u}$, $\sigma\in\R^{n_\sigma}$, $B\in\R^{n_u \times n_u}$, $M\in\R^{n_u \times n_\sigma}$ and $F\in\R^{n_u}$.  
Here $I-B$ is the invertible matrix of the direct problem, obtained after discretization, with parameter $\sigma$. Note that in the non-linear case $B$ would be a function of $\sigma$. Equation~\eqref{pbdirect} is also called \emph{state equation} and $u$ is called \emph{state}. 
Given $\sigma$, we can solve for $u$ by a fixed point iteration
\begin{equation}
	\label{iterpbdirect}
	u_{\ell+1}=Bu_\ell+M\sigma+F, \quad \ell=0,1,\dots, 
\end{equation}
which converges for any initial guess $u_0$ if and only if the spectral radius $\rho(B)$ is strictly less than $1$ (see e.g. \cite[Theorem 2.1.1]{greenbaum97}). 
Hence we assume $\rho(B)<1$. 
Now, we measure $f=Hu(\sigma)$, where $H\in\R^{n_f\times n_u}$, and we are interested in the \emph{linear inverse problem} of finding $\sigma$ from $f$. 
In order to guarantee the uniqueness of the inverse problem, we assume that $H(I-B)^{-1}M$ is injective. In summary, we set
\begin{equation}
	\label{direct-inv-prb}
	\begin{array}{lc}
		\mbox{direct problem:} & u=Bu+M\sigma+F,\\
		\mbox{inverse problem:} & \mbox{measure }f=Hu(\sigma),\mbox{ find }\sigma
	\end{array}
\end{equation}
with the assumptions:
\begin{equation}
	\label{hypo}
	\rho(B)<1, \quad H(I-B)^{-1}M \mbox{ is injective}.
\end{equation}	
To solve the inverse problem we write its least squares formulation: given $\sigma^\text{ex}$ the exact solution of the inverse problem and $f \coloneqq Hu(\sigma^\text{ex})$, 
$$\sigma^\text{ex} = \mbox{argmin}_{\sigma\in\R^{n_\sigma}} J(\sigma) \quad \mbox{ where } J(\sigma) \coloneqq \frac{1}{2}\norm{Hu(\sigma)-f}^2.$$ 
Using the classical Lagrangian technique with real scalar products, we introduce the \emph{adjoint state} $p \equiv p(\sigma)$, which is the solution of 
$$p=B^*p+H^*(Hu-f)$$
and allows us to compute the gradient of the cost functional
$$\nabla J(\sigma)=M^*p(\sigma).$$  
The classical gradient descent algorithm then reads 
\begin{equation}
	\label{usualgd}
	\mbox{\textbf{usual gradient descent:}}\quad
	\begin{cases}
		\sigma^{n+1}=\sigma^n-\tau M^*p^n, \\
		u^n=Bu^n+M\sigma^n+F,\\
		p^n=B^*p^n+H^*(Hu^n-f),
	\end{cases}
\end{equation}
where $\tau>0$ is the descent step size, and the state and adjoint state equations are solved exactly by a direct solver.  
Here $\sigma^{n+1}=\sigma^n-\tau \nabla J(\sigma_{n})$; if instead we update $\sigma^{n+1}=\sigma^n-\tau \nabla J(\sigma_{n-1})$, we obtain the 
\begin{equation}
	\label{shifted-gd}
	\mbox{\textbf{shifted gradient descent:}}\quad
	\begin{cases}
		\sigma^{n+1}=\sigma^n-\tau M^*p^n, \\
		u^{n+1}=Bu^{n+1}+M\sigma^n+F,\\
		p^{n+1}=B^*p^{n+1}+H^*(Hu^{n+1}-f). 
	\end{cases}
\end{equation}  
Both algorithms converge for sufficiently small $\tau$ (see e.g.~Appendix~\ref{app:convGD}): 
for any initial guess, \eqref{usualgd} converges  if 
\begin{equation}
	\label{best-tau-usualgd}
	\tau<\frac{2}{\norm{H(I-B)^{-1}M}^2},
\end{equation}
and \eqref{shifted-gd} converges if 
\begin{equation}
	\label{best-tau-shifted-gd}
	\tau<\frac{1}{\norm{H(I-B)^{-1}M}^2}.
\end{equation}
Here, we are interested in methods where the direct and adjoint problems are rather solved iteratively as in \eqref{iterpbdirect}, and where we iterate at the same time on the forward problem solution and the inverse problem unknown: such methods are called \emph{one-shot methods}. More precisely, we are interested in two variants of \emph{multi-step one-shot methods}, defined as follows. Let $n$ be the index of the (outer) iteration on $\sigma$, the solution to the inverse problem. 
We update $\sigma^{n+1}=\sigma^n-\tau M^*p^n$ as in gradient descent methods, but the state and adjoint state equations are now solved by a fixed point iteration method, using just \emph{$k$ inner iterations}, and \emph{coupled}:
$$\begin{cases}
	u^{n+1}_{\ell+1}=Bu^{n+1}_\ell+M\sigma+F,\\
	p^{n+1}_{\ell+1}=B^*p^{n+1}_\ell+H^*(Hu^{n+1}_\ell-f),
\end{cases}
\quad\ell=0,1,\dots,k,
\quad\begin{cases}
	u^{n+1}=u^{n+1}_k,\\ p^{n+1}=p^{n+1}_k
\end{cases}$$
where $\sigma$ depends on the considered variant ($\sigma=\sigma^{n+1}$ or, for the shifted methods, $\sigma=\sigma^n$). As initial guess we naturally choose $u^{n+1}_0=u^n$ and $p^{n+1}_0=p^n$, the information from the previous (outer) step. 
In summary, we have two multi-step one-shot algorithms
\begin{equation}
	\label{alg:k-shot n+1}
	k\mbox{\textbf{-step one-shot:}}\quad\begin{cases}
		\sigma^{n+1}=\sigma^n-\tau M^*p^n, \\
		u^{n+1}_0 = u^n, p^{n+1}_{0} = p^n,\\
		\quad\left| \begin{array}{l}
			u^{n+1}_{\ell+1}=Bu^{n+1}_\ell+M {\sigma^{n+1}}+F,\\
			p^{n+1}_{\ell+1}=B^*p^{n+1}_\ell+H^*(Hu^{n+1}_\ell -f), 
		\end{array}\right. \\
		u^{n+1} = u^{n+1}_k, p^{n+1} = p^{n+1}_k 
	\end{cases}
\end{equation}
and
\begin{equation}
	\label{alg:k-shot n}
	\mbox{\textbf{shifted} }k\mbox{\textbf{-step one-shot:}}\quad\begin{cases}
		\sigma^{n+1}=\sigma^n-\tau M^*p^n, \\
		u^{n+1}_0 = u^n, p^{n+1}_{0} = p^n,\\
		\quad\left| \begin{array}{l}
			u^{n+1}_{\ell+1}=Bu^{n+1}_\ell+M {\sigma^{n}}+F,\\
			p^{n+1}_{\ell+1}=B^*p^{n+1}_\ell+H^*(Hu^{n+1}_\ell -f), 
		\end{array}\right. \\
		u^{n+1} = u^{n+1}_k, p^{n+1} = p^{n+1}_k,
	\end{cases}
\end{equation}	
and in particular, when $k=1$, we obtain the following two algorithms
\begin{equation}
	\label{alg:1-shot n+1}
	\mbox{\textbf{one-step one-shot:}}\quad\begin{cases}
		\sigma^{n+1}=\sigma^n-\tau M^*p^n, \\
		u^{n+1} = Bu^n+M\sigma^{n+1}+F\\
		p^{n+1} = B^*p^n+H^*(Hu^n-f)
	\end{cases}
\end{equation}
and
\begin{equation}
	\label{alg:1-shot n}
	\mbox{\textbf{shifted} }\mbox{\textbf{one-step one-shot:}}\quad\begin{cases}
		\sigma^{n+1}=\sigma^n-\tau M^*p^n, \\
		u^{n+1} = Bu^n+M\sigma^n+F\\
		p^{n+1} = B^*p^n+H^*(Hu^n-f).
	\end{cases}
\end{equation}	
The only difference for the shifted versions lies in the fact that $\sigma^{n}$ is used in \eqref{alg:k-shot n} and \eqref{alg:1-shot n}, instead of $\sigma^{n+1}$ in \eqref{alg:k-shot n+1} and \eqref{alg:1-shot n+1}, so that in \eqref{alg:k-shot n+1} and \eqref{alg:1-shot n+1} we need to wait for $\sigma$ before updating $u$ and $p$, while in \eqref{alg:k-shot n} and \eqref{alg:1-shot n} we can update $\sigma, u, p$ at the same time. Also note that when $k\rightarrow\infty$, the $k$-step one-shot method \eqref{alg:k-shot n+1} formally converges to the usual gradient descent \eqref{usualgd}, while the shifted $k$-step one-shot method \eqref{alg:k-shot n} formally converges to the shifted gradient descent \eqref{shifted-gd}.

We first analyze the one-step one-shot methods ($k=1$) in Section~\ref{sec:one-step} and then the multi-step one-shot methods ($k \ge 2$) in Section~\ref{sec:multi-step}.

	\section{Convergence of one-step one-shot methods ($k=1$)}
\label{sec:one-step}

\subsection{Block iteration matrices and eigenvalue equations}
\label{sec:eigen-eq-k=1}

To analyze the convergence of these methods, first we express $(\sigma^{n+1},u^{n+1},p^{n+1})$ in terms of $(\sigma^n,u^n,p^n)$, by inserting the expression for $\sigma^{n+1}$ into the iteration for $u^{n+1}$ in \eqref{alg:1-shot n+1}, so that system \eqref{alg:1-shot n+1} is rewritten as
\begin{equation}
	\label{1-shot expl n+1}
	\begin{cases}
		\sigma^{n+1}=\sigma^n-\tau M^*p^n\\
		u^{n+1}=Bu^n+M\sigma^n-\tau MM^*p^n+F\\
		p^{n+1}=B^*p^n+H^*Hu^n-H^*f.\\
	\end{cases}
\end{equation}
System \eqref{alg:1-shot n} is already in the form we need. 
In what follows we first study the shifted $1$-step one-shot method, then the $1$-step one-shot method. 

Now, we consider the errors $(\sigma^n-\sigma^\text{ex},u^n-u(\sigma^\text{ex}),p^n-p(\sigma^\text{ex}))$ with respect to the exact solution  at the $n$-th iteration, and, by abuse of notation, we designate them by $(\sigma^n,u^n,p^n)$. We obtain that the errors satisfy: for the shifted algorithm \eqref{alg:1-shot n}  
\begin{equation}
	\label{1-shot-err expl n}
	\begin{cases}
		\sigma^{n+1}=\sigma^n-\tau M^*p^n\\
		u^{n+1}=Bu^n+M\sigma^n\\
		p^{n+1}=B^*p^n+H^*Hu^n\\
	\end{cases}
\end{equation}
and for algorithm \eqref{1-shot expl n+1}
\begin{equation}
	\label{1-shot-err expl n+1}
	\begin{cases}
		\sigma^{n+1}=\sigma^n-\tau M^*p^n\\
		u^{n+1}=Bu^n+M\sigma^n-\tau MM^*p^n\\
		p^{n+1}=B^*p^n+H^*Hu^n,\\
	\end{cases}
\end{equation}
or equivalently, by putting in evidence the block iteration matrices 
\begin{equation}
	\label{1-shot-itermat n}
	\begin{bmatrix}
		p^{n+1}\\ u^{n+1}\\ \sigma^{n+1}
	\end{bmatrix}=\begin{bmatrix}
		B^* & H^*H & 0 \\
		0 & B & M \\
		-\tau M^* & 0 & I
	\end{bmatrix}
	\begin{bmatrix}
		p^{n}\\ u^{n}\\ \sigma^{n}
	\end{bmatrix}
\end{equation}
and 
\begin{equation}
	\label{1-shot-itermat n+1}
	\begin{bmatrix}
		p^{n+1}\\ u^{n+1}\\ \sigma^{n+1}
	\end{bmatrix}=\begin{bmatrix}
		B^* & H^*H & 0\\
		-\tau MM^* & B & M \\
		-\tau M^* & 0 & I
	\end{bmatrix}
	\begin{bmatrix}
		p^{n}\\ u^{n}\\ \sigma^{n}
	\end{bmatrix}.
\end{equation}
Now recall that a fixed point iteration converges if and only if the spectral radius of its iteration matrix is strictly less than $1$. Therefore in the following propositions we establish eigenvalue equations for the iteration matrix of the two methods. 

\begin{proposition}[Eigenvalue equation for the shifted $1$-step one-shot method]\label{prop:eq-eigen shift-1-shot} 
	Assume that $\lambda\in\C$ is an eigenvalue of the iteration matrix in \eqref{1-shot-itermat n}.
	\begin{enumerate}[label=(\roman*)]
		\item If $\lambda\in\C$, $\lambda\notin\mathrm{Spec}(B)$, then $\exists \, y\in\C^{n_\sigma}, y\neq 0$ such that
		\begin{equation}\label{ori-eq-eigen-1-shot n}
			(\lambda-1)\norm{y}^2+\tau\scalar{M^*(\lambda I-B^*)^{-1}H^*H(\lambda I-B)^{-1}My,y}=0.
		\end{equation}
		\item $\lambda=1$ is not an eigenvalue of the iteration matrix.
	\end{enumerate}
\end{proposition}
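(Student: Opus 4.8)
The plan is to treat $\lambda$ as an eigenvalue of the matrix in \eqref{1-shot-itermat n} with eigenvector $(p,u,\sigma)^\intercal\neq 0$ and to read the eigenvector equation off row by row. The three block rows give $B^*p+H^*Hu=\lambda p$, $Bu+M\sigma=\lambda u$, and $-\tau M^*p+\sigma=\lambda\sigma$. First I would use the third row to record the relation $(\lambda-1)\sigma=-\tau M^*p$, which is the link between the parameter component and the adjoint component.

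Next, under the hypothesis $\lambda\notin\mathrm{Spec}(B)$, I would invert the second row to obtain $u=(\lambda I-B)^{-1}M\sigma$. To invert the first row I also need $\lambda\notin\mathrm{Spec}(B^*)$; here I would invoke that $B$ is real, so $B^*=B^\intercal$ and $\mathrm{Spec}(B^*)=\mathrm{Spec}(B^\intercal)=\mathrm{Spec}(B)$ (a real matrix has conjugate-symmetric spectrum, and transposition preserves eigenvalues). Hence $\lambda\notin\mathrm{Spec}(B^*)$ as well, and the first row yields $p=(\lambda I-B^*)^{-1}H^*Hu=(\lambda I-B^*)^{-1}H^*H(\lambda I-B)^{-1}M\sigma$. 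Substituting this into the relation from the third row collapses everything to the single reduced equation $(\lambda-1)\sigma+\tau M^*(\lambda I-B^*)^{-1}H^*H(\lambda I-B)^{-1}M\sigma=0$ in the variable $\sigma$. To finish (i) I would set $y:=\sigma$ and check $y\neq 0$: if $\sigma=0$, the two displayed formulas force $u=0$ and then $p=0$, contradicting that the eigenvector is nonzero. Taking the Hermitian inner product of the reduced equation with $\sigma$ and using linearity in the first argument then produces exactly $(\lambda-1)\norm{y}^2+\tau\scalar{M^*(\lambda I-B^*)^{-1}H^*H(\lambda I-B)^{-1}My,y}=0$.

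For (ii), since $\rho(B)<1$ we have $1\notin\mathrm{Spec}(B)$, so part (i) applies at $\lambda=1$ and the equation reduces to $\tau\scalar{M^*(I-B^*)^{-1}H^*H(I-B)^{-1}My,y}=0$. Writing $v:=(I-B)^{-1}My$ and using $\bigl((I-B^*)^{-1}\bigr)^*=(I-B)^{-1}$, this inner product equals $\scalar{Hv,Hv}=\norm{H(I-B)^{-1}My}^2\geq 0$. Since $\tau>0$, I conclude $H(I-B)^{-1}My=0$, and the injectivity of $H(I-B)^{-1}M$ from \eqref{hypo} forces $y=0$, contradicting $y\neq 0$. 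Hence $\lambda=1$ cannot be an eigenvalue.

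The difficulty is not computational but a matter of bookkeeping. One must ensure that \emph{both} resolvents $(\lambda I-B)^{-1}$ and $(\lambda I-B^*)^{-1}$ exist from the single hypothesis $\lambda\notin\mathrm{Spec}(B)$, which is precisely where the realness of $B$ is needed; and one must verify $\sigma\neq 0$ before passing to the scalar relation, since otherwise the asserted nonvanishing $y\neq 0$ in (i) would fail. The clean collapse in (ii) relies on $\lambda=1$ being real, so that the adjoint of $(I-B^*)^{-1}$ is $(I-B)^{-1}$ and the quadratic form becomes a genuine squared norm; for general $\lambda$ the corresponding adjoint is $(\overline{\lambda}I-B)^{-1}$ and no such simplification is available.
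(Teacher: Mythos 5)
Your proof is correct and follows essentially the same route as the paper: read the eigenvector equation block by block, invert the two resolvents, substitute into the parameter row, verify $y\neq 0$, and take the Hermitian inner product with $y$; part (ii) likewise reduces to the injectivity of $H(I-B)^{-1}M$ via the observation that the quadratic form equals $\norm{H(I-B)^{-1}My}^2$. The only difference is that you spell out two details the paper leaves implicit — that $\mathrm{Spec}(B^*)=\mathrm{Spec}(B)$ because $B$ is real, and that the adjoint identity $\bigl((I-B^*)^{-1}\bigr)^*=(I-B)^{-1}$ holds at $\lambda=1$ — which is careful bookkeeping, not a different argument.
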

\begin{remark}
	Since $\rho(B)$ is strictly less than $1$, so is $\rho(B^*)$.
\end{remark}		
\begin{proof}
	Since $\lambda\in\C$ is an eigenvalue of the iteration matrix in \eqref{1-shot-itermat n}, there exists a non-zero vector $(\tilde{p},\tilde{u},y)\in\C^{n_u+n_u+n_\sigma}$ such that 
	\begin{equation}
		\label{eq:eigenvec-1-shot n}
		\begin{cases}
			\lambda y = y-\tau M^*\tilde{p} \\
			\lambda\tilde{u}= B\tilde{u}+My \\
			\lambda\tilde{p}=B^*\tilde{p}+H^*H\tilde{u}.
		\end{cases}
	\end{equation}
	By the second equation in \eqref{eq:eigenvec-1-shot n} $\tilde{u}=(\lambda I-B)^{-1}My$, so together with the third equation 
	$$
	\tilde{p}=(\lambda I-B^*)^{-1}H^*H\tilde{u}=(\lambda I-B^*)^{-1}H^*H(\lambda I-B)^{-1}My, 
	$$
	and by inserting this result into the first equation we obtain
	\begin{equation}\label{eq:pre-eq-eigen-1-shot n}
		(\lambda-1)y=-\tau M^*(\lambda I-B^*)^{-1}H^*H(\lambda I-B)^{-1}My,
	\end{equation}
	that gives \eqref{ori-eq-eigen-1-shot n} by taking the scalar product with $y$. We also see that if $y=0$ then the above formulas for $\tilde{u},\tilde{p}$ immediately give $\tilde{u}=\tilde{p}=0$, that is a contradiction.
	
	\noindent (ii) Assume that $\lambda=1$ is an eigenvalue of the iteration matrix, then \eqref{eq:pre-eq-eigen-1-shot n} gives us
	$$M^*(I-B^*)^{-1}H^*H(I-B)^{-1}My=0,$$
	but this cannot happen for $y \ne 0$ due to the injectivity of  $H(I-B)^{-1}M$.
\end{proof}

\begin{proposition}[Eigenvalue equation for the $1$-step one-shot method]
	\label{prop:eq-eigen 1-shot} 
	Assume that $\lambda\in\C$ is an eigenvalue of the iteration matrix in \eqref{1-shot-itermat n+1}.
	\begin{enumerate}[label=(\roman*)]
		\item If $\lambda\in\C$, $\lambda\notin\mathrm{Spec}(B)$ then $\exists \, y\in\C^{n_\sigma}, y\neq 0$ such that:
		\begin{equation}\label{ori-eq-eigen-1-shot n+1}
			(\lambda-1)\norm{y}^2+\tau\lambda\scalar{M^*(\lambda I-B^*)^{-1}H^*H(\lambda I-B)^{-1}My,y}=0.
		\end{equation}
		\item $\lambda=1$ is not an eigenvalue of the iteration matrix.
	\end{enumerate}
\end{proposition}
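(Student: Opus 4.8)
The plan is to mirror exactly the argument used for the shifted method in Proposition~\ref{prop:eq-eigen shift-1-shot}, adapting only the first equation of the eigenvector system to account for the extra coupling term $-\tau MM^* p^n$ present in the iteration matrix \eqref{1-shot-itermat n+1}. First I would write out the eigenvalue/eigenvector relations: since $\lambda$ is an eigenvalue of the matrix in \eqref{1-shot-itermat n+1}, there is a non-zero vector $(\tilde p,\tilde u,y)\in\C^{n_u+n_u+n_\sigma}$ satisfying
\begin{equation*}
	\begin{cases}
		\lambda\tilde p=B^*\tilde p+H^*H\tilde u,\\
		\lambda\tilde u=B\tilde u+My-\tau MM^*\tilde p,\\
		\lambda y=y-\tau M^*\tilde p.
	\end{cases}
\end{equation*}
The structure is the same as in \eqref{eq:eigenvec-1-shot n} except that the middle equation now carries the term $-\tau MM^*\tilde p$.

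For part (i), assuming $\lambda\notin\spec(B)$ (hence also $\lambda\notin\spec(B^*)$, by the Remark, since $\spec(B^*)=\overline{\spec(B)}$ and $\lambda$ being excluded from $\spec(B)$ is matched by the structure of the equations), I would solve the system from the bottom up. From the third equation, $\tau M^*\tilde p=(1-\lambda)y$. I would substitute the first equation, $\tilde p=(\lambda I-B^*)^{-1}H^*H\tilde u$, into the middle equation to obtain $(\lambda I-B)\tilde u=My-\tau MM^*(\lambda I-B^*)^{-1}H^*H\tilde u$; but it is cleaner to use the third equation directly: the middle equation reads $(\lambda I-B)\tilde u=My-\tau MM^*\tilde p=My-M(\tau M^*\tilde p)=My-M(1-\lambda)y=\lambda My$. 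This is the key simplification — the coupling term $-\tau MM^*\tilde p$ collapses via the third equation into $-M(1-\lambda)y$, so that $\tilde u=\lambda(\lambda I-B)^{-1}My$, which is exactly the shifted-case expression scaled by $\lambda$. Then $\tilde p=\lambda(\lambda I-B^*)^{-1}H^*H(\lambda I-B)^{-1}My$, and inserting into the third equation gives
\begin{equation*}
	(\lambda-1)y+\tau\lambda\,M^*(\lambda I-B^*)^{-1}H^*H(\lambda I-B)^{-1}My=0,
\end{equation*}
whence \eqref{ori-eq-eigen-1-shot n+1} follows by taking the scalar product with $y$. The same reasoning as before shows $y\neq 0$: if $y=0$ then the third equation forces $M^*\tilde p=0$, and the formulas give $\tilde u=\tilde p=0$, contradicting that the eigenvector is non-zero.

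For part (ii), I would set $\lambda=1$ and observe that the pre-scalar-product identity becomes $\tau\,M^*(I-B^*)^{-1}H^*H(I-B)^{-1}My=0$, which (since $\tau>0$) forces $H(I-B)^{-1}My=0$ and hence $y=0$ by the injectivity assumption in \eqref{hypo}; then $\tilde u=\tilde p=0$ as above, so no non-zero eigenvector exists and $\lambda=1$ is not an eigenvalue. I do not anticipate a genuine obstacle here: the only point requiring a little care is confirming that the coupling term really does telescope through the third equation rather than producing a more complicated quadratic dependence on $\lambda$ — the factor $\lambda$ (rather than $1$) multiplying the scalar product is precisely the trace of that term, and this is the sole difference from the shifted case. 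The argument is otherwise a routine adaptation of the proof of Proposition~\ref{prop:eq-eigen shift-1-shot}.
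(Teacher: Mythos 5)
Your proof is correct, and it takes a genuinely different --- and in fact more economical --- route than the paper's. The paper substitutes the adjoint equation $\tilde p=(\lambda I-B^*)^{-1}H^*H\tilde u$ into the state equation, which produces the operator $I+\tau MM^*A$ with $A=(\lambda I-B^*)^{-1}H^*H(\lambda I-B)^{-1}$; it then has to (a) invert this operator (justified there by a positive-definiteness claim, which is delicate because $A$ is not Hermitian for complex $\lambda$: the adjoint of $\lambda I-B$ is $\bar\lambda I-B^*$, not $\lambda I-B^*$), (b) use the fact that $[I+\tau MM^*A]^{-1}$ commutes with $MM^*A$, and (c) invoke the injectivity of $M$ (deduced from that of $H(I-B)^{-1}M$) to strip off the leading factor $M$ and arrive at \eqref{eq:pre-eq-eigen-1-shot n+1}. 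You instead use the $\sigma$-row of the eigensystem, $-\tau M^*\tilde p=(\lambda-1)y$, to collapse the coupling term $-\tau MM^*\tilde p$ in the $u$-row into $(\lambda-1)My$, which gives $\tilde u=\lambda(\lambda I-B)^{-1}My$ in closed form and hence the same identity \eqref{eq:pre-eq-eigen-1-shot n+1} in two lines; taking the scalar product with $y$ yields \eqref{ori-eq-eigen-1-shot n+1}. Your route avoids the invertibility claim, the commutation argument, and the injectivity of $M$ altogether (injectivity of $H(I-B)^{-1}M$ is of course still needed for part (ii), exactly as in the paper), and it makes transparent why the only difference from the shifted method is the extra factor $\lambda$. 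The treatment of $y\neq 0$ and of $\lambda=1$ coincides with the paper's. One cosmetic point: to invert $\lambda I-B^*$ you need $\lambda\notin\spec(B^*)$, and the clean justification is that $B$ is real, so $\spec(B^*)=\spec(B^\intercal)=\spec(B)$; your parenthetical invoking $\overline{\spec(B)}$ should be phrased this way.
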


\begin{proof}
	Since $\lambda\in\C$ is an eigenvalue of the iteration matrix in \eqref{1-shot-itermat n+1}, there exists a non-zero vector $(\tilde{p},\tilde{u},y)\in\C^{n_u+n_u+n_\sigma}$ such that 
	\begin{equation}
		\label{eq:eigenvec-1-shot n+1}
		\begin{cases}
			\lambda y = y-\tau M^*\tilde{p} \\
			\lambda\tilde{u}= B\tilde{u}+My-\tau MM^*\tilde{p} \\
			\lambda\tilde{p}=B^*\tilde{p}+H^*H\tilde{u}.
		\end{cases}
	\end{equation}
	By the third equation in \eqref{eq:eigenvec-1-shot n+1} $\tilde{p}=(\lambda I-B^*)^{-1}H^*H\tilde{u}$, and  inserting this result into the second equation we obtain
	$$
	\lambda\tilde{u}=B\tilde{u}+My-\tau MM^*(\lambda I-B^*)^{-1}H^*H\tilde{u},
	$$
	or equivalently, 
	$$[I+\tau MM^*A](\lambda I-B)\tilde{u}=My$$
	where $A=(\lambda I-B^*)^{-1}H^*H(\lambda I-B)^{-1}$. Since $\tau>0$, $I+\tau MM^*A$ is a positive definite matrix. Therefore
	$$\tilde{u}=(\lambda I-B)^{-1}[I+\tau MM^*A]^{-1}My$$
	and
	$$\tilde{p}=(\lambda I-B^*)^{-1}H^*H\tilde{u}=A[I+\tau MM^*A]^{-1}My.$$
	By inserting this result into the first equation in \eqref{eq:eigenvec-1-shot n+1} we obtain
	$$(\lambda-1)y=-\tau M^*A[I+\tau MM^*A]^{-1}My.$$
	Thanks to the fact that $[I+\tau MM^*A]^{-1}$ and $MM^*A$ commute, we have
	$$(\lambda-1)My=-\tau MM^*A[I+\tau MM^*A]^{-1}My=-\tau[I+\tau MM^*A]^{-1}MM^*AMy$$
	then
	$$(\lambda-1)[I+\tau MM^*A]My=-\tau MM^*AMy,$$
	that leads to
	$$(\lambda-1)My+\tau\lambda MM^*AMy=0.$$ 
	Since $H(I-B)^{-1}M$ is injective, so is $M$. Therefore
	\begin{equation}\label{eq:pre-eq-eigen-1-shot n+1}
		(\lambda-1)y+\tau\lambda M^*AMy=0,
	\end{equation}
	that gives \eqref{ori-eq-eigen-1-shot n+1} by taking scalar product with $y$. We also see that if $y=0$ then the above formulas for $\tilde{u},\tilde{p}$ immediately give $\tilde{u}=\tilde{p}=0$, that is a contradiction.
	
	\noindent (ii) Assume that $\lambda=1$ is an eigenvalue of the iteration matrix, then \eqref{eq:pre-eq-eigen-1-shot n+1} gives us
	$$M^*(I-B^*)^{-1}H^*H(I-B)^{-1}My=0,$$
	but this cannot happen for $y \ne 0$ due to the injectivity of  $H(I-B)^{-1}M$.
\end{proof}

In the following sections we will show that, for sufficiently small $\tau$, equations \eqref{ori-eq-eigen-1-shot n} and \eqref{ori-eq-eigen-1-shot n+1} admit no solution $|\lambda|\ge 1$, thus algorithms \eqref{alg:1-shot n} and \eqref{alg:1-shot n+1} converge. When $\lambda\neq 0$, it is convenient to rewrite \eqref{ori-eq-eigen-1-shot n} and \eqref{ori-eq-eigen-1-shot n+1} respectively as
\begin{equation}
	\label{eq-eigen-1-shot n}
	\lambda^2(\lambda-1)\norm{y}^2+\tau\scalar{M^*\left(I-B^*/\lambda\right)^{-1}H^*H\left(I-B/\lambda\right)^{-1}My,y}=0
\end{equation}
and
\begin{equation}
	\label{eq-eigen-1-shot n+1}
	\lambda(\lambda-1)\norm{y}^2+\tau\scalar{M^*\left(I-B^*/\lambda\right)^{-1}H^*H\left(I-B/\lambda\right)^{-1}My,y}=0.
\end{equation}
For the analysis we use auxiliary results proved in Appendix~\ref{app:lems}.

\medskip
First, we study separately the very particular case where $B=0$. 

\begin{proposition}[shifted $1$-step one-shot method]\label{tau-1-shot-B=0 n}
	When $B=0$, the eigenvalue equation \eqref{eq-eigen-1-shot n} admits no solution $\lambda\in\C, |\lambda|\ge 1$ if
	$\tau<\frac{-1+\sqrt{5}}{2\norm{H}^2\norm{M}^2}$.
\end{proposition}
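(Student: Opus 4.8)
The plan is to substitute $B=0$ directly into the eigenvalue equation \eqref{eq-eigen-1-shot n}. Since then $(I-B/\lambda)^{-1}=(I-B^*/\lambda)^{-1}=I$, the equation collapses to the scalar-weighted relation
\[
\lambda^2(\lambda-1)\norm{y}^2+\tau\norm{HMy}^2=0 .
\]
The key observation is that this forces $\lambda^2(\lambda-1)$ to be a real, non-positive number: writing $a\coloneqq\tau\norm{HMy}^2/\norm{y}^2$, any solution $(\lambda,y)$ with $y\neq 0$ must satisfy $\lambda^2(\lambda-1)=-a$. Moreover $0<a\le\tau\norm{H}^2\norm{M}^2$, where the upper bound follows from $\norm{HMy}\le\norm{H}\norm{M}\norm{y}$ and the strict positivity from the injectivity of $H(I-B)^{-1}M=HM$ (so $HMy\neq 0$ whenever $y\neq 0$). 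Under the hypothesis $\tau<\frac{-1+\sqrt{5}}{2\norm{H}^2\norm{M}^2}$ we therefore obtain $a\in(0,a^*)$ with $a^*\coloneqq\frac{-1+\sqrt{5}}{2}$.

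It then suffices to prove the purely scalar statement: for every real $a\in(0,a^*)$, the cubic $g(\lambda)\coloneqq\lambda^3-\lambda^2+a$ has all of its roots in the open unit disk. This immediately yields the proposition, since it rules out any $\lambda$ with $|\lambda|\ge 1$ solving $\lambda^2(\lambda-1)=-a$.

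To pin down the threshold $a^*$, I would first locate the roots lying exactly on the unit circle. Setting $\lambda=\e^{\ic\theta}$ in $g(\lambda)=0$ and separating real and imaginary parts gives $\sin 3\theta=\sin 2\theta$ together with $a=\cos 2\theta-\cos 3\theta$. The first relation yields $\theta\in\{0\}\cup\{\pi/5+2k\pi/5\}$, and substituting into the second shows that the only strictly positive values of $a$ admitting a unit-modulus root are $a=2\cos(2\pi/5)=\frac{-1+\sqrt{5}}{2}=a^*$ (at $\theta=\pm\pi/5$) and $a=2$ (at $\theta=\pi$). Thus $a^*$ is the smallest positive value of $a$ at which a root can touch the unit circle. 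Since the three roots of $g$ depend continuously on $a$, and as $a\to 0^+$ they approach $\{0,0,1\}$ with the root emanating from $1$ equal to $1-a+O(a^2)<1$, a continuity argument shows that for all $a\in(0,a^*)$ every root remains strictly inside the disk.

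The main obstacle is making this last step fully rigorous, i.e.\ certifying that no root escapes the disk before $a=a^*$. I would either phrase it as an intermediate-value argument on the continuous quantity $\max_i|\lambda_i(a)|$ (which equals $1$ only at the boundary values computed above, and is below $1$ for small $a$), or, more cleanly, invoke the Schur--Cohn (Jury) stability criterion for $\lambda^3-\lambda^2+a$: its nontrivial condition reduces exactly to $1-a^2>a$, i.e.\ $a^2+a-1<0$, which is equivalent to $a<a^*$, while the remaining conditions $g(1)=a>0$, $g(-1)=a-2<0$ and $|a|<1$ hold automatically on $(0,a^*)$.
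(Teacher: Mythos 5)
Your proposal is correct, and its core is the same as the paper's. The reduction coincides exactly: setting $B=0$ in \eqref{eq-eigen-1-shot n} gives $\lambda^3-\lambda^2+a=0$ with $a=\tau\norm{HMy}^2/\norm{y}^2\in\bigl(0,\tau\norm{H}^2\norm{M}^2\bigr]$, strict positivity coming from injectivity of $HM$. The paper then simply invokes its Lemma~\ref{marden-ord-3} (a Jury--Marden criterion for real cubics, proved in Appendix~\ref{app:marden}): with $(a_0,a_1,a_2)=(a,0,-1)$ its three conditions read $|a|<1$, $(a^2+a-1)(a^2-a-1)>0$ and $0<a<2$, which for $a>0$ amount precisely to $a<\frac{-1+\sqrt{5}}{2}$. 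Your ``cleaner'' fallback --- the Schur--Cohn/Jury test yielding $g(1)>0$, $g(-1)<0$, $|a|<1$ and $1-a^2>a$ --- is therefore the paper's argument in all but name, with the criterion cited rather than re-derived. Your primary route is genuinely different: you locate the unit-circle crossings by solving $\sin 3\theta=\sin 2\theta$, $a=\cos 2\theta-\cos 3\theta$ (giving $a=\frac{-1+\sqrt{5}}{2}$ at $\theta=\pm\pi/5$ and $a=2$ at $\theta=\pi$), and then argue by continuity of the roots from the configuration $\{0,0,1\}$ at $a\to 0^+$ that no root can leave the open disk before $a$ reaches $\frac{-1+\sqrt{5}}{2}$. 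That argument is sound in outline and has the merit of being self-contained and of explaining where the threshold constant comes from, but as written it is a sketch (the root-continuity/intermediate-value bookkeeping you flag is the part needing care); the Jury-criterion closure is what makes your proof complete, and it is exactly the paper's.
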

\begin{proof}
	When $B=0$, equation \eqref{eq-eigen-1-shot n} becomes
	$\lambda^2(\lambda-1)\norm{y}^2+\tau\norm{HMy}^2=0$ which is equivalent to $\lambda^3-\lambda^2+\frac{\norm{HMy}^2}{\norm{y}^2}\tau=0$. Then, the conclusion can be obtained by Lemma \ref{marden-ord-3}.
\end{proof}

\begin{proposition}[$1$-step one-shot method]\label{tau-1-shot-B=0 n+1}
	When $B=0$, the eigenvalue equation \eqref{eq-eigen-1-shot n+1} admits no solution $\lambda\in\C, |\lambda|\le 1$ if 
	$\tau<\frac{1}{\norm{H}^2\norm{M}^2}$.
\end{proposition}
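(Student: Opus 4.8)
The plan is to mirror the proof of its twin, Proposition~\ref{tau-1-shot-B=0 n}: specialise \eqref{eq-eigen-1-shot n+1} to $B=0$, reduce it to a scalar polynomial identity, and locate the roots against the unit circle. With $B=0$ both resolvents $(I-B/\lambda)^{-1}$ and $(I-B^*/\lambda)^{-1}$ are the identity, so \eqref{eq-eigen-1-shot n+1} collapses to
\begin{equation*}
\lambda(\lambda-1)\norm{y}^2+\tau\norm{HMy}^2=0 .
\end{equation*}
Here $\lambda=0$ is impossible, since it would force $HMy=0$ and hence $y=0$ (because $H(I-B)^{-1}M=HM$ is injective), so dividing by $\norm{y}^2$ yields the quadratic
\begin{equation*}
\lambda^2-\lambda+c=0, \qquad c\coloneqq\tau\,\frac{\norm{HMy}^2}{\norm{y}^2} .
\end{equation*}
Although $c$ depends on $y$, it is trapped in a fixed interval: injectivity gives $c>0$, while $\norm{HMy}\le\norm{H}\,\norm{M}\,\norm{y}$ together with $\tau<1/(\norm{H}^2\norm{M}^2)$ give $c\le\tau\norm{H}^2\norm{M}^2<1$. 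The whole statement thus reduces to one uniform question: where do the two roots of $\lambda^2-\lambda+c$ sit for $c\in(0,1)$?

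I would then settle this root-location question, which is the only real content. The roots are explicit, $\lambda_\pm=\tfrac12\bigl(1\pm\sqrt{1-4c}\bigr)$, so one may simply split into the real branch $0<c\le\tfrac14$ and the complex-conjugate branch $\tfrac14<c<1$; equivalently one invokes the degree-$2$ analogue of Lemma~\ref{marden-ord-3}, i.e. the Schur--Cohn criterion that $\lambda^2+a_1\lambda+a_0$ has both roots in the open unit disk iff $|a_0|<1$ and $|a_1|<1+a_0$. With $a_1=-1$ and $a_0=c$ these two conditions read exactly $c<1$ and $c>0$, both secured above, so under the hypothesis $|\lambda_\pm|<1$.

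The main difficulty is not analytic but lies in fixing the excluded region correctly, and this is also where the statement must be read with care. The computation above shows that a small step $\tau$ drives both roots \emph{inside} the unit disk: on the complex branch one has $|\lambda_\pm|=\sqrt{c}<1$ outright, and on the real branch $0<\lambda_-\le\lambda_+<1$. Hence what the bound $\tau<1/(\norm{H}^2\norm{M}^2)$, i.e. $c<1$, actually certifies is that \eqref{eq-eigen-1-shot n+1} has no root with $|\lambda|\ge 1$ — precisely the exclusion needed for $\rho<1$, exactly as in the twin Proposition~\ref{tau-1-shot-B=0 n} and as announced just before it (the remaining eigenvalue $\lambda=0\in\spec(B)$ being harmless). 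I would therefore carry the plan through to locating $\lambda_\pm$ strictly inside the unit disk and record the excluded region accordingly; the delicate point to verify is exactly which side of the circle the inequality $c<1$ forces the roots onto, since the opposite bound $c>1$ would push them to the other side (there $|\lambda_\pm|=\sqrt{c}>1$).
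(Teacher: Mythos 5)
Your proof is correct, and it reaches the conclusion by a genuinely more elementary route than the paper's. The paper's proof also specialises \eqref{eq-eigen-1-shot n+1} to $B=0$ and arrives at $\lambda(\lambda-1)\norm{y}^2+\tau\norm{HMy}^2=0$, but it then multiplies by $\lambda$ to inflate this quadratic into the cubic $\lambda^3-\lambda^2+c\lambda=0$, with $c=\tau\norm{HMy}^2/\norm{y}^2$, precisely so that Lemma~\ref{marden-ord-3} (the degree-3 Jury--Marden criterion, proved via Marden's theory in Appendix~\ref{app:marden}) can be invoked; with $a_0=0$, $a_1=c$, $a_2=-1$ its three conditions collapse to exactly the window $0<c<1$ that you identify. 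You instead stay at degree $2$ and settle the root location by hand, through the explicit roots $\lambda_\pm=\frac{1}{2}\bigl(1\pm\sqrt{1-4c}\bigr)$ (real branch $0<c\le\frac{1}{4}$: $0<\lambda_-\le\lambda_+<1$; complex branch $\frac{1}{4}<c<1$: $|\lambda_\pm|=\sqrt{c}<1$), or equivalently the degree-2 Schur--Cohn test. What your version buys is a self-contained argument that does not lean on the machinery of Appendix~\ref{app:marden}; what the paper's version buys is uniformity with the twin Proposition~\ref{tau-1-shot-B=0 n}, where the scalar equation $\lambda^3-\lambda^2+\tau\norm{HMy}^2/\norm{y}^2=0$ is genuinely cubic and the lemma is unavoidable. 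You are also right on the one delicate reading issue: the ``$|\lambda|\le 1$'' in the statement is a typo for ``$|\lambda|\ge 1$'' (this is what the companion Proposition~\ref{tau-1-shot-B=0 n} asserts and what Theorem~\ref{th:tau n+1 k=1 all} needs), and both your argument and the paper's in fact prove the correct assertion, namely that all roots lie strictly inside the unit disk, the spurious root $\lambda=0$ introduced by the paper's multiplication by $\lambda$ being harmless.
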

\begin{proof}
	When $B=0$, equation \eqref{eq-eigen-1-shot n+1} becomes
	$\lambda(\lambda-1)\norm{y}^2+\tau\norm{HMy}^2=0$ which yields $\lambda^3-\lambda^2+\frac{\norm{HMy}^2}{\norm{y}^2}\tau\lambda=0$. Then, the conclusion can be obtained by Lemma \ref{marden-ord-3}.
\end{proof}

\subsection{Real eigenvalues}

We now find conditions on the descent step $\tau$ such that the real eigenvalues stay inside the unit disk. Recall that we have already proved that $\lambda=1$ is not an eigenvalue for both methods.
\begin{proposition}[shifted $1$-step one-shot method]\label{tau n k=1 real}
	Equation \eqref{eq-eigen-1-shot n} 
	\begin{enumerate}[label=(\roman*)]
		\item admits no solution $\lambda\in\R, \lambda>1$ for all $\tau>0$;
		\item admits no solution $\lambda\in\R, \lambda\le -1$ if we take $$\tau<\frac{2}{\norm{H}^2\norm{M}^2s(B)^2},$$
		where $s(B)$ is defined in Lemma~\ref{inv(I-T/z)}; 
		moreover if $0<\norm{B}<1$, we can take
		\begin{equation}\label{eq:tau n k=1 real}
			\tau<\frac{\chi_0(1,\norm{B})}{\norm{H}^2\norm{M}^2}, \quad \text{where } \; \chi_0(1,b) = 2(1-b)^2
		\end{equation}
		(here in the notation $\chi_0(1,b)$, $1$ refers to $k=1$).
	\end{enumerate}
\end{proposition}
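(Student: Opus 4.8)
The plan is to use that, for \emph{real} $\lambda$, the Hermitian form in \eqref{eq-eigen-1-shot n} collapses to a single nonnegative real number, turning the eigenvalue equation into a scalar inequality. First I would note that since $\rho(B)<1$, any $\lambda$ with $|\lambda|\ge 1$ automatically satisfies $\lambda\notin\spec(B)$, so \eqref{eq-eigen-1-shot n} is available (and $\lambda\neq 0$). For real $\lambda$ one has $(I-B^*/\lambda)^{-1}=\bigl((I-B/\lambda)^{-1}\bigr)^*$, hence
\[
\scalar{M^*(I-B^*/\lambda)^{-1}H^*H(I-B/\lambda)^{-1}My,\,y}=\norm{H(I-B/\lambda)^{-1}My}^2=:c(\lambda)\ge 0 .
\]
Thus \eqref{eq-eigen-1-shot n} reduces to $\lambda^2(\lambda-1)\norm{y}^2+\tau\,c(\lambda)=0$ with $\tau>0$, $\norm{y}>0$ and $c(\lambda)\ge 0$. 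Part (i) is then immediate: if $\lambda>1$ then $\lambda^2(\lambda-1)\norm{y}^2>0$ and $\tau c(\lambda)\ge 0$, so the left-hand side is strictly positive and cannot vanish, with no restriction on $\tau$.

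For part (ii), when $\lambda\le -1$ the first term is strictly negative, so a root would force $\tau c(\lambda)=\lambda^2(1-\lambda)\norm{y}^2$; it therefore suffices to guarantee the strict inequality $\tau c(\lambda)<\lambda^2(1-\lambda)\norm{y}^2$ for all $\lambda\le -1$. I would bound $c(\lambda)\le\norm{H}^2\norm{M}^2\norm{(I-B/\lambda)^{-1}}^2\norm{y}^2$ and control the resolvent via Lemma~\ref{inv(I-T/z)}. Using the uniform bound $\norm{(I-B/\lambda)^{-1}}\le s(B)$ valid for $|\lambda|\ge 1$, the requirement becomes $\tau\norm{H}^2\norm{M}^2 s(B)^2<\lambda^2(1-\lambda)$, and a one-variable study of $g(\lambda)=\lambda^2(1-\lambda)$ on $(-\infty,-1]$ (where $g'(\lambda)=\lambda(2-3\lambda)<0$, so $g$ is decreasing with minimum $g(-1)=2$) yields the first threshold $\tau<2/(\norm{H}^2\norm{M}^2 s(B)^2)$.

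For the explicit refined bound when $0<\norm{B}<1$, I would instead keep the $\lambda$-dependent estimate $\norm{(I-B/\lambda)^{-1}}\le(1-\norm{B}/|\lambda|)^{-1}$, so that it suffices to have $\tau\norm{H}^2\norm{M}^2<\lambda^2(1-\lambda)(1-\norm{B}/|\lambda|)^2$. Setting $t=|\lambda|\ge 1$ and $b=\norm{B}$, the right-hand side simplifies to $(1+t)(t-b)^2$, whose derivative $(t-b)(3t+2-b)$ is positive on $t\ge 1$; the minimum is thus attained at $t=1$ and equals $2(1-b)^2=\chi_0(1,b)$, which gives the claimed bound.

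The only genuinely delicate points are the reduction of the Hermitian form to the nonnegative square $c(\lambda)$ — valid precisely because $\lambda$ is real, so that the two resolvent factors are mutually adjoint — and the two elementary one-variable minimizations producing the sharp constants $2$ and $2(1-b)^2$; the remaining steps are submultiplicativity of the induced norm together with the resolvent estimates of Lemma~\ref{inv(I-T/z)}. I would also record that even if $c(\lambda)=0$ for some $\lambda\le -1$ the conclusion still holds, since then $\lambda^2(1-\lambda)\norm{y}^2>0=\tau c(\lambda)$, so no additional case analysis is required.
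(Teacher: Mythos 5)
Your proof is correct and follows essentially the same route as the paper: for real $\lambda$ the Hermitian form collapses to $\norm{H(I-B/\lambda)^{-1}My}^2\ge 0$, positivity handles $\lambda>1$ for all $\tau>0$, and for $\lambda\le -1$ the resolvent bound of Lemma~\ref{inv(I-T/z)} combined with $\lambda^2(1-\lambda)\ge 2$ gives the threshold $2/(\norm{H}^2\norm{M}^2 s(B)^2)$. The only (immaterial) difference is in the explicit bound for $0<\norm{B}<1$: you keep the $\lambda$-dependent estimate and minimize $(1+t)(t-b)^2$ over $t\ge 1$, whereas the paper simply inserts $s(B)\le (1-\norm{B})^{-1}$ into the uniform threshold; both minima are attained at $|\lambda|=1$ and yield the same constant $\chi_0(1,b)=2(1-b)^2$.
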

\begin{proof}
	When $\lambda\in\R\backslash\{0\}$ equation \eqref{eq-eigen-1-shot n} becomes $$\lambda^2(\lambda-1)\norm{y}^2+\tau\norm{H(I-B/\lambda)^{-1}My}^2=0.$$
	The left-hand side of the above equation is strictly positive for any $\tau>0$ if $\lambda>1$; it is strictly negative for $\tau$ satisfying the inequality in (ii) if $\lambda\le -1$, noting that $\lambda \mapsto \lambda^2(\lambda-1)$ is increasing for $\lambda\le-1$.
\end{proof}

\begin{proposition}[$1$-step one-shot method]\label{tau n+1 k=1 real}
	Equation \eqref{eq-eigen-1-shot n+1} admits no solution $\lambda\in\R, \lambda\neq 1, |\lambda|\ge 1$ for all $\tau>0$.
\end{proposition}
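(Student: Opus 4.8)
The plan is to exploit the structure of the rewritten eigenvalue equation \eqref{eq-eigen-1-shot n+1} when $\lambda$ is real and nonzero. Just as in the proof of Proposition~\ref{tau n k=1 real}, I would first observe that for real $\lambda\neq 0$ the operator $\left(I-B/\lambda\right)$ has real entries (since $B$ is real), so $\left(I-B^*/\lambda\right)^{-1}=\left[\left(I-B/\lambda\right)^{-1}\right]^*$, and the scalar-product term collapses to a manifestly nonnegative quantity. Concretely, equation \eqref{eq-eigen-1-shot n+1} becomes
\begin{equation}\label{eq:real-reduction-n+1}
	\lambda(\lambda-1)\norm{y}^2+\tau\norm{H\left(I-B/\lambda\right)^{-1}My}^2=0.
\end{equation}
The key point, and the reason no restriction on $\tau$ is needed here, is that the coefficient $\lambda(\lambda-1)$ of $\norm{y}^2$ is already strictly positive on the whole region $|\lambda|\ge 1$, $\lambda\neq 1$, that we must rule out.

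The argument then splits into the two subregions. For $\lambda>1$ we have $\lambda>0$ and $\lambda-1>0$, so $\lambda(\lambda-1)>0$; for $\lambda\le -1$ we have $\lambda<0$ and $\lambda-1<0$, so again $\lambda(\lambda-1)>0$. In either case the first term in \eqref{eq:real-reduction-n+1} is strictly positive (recall $y\neq 0$ by Proposition~\ref{prop:eq-eigen 1-shot}), while the second term $\tau\norm{H\left(I-B/\lambda\right)^{-1}My}^2$ is nonnegative for every $\tau>0$. Hence the left-hand side of \eqref{eq:real-reduction-n+1} is strictly positive and cannot vanish, so no such real $\lambda$ solves the equation. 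Together with the already-established fact that $\lambda=1$ is excluded (Proposition~\ref{prop:eq-eigen 1-shot}(ii)), this covers all real $\lambda$ with $|\lambda|\ge 1$.

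I do not anticipate a genuine obstacle here: the proposition is easier than its shifted counterpart precisely because the extra factor of $\lambda$ multiplying the scalar-product term in \eqref{eq-eigen-1-shot n+1} (as opposed to the bare $\lambda^2$ in \eqref{eq-eigen-1-shot n}) changes the $\norm{y}^2$ coefficient from $\lambda^2(\lambda-1)$ to $\lambda(\lambda-1)$, and the latter never changes sign on $\{\lambda>1\}\cup\{\lambda\le-1\}$. By contrast, $\lambda^2(\lambda-1)$ is negative for $\lambda\le-1$, which is exactly why the shifted method required the quantitative bound on $\tau$ in Proposition~\ref{tau n k=1 real}(ii). The one bookkeeping detail to check is that $\left(I-B/\lambda\right)$ is indeed invertible for the $\lambda$ under consideration, but this is immediate since $\lambda\notin\mathrm{Spec}(B)$ is part of the hypothesis of \eqref{eq-eigen-1-shot n+1}, and in any case $\rho(B)<1\le|\lambda|$ guarantees it. The real-eigenvalue case thus imposes no constraint whatsoever on the descent step for the unshifted $1$-step one-shot method, and the genuine work will be deferred to the analysis of the complex eigenvalues in the following subsection.
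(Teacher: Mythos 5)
Your proof is correct and follows essentially the same route as the paper: for real $\lambda\neq 0$ the eigenvalue equation \eqref{eq-eigen-1-shot n+1} reduces to $\lambda(\lambda-1)\norm{y}^2+\tau\norm{H\left(I-B/\lambda\right)^{-1}My}^2=0$, and since $\lambda(\lambda-1)>0$ on $\{\lambda>1\}\cup\{\lambda\le-1\}$ the left-hand side is strictly positive for every $\tau>0$. Your additional remarks (why the scalar product collapses to a squared norm, the invertibility of $I-B/\lambda$, and the contrast with the shifted method where $\lambda^2(\lambda-1)$ changes sign) are accurate elaborations of the same argument.
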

\begin{proof}
	When $\lambda\in\R\backslash\{0\}$ equation \eqref{eq-eigen-1-shot n+1} becomes $$\lambda(\lambda-1)\norm{y}^2+\tau\norm{H(I-B/\lambda)^{-1}My}^2=0.$$
	If $\lambda\in\R,\lambda\neq 1,|\lambda|\ge 1$ then $\lambda(\lambda-1)>0$, thus the left-hand side of the above equation is strictly positive for any $\tau>0$.
\end{proof}

\subsection{Complex eigenvalues}
\label{subsec:complex-1-step}

We now look for conditions on the descent step $\tau$ such that also the complex eigenvalues stay inside the unit disk. We first deal with the shifted $1$-step one-shot method. 

\begin{proposition}[shifted $1$-step one-shot method]\label{tau n k=1 cpl}
	If $B\neq 0$, $\exists\tau>0$ sufficiently small such that equation \eqref{eq-eigen-1-shot n} admits no solution $\lambda\in\C\backslash\R, |\lambda|\ge 1$. 
	In particular, if $0<\norm{B}<1$, given any $\delta_0>0$ and $0<\theta_0\le\frac{\pi}{6}$, take
	\[
	\tau < \frac{\min \{ \chi_1(1,\norm{B}),\; \chi_2(1,\norm{B}),\; \chi_3(1,\norm{B}),\; \chi_4(1,\norm{B}) \} }{\norm{H}^2\norm{M}^2},
	\]
	where 
	$$\chi_1(1,b)=\frac{(1-b)^4}{4b^2},\quad\chi_2(1,b)=\cfrac{2\sin\frac{\theta_0}{2}(1-b)^2}{(1+b)^2},$$
	$$\chi_3(1,b)=\cfrac{\delta_0\cos^2\frac{5\theta_0}{2}}{2\left(1+2\delta_0\sin\frac{5\theta_0}{2}+\delta_0^2\right)}\cdot\cfrac{(1-b)^4}{b^2},\quad\chi_4(1,b)=\left[\sin\left(\frac{\pi}{2}-3\theta_0\right)+\cos2\theta_0\right](1-b)^2$$
	(here in the notation $\chi_i(1,b), i=1,\dots,4$, $1$ refers to $k=1$).
\end{proposition}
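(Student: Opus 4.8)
The plan is to show that, once $\tau$ is small, no root $\lambda$ of the eigenvalue equation \eqref{eq-eigen-1-shot n} can lie in $\{|\lambda|\ge 1\}\setminus\R$. The strategy combines a \emph{modulus bound}, which confines any such root to a small disk around $\lambda=1$, with an \emph{argument (phase) comparison}, which excludes roots in that disk. Throughout I write $\lambda=re^{\ic\theta}$ with $r\ge 1$; since the iteration matrix in \eqref{1-shot-itermat n} is real, eigenvalues occur in conjugate pairs and I may assume $0<\theta<\pi$. It is convenient to set $\phi(\lambda)\coloneqq\scalar{M^*(I-B^*/\lambda)^{-1}H^*H(I-B/\lambda)^{-1}My,y}$, so that \eqref{eq-eigen-1-shot n} reads $\lambda^2(\lambda-1)\norm{y}^2=-\tau\phi(\lambda)$, and to note that $\phi(1)=\norm{H(I-B)^{-1}My}^2>0$ is real and positive by the injectivity assumption.

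First I would record the estimate on $\phi$ that drives everything. Using $\norm{(I-B/\lambda)^{-1}}\le s(B)$ for $|\lambda|\ge 1$ (Lemma~\ref{inv(I-T/z)}), together with $s(B^*)=s(B)\le(1-\norm{B})^{-1}$ when $0<\norm{B}<1$, one obtains the modulus bound $|\phi(\lambda)|\le\norm{H}^2\norm{M}^2(1-\norm{B})^{-2}\norm{y}^2$. Feeding this into the equation and using $r\ge 1$ gives $|\lambda-1|\le r^2|\lambda-1|=\tau|\phi(\lambda)|/\norm{y}^2\le\tau\,\norm{H}^2\norm{M}^2/(1-\norm{B})^2$, so every root with $|\lambda|\ge 1$ is trapped in the disk $|\lambda-1|\le\tau\,\norm{H}^2\norm{M}^2/(1-\norm{B})^2$; for $\tau$ small this forces $\lambda$ close to $1$. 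The condition $\chi_1$ will arise from this confinement together with a resolvent perturbation estimate $\phi(\lambda)=\phi(1)+O(|\lambda-1|)$, whose remainder is controlled in terms of $\norm{B}^2/(1-\norm{B})^4$ — this is where the factor $(1-b)^4/b^2$ enters.

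The heart of the proof is the phase comparison for $\lambda$ near $1$. Since $\phi(1)>0$, the resolvent expansion confines $\phi(\lambda)$ to a narrow cone about the positive real axis, so that $-\tau\phi(\lambda)$ points essentially along the \emph{negative} real axis, i.e.\ $\arg(-\tau\phi(\lambda))$ stays in a controlled window around $\pi$. On the other hand $\arg[\lambda^2(\lambda-1)]=2\theta+\arg(\lambda-1)$, and on the unit circle $\arg(e^{\ic\theta}-1)=\tfrac{\pi}{2}+\tfrac{\theta}{2}$, so the leading factor has argument $\tfrac{\pi}{2}+\tfrac{5\theta}{2}$ — this is exactly why $\tfrac{5\theta_0}{2}$ and $3\theta_0$ appear in the constants. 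I would split $\{0<\theta<\pi,\ r\ge 1\}$ into a few sub-sectors by comparing $\theta$ with $\theta_0$ and $r$ with $1+\delta_0$: where $\theta$ is bounded away from the critical value making the two arguments coincide, the mismatch $|\arg[\lambda^2(\lambda-1)]-\arg(-\phi(\lambda))|$ is bounded below and equality of the two sides is impossible (yielding $\chi_2,\chi_4$); in the remaining thin sector around the critical angle I use the radial gap measured by $\delta_0$, combined with the constraint $\mathrm{Re}(\lambda-1)\ge-|\lambda-1|^2/2$ forced by $|\lambda|\ge 1$, to push $\lambda^2(\lambda-1)$ off the negative real axis by a quantified amount (yielding $\chi_3$). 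Taking $\tau$ below $\min\{\chi_1,\chi_2,\chi_3,\chi_4\}/(\norm{H}^2\norm{M}^2)$ makes all these obstructions simultaneously active, excluding any complex root with $|\lambda|\ge 1$.

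The main obstacle is precisely the region adjacent to $\lambda=1$ on the unit circle, where $\lambda^2(\lambda-1)$ degenerates in modulus while its argument sweeps through $\pi$: there a pure modulus bound is useless, and one must track the argument of \emph{both} sides to leading order, which forces the delicate bookkeeping with the auxiliary angles $\theta_0,\delta_0$ and the trigonometric quantities $\sin\tfrac{\theta_0}{2}$, $\cos^2\tfrac{5\theta_0}{2}$, $\sin(\tfrac{\pi}{2}-3\theta_0)$ (the constraint $\theta_0\le\tfrac{\pi}{6}$ guaranteeing $\sin(\tfrac{\pi}{2}-3\theta_0)\ge 0$). A secondary technical point is making the cone-confinement of $\phi(\lambda)$ and the perturbation $\phi(\lambda)-\phi(1)$ fully quantitative in terms of $\norm{B}$ alone, so that no lower bound on the smallest singular value of $H(I-B)^{-1}M$ is needed in the final constants; the qualitative claim for a general $B\neq 0$ then follows by the same argument with $s(B)$ in place of $(1-\norm{B})^{-1}$.
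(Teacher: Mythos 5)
There is a genuine gap, and it sits exactly at the step you call a ``secondary technical point.'' Your exclusion mechanism near $\lambda=1$ rests on a cone confinement of $\phi(\lambda)=\scalar{M^*(I-B^*/\lambda)^{-1}H^*H(I-B/\lambda)^{-1}My,y}$ around the positive real axis, obtained from $\phi(1)>0$ plus a resolvent perturbation bound $|\phi(\lambda)-\phi(1)|\le C|\lambda-1|\norm{y}^2$ with $C$ depending only on $\norm{B},\norm{H},\norm{M}$. For this to pin down $\arg\phi(\lambda)$ you need the perturbation to be small \emph{relative to} $\phi(1)$, i.e.\ you need a lower bound on $\phi(1)/\norm{y}^2=\norm{H(I-B)^{-1}My}^2/\norm{y}^2$. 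Injectivity of $H(I-B)^{-1}M$ gives only $\phi(1)>0$; the quantitative lower bound is the square of the smallest singular value of $H(I-B)^{-1}M$, a quantity that does not appear in (and cannot be bounded by) $\norm{B},\norm{H},\norm{M}$. Consequently your route yields a threshold on $\tau$ depending on that singular value — enough for the qualitative first sentence (after normalizing $\norm{y}=1$ and using finite dimensionality), but \emph{not} for the explicit bounds $\chi_i(1,\norm{B})/(\norm{H}^2\norm{M}^2)$, which are the actual content of the proposition. The claim that the cone confinement can be made ``fully quantitative in terms of $\norm{B}$ alone'' is precisely what fails: if $\norm{HPMy}$ is tiny compared with $\norm{y}$, the argument of $\phi(\lambda)$ can swing to $\pm\pi/2$ and beyond even for $\lambda$ arbitrarily close to $1$. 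A secondary incoherence: under the claimed bounds the confinement radius $\tau\norm{H}^2\norm{M}^2 s(B)^2\le \chi_i(1,b)/(1-b)^2$ is of order one (for $\chi_4$ it lies in $[1,2]$), so roots are not actually trapped near $\lambda=1$, and your sector analysis would have to cover all of $\theta\in(-\pi,\pi)$ anyway.

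The paper's proof avoids any lower bound on $\phi$ by a different mechanism. Lemma~\ref{decompq} splits $(I-B/\lambda)^{-1}=P+\ic Q$ with the crucial estimate $\norm{Q}\le q_2|\sin\theta|$; the eigenvalue equation is then separated into real and imaginary parts \eqref{re-eq k=1}--\eqref{im-eq k=1} via the form $G$, and combined as $\Re+\gamma\,\Im$ with $\gamma=\gamma(\lambda)$ chosen case by case (Lemma~\ref{gamma123 n}, four regions of $\theta$, which is where $\tfrac{5\theta_0}{2}$ and $3\theta_0$ really enter). The point is that the positive term $G(P^*+\gamma Q^*,P+\gamma Q)$ is used \emph{only through its sign} (it is a squared norm, hence $\ge 0$), never through a lower bound, while the competing term $(1+\gamma^2)\tau G(Q^*,Q)$ carries the factor $\sin^2\theta$ from the bound on $Q$. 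Since the main term obeys $\Re(\lambda^3-\lambda^2)+\gamma\Im(\lambda^3-\lambda^2)\gtrsim|\sin(\theta/2)|$ and $\sin^2\theta/|\sin(\theta/2)|\le 2$, the degeneracies at $\theta\to 0$ cancel and the resulting smallness condition on $\tau$ is uniform in $\theta$ and depends only on $\norm{B},\norm{H},\norm{M},\theta_0,\delta_0$. If you want to keep a phase-style argument, the fix is to extract the missing lower bound from the equation itself (it forces $\norm{HPMy}^2\gtrsim|\lambda-1|\norm{y}^2/\tau$, hence $\norm{HPMy}\gg\norm{HQMy}$ for small $\tau$), but that is a different proof from the one you wrote down.
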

\begin{proof}
	\textbf{Step 1. Rewrite equation \eqref{eq-eigen-1-shot n} so that we can study its real and imaginary parts.}
	
	Let $\lambda=R(\cos\theta+\ic\sin\theta)$ in polar form where $R=|\lambda| \ge 1$ and $\theta\in(-\pi,\pi)$. Write ${1}/{\lambda}=r(\cos\phi+\ic\sin\phi)$ in polar form where $r={1}/{|\lambda|}={1}/{R}\le 1$ and $\phi=-\theta\in(-\pi,\pi)$. By Lemma \ref{decompq}, we have
	$$\left(I-\cfrac{B}{\lambda}\right)^{-1}=P(\lambda)+\ic Q(\lambda),\quad \left(I-\cfrac{B^*}{\lambda}\right)^{-1}=P(\lambda)^*+\ic Q(\lambda)^*$$
	where $P(\lambda)$ and $Q(\lambda)$ are $\C^{n_u\times n_u}$-valued functions, and, by omitting the dependence on $\lambda$, 
	\begin{equation}\label{eq:pB}
		\norm{P}\le p \coloneqq \left\{\begin{array}{cl}
			(1+\norm{B})s(B)^2 &\text{ for general }B\neq 0,\\
			\cfrac{1}{1-\norm{B}} &\text{ when }\norm{B}<1;
		\end{array}\right.
	\end{equation}
	\begin{equation}\label{eq:q1B}
		\norm{Q}\le q_1 \coloneqq \left\{\begin{array}{cl}
			\norm{B}s(B)^2 &\text{ for general }B\neq 0,\\
			\cfrac{\norm{B}}{1-\norm{B}} &\text{ when }0<\norm{B}<1;
		\end{array}\right.
	\end{equation}
	\begin{equation}\label{eq:q2B}
		\norm{Q}\le |\sin\theta|q_2,\quad q_2 \coloneqq \left\{\begin{array}{cl}
			\norm{B}s(B)^2 &\text{ for general }B\neq 0,\\
			\cfrac{\norm{B}}{(1-\norm{B})^2} &\text{ when }0<\norm{B}<1.
		\end{array}\right.
	\end{equation}
	Now we rewrite \eqref{eq-eigen-1-shot n} as
	\begin{equation}
		\label{ready-split-k=1}
		\lambda^2(\lambda-1)\norm{y}^2+\tau G(P^*+\ic Q^*,P+\ic Q)=0
	\end{equation}
	where 
	$$G(X,Y)=\scalar{M^*XH^*HYMy,y}\in\C, \quad X,Y\in\C^{n_u\times n_u}.$$
	$G$ satisfies the following properties:
	\begin{itemize}
		\item $\forall X,Y_1,Y_2\in\C^{n_u\times n_u}, \forall z_1,z_2\in\C$: \quad
		$G(X, z_1Y_1+z_2Y_2)=z_1G(X,Y_1)+z_2G(X,Y_2).$
		\item $\forall X_1,X_2,Y\in\C^{n_u\times n_u}, \forall z_1,z_2\in\C$: \quad
		$G(z_1X_1+z_2X_2,Y)=z_1G(X_1,Y)+z_2G(X_2,Y).$ 
		\item $\forall X\in\C^{n_u\times n_u}$: \quad
		$0\le G(X^*,X)=\norm{HXMy}^2\le (\norm{H}\norm{M}\norm{X})^2 \norm{y}^2.$
		\item $\forall X,Y\in\C^{n_u\times n_u}$: \quad $G(X,Y)+G(Y^*,X^*)\in\R$, indeed 
		$$\begin{array}{ll}
			G(X,Y)&=\scalar{M^*XH^*HYMy,y}=\scalar{y,M^*Y^*H^*HX^*My}\\
			&=\scalar{M^*Y^*H^*HX^*My,y}^*=G(Y^*,X^*)^*.
		\end{array}$$ 
	\end{itemize}
	With these properties of $G$, we expand \eqref{ready-split-k=1} and take its real and imaginary parts, so we respectively obtain:
	\begin{equation}\label{re-eq k=1}
		\Re(\lambda^3-\lambda^2)\norm{y}^2+\tau [G(P^*,P)- G(Q^*,Q)]=0
	\end{equation}
	and
	\begin{equation}\label{im-eq k=1}
		\Im(\lambda^3-\lambda^2)\norm{y}^2+\tau [G(P^*,Q)+ G(Q^*,P)]=0
	\end{equation}
	
	\noindent\textbf{Step 2. Find a suitable combination of equations \eqref{re-eq k=1} and \eqref{im-eq k=1}, choose $\tau$ so that we obtain a new equation with a left-hand side which is strictly positive/negative.}
	
	Let $\gamma=\gamma(\lambda)\in\R$, defined by cases as in Lemma~\ref{gamma123 n}. Multiplying equation \eqref{im-eq k=1} with $\gamma$ then summing it with equation \eqref{re-eq k=1}, we obtain:
	$$
	[\Re(\lambda^3-\lambda^2)+\gamma\Im(\lambda^3-\lambda^2)]\norm{y}^2+\tau[G(P^*,P)-G(Q^*,Q)+\gamma G(P^*,Q)+\gamma G(Q^*,P)]=0,
	$$
	or equivalently,
	\begin{equation}\label{im-gamma-re k=1}
		[\Re(\lambda^3-\lambda^2)+\gamma\Im(\lambda^3-\lambda^2)]\norm{y}^2+\tau G(P^*+\gamma Q^*,P+\gamma Q)-(1+\gamma^2)\tau G(Q^*,Q)=0.
	\end{equation}
	Now we consider four cases of $\lambda$ as in Lemma~\ref{gamma123 n}:
	\begin{itemize}
		\item\textit{Case 1.} $\Re(\lambda^3-\lambda^2)\ge 0$;
		\item\textit{Case 2.} $\Re(\lambda^3-\lambda^2)<0$ and $\theta\in [\theta_0,\pi-\theta_0]\cup[-\pi+\theta_0,-\theta_0]$ for fixed $0<\theta_0\le\frac{\pi}{6}$;
		\item\textit{Case 3.} $\Re(\lambda^3-\lambda^2)<0$ and $\theta \in (-\theta_0,\theta_0)$ for fixed $0<\theta_0\le \frac{\pi}{6}$;
		\item\textit{Case 4.} $\Re(\lambda^3-\lambda^2)<0$ and $\theta \in (\pi-\theta_0,\pi)\cup(-\pi,-\pi+\theta_0)$ for fixed $0<\theta_0\le \frac{\pi}{6}$. 
	\end{itemize}
	The four cases will be treated in the following four lemmas (Lemmas \ref{k=1 n case1}--\ref{k=1 n case4}), which together give the statement of this proposition.
\end{proof}
\begin{lemma}[Case 1]
	\label{k=1 n case1} Equation \eqref{eq-eigen-1-shot n} admits no solutions $\lambda$ in Case 1 if we take
	$$\tau<\frac{1}{4\norm{H}^2\norm{M}^2\norm{B}^2s(B)^4}.$$
	Moreover, if $0<\norm{B}<1$, we can take
	$$\tau<\frac{(1-\norm{B})^4}{4\norm{H}^2\norm{M}^2\norm{B}^2}.$$
\end{lemma}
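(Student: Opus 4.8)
The plan is to reduce the claim to a sign analysis of the real part of the eigenvalue equation \eqref{eq-eigen-1-shot n}, using the splitting already prepared in \eqref{re-eq k=1}, \eqref{im-eq k=1} and \eqref{im-gamma-re k=1}. In Case~1 the geometric factor $\Re(\lambda^3-\lambda^2)$ is nonnegative, so I would take $\gamma=0$ in \eqref{im-gamma-re k=1} (the value prescribed by Lemma~\ref{gamma123 n} in this case), which collapses the combined equation back to the real part equation \eqref{re-eq k=1},
\[
\Re(\lambda^3-\lambda^2)\norm{y}^2+\tau\,G(P^*,P)-\tau\,G(Q^*,Q)=0 .
\]
The goal is then to show that, for $\tau$ below the announced threshold, the left-hand side is \emph{strictly} positive, so no $\lambda$ with $|\lambda|\ge 1$ in Case~1 can solve it.

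Next I would read off the signs term by term. By the positivity property of $G$ we have $G(P^*,P)=\norm{HPMy}^2\ge 0$ and $G(Q^*,Q)=\norm{HQMy}^2\ge 0$, while $\Re(\lambda^3-\lambda^2)\ge 0$ by the very definition of Case~1. Hence the only term that can lower the left-hand side is $-\tau\,G(Q^*,Q)$, and I would control it with the refined estimate \eqref{eq:q2B}, which uses $\norm{Q}\le|\sin\theta|\,q_2$:
\[
G(Q^*,Q)=\norm{HQMy}^2\le \norm{H}^2\norm{M}^2\norm{Q}^2\norm{y}^2\le \norm{H}^2\norm{M}^2\,q_2^2\,\sin^2\theta\,\norm{y}^2 .
\]
The natural attempt is then to bound $\Re(\lambda^3-\lambda^2)\ge \tfrac14\sin^2\theta$ and require $\tau<1/(4\norm{H}^2\norm{M}^2 q_2^2)$; inserting the two values $q_2=\norm{B}s(B)^2$ for general $B\neq 0$ and $q_2=\norm{B}/(1-\norm{B})^2$ for $0<\norm{B}<1$ reproduces exactly the two stated thresholds, since $\norm{B}^2 s(B)^4=q_2^2$ and $(1-\norm{B})^4/\norm{B}^2=1/q_2^2$.

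The delicate point, and what I expect to be the main obstacle, is that the naive inequality $\Re(\lambda^3-\lambda^2)\ge\tfrac14\sin^2\theta$ is in fact false: the factor $\Re(\lambda^3-\lambda^2)$ vanishes on the boundary of Case~1 (for instance along $\lambda$ of large modulus with argument near $\pi/2$) while $\sin^2\theta$ stays bounded away from $0$. Thus one cannot discard the term $\tau\,G(P^*,P)=\tau\norm{HPMy}^2$, and it must be retained precisely in the regime where the geometric factor degenerates. The heart of the argument is therefore a uniform lower bound, for $|\lambda|\ge 1$ in Case~1, on the combination $\Re(\lambda^3-\lambda^2)\norm{y}^2+\tau\,G(P^*,P)$ that dominates $\tau\,G(Q^*,Q)$; this is the quantitative content carried by Lemma~\ref{gamma123 n}, obtained through the decomposition $(I-B/\lambda)^{-1}=P+\ic Q$ and the estimates \eqref{eq:pB}, \eqref{eq:q1B}, \eqref{eq:q2B}. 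Once that bound is established, strict positivity of the left-hand side follows for $\tau$ below the threshold, and the two regimes (general $B\neq 0$ versus $0<\norm{B}<1$) are recovered by substituting the corresponding expressions for $s(B)$, $p$, $q_1$ and $q_2$.
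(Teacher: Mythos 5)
Your threshold bookkeeping is right, and your diagnosis of the obstacle is exactly on target: in Case 1 the factor $\Re(\lambda^3-\lambda^2)$ vanishes on the boundary of the region while $\sin^2\theta$ stays bounded away from zero, so no inequality of the form $\Re(\lambda^3-\lambda^2)\gtrsim\sin^2\theta$ can hold. But your proposed repair does not exist, and this is a genuine gap. Lemma~\ref{gamma123 n} contains no lower bound on the combination $\Re(\lambda^3-\lambda^2)\norm{y}^2+\tau G(P^*,P)$: its part (i) is a statement purely about $\lambda$, namely about $\Re(\lambda^3-\lambda^2)+\gamma_1\Im(\lambda^3-\lambda^2)$ with $\gamma_1=\pm1$, and makes no reference to $G$, $H$, $M$ or $y$. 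Moreover, no such bound can hold in general: $G(P^*,P)=\norm{HPMy}^2$ can vanish (whenever $PMy\in\ker H$, which the injectivity assumption on $H(I-B)^{-1}M$ does not preclude for $\lambda\neq 1$) at points where $G(Q^*,Q)=\norm{HQMy}^2>0$ and $\Re(\lambda^3-\lambda^2)=0$; there the left-hand side of \eqref{re-eq k=1} is strictly negative, so the positivity you aim for is simply false and the $\gamma=0$ route cannot be completed. Your parenthetical claim that $\gamma=0$ is ``the value prescribed by Lemma~\ref{gamma123 n} in this case'' also misreads the lemma: in Case 1 it prescribes $\gamma_1=\mathrm{sign}\bigl(\Im(\lambda^3-\lambda^2)\bigr)=\pm1$.

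The missing idea --- and what the paper actually does --- is to keep the imaginary-part equation \eqref{im-eq k=1} in play with precisely this sign choice. Writing \eqref{im-gamma-re k=1} with $\gamma=\gamma_1$ and using $\gamma_1^2=1$, the equation becomes $[\Re(\lambda^3-\lambda^2)+\gamma_1\Im(\lambda^3-\lambda^2)]\norm{y}^2+\tau G(P^*+\gamma_1Q^*,P+\gamma_1Q)-2\tau G(Q^*,Q)=0$, and two things now work simultaneously. First, the geometric factor satisfies $\Re(\lambda^3-\lambda^2)+\gamma_1\Im(\lambda^3-\lambda^2)\ge|\lambda-1|\ge 2|\sin(\theta/2)|$ by Lemma~\ref{gamma123 n}(i), so it never degenerates relative to $\sin^2\theta$. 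Second, the cross terms $\gamma_1[G(P^*,Q)+G(Q^*,P)]$ recombine with $G(P^*,P)-G(Q^*,Q)$ into the manifestly nonnegative quantity $G(P^*+\gamma_1Q^*,P+\gamma_1Q)=\norm{H(P+\gamma_1Q)My}^2\ge0$, which can be discarded, leaving only $-2\tau G(Q^*,Q)$ to control. Bounding $G(Q^*,Q)\le(\norm{H}\norm{M}q_2|\sin\theta|)^2\norm{y}^2$ via \eqref{eq:q2B} and using $\sin^2\theta/(2|\sin(\theta/2)|)=2|\sin(\theta/2)|\cos^2(\theta/2)\le2$ gives strict positivity whenever $\tau<1/(4\norm{H}^2\norm{M}^2q_2^2)$, which is the stated threshold in both regimes, exactly as in your final substitution.
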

\begin{proof}
	Writing \eqref{im-gamma-re k=1} for $\gamma=\gamma_1$ as in Lemma \ref{gamma123 n} (i) (in particular $\gamma_1^2=1$), we have
	\begin{equation}\label{case1 k=1}
		[\Re(\lambda^3-\lambda^2)+\gamma_1\Im(\lambda^3-\lambda^2)]\norm{y}^2+\tau G(P^*+\gamma_1 Q^*,P+\gamma_1 Q)-2\tau G(Q^*,Q)=0.
	\end{equation}
	By the properties of $G$ we have
	$$G(P^*+\gamma_1 Q^*,P+\gamma_1 Q) \ge 0$$
	and
	$$G(Q^*,Q)\le(\norm{H}\norm{M}\norm{Q})^2\norm{y}^2\le (\norm{H}\norm{M}|\sin\theta|q_2)^2\norm{y}^2,$$
	therefore the left-hand side of \eqref{case1 k=1} will be strictly positive if $\tau$ satisfies
	$$\tau<\frac{\Re(\lambda^3-\lambda^2)+\gamma_1\Im(\lambda^3-\lambda^2)}{2\left(\norm{H}\norm{M}|\sin\theta|q_2\right)^2}.$$
	Since $\Re(\lambda^3-\lambda^2)+\gamma_1\Im(\lambda^3-\lambda^2)\ge 2|\sin(\theta/2)|$ by Lemma \ref{gamma123 n} (i), it is enough to choose
	$$\tau<\cfrac{1}{4\left|\sin\frac{\theta}{2}\right|\cos^2\frac{\theta}{2}\norm{H}^2\norm{M}^2q_2^2}.$$
	Since $\left|\sin\frac{\theta}{2}\right|\cos^2\frac{\theta}{2}\le 1$, it is sufficient to choose 
	$\tau<\frac{1}{4\norm{H}^2\norm{M}^2q_2^2}$ and we use definition \eqref{eq:q2B} of $q_2$.
\end{proof}

\begin{lemma}[Case 2]
	\label{k=1 n case2} Equation \eqref{eq-eigen-1-shot n} admits no solutions $\lambda$ in Case 2 if we take
	$$\tau<\cfrac{2\sin\frac{\theta_0}{2}}{\norm{H}^2\norm{M}^2(1+2\norm{B})^2s(B)^4}.$$
	Moreover, if $0<\norm{B}<1$, we can take
	$$\tau<\cfrac{2\sin\frac{\theta_0}{2}(1-\norm{B})^2}{\norm{H}^2\norm{M}^2(1+\norm{B})^2}.$$
\end{lemma}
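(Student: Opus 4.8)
The plan is to run the same machinery as in Case~1 (Lemma~\ref{k=1 n case1}), but with the sign strategy reversed, since in Case~2 we have $\Re(\lambda^3-\lambda^2)<0$ and we want to force the left-hand side of the relevant equation to be \emph{strictly negative}. First I would start from the combined equation \eqref{im-gamma-re k=1} and specialize $\gamma$ to the value $\gamma_2$ supplied by Lemma~\ref{gamma123 n} for Case~2. Two features of that value are crucial. First, $\gamma_2^2=1$, so that $1+\gamma_2^2=2$ and \eqref{im-gamma-re k=1} collapses to
\[
[\Re(\lambda^3-\lambda^2)+\gamma_2\Im(\lambda^3-\lambda^2)]\norm{y}^2+\tau G(P^*+\gamma_2 Q^*,P+\gamma_2 Q)-2\tau G(Q^*,Q)=0.
\]
Second, the companion trigonometric bound from Lemma~\ref{gamma123 n} (the analogue of the $\ge 2|\sin(\theta/2)|$ estimate used in Case~1), which on the $\theta$-range of Case~2, namely $\theta\in[\theta_0,\pi-\theta_0]\cup[-\pi+\theta_0,-\theta_0]$, forces $|\sin(\theta/2)|\ge\sin\frac{\theta_0}{2}$ and hence $\Re(\lambda^3-\lambda^2)+\gamma_2\Im(\lambda^3-\lambda^2)\le -2\sin\frac{\theta_0}{2}<0$.

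Next I would bound the two $G$-terms in the direction that drives the left-hand side negative. Since $G(Q^*,Q)=\norm{HQMy}^2\ge 0$, the term $-2\tau G(Q^*,Q)$ is nonpositive and can simply be discarded. For the remaining term I would use the norm property of $G$,
\[
G(P^*+\gamma_2 Q^*,P+\gamma_2 Q)=\norm{H(P+\gamma_2 Q)My}^2\le \norm{H}^2\norm{M}^2\norm{P+\gamma_2 Q}^2\norm{y}^2,
\]
together with $\norm{P+\gamma_2 Q}\le\norm{P}+|\gamma_2|\norm{Q}=\norm{P}+\norm{Q}\le p+q_1$, where $p$ and $q_1$ are from \eqref{eq:pB} and \eqref{eq:q1B}. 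Note that here I deliberately use the $q_1$ estimate on $\norm{Q}$ rather than the sharper $|\sin\theta|q_2$ estimate exploited in Case~1, because the term $G(Q^*,Q)$ carrying the $|\sin\theta|$ cancellation is no longer kept. Putting these together, the left-hand side is at most $\bigl[-2\sin\frac{\theta_0}{2}+\tau\,\norm{H}^2\norm{M}^2(p+q_1)^2\bigr]\norm{y}^2$.

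Consequently the eigenvalue equation cannot hold as soon as this upper bound is strictly negative, i.e.\ whenever
\[
\tau<\frac{2\sin\frac{\theta_0}{2}}{\norm{H}^2\norm{M}^2(p+q_1)^2}.
\]
To finish, I would substitute the two definitions of $p+q_1$: for general $B\neq 0$ one has $p+q_1=(1+\norm{B})s(B)^2+\norm{B}s(B)^2=(1+2\norm{B})s(B)^2$, giving the first stated bound, while for $0<\norm{B}<1$ one has $p+q_1=\tfrac{1}{1-\norm{B}}+\tfrac{\norm{B}}{1-\norm{B}}=\tfrac{1+\norm{B}}{1-\norm{B}}$, giving the second. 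Both match the claimed constants exactly.

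The genuinely delicate ingredient---the trigonometric lower bound $-[\Re(\lambda^3-\lambda^2)+\gamma_2\Im(\lambda^3-\lambda^2)]\ge 2\sin\frac{\theta_0}{2}$ together with the existence of a suitable $\gamma_2$ with $\gamma_2^2=1$---is isolated in Lemma~\ref{gamma123 n} and may be invoked directly, so it is not an obstacle here. What I expect to require the most care in the remaining argument is the bookkeeping of signs: one must keep the $-2\tau G(Q^*,Q)$ term (discarding it because it is nonpositive) while bounding $G(P^*+\gamma_2 Q^*,P+\gamma_2 Q)$ from above, so that the whole left-hand side is genuinely pushed below zero, and one must choose the $q_1$ rather than the $q_2$ estimate for $\norm{Q}$ so that the resulting constant is precisely $(p+q_1)^2$.
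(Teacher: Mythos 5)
Your proposal is correct and follows essentially the same route as the paper's own proof: the same specialization $\gamma=\gamma_2$ in \eqref{im-gamma-re k=1}, the same discarding of the nonpositive term $-2\tau G(Q^*,Q)$, the same upper bound $\norm{P+\gamma_2 Q}\le p+q_1$, and the same invocation of Lemma~\ref{gamma123 n}~(ii), yielding identical constants in both regimes.
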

\begin{proof}
	Writing \eqref{im-gamma-re k=1} for $\gamma=\gamma_2$ as in Lemma \ref{gamma123 n} (ii) (in particular $\gamma_2^2=1$), we have
	\begin{equation}\label{case2 k=1}
		[\Re(\lambda^3-\lambda^2)+\gamma_2\Im(\lambda^3-\lambda^2)]\norm{y}^2+\tau G(P^*+\gamma_2 Q^*,P+\gamma_2 Q)-2\tau G(Q^*,Q)=0.
	\end{equation}
	By the properties of $G$
	$$G(Q^*,Q)\ge 0,\quad G(P^*+\gamma_2 Q^*,P+\gamma_2 Q)\le(\norm{H}\norm{M}\norm{P+\gamma_2 Q})^2\norm{y}^2$$
	and the estimate  
	$\norm{P+\gamma_2 Q}\le \norm{P}+|\gamma_2|\norm{Q}=\norm{P}+\norm{Q}\le p+q_1,$
	the left-hand side of \eqref{case2 k=1} will be strictly negative if $\tau$ satisfies:
	$$\tau<\frac{-\Re(\lambda^3-\lambda^2)-\gamma_2\Im(\lambda^3-\lambda^2)}{\left[\norm{H}\norm{M}(p+q_1)\right]^2}.$$
	Thanks to Lemma \ref{gamma123 n} (ii), it is sufficient to choose
	$$\tau<\cfrac{2\sin\frac{\theta_0}{2}}{\norm{H}^2\norm{M}^2(p+q_1)^2}$$
	and we use definitions \eqref{eq:pB} and \eqref{eq:q1B} of $p$ and $q_1$.
\end{proof}

\begin{lemma}[Case 3]
	\label{k=1 n case3} Let $\delta_0>0$ be fixed. Equation \eqref{eq-eigen-1-shot n} admits no solutions $\lambda$ in Case 3 if we take
	$$\tau<\cfrac{\delta_0\cos^2\frac{5\theta_0}{2}}{2\left(1+2\delta_0\sin\frac{5\theta_0}{2}+\delta_0^2\right)}\cdot\cfrac{1}{\norm{H}^2\norm{M}^2\norm{B}^2s(B)^4}.$$
	Moreover, if $0<\norm{B}<1$, we can take
	$$\tau<\cfrac{\delta_0\cos^2\frac{5\theta_0}{2}}{2\left(1+2\delta_0\sin\frac{5\theta_0}{2}+\delta_0^2\right)}\cdot\cfrac{(1-\norm{B})^4}{\norm{H}^2\norm{M}^2\norm{B}^2}.$$
\end{lemma}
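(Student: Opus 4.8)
The plan is to mirror the proof of Case~1 (Lemma~\ref{k=1 n case1}). I would take the multiplier $\gamma=\gamma_3=\gamma_3(\lambda)$ furnished by Lemma~\ref{gamma123 n}~(iii) for the regime $\theta\in(-\theta_0,\theta_0)$, and write the combination \eqref{im-gamma-re k=1} with this $\gamma_3$. As in Case~1, the goal is to show that its left-hand side is strictly positive, hence cannot vanish, so that such a $\lambda$ is not an eigenvalue. The two ingredients are the positivity $G(P^*+\gamma_3 Q^*,P+\gamma_3 Q)\ge 0$, which lets me discard that term, and the upper bound $G(Q^*,Q)\le(\norm{H}\norm{M}\,|\sin\theta|\,q_2)^2\norm{y}^2$ obtained from \eqref{eq:q2B}. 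Note that it is this $Q$-estimate, rather than the $\norm{P+\gamma Q}\le p+q_1$ estimate used in Case~2, that makes the final constant appear through $q_2$, exactly as the target bound (which carries $\norm{B}^2s(B)^4$ and $\norm{B}^2/(1-\norm{B})^4$) requires.

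With these two bounds the left-hand side of \eqref{im-gamma-re k=1} is strictly positive as soon as
\[
\tau<\frac{\Re(\lambda^3-\lambda^2)+\gamma_3\Im(\lambda^3-\lambda^2)}{(1+\gamma_3^2)\left(\norm{H}\norm{M}\,|\sin\theta|\,q_2\right)^2}.
\]
It then remains to bound the right-hand side below uniformly in $\lambda$. For this I would invoke the two estimates of Lemma~\ref{gamma123 n}~(iii): a lower bound of the shape $\Re(\lambda^3-\lambda^2)+\gamma_3\Im(\lambda^3-\lambda^2)\ge 2\,|\sin\tfrac{\theta}{2}|\,\delta_0$ together with the upper bound $1+\gamma_3^2\le\bigl(1+2\delta_0\sin\tfrac{5\theta_0}{2}+\delta_0^2\bigr)/\cos^2\tfrac{5\theta_0}{2}$, both valid for $\theta\in(-\theta_0,\theta_0)$ with $0<\theta_0\le\tfrac{\pi}{6}$. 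Substituting them and using $|\sin\theta|=2\,|\sin\tfrac{\theta}{2}|\cos\tfrac{\theta}{2}$ isolates the factor $\tfrac{2|\sin(\theta/2)|}{|\sin\theta|^2}=\tfrac{1}{2\,|\sin(\theta/2)|\cos^2(\theta/2)}\ge\tfrac12$, since $|\sin\tfrac{\theta}{2}|\cos^2\tfrac{\theta}{2}\le 1$. This yields precisely the stated threshold, and replacing $q_2$ by its two values in \eqref{eq:q2B} gives both the general and the $0<\norm{B}<1$ bounds.

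The genuinely delicate point is Case~3 itself, and this is where I expect the real obstacle to sit (and why the free parameter $\delta_0$ and the angle $\tfrac{5\theta_0}{2}$ are introduced). Unlike Case~1, here $\Re(\lambda^3-\lambda^2)<0$ while $\theta$ is close to $0$, so near the positive real axis both $\Re(\lambda^3-\lambda^2)$ and $\Im(\lambda^3-\lambda^2)$ degenerate, the former being small and negative; a multiplier with $\gamma_3^2=1$, as used in Cases~1--2, no longer keeps the bracket positive. The remedy is to align $\gamma_3$ with the argument of $\lambda^2(\lambda-1)$, which at $|\lambda|=1$ equals $\tfrac{5\theta}{2}+\tfrac{\pi}{2}$ --- this is the origin of the angle $\tfrac{5\theta_0}{2}$ --- and to tune the slack through $\delta_0$ so that the combination stays positive and comparable to $|\sin\tfrac{\theta}{2}|$ while $1+\gamma_3^2$ stays uniformly bounded over $\theta\in(-\theta_0,\theta_0)$. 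This quantitative trigonometry is exactly what Lemma~\ref{gamma123 n}~(iii) packages; granting it, the present lemma reduces to the short computation above, parallel to Case~1.
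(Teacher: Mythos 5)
Your proposal is correct and follows essentially the same route as the paper's proof: the same multiplier $\gamma_3$ from Lemma~\ref{gamma123 n}~(iii) in the combination \eqref{im-gamma-re k=1}, the same two estimates $G(P^*+\gamma_3 Q^*,P+\gamma_3 Q)\ge 0$ and $G(Q^*,Q)\le(\norm{H}\norm{M}|\sin\theta|q_2)^2\norm{y}^2$, and the same final reduction via $\Re(\lambda^3-\lambda^2)+\gamma_3\Im(\lambda^3-\lambda^2)\ge 2\delta_0|\sin\frac{\theta}{2}|$, the identity $1+\gamma_3^2=\bigl(1+2\delta_0\sin\frac{5\theta_0}{2}+\delta_0^2\bigr)/\cos^2\frac{5\theta_0}{2}$, and $|\sin\frac{\theta}{2}|\cos^2\frac{\theta}{2}\le 1$, followed by substitution of $q_2$ from \eqref{eq:q2B}. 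Your closing paragraph on why the angle $\frac{5\theta_0}{2}$ and the slack $\delta_0$ arise is accurate motivation but not needed, since that content is exactly what Lemma~\ref{gamma123 n}~(iii) already provides.
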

\begin{proof}
	Writing \eqref{im-gamma-re k=1} for $\gamma=\gamma_3$ as in Lemma \ref{gamma123 n} (iii), we have
	\begin{equation}\label{case3 k=1}
		[\Re(\lambda^3-\lambda^2)+\gamma_3\Im(\lambda^3-\lambda^2)]\norm{y}^2+\tau G(P^*+\gamma_3 Q^*,P+\gamma_3 Q)-(1+\gamma_3^2)\tau G(Q^*,Q)=0.
	\end{equation}
	By the properties of $G$
	$$G(P^*+\gamma_3 Q^*,P+\gamma_3 Q) \ge 0,\quad G(Q^*,Q)\le(\norm{H}\norm{M}\norm{Q})^2\norm{y}^2$$
	and by the estimate $\norm{Q}\le|\sin\theta|q_2$, the left-hand side of \eqref{case3 k=1} will be strictly positive if $\tau$ satisfies:
	$$\tau<\frac{\Re(\lambda^2-\lambda)+\gamma_3\Im(\lambda^2-\lambda)}{(1+\gamma_3^2)\left(\norm{H}\norm{M}|\sin\theta|q_2\right)^2}.$$
	Since by Lemma \ref{gamma123 n} (iii) $\Re(\lambda^3-\lambda^2)+\gamma_3\Im(\lambda^3-\lambda^2)>2\delta_0\left|\sin\frac{\theta}{2}\right|$, it is sufficient to choose 
	$$\tau<\cfrac{\delta_0}{2(1+\gamma_3^2)\norm{H}^2\norm{M}^2q_2^2}=\cfrac{1}{2\norm{H}^2\norm{M}^2q_2^2}\cdot\cfrac{\delta_0\cos^2\frac{5\theta_0}{2}}{1+2\delta_0\sin\frac{5\theta_0}{2}+\delta_0^2},$$ 
	where we have used the definition of $\gamma_3$. To conclude we use definition \eqref{eq:q2B} of $q_2$.
\end{proof}

\begin{lemma}[Case 4]
	\label{k=1 n case4} Equation \eqref{eq-eigen-1-shot n} admits no solutions $\lambda$ in Case 4 if we take
	$$\tau<\cfrac{\sin\left(\frac{\pi}{2}-3\theta_0\right)+\cos2\theta_0}{\norm{H}^2\norm{M}^2(1+\norm{B})^2s(B)^2}.$$
	Moreover, if $0<\norm{B}<1$, we can take
	$$\tau<\left[\sin\left(\frac{\pi}{2}-3\theta_0\right)+\cos2\theta_0\right]\cfrac{(1-\norm{B})^2}{\norm{H}^2\norm{M}^2}.$$
\end{lemma}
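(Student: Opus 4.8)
The plan is to treat Case~4 as the single regime not covered by the auxiliary Lemma~\ref{gamma123 n} (which supplies only $\gamma_1,\gamma_2,\gamma_3$), and therefore to argue directly from the real-part equation \eqref{re-eq k=1}, i.e.\ the specialization of \eqref{im-gamma-re k=1} to $\gamma=0$:
\begin{equation*}
	\Re(\lambda^3-\lambda^2)\norm{y}^2+\tau\,[G(P^*,P)-G(Q^*,Q)]=0.
\end{equation*}
Since in Case~4 we have $\Re(\lambda^3-\lambda^2)<0$ by assumption, I would choose $\tau$ small enough that the left-hand side is \emph{strictly negative}, contradicting the equation and thereby excluding any solution $\lambda$ of \eqref{eq-eigen-1-shot n} in this case. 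Concretely this splits into a quantitative lower bound on $-\Re(\lambda^3-\lambda^2)$ and an upper bound on the positive contribution $\tau\,G(P^*,P)$.

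For the trigonometric bound I would write $\lambda=R(\cos\theta+\ic\sin\theta)$ with $R=|\lambda|\ge 1$ and, exploiting that $\Re$ is even in $\theta$, reduce to $\theta\in(\pi-\theta_0,\pi)$ and set $\theta=\pi-\epsilon$ with $\epsilon\in(0,\theta_0)$. A direct computation gives
\begin{equation*}
	\Re(\lambda^3-\lambda^2)=-R^3\cos 3\epsilon-R^2\cos 2\epsilon .
\end{equation*}
Because $\theta_0\le\frac{\pi}{6}$, we have $3\epsilon\in(0,\tfrac{\pi}{2})$ and $2\epsilon\in(0,\tfrac{\pi}{3})$, so $\cos 3\epsilon$ and $\cos 2\epsilon$ are positive; hence $-\Re(\lambda^3-\lambda^2)=R^3\cos 3\epsilon+R^2\cos 2\epsilon$ is increasing in $R$ and decreasing in $\epsilon$ on these ranges. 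Taking the infimum at $R=1$ and $\epsilon\uparrow\theta_0$ yields
\begin{equation*}
	-\Re(\lambda^3-\lambda^2)>\cos 3\theta_0+\cos 2\theta_0=\sin\!\left(\tfrac{\pi}{2}-3\theta_0\right)+\cos 2\theta_0 ,
\end{equation*}
which is precisely the numerator in the statement; the symmetric sub-interval $\theta\in(-\pi,-\pi+\theta_0)$ is identical since $\cos$ is even.

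Finally I would estimate the $G$-terms: dropping $-\tau\,G(Q^*,Q)\le 0$ (it only reinforces negativity) and using the third listed property of $G$, namely $G(P^*,P)=\norm{HPMy}^2\le\norm{H}^2\norm{M}^2\norm{P}^2\norm{y}^2$, together with $\norm{P}\le p$ from \eqref{eq:pB}, the left-hand side is bounded above by $\big[\Re(\lambda^3-\lambda^2)+\tau\,\norm{H}^2\norm{M}^2 p^2\big]\norm{y}^2$. This is strictly negative as soon as $\tau<-\Re(\lambda^3-\lambda^2)/(\norm{H}^2\norm{M}^2 p^2)$; combining this with the lower bound of the previous step and with the explicit values of $p$ given in \eqref{eq:pB} produces the announced sufficient conditions on $\tau$ (in the special case $0<\norm{B}<1$ one uses $p=1/(1-\norm{B})$, so that $1/p^2=(1-\norm{B})^2$, giving exactly the stated factor). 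I expect the only delicate point to be the trigonometric estimate: one must verify that the hypothesis $\theta_0\le\frac{\pi}{6}$ keeps both cosines positive, so that the infimum over $R\ge 1$ is attained at $R=1$ and the bound degrades monotonically as $\epsilon\uparrow\theta_0$; the manipulations with $G$ are then routine.
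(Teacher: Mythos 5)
Your proof is correct and is essentially the paper's own argument: both work solely with the real-part equation \eqref{re-eq k=1}, drop the nonnegative term $G(Q^*,Q)$, bound $G(P^*,P)\le(\norm{H}\norm{M}p)^2\norm{y}^2$ with $p$ from \eqref{eq:pB}, and conclude from the lower bound $-\Re(\lambda^3-\lambda^2)>\sin\left(\frac{\pi}{2}-3\theta_0\right)+\cos 2\theta_0$. The only difference is presentational: you re-derive this trigonometric estimate directly (your computation with $\theta=\pi-\epsilon$ and the monotonicity in $R$ and $\epsilon$ reproduces exactly Lemma~\ref{gamma123 n}~(iv)), whereas the paper simply cites that appendix lemma.
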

\begin{proof}
	Here it is enough to consider \eqref{re-eq k=1}. By the properties of $G$
	$$G(Q^*,Q)\ge 0,\quad G(P^*,P)\le(\norm{H}\norm{M}p)^2\norm{y}^2$$
	we see that	the left-hand side of \eqref{re-eq k=1} will be strictly negative if $\tau$ satisfies
	$$\tau<\frac{-\Re(\lambda^3-\lambda^2)}{\left(\norm{H}\norm{M}p\right)^2}.$$
	Thanks to Lemma \ref{gamma123 n} (iv), it is sufficient to choose
	$$\tau<\cfrac{\sin\left(\frac{\pi}{2}-3\theta_0\right)+\cos2\theta_0}{\norm{H}^2\norm{M}^2p^2},$$
	and definition \eqref{eq:pB} of $p$ leads to the conclusion.
\end{proof}

Similarly, with the help of Lemma \ref{gamma123 n+1}, we prove for the $1$-step one-shot method the analogue of Proposition~\ref{tau n k=1 cpl}. In particular, note that here just three cases of $\lambda$ need to be considered, because the analogue of the fourth one is excluded by Lemma \ref{gamma123 n+1} (iv). 

\begin{proposition}[$1$-step one-shot method]\label{tau n+1 k=1 cpl}
	If $B\neq 0$, $\exists\tau>0$ sufficiently small such that equation \eqref{eq-eigen-1-shot n+1} admits no solution $\lambda\in\C\backslash\R, |\lambda|\ge 1$. 
	In particular, if $0<\norm{B}<1$, given any $\delta_0>0$ and $0<\theta_0\le\frac{\pi}{4}$, take
	\[
	\tau < \frac{\min \{ \psi_1(1,\norm{B}),\; \psi_2(1,\norm{B}),\; \psi_3(1,\norm{B}) \} }{\norm{H}^2\norm{M}^2},
	\]
	where 
	$$\psi_1(1,b)=\frac{(1-b)^4}{4b^2},\quad\psi_2(1,b)=\cfrac{2\sin\frac{\theta_0}{2}(1-b)^2}{(1+b)^2},\quad\psi_3(1,b)=\cfrac{\delta_0\cos^2\frac{3\theta_0}{2}(1-b)^4}{2\left(1+2\delta_0\sin\frac{3\theta_0}{2}+\delta_0^2\right)b^2}$$
	(here in the notation $\psi_i(1,b), i=1,2,3$, $1$ refers to $k=1$).
\end{proposition}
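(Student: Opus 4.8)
The plan is to mirror the structure of the proof of Proposition~\ref{tau n k=1 cpl}, adapted to the non-shifted equation \eqref{eq-eigen-1-shot n+1}, whose polynomial prefactor is $\lambda(\lambda-1)=\lambda^2-\lambda$ rather than $\lambda^2(\lambda-1)=\lambda^3-\lambda^2$. Writing $\lambda=R(\cos\theta+\ic\sin\theta)$ with $R=|\lambda|\ge 1$ and $\theta\in(-\pi,\pi)$, I would first invoke Lemma~\ref{decompq} to decompose
$$\left(I-\frac{B}{\lambda}\right)^{-1}=P+\ic Q,\qquad \left(I-\frac{B^*}{\lambda}\right)^{-1}=P^*+\ic Q^*,$$
with the norm bounds \eqref{eq:pB}, \eqref{eq:q1B}, \eqref{eq:q2B} on $P$ and $Q$. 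Reusing the sesquilinear form $G(X,Y)=\scalar{M^*XH^*HYMy,y}$ and its algebraic properties from the proof of Proposition~\ref{tau n k=1 cpl}, equation \eqref{eq-eigen-1-shot n+1} becomes $\lambda(\lambda-1)\norm{y}^2+\tau G(P^*+\ic Q^*,P+\ic Q)=0$, whose real and imaginary parts read
$$\Re(\lambda^2-\lambda)\norm{y}^2+\tau[G(P^*,P)-G(Q^*,Q)]=0,$$
$$\Im(\lambda^2-\lambda)\norm{y}^2+\tau[G(P^*,Q)+G(Q^*,P)]=0.$$

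Next I would introduce the real multiplier $\gamma=\gamma(\lambda)$ supplied by Lemma~\ref{gamma123 n+1}, multiply the imaginary-part equation by $\gamma$, and add it to the real-part equation to obtain
$$[\Re(\lambda^2-\lambda)+\gamma\Im(\lambda^2-\lambda)]\norm{y}^2+\tau G(P^*+\gamma Q^*,P+\gamma Q)-(1+\gamma^2)\tau G(Q^*,Q)=0.$$
The dichotomy is governed by the sign of $\Re(\lambda^2-\lambda)=R^2\cos2\theta-R\cos\theta$ and the angular sector of $\theta$, split into: Case~1, $\Re(\lambda^2-\lambda)\ge 0$; Case~2, $\Re(\lambda^2-\lambda)<0$ with $|\theta|\ge\theta_0$; and Case~3, $\Re(\lambda^2-\lambda)<0$ with $\theta\in(-\theta_0,\theta_0)$. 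Crucially, unlike the shifted case, the fourth sector $\theta$ near $\pm\pi$ does not arise: there $\cos2\theta\to1$ and $\cos\theta\to-1$, so $\Re(\lambda^2-\lambda)=R^2\cos2\theta-R\cos\theta\ge 0$ for every $R\ge 1$, placing such $\lambda$ automatically in Case~1. This is precisely the content of Lemma~\ref{gamma123 n+1}(iv), and it is why only three thresholds $\psi_1,\psi_2,\psi_3$ and the wider admissible range $0<\theta_0\le\frac{\pi}{4}$ appear here.

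In each case I would bound the $G$-terms by their sign ($G(X^*,X)\ge 0$) together with $G(X^*,X)\le(\norm{H}\norm{M}\norm{X})^2\norm{y}^2$ and the estimates \eqref{eq:pB}--\eqref{eq:q2B}, exactly as in Lemmas~\ref{k=1 n case1}--\ref{k=1 n case3}: in Cases~1 and~3 keep $G(P^*+\gamma Q^*,P+\gamma Q)\ge 0$ and absorb the negative contribution $-(1+\gamma^2)\tau G(Q^*,Q)$ using $\norm{Q}\le|\sin\theta|q_2$; in Case~2 instead discard $G(Q^*,Q)\ge 0$ and bound $G(P^*+\gamma Q^*,P+\gamma Q)$ from above via $\norm{P+\gamma Q}\le p+q_1$. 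In each instance the trigonometric lower bound from Lemma~\ref{gamma123 n+1} on $\Re(\lambda^2-\lambda)+\gamma\Im(\lambda^2-\lambda)$ forces the left-hand side to have a strict sign as soon as $\tau$ is below the stated $\psi_i(1,\norm{B})/(\norm{H}^2\norm{M}^2)$; taking the minimum over the three cases gives the result, and the general $B\neq 0$ bound follows by using the $s(B)$-based versions of \eqref{eq:pB}--\eqref{eq:q2B}.

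The main obstacle is concentrated in Lemma~\ref{gamma123 n+1}: one must choose, in each sector, a real $\gamma$ for which $\Re(\lambda^2-\lambda)+\gamma\Im(\lambda^2-\lambda)$ admits an explicit strictly positive (or, in Case~2, strictly negative) lower bound uniform over all $|\lambda|\ge 1$, while keeping $1+\gamma^2$ under control so that the $G(Q^*,Q)$ term can be dominated. The geometry differs from the shifted proof because the argument of the leading term is $2\theta$ rather than $3\theta$; re-deriving these angular estimates for $\lambda^2-\lambda$ — and checking that they degenerate gracefully as $R\to1$ so that the bounds stay uniform — is the delicate part, everything else being a direct transcription of the shifted $k=1$ argument.
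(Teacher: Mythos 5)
Your proposal is correct and follows essentially the same route as the paper, which proves this proposition by transcribing the argument of Proposition~\ref{tau n k=1 cpl} (decomposition via Lemma~\ref{decompq}, the form $G$, the $\gamma$-weighted combination of real and imaginary parts) with Lemma~\ref{gamma123 n} replaced by Lemma~\ref{gamma123 n+1}, noting that part (iv) of that lemma eliminates the fourth angular sector so only the three thresholds $\psi_1,\psi_2,\psi_3$ survive. Your case-by-case estimates reproduce exactly the paper's intended derivation of $\psi_1,\psi_2,\psi_3$, including the correct constant $c=1+\gamma_3^2=\bigl(1+2\delta_0\sin\tfrac{3\theta_0}{2}+\delta_0^2\bigr)/\cos^2\tfrac{3\theta_0}{2}$ coming from the $3\theta/2$ geometry of $\lambda^2-\lambda$.
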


\subsection{Final result ($k=1$)}

Considering Proposition~\ref{tau-1-shot-B=0 n}, and taking the minimum between the bound \eqref{eq:tau n k=1 real} in Proposition~\ref{tau n k=1 real} for real eigenvalues and the bound in Proposition~\ref{tau n k=1 cpl} for complex eigenvalues, we obtain a sufficient condition on the descent step $\tau$ to ensure convergence of the shifted $1$-step one-shot method. 

\begin{theorem}[Convergence of shifted $1$-step one-shot] \label{th:tau n k=1 all} Under assumption \eqref{hypo}, the shifted $1$-step one-shot method \eqref{alg:1-shot n} converges for sufficiently small $\tau$.  In particular, for $\norm{B}<1$, it is enough to take 
	$$\tau<\frac{\chi(1,\norm{B})}{\norm{H}^2\norm{M}^2},$$
	where $\chi(1,\norm{B})$ is an explicit function of $\norm{B}$ (in this notation $1$ refers to $k=1$).
\end{theorem}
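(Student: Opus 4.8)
The plan is to obtain the theorem as the synthesis of the eigenvalue analysis already carried out for the shifted $1$-step method. By the standard fixed-point convergence criterion (\cite[Theorem 2.1.1]{greenbaum97}), the iteration \eqref{alg:1-shot n} converges for every initial guess if and only if the spectral radius of the block matrix in \eqref{1-shot-itermat n} is strictly less than $1$, i.e.\ if and only if that matrix has no eigenvalue $\lambda$ with $|\lambda|\ge 1$. Thus the entire task is to choose $\tau$ so as to exclude every such $\lambda$.

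First I would dispose of the eigenvalues lying in $\mathrm{Spec}(B)$: since $\rho(B)<1$ by assumption \eqref{hypo}, any $\lambda\in\mathrm{Spec}(B)$ satisfies $|\lambda|<1$ and is harmless. Consequently any eigenvalue with $|\lambda|\ge 1$ must satisfy $\lambda\notin\mathrm{Spec}(B)$ and $\lambda\ne 0$, so by Proposition~\ref{prop:eq-eigen shift-1-shot}(i) it solves the eigenvalue equation \eqref{eq-eigen-1-shot n} with a nonzero $y$; moreover Proposition~\ref{prop:eq-eigen shift-1-shot}(ii) already excludes $\lambda=1$. It remains only to forbid solutions of \eqref{eq-eigen-1-shot n} in the closed exterior $\{|\lambda|\ge 1\}$, which I would partition into the real part ($\lambda>1$, $\lambda=1$, $\lambda\le -1$) and the non-real part ($\lambda\in\C\setminus\R$); checking that this partition is exhaustive is one of the few points needing care.

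Then I would assemble the three available bounds. The degenerate case $B=0$ is handled directly by Proposition~\ref{tau-1-shot-B=0 n}, giving $\tau<(-1+\sqrt5)/(2\norm{H}^2\norm{M}^2)$. For $0<\norm{B}<1$: real $\lambda>1$ are excluded for all $\tau>0$ and real $\lambda\le -1$ under the bound \eqref{eq:tau n k=1 real}, both by Proposition~\ref{tau n k=1 real}, while the non-real $\lambda$ with $|\lambda|\ge 1$ are excluded by Proposition~\ref{tau n k=1 cpl} under $\tau<\min\{\chi_1,\chi_2,\chi_3,\chi_4\}(\norm{B})/(\norm{H}^2\norm{M}^2)$. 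Taking the minimum of all the thresholds and setting
\[
\chi(1,b)\coloneqq\min\{\chi_0(1,b),\chi_1(1,b),\chi_2(1,b),\chi_3(1,b),\chi_4(1,b)\}
\]
for $0<b<1$ (and $\chi(1,0)\coloneqq(-1+\sqrt5)/2$) yields the claimed bound $\tau<\chi(1,\norm{B})/(\norm{H}^2\norm{M}^2)$. To make $\chi(1,\cdot)$ a genuine function of $b$ alone I would freeze the auxiliary parameters of Proposition~\ref{tau n k=1 cpl}, e.g.\ $\theta_0=\pi/6$ and any fixed admissible $\delta_0>0$.

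I expect the main effort here to be purely organizational rather than analytical: verifying the exhaustiveness of the case split over $\{|\lambda|\ge 1\}$, and checking that fixing $\theta_0,\delta_0$ leaves a well-defined, explicit $\chi(1,\cdot)$. The genuinely delicate step --- controlling the non-real spectrum through the four-case multiplier argument --- has already been completed in Proposition~\ref{tau n k=1 cpl} and Lemmas~\ref{k=1 n case1}--\ref{k=1 n case4}, so no new obstacle arises at the level of the theorem beyond this bookkeeping.
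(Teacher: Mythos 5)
Your proposal is correct and follows essentially the same route as the paper: the paper's proof of Theorem~\ref{th:tau n k=1 all} is precisely the synthesis you describe, namely invoking Proposition~\ref{tau-1-shot-B=0 n} for $B=0$ and taking the minimum of the real-eigenvalue bound \eqref{eq:tau n k=1 real} from Proposition~\ref{tau n k=1 real} and the complex-eigenvalue bound from Proposition~\ref{tau n k=1 cpl}. The bookkeeping you flag (eigenvalues in $\mathrm{Spec}(B)$ being harmless since $\rho(B)<1$, the exclusion of $\lambda=1$ via Proposition~\ref{prop:eq-eigen shift-1-shot}(ii), and freezing $\theta_0,\delta_0$ to get an explicit $\chi(1,\cdot)$) is left implicit in the paper but is exactly what its argument requires.
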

\begin{remark}
	Set $b=\norm{B}$. For $0<b<1$, a practical (but not optimal) bound for $\tau$ is
	$$\tau< \frac{1}{\norm{H}^2\norm{M}^2} \cdot \min\left\{\frac{1}{2}\cdot\frac{(1-b)^2}{(1+b)^2}, \; \frac{1-\sin\frac{5\pi}{12}}{4}\cdot\frac{(1-b)^4}{b^2}\right\}.$$
	Indeed, using the notation in Proposition~\ref{tau n k=1 real} and \ref{tau n k=1 cpl}, it is easy to show that $\chi_2(1,b)\le\chi_0(1,b)$ and $\chi_3(1,b)\le\chi_1(1,b)$. By studying $\chi_3(1,b)$ and noting that $\delta_0^2+1\ge 2\delta_0$, we see that we should take $\delta_0=1$. Finally, we can take for instance $\theta_0=\frac{\pi}{6}$, then compare $\chi_2(1,b)$, $\chi_3(1,b)$ and $\chi_4(1,b)$.
\end{remark}

Putting together Propositions~\ref{tau-1-shot-B=0 n+1}, \ref{tau n+1 k=1 real}, \ref{tau n+1 k=1 cpl}, we obtain a sufficient condition on the descent step $\tau$ to ensure convergence of the $1$-step one-shot method. 

\begin{theorem}[Convergence of $1$-step one-shot]
	\label{th:tau n+1 k=1 all}
	Under assumption \eqref{hypo}, the $1$-step one-shot method \eqref{alg:1-shot n+1} converges for sufficiently small $\tau$. In particular, for $\norm{B}<1$, it is enough to take 
	$$\tau<\frac{\psi(1,\norm{B})}{\norm{H}^2\norm{M}^2},$$
	where $\psi(1,\norm{B})$ is an explicit function of $\norm{B}$ (in this notation $1$ refers to $k=1$).
\end{theorem}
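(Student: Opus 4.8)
The plan is to read convergence off the spectral radius of the error iteration matrix in \eqref{1-shot-itermat n+1}. Since the $1$-step one-shot update acts on the error triple $(p^n,u^n,\sigma^n)$ as multiplication by that fixed matrix, the scheme converges for every initial guess if and only if its spectral radius is strictly less than $1$, i.e.\ if and only if the matrix has no eigenvalue $\lambda$ with $|\lambda|\ge 1$. Thus the theorem reduces to ruling out such eigenvalues once $\tau$ is small enough, and Propositions~\ref{tau-1-shot-B=0 n+1}, \ref{tau n+1 k=1 real} and \ref{tau n+1 k=1 cpl} are exactly the ingredients that do this, partitioned according to where $\lambda$ sits.

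First I would fix a hypothetical eigenvalue $\lambda$ with $|\lambda|\ge 1$ and check that the eigenvalue equation applies to it. Since $\rho(B)<1$, and hence $\rho(B^*)<1$ by the Remark following Proposition~\ref{prop:eq-eigen shift-1-shot}, any $\lambda$ with $|\lambda|\ge 1$ lies outside $\spec(B)$, so Proposition~\ref{prop:eq-eigen 1-shot}(i) yields some $y\neq 0$ solving \eqref{ori-eq-eigen-1-shot n+1}. Part (ii) of the same proposition already excludes $\lambda=1$, and $\lambda\neq 0$ because $|\lambda|\ge 1$, so I may pass to the equivalent normalized form \eqref{eq-eigen-1-shot n+1}. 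It therefore suffices to show that \eqref{eq-eigen-1-shot n+1} has no root with $|\lambda|\ge 1$ for $\tau$ small.

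Next I would split on $B$. If $B=0$, Proposition~\ref{tau-1-shot-B=0 n+1} excludes all such roots, real and non-real alike, as soon as $\tau<1/(\norm{H}^2\norm{M}^2)$. If $B\neq 0$, I would separate real from non-real roots: Proposition~\ref{tau n+1 k=1 real} already rules out every real $\lambda\neq 1$ with $|\lambda|\ge 1$ for all $\tau>0$, so the real case costs nothing, while Proposition~\ref{tau n+1 k=1 cpl} rules out every non-real $\lambda$ with $|\lambda|\ge 1$ once $\tau$ is below the stated threshold, which in the regime $\norm{B}<1$ is the explicit $\min\{\psi_1(1,\norm{B}),\psi_2(1,\norm{B}),\psi_3(1,\norm{B})\}/(\norm{H}^2\norm{M}^2)$. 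Taking $\tau$ smaller than the minimum of the finitely many thresholds that are actually active then leaves no eigenvalue on or outside the unit circle, so the spectral radius is $<1$ and the method converges. For $\norm{B}<1$ this packages into the explicit function $\psi(1,\norm{B})=\min\{1,\psi_1(1,\norm{B}),\psi_2(1,\norm{B}),\psi_3(1,\norm{B})\}$.

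Since every quantitative estimate is already carried by the three cited propositions, the remaining work is essentially bookkeeping rather than analysis, and there is no single hard step. The one thing to verify carefully is that the regions $\spec(B)$, $\{\lambda=1\}$, $\{\lambda\in\R,\ |\lambda|\ge 1\}$ and $\{\lambda\in\C\setminus\R,\ |\lambda|\ge 1\}$ together cover the whole closed exterior of the unit disk, and that the propositions place no conflicting demands on $\tau$ — they do not, because the real-eigenvalue exclusion is unconditional. The only subtle point is discarding $\lambda=0$ before normalizing \eqref{ori-eq-eigen-1-shot n+1} into \eqref{eq-eigen-1-shot n+1}, which is automatic under $|\lambda|\ge 1$.
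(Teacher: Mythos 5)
Your proposal is correct and follows essentially the same route as the paper: the paper's proof of Theorem~\ref{th:tau n+1 k=1 all} is precisely the combination of Propositions~\ref{tau-1-shot-B=0 n+1}, \ref{tau n+1 k=1 real} and \ref{tau n+1 k=1 cpl}, with the reduction to the spectral radius and the eigenvalue equation of Proposition~\ref{prop:eq-eigen 1-shot} left implicit. Your extra bookkeeping (excluding $\lambda\in\spec(B)$, $\lambda=1$ and $\lambda=0$ before normalizing, and packaging the bound as $\psi(1,\norm{B})=\min\{1,\psi_1,\psi_2,\psi_3\}$) just makes explicit what the paper takes for granted.
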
	

\begin{remark} 	Similarly as above, for $0<b<1$, a practical (but not optimal) bound for $\tau$ is
	$$\tau< \frac{1}{\norm{H}^2\norm{M}^2} \cdot \min\left\{2\sin\frac{\pi}{8}\cdot\frac{(1-b)^2}{(1+b)^2}, \;\frac{1-\sin\frac{3\pi}{8}}{4}\cdot\frac{(1-b)^4}{b^2}\right\}.$$
\end{remark}

	\section{Convergence of multi-step one-shot methods ($k\ge 2$)}
\label{sec:multi-step}

We now tackle the multi-step case, that is the $k$-step one-shot methods with $k\ge 2$. 

\subsection{Block iteration matrices and eigenvalue equations}
\label{sec:eigen-eq}

Once again, to analyze the convergence of these methods, first we express $(\sigma^{n+1},u^{n+1},p^{n+1})$ in terms of $(\sigma^n,u^n,p^n)$, by rewriting the recursions for $u$ and $p$: systems \eqref{alg:k-shot n+1} and \eqref{alg:k-shot n} are respectively rewritten as
\begin{equation}
	\label{k-shot expl n+1}
	\begin{cases}
		\sigma^{n+1}=\sigma^n-\tau M^*p^n\\
		u^{n+1}=B^ku^n+T_kM\sigma^n-\tau T_kMM^*p^n+T_kF\\
		p^{n+1}=[(B^*)^k-\tau X_kMM^*]p^n+U_ku^n+X_kM\sigma^n+X_kF-T_k^*H^*f\\
	\end{cases}
\end{equation}
and
\begin{equation}
	\label{k-shot expl n}
	\begin{cases}
		\sigma^{n+1}=\sigma^n-\tau M^*p^n\\
		u^{n+1}=B^ku^n+T_kM\sigma^n+T_kF\\
		p^{n+1}=(B^*)^kp^n+U_ku^n+X_kM\sigma^n+X_kF-T_k^*H^*f\\
	\end{cases}
\end{equation}
where
\begin{equation}\label{eq:T}
	T_k=I+B+...+B^{k-1}=(I-B)^{-1}(I-B^k), \quad k\ge 1,
\end{equation}
$$U_k=(B^*)^{k-1}H^*H+(B^*)^{k-2}H^*HB+...+H^*HB^{k-1}, \quad k\ge 1,$$
\begin{equation}\label{eq:X}
	X_k=\left\{\begin{array}{cl}
		(B^*)^{k-2}H^*HT_1+(B^*)^{k-3}H^*HT_2+...+H^*HT_{k-1} & \text{if }k\ge 2,\\
		0 & \text{if }k=1. 
	\end{array}\right.
\end{equation}
Note that \eqref{k-shot expl n+1} ($k$-step one-shot) can be obtained from \eqref{k-shot expl n} (shifted $k$-step one-shot) by replacing $\sigma^n$ with $\sigma^{n+1}=\sigma^n-\tau M^*p^n$ in the equations for $u$ and $p$, which yields two extra terms in \eqref{k-shot expl n+1}. In what follows we first study the shifted $k$-step one-shot method then the $k$-step one-shot method. The following lemma gathers some useful properties of $T_k, U_k$ and $X_k$.
\begin{lemma}\label{lem:propTUX} 
	\begin{enumerate}[label=(\roman*)]
		\item The matrices $U_k$ and $X_k$ can be rewritten as 
		\begin{equation*} 
			\begin{split}
				& U_k=\sum_{i+j=k-1} (B^*)^iH^*HB^j \quad \text{for } k\ge 1, \\
				& X_k=\sum_{l=0}^{k-2}\sum_{i+j=l} (B^*)^iH^*HB^j=\sum_{l=1}^{k-1}U_l \quad \text{for } k\ge 2.
			\end{split}
		\end{equation*}
		\item The matrices $U_k$ and $X_k$ are self-adjoint: $U_k^*=U_k$, $X_k^*=X_k$.
		\item We have the relation 
		\begin{equation}\label{uxt}
			U_kT_k-X_kB^k+X_k=T_k^*H^*HT_k, \quad\forall k\ge 1.
		\end{equation}
	\end{enumerate}
\end{lemma}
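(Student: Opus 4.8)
The plan is to treat the three parts in order: the first two are routine reindexings, and the third reduces to a single auxiliary identity proved by telescoping.

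For part (i), I would simply reindex the defining sums. In $U_k=(B^*)^{k-1}H^*H+\dots+H^*HB^{k-1}$ the general summand is $(B^*)^{k-1-m}H^*HB^m$ with $0\le m\le k-1$; setting $i=k-1-m$, $j=m$ gives $i+j=k-1$, which is the claimed symmetric form. For $X_k$, I would substitute $T_m=\sum_{j=0}^{m-1}B^j$ into its definition and set $i=k-1-m$; the constraint $0\le j\le m-1$ then reads $i+j\le k-2$ with $i,j\ge 0$, producing $\sum_{l=0}^{k-2}\sum_{i+j=l}(B^*)^iH^*HB^j$. Recognizing the inner sum $\sum_{i+j=l}(\cdots)$ as $U_{l+1}$ and shifting the index yields $X_k=\sum_{l=1}^{k-1}U_l$.

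For part (ii), I would take adjoints in the symmetric form from (i). Since $(H^*H)^*=H^*H$ and $((B^*)^i)^*=B^i$, we get $U_k^*=\sum_{i+j=k-1}(B^*)^jH^*HB^i$, which coincides with $U_k$ after interchanging the summation labels $i$ and $j$. Self-adjointness of $X_k$ is then immediate from $X_k=\sum_{l=1}^{k-1}U_l$.

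The core is part (iii), and I expect the main obstacle to be identifying and proving the auxiliary identity
\[
U_k+X_k(I-B)=T_k^*H^*H .
\]
I would establish it by telescoping. Using $X_k=\sum_{l=1}^{k-1}U_l$ together with the recursion $U_{l+1}=U_lB+(B^*)^lH^*H$ (itself read off from the symmetric form in (i) by splitting off the $j=0$ term), one has $U_l-U_lB=U_l-U_{l+1}+(B^*)^lH^*H$, so that
\[
X_k(I-B)=\sum_{l=1}^{k-1}(U_l-U_{l+1})+\Bigl(\sum_{l=1}^{k-1}(B^*)^l\Bigr)H^*H=(U_1-U_k)+(T_k^*-I)H^*H .
\]
Since $U_1=H^*H$, the right-hand side collapses to $T_k^*H^*H-U_k$, which gives the auxiliary identity. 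Finally, I would invoke the polynomial factorization $I-B^k=(I-B)T_k$ to write $X_k(I-B^k)=X_k(I-B)T_k$, whence
\[
U_kT_k-X_kB^k+X_k=U_kT_k+X_k(I-B)T_k=\bigl[U_k+X_k(I-B)\bigr]T_k=T_k^*H^*HT_k ,
\]
which is exactly \eqref{uxt}. The delicate point throughout is bookkeeping the noncommutativity of $B$ and $B^*$: the factorization step is legitimate precisely because $I-B^k=(I-B)T_k$ is a scalar-polynomial identity in $B$ alone, so it may be inserted to the right of $X_k$ with no commutation issue, and the telescoping then handles the mixed $B$, $B^*$ terms cleanly. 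The base case $k=1$ (where $X_1=0$, $U_1=H^*H$, $T_1=I$) is consistent with the empty-sum convention, so no separate argument is needed.
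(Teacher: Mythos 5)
Your proof is correct, and its overall skeleton matches the paper's: both reduce (iii) to the same intermediate identity (you write it as $U_k+X_k(I-B)=T_k^*H^*H$, the paper as $U_k=X_k(B-I)+T_k^*H^*H$) and then conclude via the factorization $X_k(I-B^k)=X_k(I-B)T_k$. Where you genuinely diverge is in how that intermediate identity is established. The paper compares two expressions for $X_{k+1}$: from the definition it reads off the recursion $X_{k+1}=B^*X_k+H^*HT_k$, converts it into $X_{k+1}=X_kB+T_k^*H^*H$ by invoking the self-adjointness in (ii), and sets this against $X_{k+1}=X_k+U_k$ from (i). You instead telescope $X_k(I-B)=\sum_{l=1}^{k-1}U_l(I-B)$ using the recursion $U_{l+1}=U_lB+(B^*)^lH^*H$, never appealing to (ii). The paper's argument is shorter (two lines) but makes (iii) depend on (ii); your telescoping is a bit more computational but keeps (iii) self-contained, and it also absorbs the base case $k=1$ uniformly through the empty-sum convention, whereas the paper verifies $k=1$ separately by direct substitution. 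Both are complete and rigorous; the bookkeeping of noncommutativity that you flag (inserting the scalar-polynomial identity $I-B^k=(I-B)T_k$ only to the right of $X_k$) is handled correctly in each.
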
	
\begin{proof}
	(i) is easy to check by the definitions. (ii) follows from (i). 

	\noindent (iii) For $k=1$, we have $U_1=H^*H$, $T_1=I$ and $X_1=0$, hence the identity is verified. For $k\ge 2$, note that $X_{k+1}=B^*X_k+H^*HT_k$, then by (ii) $X_{k+1}=X_{k+1}^*=X_kB+T_k^*H^*H$. 
	On the other hand, from (i) we get that $X_{k+1}=X_k+U_k$. Thus,
	$$X_k+U_k=X_kB+T_k^*H^*H,\quad\mbox{ or equivalently, }\quad U_k=X_k(B-I)+T_k^*H^*H.$$
	Finally,
	$$U_kT_k=X_k(B-I)T_k+T_k^*H^*HT_k=X_k(B^k-I)+T_k^*H^*HT_k.$$
\end{proof}

Now, we consider the errors $(\sigma^n-\sigma^\text{ex},u^n-u(\sigma^\text{ex}),p^n-p(\sigma^\text{ex}))$ with respect to the exact solution  at the $n$-th iteration, and, by abuse of notation, we designate them by $(\sigma^n,u^n,p^n)$. We obtain that the errors satisfy: for the shifted algorithm \eqref{k-shot expl n}  
\begin{equation}
	\label{k-shot-err expl n}
	\begin{cases}
		\sigma^{n+1}=\sigma^n-\tau M^*p^n\\
		u^{n+1}=B^ku^n+T_kM\sigma^n\\
		p^{n+1}=(B^*)^kp^n+U_ku^n+X_kM\sigma^n\\
	\end{cases}
\end{equation}
and for algorithm \eqref{k-shot expl n+1}
\begin{equation}
	\label{k-shot-err expl n+1}
	\begin{cases}
		\sigma^{n+1}=\sigma^n-\tau M^*p^n\\
		u^{n+1}=B^ku^n+T_kM\sigma^n-\tau T_kMM^*p^n\\
		p^{n+1}=[(B^*)^k-\tau X_kMM^*]p^n+U_ku^n+X_kM\sigma^n,\\
	\end{cases}
\end{equation}
or equivalently, by putting in evidence the block iteration matrices 
\begin{equation}
	\label{itermat n}
	\begin{bmatrix}
		p^{n+1}\\ u^{n+1}\\ \sigma^{n+1}
	\end{bmatrix}=\begin{bmatrix}
		(B^*)^k & U_k & X_kM \\
		0 & B^k & T_kM \\
		-\tau M^* & 0 & I
	\end{bmatrix}
	\begin{bmatrix}
		p^{n}\\ u^{n}\\ \sigma^{n}
	\end{bmatrix}
\end{equation}
and 
\begin{equation}
	\label{itermat n+1}
	\begin{bmatrix}
		p^{n+1}\\ u^{n+1}\\ \sigma^{n+1}
	\end{bmatrix}=\begin{bmatrix}
		(B^*)^k-\tau X_kMM^* & U_k & X_kM \\
		-\tau T_kMM^* & B^k & T_kM \\
		-\tau M^* & 0 & I
	\end{bmatrix}
	\begin{bmatrix}
		p^{n}\\ u^{n}\\ \sigma^{n}
	\end{bmatrix}.
\end{equation}
Now recall that a fixed point iteration converges if and only if the spectral radius of its iteration matrix is strictly less than $1$. Therefore in the following propositions we establish eigenvalue equations for the iteration matrix of the two methods. 

\begin{proposition}[Eigenvalue equation for the shifted $k$-step one-shot method]\label{prop:eq-eigen shift-k-shot} 
	Assume that $\lambda\in\C$ is an eigenvalue of the iteration matrix in \eqref{itermat n}.
	\begin{enumerate}[label=(\roman*)]
		\item If $\lambda\in\C$, $\lambda\notin\mathrm{Spec}(B^k)$, then $\exists \, y\in\C^{n_\sigma}, y\neq 0$ such that
		\begin{equation}\label{ori-eq-eigen n}
			(\lambda-1)\norm{y}^2+\tau\scalar{M^*[\lambda I-(B^*)^k]^{-1}[(\lambda-1)X_k+T_k^*H^*HT_k](\lambda I-B^k)^{-1}My,y}=0.
		\end{equation}
		\item $\lambda=1$ is not an eigenvalue of the iteration matrix.
	\end{enumerate}
\end{proposition}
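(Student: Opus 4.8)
The plan is to mirror the proof of Proposition~\ref{prop:eq-eigen shift-1-shot}, replacing the single-step blocks by their $k$-step counterparts and using the algebraic relation \eqref{uxt} to collapse the resulting expression. First I would write out the eigenvector relations: since $\lambda$ is an eigenvalue of the matrix in \eqref{itermat n}, there is a nonzero $(\tilde p,\tilde u,y)\in\C^{n_u+n_u+n_\sigma}$ with
\[
\begin{cases}
\lambda\tilde p=(B^*)^k\tilde p+U_k\tilde u+X_kMy,\\
\lambda\tilde u=B^k\tilde u+T_kMy,\\
\lambda y=y-\tau M^*\tilde p.
\end{cases}
\]
Since $\rho(B)<1$ forces $\rho(B^*)<1$, both $\lambda I-B^k$ and $\lambda I-(B^*)^k$ are invertible (the latter as in the remark following Proposition~\ref{prop:eq-eigen shift-1-shot}). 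Hence the second relation gives $\tilde u=(\lambda I-B^k)^{-1}T_kMy$ and, substituting into the first, $\tilde p=[\lambda I-(B^*)^k]^{-1}\bigl(U_k(\lambda I-B^k)^{-1}T_k+X_k\bigr)My$.

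Next I would simplify the middle factor. The key observation is that $T_k$, $B^k$ and $(\lambda I-B^k)^{-1}$ are all polynomials in $B$, hence commute, so
\[
U_k(\lambda I-B^k)^{-1}T_k+X_k=\bigl[U_kT_k+X_k(\lambda I-B^k)\bigr](\lambda I-B^k)^{-1}.
\]
Expanding the bracket as $U_kT_k+\lambda X_k-X_kB^k$ and invoking \eqref{uxt} from Lemma~\ref{lem:propTUX}(iii) in the form $U_kT_k=X_kB^k-X_k+T_k^*H^*HT_k$, the bracket collapses exactly to $(\lambda-1)X_k+T_k^*H^*HT_k$, giving
\[
\tilde p=[\lambda I-(B^*)^k]^{-1}\bigl[(\lambda-1)X_k+T_k^*H^*HT_k\bigr](\lambda I-B^k)^{-1}My .
\]
Inserting this into the third relation written as $(\lambda-1)y=-\tau M^*\tilde p$ and taking the scalar product with $y$ yields \eqref{ori-eq-eigen n}. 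That $y\neq0$ follows as for $k=1$: if $y=0$ the displayed formulas force $\tilde u=\tilde p=0$, contradicting nonvanishing of the eigenvector.

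For part (ii) I would set $\lambda=1$ in $(\lambda-1)y=-\tau M^*\tilde p$. The term $(\lambda-1)X_k$ vanishes, and using $T_k(I-B^k)^{-1}=(I-B)^{-1}$ together with $[I-(B^*)^k]^{-1}T_k^*=(I-B^*)^{-1}$ (both from $T_k=(I-B)^{-1}(I-B^k)$ and commutativity), the right-hand side reduces to $-\tau M^*(I-B^*)^{-1}H^*H(I-B)^{-1}My$. Pairing with $y$ gives $\norm{H(I-B)^{-1}My}^2=0$, so the injectivity in \eqref{hypo} forces $y=0$, the same contradiction; hence $\lambda=1$ is not an eigenvalue.

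I expect the main obstacle to be the algebraic collapse in the second paragraph: steering the middle factor into the form $(\lambda-1)X_k+T_k^*H^*HT_k$ is precisely what makes identity \eqref{uxt}, together with the self-adjointness of $U_k$ and $X_k$ from Lemma~\ref{lem:propTUX}(ii), indispensable, and one must check that every factor involved is a function of $B$ (or of $B^*$) so that the rearrangements by commutation are legitimate. The remaining steps are a routine transcription of the $k=1$ argument.
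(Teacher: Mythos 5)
Your proof is correct and is essentially the paper's own argument: the paper does not write this proof out but states that it mirrors Proposition~\ref{prop:eq-eigen shift-1-shot} with identity \eqref{uxt} used to simplify terms, which is exactly what you do --- solving for $\tilde u$ and $\tilde p$, commuting the polynomial-in-$B$ factors past $(\lambda I-B^k)^{-1}$, and collapsing $U_kT_k+\lambda X_k-X_kB^k$ to $(\lambda-1)X_k+T_k^*H^*HT_k$ via \eqref{uxt}. Part (ii), reducing at $\lambda=1$ to $\norm{H(I-B)^{-1}My}^2=0$ and invoking the injectivity assumption \eqref{hypo}, likewise follows the $k=1$ template.
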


\begin{proposition}[Eigenvalue equation for the $k$-step one-shot method]
	\label{prop:eq-eigen k-shot} 
	Assume that $\lambda\in\C$ is an eigenvalue of the iteration matrix in \eqref{itermat n+1}.
	\begin{enumerate}[label=(\roman*)]
		\item If $\lambda\in\C$, $\lambda\notin\mathrm{Spec}(B^k)$ then $\exists \, y\in\C^{n_\sigma}, y\neq 0$ such that:
		\begin{equation}\label{ori-eq-eigen n+1}
			(\lambda-1)\norm{y}^2+\tau\lambda\scalar{M^*[\lambda I-(B^*)^k]^{-1}[(\lambda-1)X_k+T_k^*H^*HT_k](\lambda I-B^k)^{-1}My,y}=0.
		\end{equation}
		\item $\lambda=1$ is not an eigenvalue of the iteration matrix.
	\end{enumerate}
\end{proposition}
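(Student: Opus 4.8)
The plan is to mirror the proof of Proposition~\ref{prop:eq-eigen 1-shot}, now with $B$ playing the role of $B^k$ and the single operator $H^*H$ replaced by the data encoded in $U_k$, $X_k$ and $T_k$. First I would read off the eigenvector system: if $(\tilde p,\tilde u,y)\neq 0$ is an eigenvector of the matrix in \eqref{itermat n+1} associated with $\lambda$, the three block rows give
\begin{equation*}
	\begin{cases}
		(\lambda-1)y=-\tau M^*\tilde p,\\
		(\lambda I-B^k)\tilde u=T_kMy-\tau T_kMM^*\tilde p,\\
		[\lambda I-(B^*)^k]\tilde p+\tau X_kMM^*\tilde p=U_k\tilde u+X_kMy.
	\end{cases}
\end{equation*}
The key simplification, cleaner here than the positive-definiteness argument used for $k=1$, is that the first row yields $\tau MM^*\tilde p=(1-\lambda)My$; substituting this into the other two rows removes every $\tau MM^*\tilde p$ term simultaneously and turns them into $(\lambda I-B^k)\tilde u=\lambda T_kMy$ and $[\lambda I-(B^*)^k]\tilde p=U_k\tilde u+\lambda X_kMy$.

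Next, using $\lambda\notin\mathrm{Spec}(B^k)$ — and hence $\lambda\notin\mathrm{Spec}((B^*)^k)=\mathrm{Spec}(B^k)$, since $B^k$ is real and its spectrum is therefore conjugation-symmetric — I would solve $\tilde u=\lambda(\lambda I-B^k)^{-1}T_kMy$ and then $\tilde p=\lambda[\lambda I-(B^*)^k]^{-1}[U_k(\lambda I-B^k)^{-1}T_k+X_k]My$. Inserting $\tilde p$ back into the first row and taking the scalar product with $y$ produces \eqref{ori-eq-eigen n+1}, provided one establishes the algebraic identity
\begin{equation*}
	U_k(\lambda I-B^k)^{-1}T_k+X_k=[(\lambda-1)X_k+T_k^*H^*HT_k](\lambda I-B^k)^{-1}.
\end{equation*}
Since $T_k=I+B+\dots+B^{k-1}$ is a polynomial in $B$, it commutes with $(\lambda I-B^k)^{-1}$, so after multiplying on the right by $(\lambda I-B^k)$ this identity collapses to $U_kT_k-X_kB^k+X_k=T_k^*H^*HT_k$, which is exactly relation \eqref{uxt} of Lemma~\ref{lem:propTUX}. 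Finally $y\neq0$: if $y=0$, the displayed formulas force $\tilde u=\tilde p=0$, contradicting that the eigenvector is non-zero.

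For part (ii), I would not pass to the scalar product but keep the vector identity $(\lambda-1)y+\tau\lambda M^*[\lambda I-(B^*)^k]^{-1}[(\lambda-1)X_k+T_k^*H^*HT_k](\lambda I-B^k)^{-1}My=0$ and set $\lambda=1$ (legitimate since $\rho(B^k)<1$ gives $1\notin\mathrm{Spec}(B^k)$). The term $(\lambda-1)X_k$ and the first summand both vanish, and by \eqref{eq:T} the remaining factors telescope, $T_k(I-B^k)^{-1}=(I-B)^{-1}$ and $[I-(B^*)^k]^{-1}T_k^*=(I-B^*)^{-1}$, leaving $M^*(I-B^*)^{-1}H^*H(I-B)^{-1}My=0$; the injectivity of $H(I-B)^{-1}M$ in \eqref{hypo} then forces $y=0$, a contradiction. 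I expect the main obstacle to be precisely the identity reducing to \eqref{uxt} together with the telescoping of $T_k$ against the resolvents — the rest being bookkeeping parallel to the $k=1$ case.
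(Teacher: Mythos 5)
Your proof is correct, and it takes a genuinely different route from the paper's. The paper proves this proposition by mimicking Proposition~\ref{prop:eq-eigen 1-shot}: there one solves the $p$-row for $\tilde p$ in terms of $\tilde u$, substitutes into the $u$-row, inverts the perturbed operator $I+\tau MM^*A$ (justified by a positive-definiteness claim that is delicate, since $MM^*A$ is not Hermitian), and finally produces the factor $\lambda$ through a commutation manipulation plus the injectivity of $M$; for $k\ge2$ the extra term $-\tau X_kMM^*$ in the $p$-row makes that template heavier, and \eqref{uxt} is invoked there to clean up the result. Your single observation --- multiplying the $\sigma$-row by $M$ to get $\tau MM^*\tilde p=(1-\lambda)My$, which wipes out all $\tau$-coupling terms in the $u$- and $p$-rows simultaneously --- bypasses the perturbed-operator inversion, the commutation trick, and the appeal to injectivity of $M$ in part (i), and makes the factor $\tau\lambda$ in \eqref{ori-eq-eigen n+1} appear for free from $\tilde u=\lambda(\lambda I-B^k)^{-1}T_kMy$. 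The remaining work is exactly the identity you isolate, which commutation of $T_k$ with $(\lambda I-B^k)^{-1}$ (both rational functions of $B$) reduces to \eqref{uxt}, i.e.\ Lemma~\ref{lem:propTUX}(iii), as in the paper; your handling of $\mathrm{Spec}((B^*)^k)=\mathrm{Spec}(B^k)$ for real $B$ is also right. Part (ii) is sound as well, and the telescoping $T_k(I-B^k)^{-1}=(I-B)^{-1}$ and $[I-(B^*)^k]^{-1}T_k^*=(I-B^*)^{-1}$ (trivial for $k=1$, necessary here) is precisely what connects the $k$-step equation to the injectivity hypothesis \eqref{hypo}: pairing with $y$ gives $\norm{H(I-B)^{-1}My}^2=0$, hence the contradiction, at the same level of detail as the paper's own $k=1$ argument. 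In short: same skeleton (eigenvector system, resolvent formulas, \eqref{uxt}, injectivity), but your elimination step is cleaner and needs strictly fewer auxiliary facts than the paper's template.
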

\begin{remark}
	Since $\rho(B)$ is strictly less than $1$, so are $\rho(B^*), \rho(B^k)$ and $\rho((B^*)^k)$.
\end{remark}		
\noindent The proofs for Propositions~\ref{prop:eq-eigen shift-k-shot} and  \ref{prop:eq-eigen k-shot} are respectively similar to the ones of Propositions~\ref{prop:eq-eigen shift-1-shot} and \ref{prop:eq-eigen 1-shot}, the slight difference is that in the calculation we use \eqref{uxt} to simplify some terms.

In the following sections we will show that, for sufficiently small $\tau$, equations \eqref{ori-eq-eigen n} and \eqref{ori-eq-eigen n+1} admit no solution $|\lambda|\ge 1$, thus algorithms \eqref{alg:k-shot n} and \eqref{alg:k-shot n+1} converge. When $\lambda\neq 0$, it is convenient to rewrite \eqref{ori-eq-eigen n} and \eqref{ori-eq-eigen n+1} respectively as
\begin{equation}\label{eq-eigen n}
	\lambda^2(\lambda-1)\norm{y}^2+\tau\scalar{M^*\left[I-(B^*)^k/\lambda\right]^{-1}[(\lambda-1)X_k+T_k^*H^*HT_k]\left(I-B^k/\lambda\right)^{-1}My,y}=0
\end{equation}
and
\begin{equation}\label{eq-eigen n+1}
	\lambda(\lambda-1)\norm{y}^2+\tau\scalar{M^*\left[I-(B^*)^k/\lambda\right]^{-1}[(\lambda-1)X_k+T_k^*H^*HT_k]\left(I-B^k/\lambda\right)^{-1}My,y}=0
\end{equation}

The scalar case where $n_u, n_\sigma, n_f =1$ is analyzed in Appendix~\ref{app:1D}.

\begin{remark}\label{rmk:B=0,k>1}
	Note that when $B=0$ and $k \ge 2$, the shifted $k$-step one-shot and $k$-step one-shot are respectively equivalent to the shifted and usual gradient descent methods, therefore we retrieve the same bounds \eqref{best-tau-shifted-gd}--\eqref{best-tau-usualgd} for the descent step $\tau$ as for those methods. 
\end{remark}

For the analysis we use auxiliary results proved in Appendix~\ref{app:lems}, and the following bounds for $s(B^k), T_k, X_k$.
\begin{lemma}\label{lem:boundsSTXk}
	If $\norm{B}<1$, 
	\[
	s(B^k)\le\frac{1}{1-\norm{B}^k}, \quad
	\norm{T_k} \le \frac{1-\norm{B}^k}{1-\norm{B}}, \quad
	\norm{X_k} \le \frac{\norm{H}^2(1-k\norm{B}^{k-1}+(k-1)\norm{B}^k)}{(1-\norm{B})^2}.
	\]
\end{lemma}
\begin{proof}
	The bound for $s(B^k)$ is proved using Lemma~\ref{inv(I-T/z)} and  $\norm{B^k} \le \norm{B}^k$. Next, from \eqref{eq:T} we have
	$$\norm{T_k}\le 1+\norm{B}+...+\norm{B}^{k-1}=\frac{1-\norm{B}^k}{1-\norm{B}}.$$
	From \eqref{eq:X}, 
	if $k\ge 2$ we have
	$$\begin{array}{cl}
		\norm{X_k}&\le\norm{H}^2 \bigl(\norm{B}^{k-2}+\norm{B}^{k-3}(1+\norm{B})+...+(1+\norm{B}+...+\norm{B}^{k-2})\bigr)\\
		&=\displaystyle\norm{H}^2(1+2\norm{B}+...+(k-1)\norm{B}^{k-2})=\frac{\norm{H}^2(1-k\norm{B}^{k-1}+(k-1)\norm{B}^k)}{(1-\norm{B})^2}.
	\end{array}
	$$
\end{proof}

\subsection{Real eigenvalues}
We first find conditions on the descent step $\tau$ such that the real eigenvalues stay inside the unit disk. Recall that we have already proved that $\lambda=1$ is not an eigenvalue for any $k$.
\begin{proposition}[shifted $k$-step one-shot method]\label{tau n k>1 real}
	When $k\ge 2$, $\exists\tau>0$ sufficiently small such that equation \eqref{eq-eigen n} admits no solution $\lambda\in\R, \lambda\neq 1, |\lambda|\ge 1$. More precisely, take
	\begin{itemize}
		\item $\tau<\frac{2}{\norm{M}^2\left(\norm{H}^2\norm{T_k}^2+2\norm{X_k}\right)s(B^k)^2}$
		if the denominator of the right-hand side is not $0$;
		\item any $\tau>0$ otherwise.
	\end{itemize}
	Moreover, if $\norm{B}<1$, we can take
	$$\tau<\frac{(1-\norm{B})^2}{\norm{H}^2\norm{M}^2}\cdot\frac{2(1-\norm{B}^k)^2}{(1-\norm{B}^k)^2+2(1-k\norm{B}^{k-1}+(k-1)\norm{B}^k)}.$$
\end{proposition}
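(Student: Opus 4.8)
The plan is to mirror the template used for the one-step real-eigenvalue statements (Propositions~\ref{tau n k=1 real} and \ref{tau n+1 k=1 real}): reduce the eigenvalue equation \eqref{eq-eigen n} to a scalar identity in the real variable $\lambda$, isolate the dominant cubic term $\lambda^2(\lambda-1)\norm{y}^2$, and pick $\tau$ small enough that the remaining terms cannot cancel it. First I would restrict to $\lambda\in\R\setminus\{0\}$ and use that, $\lambda$ being real, $[I-(B^*)^k/\lambda]^{-1}=\bigl([I-B^k/\lambda]^{-1}\bigr)^*$. Setting $w:=(I-B^k/\lambda)^{-1}My$ and invoking the self-adjointness of $X_k$ from Lemma~\ref{lem:propTUX}(ii), the scalar product in \eqref{eq-eigen n} collapses to $(\lambda-1)\scalar{X_k w,w}+\norm{HT_kw}^2$, so \eqref{eq-eigen n} becomes
\[
(\lambda-1)\bigl[\lambda^2\norm{y}^2+\tau\scalar{X_k w,w}\bigr]+\tau\norm{HT_kw}^2=0.
\]

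Next I would record the three elementary bounds valid for $|\lambda|\ge 1$: by the definition of $s(\cdot)$ one has $\norm{w}\le s(B^k)\norm{M}\norm{y}$, while $\scalar{X_k w,w}\ge -\norm{X_k}\norm{w}^2$ (recall $X_k$ is self-adjoint but not necessarily positive) and $\norm{HT_kw}^2\le\norm{H}^2\norm{T_k}^2\norm{w}^2$. The argument then splits by sign. For $\lambda>1$ the factor $\lambda-1$ is positive and, since $\lambda^2>1$, the bracket satisfies $\lambda^2\norm{y}^2+\tau\scalar{X_k w,w}\ge\norm{y}^2(\lambda^2-\tau\norm{X_k}s(B^k)^2\norm{M}^2)>0$ under the stated bound, forcing the whole left-hand side to be strictly positive. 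For $\lambda\le -1$ the cubic term is negative; after bounding the two perturbation terms from above and dividing the target inequality by $1-\lambda\ge 2$, the factor $\tfrac{1}{1-\lambda}\le\tfrac12$ lets me absorb the competing positive term $\tau\norm{HT_kw}^2$, and $\lambda^2\ge 1$ then yields strict negativity precisely when $\tau<2/\bigl(\norm{M}^2(\norm{H}^2\norm{T_k}^2+2\norm{X_k})s(B^k)^2\bigr)$. I would then note this single bound also covers $\lambda>1$, since it is no larger than $1/(\norm{X_k}s(B^k)^2\norm{M}^2)$; the degenerate vanishing-denominator case is immediate because there the perturbation terms drop out and the cubic term alone fixes the sign, so any $\tau>0$ works.

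Finally, for $\norm{B}<1$ I would substitute the explicit estimates of Lemma~\ref{lem:boundsSTXk} for $s(B^k)$, $\norm{T_k}$ and $\norm{X_k}$ into the denominator and upper-bound it; the factor $\norm{H}^2$ factors out of both the $\norm{T_k}^2$ and $\norm{X_k}$ contributions, the powers of $(1-\norm{B}^k)$ partially cancel against $s(B^k)^2$, and one is left with $(1-\norm{B})^2/(\norm{H}^2\norm{M}^2)$ times the announced rational quotient in $\norm{B}$.

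The main obstacle is that, unlike the case $k=1$ where $X_1=0$ makes the sign of the cubic term decisive outright, here $X_k$ need not be positive semidefinite, so $\scalar{X_k w,w}$ must be carried as a signed quantity and controlled only through $-\norm{X_k}\norm{w}^2$. It is precisely the regime $\lambda\le -1$ — where this negative contribution and the positive misfit term $\tau\norm{HT_kw}^2$ both push against the negative cubic — that dictates the final bound, and the $\tfrac{1}{1-\lambda}\le\tfrac12$ estimate is the one delicate point that makes the two competing positive terms fit under the single clean threshold.
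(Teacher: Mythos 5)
Your proof is correct and follows essentially the same route as the paper's: the same reduction of \eqref{eq-eigen n} for real $\lambda$ via $[I-(B^*)^k/\lambda]^{-1}=\bigl((I-B^k/\lambda)^{-1}\bigr)^*$ and the self-adjointness of $X_k$, the same case split $\lambda>1$ versus $\lambda\le-1$ with the estimate $\frac{1}{1-\lambda}\le\frac{1}{2}$ producing exactly the stated threshold, and the same substitution of Lemma~\ref{lem:boundsSTXk} for the $\norm{B}<1$ bound. The only cosmetic differences are that you factor out $(\lambda-1)$ explicitly, note that the single bound dominates the weaker one needed for $\lambda>1$, and spell out the vanishing-denominator case, all of which the paper leaves implicit.
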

\begin{proof}
	When $\lambda\in\R$ equation \eqref{eq-eigen n} is rewritten as
	$$
	\begin{array}{ll}
		\lambda^2(\lambda-1)\norm{y}^2+\tau\norm{HT_k\left(I-\frac{B^k}{\lambda}\right)^{-1}My}^2&\\
		+\tau(\lambda-1)\scalar{M^*\left[I-\frac{(B^*)^k}{\lambda}\right]^{-1}X_k\left(I-\frac{B^k}{\lambda}\right)^{-1}My,y}&=0.
	\end{array}
	$$
	We show that if $\lambda>1$ (or respectively $\lambda\le-1$) we can choose $\tau$ so that the left-hand side of the above equation is strictly positive (or respectively negative). Indeed, if $\lambda>1$, we choose $\tau$ such that
	\[
	\lambda^2\norm{y}^2-\tau\left|\scalar{M^*\left[I-\frac{(B^*)^k}{\lambda}\right]^{-1}X_k\left(I-\frac{B^k}{\lambda}\right)^{-1}My,y}\right|>0
	\]
	and this can be done by taking $\tau$ such that $$[\norm{X_k}\norm{M}^2s(B^k)^2]\tau<1.$$ 
	If $\lambda\le-1$, we choose $\tau$ such that
	$$\begin{array}{ll}
		\lambda^2(\lambda-1)\norm{y}^2+\tau\norm{HT_k\left(I-\frac{B^k}{\lambda}\right)^{-1}My}^2&\\
		+\tau(1-\lambda)\left|\scalar{M^*\left[I-\frac{(B^*)^k}{\lambda}\right]^{-1}X_k\left(I-\frac{B^k}{\lambda}\right)^{-1}My,y}\right|&<0
	\end{array}$$
	and this can be done by taking $\tau$ such that
	$$\left[\frac{\norm{H}^2\norm{T_k}^2\norm{M}^2s(B^k)^2}{2}+\norm{X_k}\norm{M}^2s(B^k)^2\right]\tau<1,$$
	so we obtain the first conclusion. Finally, the second conclusion in the case $\norm{B}<1$ can be obtained by Lemma \ref{lem:boundsSTXk}.
\end{proof}

\begin{proposition}[$k$-step one-shot method]\label{tau n+1 k>1 real}
	When $k\ge 2$, $\exists\tau>0$ sufficiently small such that equation \eqref{eq-eigen n+1} admits no solution $\lambda\in\R, \lambda\neq 1, |\lambda|\ge 1$. More precisely, take
	\begin{itemize}
		\item $\tau<\cfrac{1}{\norm{X_k}\norm{M}^2s(B^k)^2}$
		if the denominator of the right-hand side is not $0$;
		\item any $\tau>0$ otherwise.
	\end{itemize}
	Moreover, if $\norm{B}<1$, we can take
	$$\tau<\frac{(1-\norm{B})^2}{\norm{H}^2\norm{M}^2}\cdot\frac{(1-\norm{B}^k)^2}{1-k\norm{B}^{k-1}+(k-1)\norm{B}^k}.$$
\end{proposition}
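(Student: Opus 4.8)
The plan is to mirror the proof of Proposition~\ref{tau n k>1 real}, exploiting that for real $\lambda$ the factor $[I-(B^*)^k/\lambda]^{-1}$ is exactly the adjoint of $(I-B^k/\lambda)^{-1}$, so the self-adjoint part of the bracket produces a manifestly nonnegative term. Concretely, I would first restrict to $\lambda\in\R$, $\lambda\neq 1$, $|\lambda|\ge 1$ and split the bracket $(\lambda-1)X_k+T_k^*H^*HT_k$ in \eqref{eq-eigen n+1}. Since $\lambda$ is real,
\[
\scalar{M^*[I-(B^*)^k/\lambda]^{-1}T_k^*H^*HT_k(I-B^k/\lambda)^{-1}My,y}=\norm{HT_k(I-B^k/\lambda)^{-1}My}^2\ge 0,
\]
so \eqref{eq-eigen n+1} becomes
\[
\lambda(\lambda-1)\norm{y}^2+\tau\norm{HT_k(I-B^k/\lambda)^{-1}My}^2+\tau(\lambda-1)\scalar{M^*[I-(B^*)^k/\lambda]^{-1}X_k(I-B^k/\lambda)^{-1}My,y}=0.
\]

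The key observation, and the place where this case is genuinely simpler than the shifted one, is that the leading coefficient $\lambda(\lambda-1)$ is strictly positive for every real $\lambda$ with $|\lambda|\ge 1$, $\lambda\neq 1$ (both for $\lambda>1$ and for $\lambda\le-1$). Hence I aim to show the entire left-hand side is strictly positive, which lets me simply discard the nonnegative $HT_k$ term and control only the indefinite $X_k$ term. I would bound its modulus via $\norm{(I-B^k/\lambda)^{-1}}\le s(B^k)$ and $\norm{[I-(B^*)^k/\lambda]^{-1}}=\norm{(I-B^k/\lambda)^{-1}}\le s(B^k)$ (valid since $|\lambda|\ge 1$ and $\lambda$ is real, so the second factor is the adjoint of the first), together with $\norm{M}$, giving
\[
\left|\scalar{M^*[I-(B^*)^k/\lambda]^{-1}X_k(I-B^k/\lambda)^{-1}My,y}\right|\le\norm{X_k}\norm{M}^2s(B^k)^2\norm{y}^2.
\]

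Then I would treat the two sign regimes in a unified way. For $\lambda>1$, after dropping the $HT_k$ term the left-hand side is at least $(\lambda-1)\norm{y}^2\bigl[\lambda-\tau\norm{X_k}\norm{M}^2s(B^k)^2\bigr]$; for $\lambda\le-1$ it is at least $(1-\lambda)\norm{y}^2\bigl[-\lambda-\tau\norm{X_k}\norm{M}^2s(B^k)^2\bigr]$. In both cases the bracket is positive as soon as $\tau\norm{X_k}\norm{M}^2s(B^k)^2<1\le|\lambda|$, i.e.\ $\tau<1/(\norm{X_k}\norm{M}^2s(B^k)^2)$, which forces the left-hand side to be strictly positive and hence rules out any such root. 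The degenerate subcase $\norm{X_k}=0$ makes the indefinite term vanish, so any $\tau>0$ works. Finally, the explicit bound for $\norm{B}<1$ follows by substituting the estimates $s(B^k)\le 1/(1-\norm{B}^k)$ and $\norm{X_k}\le\norm{H}^2(1-k\norm{B}^{k-1}+(k-1)\norm{B}^k)/(1-\norm{B})^2$ from Lemma~\ref{lem:boundsSTXk}.

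I expect the only genuinely delicate point to be the bookkeeping of signs: recognizing that $\lambda(\lambda-1)>0$ on the whole region (so one seeks strict positivity throughout, rather than positivity for $\lambda>1$ and negativity for $\lambda\le-1$ as in the shifted case) and correctly factoring out $(\lambda-1)$, respectively $(1-\lambda)$, so that both regimes collapse to the single comparison $|\lambda|>\tau\norm{X_k}\norm{M}^2s(B^k)^2$. Everything else is routine norm estimation.
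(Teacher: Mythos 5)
Your proposal is correct and follows essentially the same route as the paper's proof: rewriting \eqref{eq-eigen n+1} for real $\lambda$, discarding the nonnegative $\norm{HT_k(I-B^k/\lambda)^{-1}My}^2$ term, bounding the indefinite $X_k$ term by $\norm{X_k}\norm{M}^2s(B^k)^2\norm{y}^2$, and concluding strict positivity of the left-hand side in both regimes $\lambda>1$ and $\lambda\le-1$ under the condition $\tau\norm{X_k}\norm{M}^2s(B^k)^2<1$, with the $\norm{B}<1$ bound obtained from Lemma~\ref{lem:boundsSTXk}. Your explicit factoring of $(\lambda-1)$, respectively $(1-\lambda)$, is just a cleaner bookkeeping of the paper's two sign cases and changes nothing of substance.
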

\begin{proof}
	When $\lambda\in\R$ equation \eqref{eq-eigen n+1} is rewritten as
	$$
	\begin{array}{ll}
		\lambda(\lambda-1)\norm{y}^2+\tau\norm{HT_k\left(I-\frac{B^k}{\lambda}\right)^{-1}My}^2&\\
		+\tau(\lambda-1)\scalar{M^*\left[I-\frac{(B^*)^k}{\lambda}\right]^{-1}X_k\left(I-\frac{B^k}{\lambda}\right)^{-1}My,y}&=0.
	\end{array}
	$$
	We show that we can choose $\tau$ so that the left-hand side of the above equation is strictly positive. Indeed, if $\lambda>1$, we choose $\tau$ such that
	\[
	\lambda\norm{y}^2-\tau\left|\scalar{M^*\left[I-\frac{(B^*)^k}{\lambda}\right]^{-1}X_k\left(I-\frac{B^k}{\lambda}\right)^{-1}My,y}\right|>0
	\]
	and this can be done by taking $\tau$ such that
	$$\norm{X_k}\norm{M}^2s(B^k)^2\tau<1.$$
	If $\lambda\le-1$, we choose $\tau$ such that
	\[
	\lambda\norm{y}^2+\tau\left|\scalar{M^*\left[I-\frac{(B^*)^k}{\lambda}\right]^{-1}X_k\left(I-\frac{B^k}{\lambda}\right)^{-1}My,y}\right|<0
	\]
	and this is also done by taking $\tau$ such that
	$$\norm{X_k}\norm{M}^2s(B^k)^2\tau<1.$$
	so we obtain the first conclusion. Finally, the conclusion in the case $\norm{B}<1$ can be obtained by Lemma \ref{lem:boundsSTXk}.
\end{proof}

\subsection{Complex eigenvalues}
\label{subsec:complex-k-step}

We now look for conditions on the descent step $\tau$ such that also the complex eigenvalues stay inside the unit disk. We first deal with the shifted $k$-step one-shot method. 

\begin{proposition}[shifted $k$-step one-shot method]\label{tau n k>1 cpl}
	When $k\ge 2$, $\exists\tau>0$ sufficiently small such that equation \eqref{eq-eigen n} admits no solution $\lambda\in\C\backslash\R$, $|\lambda|\ge 1$. 
	In particular, if $\norm{B}<1$, given any $\delta_0>0$ and $0<\theta_0<\frac{\pi}{6}$, take
	\[
	\tau < \frac{\min \{ \chi_1(k,\norm{B}),\; \chi_2(k,\norm{B}),\; \chi_3(k,\norm{B}),\; \chi_4(k,\norm{B}) \} }{\norm{H}^2\norm{M}^2}
	\]
	where 
	\[
	\chi_1(k,b)=\frac{(1-b)^2(1-b^k)^2}{4b^{2k} + \sqrt{2}(1-k b^{k-1}+(k-1)b^k)(1+b^k)^2}
	\]
	\[
	\chi_2(k,b)=\frac{(1-b)^2(1-b^k)^2}{\bigl[\frac{1}{2\sin(\theta_0/2)}(1-b^k)^2+\sqrt{2}(1-k b^{k-1}+(k-1)b^k)\bigr](1+b^k)^2}
	\]
	\[
	\chi_3(k,b)=\frac{(1-b)^2(1-b^k)^2}{ \frac{2c\sin(\theta_0/2)}{\delta_0}b^{2k} +
		(1-kb^{k-1}+(k-1)b^k) \Bigl[ \frac{\sqrt{c}}{\delta_0}(1+b^{2k}) + 2\max\Bigl(\frac{\sqrt{c}}{\delta_0},\frac{\sqrt{c}}{\cos3\theta_0}\Bigr)b^k \Bigr] }
	\]
	\[
	\chi_4(k,b)=\frac{\left[\sin\left(\frac{\pi}{2}-3\theta_0\right)+\cos2\theta_0\right](1-b)^2(1-b^k)^2}{(1-b^k)^2+2(1-kb^{k-1}+(k-1)b^k)(1+b^k)^2}
	\]
	and $c=\frac{1+2\delta_0\sin\frac{5\theta_0}{2}+\delta_0^2}{\cos^2\frac{5\theta_0}{2}}$. 
\end{proposition}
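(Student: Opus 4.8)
The plan is to follow verbatim the architecture of the proof of Proposition~\ref{tau n k=1 cpl}, since the cubic term $\lambda^2(\lambda-1)\norm{y}^2$ in \eqref{eq-eigen n} is \emph{identical} to the one in the shifted $1$-step equation; the whole difference lies in the perturbation, where the middle operator $H^*H$ is replaced by $(\lambda-1)X_k+T_k^*H^*HT_k$ and $B$ is replaced by $B^k$. First I would write $\lambda=R(\cos\theta+\ic\sin\theta)$ with $R=|\lambda|\ge1$ and apply Lemma~\ref{decompq} to $B^k$ (legitimate since $\rho(B^k)<1$), obtaining $(I-B^k/\lambda)^{-1}=P+\ic Q$ and $(I-(B^*)^k/\lambda)^{-1}=P^*+\ic Q^*$ with the bounds \eqref{eq:pB}--\eqref{eq:q2B} read with $B^k$ in place of $B$; when $\norm{B}<1$, using $\norm{B^k}\le\norm{B}^k$, these give $\norm{P}\le p=\frac{1}{1-\norm{B}^k}$, $\norm{Q}\le q_1=\frac{\norm{B}^k}{1-\norm{B}^k}$ and $\norm{Q}\le|\sin\theta|q_2$ with $q_2=\frac{\norm{B}^k}{(1-\norm{B}^k)^2}$.

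Next I would substitute into \eqref{eq-eigen n} and split the quadratic form into the part carrying $T_k^*H^*HT_k$ and the part carrying $(\lambda-1)X_k$. For the first part I define $G(X,Y)=\scalar{M^*XT_k^*H^*HT_kYMy,y}$, which enjoys all the properties used in the $1$-step proof, in particular $0\le G(X^*,X)=\norm{HT_kXMy}^2\le(\norm{H}\norm{T_k}\norm{M}\norm{X})^2\norm{y}^2$; thus it behaves exactly like the form of Proposition~\ref{tau n k=1 cpl} with $\norm{H}$ replaced by $\norm{H}\norm{T_k}$. For the second part I would use only that $X_k$ is self-adjoint (Lemma~\ref{lem:propTUX}(ii)), which still yields the conjugate symmetry $\scalar{M^*XX_kYMy,y}=\scalar{M^*Y^*X_kX^*My,y}^*$, together with the crude estimate $|\scalar{M^*(P^*+\ic Q^*)X_k(P+\ic Q)My,y}|\le\norm{X_k}\norm{M}^2(p+q_1)^2\norm{y}^2$. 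Taking real and imaginary parts as in \eqref{re-eq k=1}--\eqref{im-eq k=1}, forming the same combination with $\gamma=\gamma(\lambda)$ chosen by the four cases of Lemma~\ref{gamma123 n}, and following the computations of Lemmas~\ref{k=1 n case1}--\ref{k=1 n case4} for the $G$-part, I would in each case take $\tau$ so small that the cubic contribution $[\Re(\lambda^3-\lambda^2)+\gamma\Im(\lambda^3-\lambda^2)]\norm{y}^2$ strictly dominates the sum of the $G$-perturbation and the new $X_k$-perturbation, contradicting the equation. Inserting the explicit estimates of Lemma~\ref{lem:boundsSTXk} for $s(B^k),\norm{T_k},\norm{X_k}$ then turns the abstract thresholds into $\chi_1,\dots,\chi_4$, and the minimum over the four cases gives the statement.

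The main obstacle is precisely the new term $(\lambda-1)X_k$, which differs from everything in the $1$-step analysis in two ways. First, $X_k$ is only self-adjoint, not positive semidefinite (Lemma~\ref{lem:propTUX} provides no sign), so it cannot be discarded by positivity as $G(Q^*,Q)\ge0$ and $G(P^*+\gamma Q^*,P+\gamma Q)\ge0$ were; it must be absorbed into the perturbation budget through its operator norm. Second, and more seriously, the factor $\lambda-1$ makes this term grow like $|\lambda|$ as $R\to\infty$, whereas the $G$-perturbation stays bounded, so the weak $R$-independent lower bound $\Re(\lambda^3-\lambda^2)+\gamma_1\Im(\lambda^3-\lambda^2)\ge 2|\sin\frac{\theta}{2}|$ used for $k=1$ is no longer sufficient. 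The resolution is to exploit that the combination $\Re(\lambda^3-\lambda^2)+\gamma\Im(\lambda^3-\lambda^2)=\Re[(1-\ic\gamma)\lambda^2(\lambda-1)]$ carries the \emph{same} factor $\lambda-1$, and that the choice of $\gamma$ in Lemma~\ref{gamma123 n} controls the argument of $(1-\ic\gamma)\lambda^2(\lambda-1)$ so that this real part is bounded below by a constant multiple of $\sqrt{1+\gamma^2}\,R^2|\lambda-1|\ge\sqrt{1+\gamma^2}\,|\lambda-1|$; since the $X_k$-perturbation is bounded above by a multiple of $\tau\sqrt{1+\gamma^2}\,|\lambda-1|\norm{X_k}\norm{M}^2(p+q_1)^2$, the common factor $\sqrt{1+\gamma^2}\,|\lambda-1|$ cancels and an $R$-uniform threshold for $\tau$ emerges. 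Tracking these $\sqrt{1+\gamma^2}$ weights (equal to $\sqrt2$ in Cases~1 and~2) together with $(p+q_1)^2=\frac{(1+\norm{B}^k)^2}{(1-\norm{B}^k)^2}$ is exactly what produces the extra $\sqrt2\,(1-k\norm{B}^{k-1}+(k-1)\norm{B}^k)(1+\norm{B}^k)^2$ summands in the denominators of $\chi_1,\dots,\chi_4$, which are absent (indeed vanish, since $X_1=0$) when $k=1$.
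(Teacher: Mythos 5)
Your proposal is correct and follows essentially the same route as the paper: apply Lemma~\ref{decompq} to $B^k$, split the quadratic form into the $T_k^*H^*HT_k$ part $G$ and the $(\lambda-1)X_k$ part $L$, take the $\gamma$-combination of real and imaginary parts case by case via Lemma~\ref{gamma123 n}, absorb the $L$-terms through $\norm{X_k}$ (since $X_k$ has no sign) using the fact that both the cubic combination and the $L$-perturbation carry the factor $|\lambda-1|$, and conclude with Lemma~\ref{lem:boundsSTXk}; this is exactly the paper's proof, including your key observation that the $|\lambda-1|$ cancellation is what restores an $R$-uniform threshold. The only minor deviation is in Cases 3--4, where the paper does not use your unified Cauchy--Schwarz bound $\sqrt{1+\gamma^2}\,|\lambda-1|(p+q_1)^2$ but rather the separate ratio bounds stated in Lemma~\ref{gamma123 n}(iii)--(iv), which weight $|L_1|$ and $|L_2|$ differently (giving the $\frac{\sqrt{c}}{\delta_0}(1+b^{2k})$ and $\max\bigl(\frac{\sqrt{c}}{\delta_0},\frac{\sqrt{c}}{\cos 3\theta_0}\bigr)b^k$ terms in $\chi_3$ exactly); your variant is still valid there but yields a slightly different, not identical, constant for $\chi_3$.
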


	\begin{proof}
		\textbf{Step 1. Rewrite equation \eqref{eq-eigen n} so that we can study its real and imaginary parts.}
		
		Let $\lambda=R(\cos\theta+\ic\sin\theta)$ in polar form where $R=|\lambda| \ge 1$ and $\theta\in(-\pi,\pi)$. Write ${1}/{\lambda}=r(\cos\phi+\ic\sin\phi)$ in polar form where $r={1}/{|\lambda|}={1}/{R}\le 1$ and $\phi=-\theta\in(-\pi,\pi)$. 
		By Lemma \ref{decompq} applied to $T=B^k$, we have
		$$\left(I-\cfrac{B^k}{\lambda}\right)^{-1}=P(\lambda)+\ic Q(\lambda),\quad \left(I-\cfrac{(B^*)^k}{\lambda}\right)^{-1}=P(\lambda)^*+\ic Q(\lambda)^*$$
		where $P(\lambda)$ and $Q(\lambda)$ are $\C^{n_u\times n_u}$-valued functions, and, by omitting the dependence on $\lambda$, 
		\begin{equation}\label{eq:pBk}
			\norm{P}\le p \coloneqq \left\{\begin{array}{cl}
				(1+\norm{B^k})s(B^k)^2 &\text{ for general }B,\\
				\cfrac{1}{1-\norm{B}^k} &\text{ when }\norm{B}<1;
			\end{array}\right.
		\end{equation}
		\begin{equation}\label{eq:q1Bk}
			\norm{Q}\le q_1 \coloneqq \left\{\begin{array}{cl}
				\norm{B^k}s(B^k)^2 &\text{ for general }B,\\
				\cfrac{\norm{B}^k}{1-\norm{B}^k} &\text{ when }\norm{B}<1;
			\end{array}\right.
		\end{equation}
		\begin{equation}\label{eq:q2Bk}
			\norm{Q}\le q_2|\sin\theta|,\quad q_2 \coloneqq \left\{\begin{array}{cl}
				\norm{B^k}s(B^k)^2 &\text{ for general }B,\\
				\cfrac{\norm{B}^k}{(1-\norm{B}^k)^2} &\text{ when }\norm{B}<1.
			\end{array}\right.
		\end{equation}

		\noindent Now we rewrite \eqref{eq-eigen n} as
		\begin{equation}
			\label{ready-split n k>1}
			\lambda^2(\lambda-1)\norm{y}^2+\tau G(P^*+\ic Q^*,P+\ic Q)+\tau(\lambda-1)L(P^*+\ic Q^*,P+\ic Q)=0.
		\end{equation}
		where 
		$$G(X,Y)=\scalar{M^*XT_k^*H^*HT_kYMy,y},\quad L(X,Y)=\scalar{M^*XX_kYMy,y}$$
		for $X,Y\in\C^{n_u\times n_u}$.	$G$ satisfies the following properties:
		\begin{itemize}
			\item $\forall X,Y_1,Y_2\in\C^{n_u\times n_u}, \forall z_1,z_2\in\C$: \quad
			$G(X, z_1Y_1+z_2Y_2)=z_1G(X,Y_1)+z_2G(X,Y_2).$
			\item $\forall X_1,X_2,Y\in\C^{n_u\times n_u}, \forall z_1,z_2\in\C$: \quad
			$G(z_1X_1+z_2X_2,Y)=z_1G(X_1,Y)+z_2G(X_2,Y).$
			\item $\forall X\in\C^{n_u\times n_u}$: \quad
			$G(X^*,X)\in \R$.
			\item $\forall X,Y\in\C^{n_u\times n_u}$: \quad $G(X,Y)+G(Y^*,X^*)\in\R$, indeed
			$$\begin{array}{ll}
				G(X,Y)&=\scalar{M^*XT_k^*H^*HT_kYMy,y}=\scalar{y,M^*Y^*T_k^*H^*HT_kX^*My}\\
				&=\scalar{M^*Y^*T_k^*H^*HT_kX^*My,y}^*=G(Y^*,X^*)^*.
			\end{array}$$ 
		\end{itemize}
		Similarly, $L$ has the same properties as $G$ (note that $X_k^*=X_k$ by Lemma \ref{lem:propTUX}). With these properties of $G$ and $L$, we expand \eqref{ready-split n k>1} and take its real and imaginary parts, so we respectively obtain:
		\begin{equation}\label{re-eq k>1}
			\Re(\lambda^3-\lambda^2)\norm{y}^2+\tau G_1+\tau [\Re(\lambda-1)L_1-\Im(\lambda-1)L_2]=0
		\end{equation}
		and
		\begin{equation}\label{im-eq k>1}
			\Im(\lambda^3-\lambda^2)\norm{y}^2+\tau G_2+\tau [\Im(\lambda-1)L_1+\Re(\lambda-1)L_2]=0
		\end{equation}
		where
		$$G_1=G(P^*,P)-G(Q^*,Q),\quad G_2=G(P^*,Q)+G(Q^*,P),$$
		$$L_1=L(P^*,P)-L(Q^*,Q),\quad L_2=L(P^*,Q)+L(Q^*,P).$$
		
		\noindent\textbf{Step 2. Find a suitable combination of equations \eqref{re-eq k>1} and \eqref{im-eq k>1}, choose $\tau$ so that we obtain a new equation with a left-hand side which is strictly positive/negative.}
		
		Let $\gamma=\gamma(\lambda)\in\R$, defined by cases as in Lemma~\ref{gamma123 n}. Multiplying equation \eqref{im-eq k>1} with $\gamma$ then summing it with equation \eqref{re-eq k>1}, we obtain:
		\begin{equation}\label{im-gamma-re k>1}
			\begin{array}{ll}
				[\Re(\lambda^3-\lambda^2)+\gamma\Im(\lambda^3-\lambda^2)]\norm{y}^2+\tau G(P^*+\gamma Q^*,P+\gamma Q)-(1+\gamma^2)\tau G(Q^*,Q)&\\
				+\tau\left([\Re(\lambda-1)+\gamma\Im(\lambda-1)]L_1+[\gamma\Re(\lambda-1)-\Im(\lambda-1)]L_2\right)  &=0.
			\end{array}
		\end{equation}
		Now we prepare some useful estimates.
		\begin{itemize}
			\item $\forall X\in\C^{n_u\times n_u}$: \quad
			$0\le G(X^*,X)=\norm{HT_kXMy}^2\le (\norm{H}\norm{T_k}\norm{M}\norm{X})^2 \norm{y}^2.$
			
			Since $\norm{Q}\le q_1$ and $\norm{Q}\le q_2|\sin\theta|$, we have
			$$G(Q^*,Q)\le (\norm{H}\norm{T_k}\norm{M}q_1)^2 \norm{y}^2
			\text{ and }
			G(Q^*,Q)\le (\norm{H}\norm{T_k}\norm{M}q_2\sin|\theta|)^2 \norm{y}^2.$$
			\item By Cauchy-Schwarz inequality we have
			$$|\Re(\lambda-1)+\gamma\Im(\lambda-1)|\le \sqrt{1+\gamma^2}|\lambda-1|; \quad
			|\gamma\Re(\lambda-1)-\Im(\lambda-1)|\le \sqrt{1+\gamma^2}|\lambda-1|.$$
			\item $\forall X,Y\in\C^{n_u\times n_u}$: \quad
			$|L(X,Y)|=|\scalar{M^*XX_kYMy,y}|\le\norm{X_k}\norm{M}^2\norm{X}\norm{Y}\norm{y}^2.$
			Hence
			$$\begin{array}{ll}
				|L_1|&=|L(P^*,P)-L(Q^*,Q)|
				\le |L(P^*,P)|+|L(Q^*,Q)|\\
				&\le \norm{X_k}\norm{M}^2(\norm{P}^2+\norm{Q}^2)\norm{y}^2
				\le \norm{X_k}\norm{M}^2(p^2+q_1^2)\norm{y}^2,
			\end{array}$$
			$$\begin{array}{ll}
				|L_2|&=|L(P^*,Q)+L(Q^*,P)|
				\le |L(P^*,Q)|+|L(Q^*,P)|\\
				&\le 2\norm{X_k}\norm{M}^2\norm{P}\norm{Q}\norm{y}^2
				\le 2\norm{X_k}\norm{M}^2pq_1\norm{y}^2,
			\end{array}$$
			and then
			$$\begin{array}{ll}
				&|[\Re(\lambda-1)+\gamma\Im(\lambda-1)]L_1+[\gamma\Re(\lambda-1)-\Im(\lambda-1)]L_2|\\
				\le&|\Re(\lambda-1)+\gamma\Im(\lambda-1)||L_1|+|\gamma\Re(\lambda-1)-\Im(\lambda-1)||L_2|\\
				\le&\sqrt{1+\gamma^2}|\lambda-1|\norm{X_k}\norm{M}^2(p^2+q_1^2+2pq_1)\norm{y}^2\\
				=&\sqrt{1+\gamma^2}|\lambda-1|\norm{X_k}\norm{M}^2(p+q_1)^2\norm{y}^2.
			\end{array}
			$$
		\end{itemize}
		Now we consider four cases of $\lambda$ as in Lemma~\ref{gamma123 n}:
		\begin{itemize}
			\item\textit{Case 1.} $\Re(\lambda^3-\lambda^2)\ge 0$;
			\item\textit{Case 2.} $\Re(\lambda^3-\lambda^2)<0$ and $\theta\in [\theta_0,\pi-\theta_0]\cup[-\pi+\theta_0,-\theta_0]$ for fixed $0<\theta_0<\frac{\pi}{6}$;
			\item\textit{Case 3.} $\Re(\lambda^3-\lambda^2)<0$ and $\theta \in (-\theta_0,\theta_0)$ for fixed $0<\theta_0<\frac{\pi}{6}$;
			\item\textit{Case 4.} $\Re(\lambda^3-\lambda^2)<0$ and $\theta \in (\pi-\theta_0,\pi)\cup(-\pi,-\pi+\theta_0)$ for fixed $0<\theta_0<\frac{\pi}{6}$. 
		\end{itemize}
		The four cases will be treated in the following four lemmas (Lemmas \ref{k>1 n case1}--\ref{k>1 n case4}), which together give the statement of this proposition.
	\end{proof}
	
	\begin{lemma}[Case 1]
		\label{k>1 n case1} For $k\ge 2$, equation \eqref{eq-eigen n} admits no solutions $\lambda$ in Case 1 if we take
		\begin{itemize}
			\item $\tau<\cfrac{s(B^k)^{-4}}{4\norm{H}^2\norm{M}^2\norm{T_k}^2\norm{B^k}^2+\sqrt{2}\norm{M}^2\norm{X_k}(1+2\norm{B^k})^2}$ if the denominator of the right-hand side is not $0$;
			\item any $\tau>0$ otherwise.
		\end{itemize}
		Moreover, if $\norm{B}<1$, we can take
		$$\tau<\frac{(1-\norm{B})^2}{\norm{H}^2\norm{M}^2}\cdot\frac{(1-\norm{B}^k)^2}{4\norm{B}^{2k}+\sqrt{2}(1-k\norm{B}^{k-1}+(k-1)\norm{B}^k)(1+\norm{B}^k)^2}.$$
	\end{lemma}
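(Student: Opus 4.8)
The plan is to mirror the $k=1$ argument of Lemma~\ref{k=1 n case1}, the only genuinely new ingredient being the two $L$-terms in \eqref{im-gamma-re k>1} that are absent when $k=1$ (where $X_k=0$) and that account for the extra summand in the denominator of $\chi_1(k,b)$. Concretely, I would write \eqref{im-gamma-re k>1} for the multiplier $\gamma=\gamma_1$ of Lemma~\ref{gamma123 n}(i), for which $\gamma_1^2=1$, so that, in parallel to \eqref{case1 k=1}, the equation becomes $[\Re(\lambda^3-\lambda^2)+\gamma_1\Im(\lambda^3-\lambda^2)]\norm{y}^2+\tau G(P^*+\gamma_1Q^*,P+\gamma_1Q)-2\tau G(Q^*,Q)+\tau([\Re(\lambda-1)+\gamma_1\Im(\lambda-1)]L_1+[\gamma_1\Re(\lambda-1)-\Im(\lambda-1)]L_2)=0$. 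Dropping the nonnegative term $G(P^*+\gamma_1Q^*,P+\gamma_1Q)\ge0$, it suffices to show that the left-hand side is strictly positive, i.e. that $[\Re(\lambda^3-\lambda^2)+\gamma_1\Im(\lambda^3-\lambda^2)]\norm{y}^2$ strictly exceeds $2\tau G(Q^*,Q)$ plus the absolute value of the $L$-contribution.

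For the upper bounds I would reuse the estimates already prepared in the proof of Proposition~\ref{tau n k>1 cpl}: from $\norm{Q}\le q_2|\sin\theta|$ in \eqref{eq:q2Bk}, $2\tau G(Q^*,Q)\le2\tau\norm{H}^2\norm{T_k}^2\norm{M}^2q_2^2\sin^2\theta\norm{y}^2$, and, by Cauchy--Schwarz, the $L$-contribution is bounded by $\sqrt{1+\gamma_1^2}\,|\lambda-1|\norm{X_k}\norm{M}^2(p+q_1)^2\norm{y}^2=\sqrt2\,|\lambda-1|\norm{X_k}\norm{M}^2(p+q_1)^2\norm{y}^2$. The main obstacle is then to control these two terms, which carry different dependences on $\lambda$ (a bounded $\sin^2\theta$ against a possibly large $|\lambda-1|$), by a single lower bound so that one threshold on $\tau$ works for all $|\lambda|\ge1$. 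I would resolve this by funneling both through $|\lambda-1|$: since $2|\sin\tfrac{\theta}{2}|\le|\lambda-1|$ for $|\lambda|\ge1$ and $|\sin\tfrac{\theta}{2}|\le1$, one has $\sin^2\theta=4\sin^2\tfrac{\theta}{2}\cos^2\tfrac{\theta}{2}\le4|\sin\tfrac{\theta}{2}|\le2|\lambda-1|$; and the choice of $\gamma_1$ in Lemma~\ref{gamma123 n}(i), beyond the stated $\Re(\lambda^3-\lambda^2)+\gamma_1\Im(\lambda^3-\lambda^2)\ge2|\sin\tfrac{\theta}{2}|$, yields the sharper $\Re(\lambda^3-\lambda^2)+\gamma_1\Im(\lambda^3-\lambda^2)\ge|\lambda-1|$, which I would verify by writing $\lambda^2(\lambda-1)=R^2|\lambda-1|\e^{\ic\alpha}$ and using $\cos\alpha+|\sin\alpha|\ge1$ when $\cos\alpha\ge0$ together with $R\ge1$.

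With both contributions now bounded by multiples of $|\lambda-1|\norm{y}^2$, the left-hand side is at least $|\lambda-1|\norm{y}^2$ while the negative part is at most $(2A+B)\tau|\lambda-1|\norm{y}^2$, where $A=2\norm{H}^2\norm{T_k}^2\norm{M}^2q_2^2$ and $B=\sqrt2\norm{X_k}\norm{M}^2(p+q_1)^2$; as $\lambda\in\C\backslash\R$ forces $\lambda\neq1$ and $y\neq0$, we have $|\lambda-1|\norm{y}^2>0$, so strict positivity holds precisely when $(2A+B)\tau<1$. Substituting $q_2=\norm{B^k}s(B^k)^2$ and $p+q_1=(1+2\norm{B^k})s(B^k)^2$ from \eqref{eq:pBk}--\eqref{eq:q2Bk} turns $(2A+B)^{-1}$ into the stated general bound, the degenerate case $2A+B=0$ being trivial since then the whole negative part vanishes and any $\tau>0$ works. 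Finally, for $\norm{B}<1$ I would insert the $\norm{B}<1$ branches of \eqref{eq:pBk}--\eqref{eq:q2Bk} together with the estimates of Lemma~\ref{lem:boundsSTXk} and $s(B^k)\le(1-\norm{B}^k)^{-1}$ and simplify, obtaining $\chi_1(k,\norm{B})$; I expect the only real care needed here to be the algebraic bookkeeping of the constants $4$ and $\sqrt2$, so that the denominator lines up with $4\norm{B}^{2k}+\sqrt2(1-k\norm{B}^{k-1}+(k-1)\norm{B}^k)(1+\norm{B}^k)^2$.
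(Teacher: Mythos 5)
Your proof is correct and follows essentially the same route as the paper's: the same multiplier $\gamma_1$ in \eqref{im-gamma-re k>1}, dropping $G(P^*+\gamma_1 Q^*,P+\gamma_1 Q)\ge 0$, the same estimates of $G(Q^*,Q)$ and of the $L$-terms, and the same reduction to a single threshold on $\tau$ through $|\lambda-1|$ (your inequality $\sin^2\theta\le 2|\lambda-1|$ is exactly the paper's bound $|\sin\theta|^2/|\lambda-1|\le 2$ rearranged). The only remark is that the sharper estimate $\Re(\lambda^3-\lambda^2)+\gamma_1\Im(\lambda^3-\lambda^2)\ge|\lambda-1|$, which you propose to verify separately, is already part of the statement of Lemma~\ref{gamma123 n}(i), so no extra work is needed there.
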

	\begin{proof}
		Writing \eqref{im-gamma-re k>1} for $\gamma=\gamma_1$ as in Lemma \ref{gamma123 n} (i) (in particular $\gamma_1^2=1$), we have
		\begin{equation}\label{case1 k>1}
			\begin{array}{ll}
				[\Re(\lambda^3-\lambda^2)+\gamma_1\Im(\lambda^3-\lambda^2)]\norm{y}^2+\tau G(P^*+\gamma_1 Q^*,P+\gamma_1 Q)-2\tau G(Q^*,Q)&\\
				+\tau\left([\Re(\lambda-1)+\gamma_1\Im(\lambda-1)]L_1+[\gamma_1\Re(\lambda-1)-\Im(\lambda-1)]L_2\right)  &=0.
			\end{array}
		\end{equation}
		Since $G(P^*+\gamma_1 Q^*,P+\gamma_1 Q)\ge 0$, and by estimating
		\[
		G(Q^*,Q)\le(\norm{H}\norm{T_k}\norm{M}q_2\sin|\theta|)^2 \norm{y}^2,
		\]
		\begin{multline*}
			[\Re(\lambda-1)+\gamma_1\Im(\lambda-1)]L_1+[\gamma_1\Re(\lambda-1)-\Im(\lambda-1)]L_2\\
			\ge-\sqrt{2}|\lambda-1|\norm{X_k}\norm{M}^2(p+q_1)^2\norm{y}^2,
		\end{multline*}
		by Lemma \ref{gamma123 n} (i) the left-hand side of \eqref{case1 k>1} will be strictly positive if $\tau$ satisfies:
		$$\left(2\left(\norm{H}\norm{T_k}\norm{M}q_2\right)^2\frac{|\sin\theta|^2}{|\lambda-1|}+\sqrt{2}\norm{X_k}\norm{M}^2(p+q_1)^2\right)\tau<1.$$
		Since $\frac{|\sin\theta|^2}{|\lambda-1|}\le\frac{|\sin\theta|^2}{2|\sin(\theta/2)|}=2\left| \sin\frac{\theta}{2}\right|\cos^2\frac{\theta}{2}\le 2$, we have the first part of the conclusion using definitions \eqref{eq:pBk}, \eqref{eq:q1Bk}, \eqref{eq:q2Bk}  of $p, q_1, q_2$.
		Finally, the conclusion in the case $\norm{B}<1$ can be obtained by Lemma \ref{lem:boundsSTXk}.
	\end{proof}
	
	\begin{lemma}[Case 2]
		\label{k>1 n case2} For $k\ge 2$, equation \eqref{eq-eigen n} admits no solutions $\lambda$ in Case 2 if we take
		\begin{itemize}
			\item $\tau<\cfrac{s(B^k)^{-4}}{\left(\frac{1}{2\sin(\theta_0/2)}\norm{H}^2\norm{M}^2\norm{T_k}^2+\sqrt{2}\norm{M}^2\norm{X_k}\right)(1+2\norm{B^k})^2}$ if the denominator of the right-hand side is not $0$;
			\item any $\tau$ otherwise.
		\end{itemize}
		Moreover, if $\norm{B}<1$, we can take
		$$\tau<\frac{(1-\norm{B})^2}{\norm{H}^2\norm{M}^2}\cdot\frac{(1-\norm{B^k})^2}{\left[ \frac{1}{2\sin(\theta_0/2)}(1-\norm{B}^k)^2+\sqrt{2}(1-k\norm{B}^{k-1}+(k-1)\norm{B}^k)\right](1+\norm{B}^k)^2}.$$
	\end{lemma}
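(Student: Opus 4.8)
The plan is to mirror the argument of Lemma~\ref{k>1 n case1} (Case~1), only now driving the left-hand side to be strictly \emph{negative} and using the multiplier $\gamma=\gamma_2$ supplied by Lemma~\ref{gamma123 n}(ii). Specializing the combined identity \eqref{im-gamma-re k>1} to $\gamma_2$, and recalling that $\gamma_2^2=1$ so that $1+\gamma_2^2=2$, the task is to show that no $\lambda$ in Case~2 can satisfy
\[
[\Re(\lambda^3-\lambda^2)+\gamma_2\Im(\lambda^3-\lambda^2)]\norm{y}^2+\tau\,G(P^*+\gamma_2 Q^*,P+\gamma_2 Q)-2\tau\,G(Q^*,Q)+\tau\,\Lambda=0,
\]
where $\Lambda=[\Re(\lambda-1)+\gamma_2\Im(\lambda-1)]L_1+[\gamma_2\Re(\lambda-1)-\Im(\lambda-1)]L_2$. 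I will exhibit a range of $\tau$ for which the left-hand side is strictly negative, contradicting the equality.

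First I would bound the three blocks from above. Since in Case~2 the angle is bounded away from $0$, I keep the favorable term $-2\tau G(Q^*,Q)\le 0$ and simply discard it, while bounding $G(P^*+\gamma_2 Q^*,P+\gamma_2 Q)\le(\norm{H}\norm{T_k}\norm{M}\norm{P+\gamma_2 Q})^2\norm{y}^2\le(\norm{H}\norm{T_k}\norm{M})^2(p+q_1)^2\norm{y}^2$, using $\norm{P+\gamma_2 Q}\le\norm{P}+\norm{Q}\le p+q_1$ from \eqref{eq:pBk}--\eqref{eq:q1Bk}. For the $L$-block I reuse the estimate established in Step~2 of Proposition~\ref{tau n k>1 cpl}, which with $\sqrt{1+\gamma_2^2}=\sqrt2$ gives $|\Lambda|\le\sqrt2\,|\lambda-1|\,\norm{X_k}\norm{M}^2(p+q_1)^2\norm{y}^2$. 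Note this block carries a factor $|\lambda-1|$, unlike the $k=1$ setting (Lemma~\ref{k=1 n case2}), where $X_1=0$ and no such term is present.

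Next I would combine these with the first term. By Lemma~\ref{gamma123 n}(ii) the scalar $\Re(\lambda^3-\lambda^2)+\gamma_2\Im(\lambda^3-\lambda^2)$ is strictly negative in Case~2 and, as the mirror of the Case~1 estimate used for $\gamma_1$, satisfies $-[\Re(\lambda^3-\lambda^2)+\gamma_2\Im(\lambda^3-\lambda^2)]\ge|\lambda-1|$. Collecting the estimates, the left-hand side is strictly negative as soon as $\tau(\norm{H}\norm{T_k}\norm{M})^2(p+q_1)^2+\sqrt2\,\tau|\lambda-1|\norm{X_k}\norm{M}^2(p+q_1)^2<|\lambda-1|$. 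Dividing by $|\lambda-1|>0$ (legitimate since $\lambda\ne1$), it suffices that $\tau\,\dfrac{(\norm{H}\norm{T_k}\norm{M})^2(p+q_1)^2}{|\lambda-1|}+\sqrt2\,\tau\norm{X_k}\norm{M}^2(p+q_1)^2<1$. To make this uniform in $\lambda$, I use that Case~2 forces $|\theta|\ge\theta_0$, hence $|\lambda-1|\ge2|\sin(\theta/2)|\ge2\sin(\theta_0/2)$ and $\tfrac{1}{|\lambda-1|}\le\tfrac{1}{2\sin(\theta_0/2)}$. This yields the $\lambda$-independent sufficient condition $\tau\big[\tfrac{1}{2\sin(\theta_0/2)}\norm{H}^2\norm{T_k}^2\norm{M}^2+\sqrt2\,\norm{X_k}\norm{M}^2\big](p+q_1)^2<1$. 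Substituting $(p+q_1)^2=(1+2\norm{B^k})^2 s(B^k)^4$ gives the first displayed bound (the degenerate ``denominator $=0$'' case, where the $G$- and $L$-blocks vanish, being handled by ``any $\tau$'' exactly as in Case~1); finally, bounding $s(B^k)$, $\norm{T_k}$ and $\norm{X_k}$ by Lemma~\ref{lem:boundsSTXk} when $\norm{B}<1$ produces the explicit second bound.

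The main obstacle is precisely the new $L$-block: its factor $|\lambda-1|$ is unbounded as $|\lambda|\to\infty$, so the crude lower bound $2\sin(\theta_0/2)$ on the first term that sufficed for $k=1$ is no longer enough. The argument goes through only because Lemma~\ref{gamma123 n}(ii) provides the sharper, growth-matched estimate $-[\Re(\lambda^3-\lambda^2)+\gamma_2\Im(\lambda^3-\lambda^2)]\ge|\lambda-1|$, which, after division by $|\lambda-1|$, cancels the dangerous factor in the $L$-block and leaves only the $\theta_0$-dependent tail of the $G$-block to be absorbed into $\tfrac{1}{2\sin(\theta_0/2)}$.
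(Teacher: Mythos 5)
Your proof is correct and takes essentially the same approach as the paper's: the same combination \eqref{im-gamma-re k>1} with $\gamma_2$, dropping $-2\tau G(Q^*,Q)\le 0$, bounding the $G$-block by $[\norm{H}\norm{T_k}\norm{M}(p+q_1)]^2\norm{y}^2$ and the $L$-block by $\sqrt{2}\,|\lambda-1|\norm{X_k}\norm{M}^2(p+q_1)^2\norm{y}^2$, then invoking Lemma~\ref{gamma123 n}(ii) and the bound $1/|\lambda-1|\le 1/(2\sin(\theta_0/2))$ before concluding via $(p+q_1)=(1+2\norm{B^k})s(B^k)^2$ and Lemma~\ref{lem:boundsSTXk}. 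Your closing observation, that the growth-matched lower bound $|\lambda-1|$ (rather than the constant $2\sin(\theta_0/2)$) is what neutralizes the $|\lambda-1|$ factor carried by the $X_k$-terms, is exactly the point that distinguishes the $k\ge 2$ case from Lemma~\ref{k=1 n case2}.
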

	\begin{proof}
		Writing \eqref{im-gamma-re k>1} for $\gamma=\gamma_2$ as in Lemma \ref{gamma123 n} (ii) (in particular $\gamma_2^2=1$), we have
		\begin{equation}\label{case2 k>1}
			\begin{array}{ll}
				[\Re(\lambda^3-\lambda^2)+\gamma_2\Im(\lambda^3-\lambda^2)]\norm{y}^2+\tau G(P^*+\gamma_2 Q^*,P+\gamma_2 Q)-2\tau G(Q^*,Q)&\\
				+\tau\left([\Re(\lambda-1)+\gamma_2\Im(\lambda-1)]L_1+[\gamma_2\Re(\lambda-1)-\Im(\lambda-1)]L_2\right)  &=0.
			\end{array}
		\end{equation}
		Since $G(Q^*,Q)\ge 0$, and by estimating $\norm{P+\gamma_2 Q}\le \norm{P}+|\gamma_2|\norm{Q}=\norm{P}+\norm{Q}\le p+q_1$, so that 
		\[
		G(P^*+\gamma_2 Q^*,P+\gamma_2 Q)\le[\norm{H}\norm{T_k}\norm{M}(p+q_1)]^2 \norm{y}^2, 
		\]
		and 
		\begin{multline*}
			[\Re(\lambda-1)+\gamma_2\Im(\lambda-1)]L_1+[\gamma_2\Re(\lambda-1)-\Im(\lambda-1)]L_2\\
			\le\sqrt{2}|\lambda-1|\norm{X_k}\norm{M}^2(p+q_1)^2\norm{y}^2,
		\end{multline*}
		by Lemma \ref{gamma123 n} (ii), the left-hand side of \eqref{case2 k>1} will be strictly negative if $\tau$ satisfies:
		$$\left(\left[\norm{H}\norm{T_k}\norm{M}(p+q_1)\right]^2\frac{1}{|\lambda-1|}+\sqrt{2}\norm{X_k}\norm{M}^2(p+q_1)^2\right)\tau<1.$$
		Since $\frac{1}{|\lambda-1|}\le\frac{1}{2\sin(\theta_0/2)}$, we have the first part of the conclusion using definitions \eqref{eq:pBk}, \eqref{eq:q1Bk} of $p, q_1$.
		Finally, the conclusion in the case $\norm{B}<1$ can be obtained by Lemma \ref{lem:boundsSTXk}.
	\end{proof}	
	
	\begin{lemma}[Case 3]
		\label{k>1 n case3} Let $\delta_0>0$ be fixed and $c \coloneqq \frac{1+2\delta_0\sin\frac{5\theta_0}{2}+\delta_0^2}{\cos^2\frac{5\theta_0}{2}}$. For $k\ge 2$, equation \eqref{eq-eigen n} admits no solutions $\lambda$ in Case 3 if we take
		\begin{itemize}
			\item $
			\tau<s(B^k)^{-4}\bigg/\left[\frac{2c\sin\frac{\theta_0}{2} }{\delta_0}\norm{H}^2\norm{M}^2\norm{T_k}^2\norm{B^k}^2+\frac{\sqrt{c}}{\delta_0}\norm{M}^2\norm{X_k}(1+2\norm{B^k}+2\norm{B^k}^2)\right.$
			
			$\left.+2\max\left(\frac{\sqrt{c}}{\delta_0},\frac{\sqrt{c}}{\cos3\theta_0}\right)\norm{M}^2\norm{X_k}(\norm{B^k}+\norm{B^k}^2) \right]
			$ if the denominator of the right-hand side is not $0$;
			\item  any $\tau>0$ otherwise.
		\end{itemize}
		Moreover, if $\norm{B}<1$, we can take
		$$\begin{array}{ll}
			\tau<&\frac{(1-\norm{B})^2}{\norm{H}^2\norm{M}^2}(1-\norm{B}^k)^2\left[\frac{2c\sin\frac{\theta_0}{2}}{\delta_0}\norm{B^k}^{2k}+\frac{\sqrt{c}}{\delta_0}(1-k\norm{B}^{k-1}+(k-1)\norm{B}^k)(1+\norm{B}^{2k})\right.\\
			&\left.
			+2\max\left(\frac{\sqrt{c}}{\delta_0},\frac{\sqrt{c}}{\cos3\theta_0}\right)(1-k\norm{B}^{k-1}+(k-1)\norm{B}^k)\norm{B}^k\right]^{-1}.\\
		\end{array}
		$$
	\end{lemma}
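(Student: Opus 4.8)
The plan is to follow verbatim the pattern of the Case~1 and Case~2 lemmas (Lemmas~\ref{k>1 n case1} and \ref{k>1 n case2}), specializing the master identity \eqref{im-gamma-re k>1} to the multiplier $\gamma=\gamma_3$ from Lemma~\ref{gamma123 n}~(iii). With this choice the term $\tau G(P^*+\gamma_3 Q^*,P+\gamma_3 Q)$ is nonnegative and may be discarded, while $\Re(\lambda^3-\lambda^2)+\gamma_3\Im(\lambda^3-\lambda^2)>2\delta_0|\sin(\theta/2)|$; it then remains to choose $\tau$ so that this lower bound dominates the sum of the $(1+\gamma_3^2)\tau G(Q^*,Q)$ term and the two $L$-terms. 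I would estimate $G(Q^*,Q)\le(\norm{H}\norm{T_k}\norm{M}q_2)^2|\sin\theta|^2\norm{y}^2$ using the \emph{tight} bound \eqref{eq:q2Bk}, and the $L$-combination by Cauchy--Schwarz together with \eqref{eq:pBk}--\eqref{eq:q1Bk}, writing $\sqrt{1+\gamma_3^2}=\sqrt{c}$ and keeping the $L_1$ and $L_2$ pieces \emph{separate}, so that the diagonal contribution (controlled by $p^2+q_1^2$) and the cross contribution (controlled by $2pq_1$) may be weighted by different trigonometric factors.

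The decisive new ingredient, absent when $k=1$ since then $X_k=0$, is the control of the factor $|\lambda-1|$ carried by the $L$-terms. Here the defining hypothesis of Case~3, namely $\Re(\lambda^3-\lambda^2)=R^2(R\cos3\theta-\cos2\theta)<0$ with $|\theta|<\theta_0<\frac{\pi}{6}$ (hence $\cos3\theta>0$), confines the modulus to $1\le R<\cos2\theta/\cos3\theta$. Writing $|\lambda-1|^2=(R-1)^2+4R\sin^2(\theta/2)$ and inserting $R-1<(\cos2\theta-\cos3\theta)/\cos3\theta=2\sin(5\theta/2)\sin(\theta/2)/\cos3\theta$, the bracket collapses through the identity $\sin^2(5\theta/2)+\cos2\theta\cos3\theta=\cos^2(\theta/2)$ to the clean estimate
\[
|\lambda-1|<\frac{|\sin\theta|}{\cos3\theta}\le\frac{|\sin\theta|}{\cos3\theta_0},\qquad\text{equivalently}\qquad \frac{|\lambda-1|}{|\sin(\theta/2)|}<\frac{2\cos(\theta/2)}{\cos3\theta}.
\]
This is exactly the tool needed to turn the $|\lambda-1|$ of each $L$-term into a bounded multiple of $|\sin(\theta/2)|$; after division by the lower bound $2\delta_0|\sin(\theta/2)|$ one is left with the $\theta_0$-dependent constants $\frac{1}{\delta_0}$ and $\frac{1}{\cos3\theta_0}$ that appear in the statement, the latter coming precisely from the confinement $R<1/\cos3\theta_0$.

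Assembling the pieces, $G(Q^*,Q)$ feeds the term $\frac{2c\sin(\theta_0/2)}{\delta_0}\norm{H}^2\norm{M}^2\norm{T_k}^2\norm{B^k}^2$ via $\frac{|\sin\theta|^2}{|\sin(\theta/2)|}\le 4\sin(\theta_0/2)$, exactly as in the $k=1$ Case~3, while the diagonal and cross $L$-contributions produce the $\frac{\sqrt c}{\delta_0}(1+\norm{B^k}^2)$ and $2\max\bigl(\frac{\sqrt c}{\delta_0},\frac{\sqrt c}{\cos3\theta_0}\bigr)\norm{B^k}$ blocks; collecting everything into a single positivity requirement of the form $\tau\cdot[\text{denominator}]<s(B^k)^{-4}$ yields the first bound, and the case $\norm{B}<1$ then follows by substituting the explicit estimates of Lemma~\ref{lem:boundsSTXk} for $s(B^k)$, $\norm{T_k}$ and $\norm{X_k}$. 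I expect the real difficulty to be not the geometric estimate above but the meticulous bookkeeping that keeps the diagonal $p^2+q_1^2$ terms apart from the cross $2pq_1$ term: it is for the cross term that the estimate $|\lambda-1|\le|\sin\theta|/\cos3\theta$ must be weighed against the competing factor $\frac{|\lambda-1|}{2\delta_0|\sin(\theta/2)|}$, the worse of the two forcing the $\max$, and the split has to be arranged so that the diagonal terms retain the sharper constant $\frac{\sqrt c}{\delta_0}$ rather than the cruder $\frac{\sqrt c}{\delta_0\cos3\theta_0}$ that a naive combination would give.
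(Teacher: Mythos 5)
Your skeleton is the paper's: write \eqref{im-gamma-re k>1} with $\gamma=\gamma_3$, discard the nonnegative term $\tau G(P^*+\gamma_3 Q^*,P+\gamma_3 Q)$, estimate $G(Q^*,Q)$ through \eqref{eq:q2Bk} (your reduction $\frac{|\sin\theta|^2}{|\sin(\theta/2)|}\le 4\sin(\theta_0/2)$ is exactly the paper's), keep $|L_1|\le\norm{X_k}\norm{M}^2(p^2+q_1^2)\norm{y}^2$ and $|L_2|\le 2\norm{X_k}\norm{M}^2pq_1\norm{y}^2$ separate, and finish with Lemma~\ref{lem:boundsSTXk}. Your geometric estimate is also correct and even sharp: Case 3 does force $1\le R<\cos2\theta/\cos3\theta$, and the identity $\sin^2\frac{5\theta}{2}+\cos2\theta\cos3\theta=\cos^2\frac{\theta}{2}$ does give $|\lambda-1|<|\sin\theta|/\cos3\theta$, with equality approached at the boundary $R=\cos2\theta/\cos3\theta$.

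The gap is in how you convert the $L$-terms, and it is not repairable by bookkeeping. The stated constants require the two ratio bounds of Lemma~\ref{gamma123 n}~(iii): $\frac{|\Re(\lambda-1)+\gamma_3\Im(\lambda-1)|}{\Re(\lambda^3-\lambda^2)+\gamma_3\Im(\lambda^3-\lambda^2)}\le\frac{\sqrt c}{\delta_0}$ for the diagonal block and $\max\bigl(\frac{\sqrt c}{\delta_0},\frac{\sqrt c}{\cos3\theta_0}\bigr)$ for the cross block. The paper proves these by a monotonicity-in-$R$ analysis: the ratio $f_1(R)=\bigl(\frac{aR-1}{bR-c}\bigr)^2$ is decreasing on $R\ge 1$, so its supremum is at $R=1$, where numerator and denominator are \emph{both} of size $|\sin(\theta/2)|$ and the constant $\frac{\sqrt c}{\delta_0}$ comes out; the ratio $f_2$ decreases then increases, so its supremum is $\max\bigl(f_2(1),\lim_{R\to\infty}f_2(R)\bigr)$ --- this, and not a choice between two competing estimates for the cross term, is the origin of the $\max$. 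Your route instead bounds numerator and denominator independently: Cauchy--Schwarz gives $\sqrt c\,|\lambda-1|$ above, the denominator is replaced by its infimum $2\delta_0|\sin(\theta/2)|$, and then $|\lambda-1|<|\sin\theta|/\cos3\theta$ is inserted. This can only yield $\frac{\sqrt c\,\cos(\theta/2)}{\delta_0\cos3\theta}\le\frac{\sqrt c}{\delta_0\cos3\theta_0}$ for \emph{both} ratios. Since $|\lambda-1|^2=(R-1)^2+4R\sin^2(\theta/2)\ge 4\sin^2(\theta/2)$ for all $R\ge 1$, any bound of the form $\frac{\sqrt c\,|\lambda-1|}{2\delta_0|\sin(\theta/2)|}$ is necessarily $\ge\frac{\sqrt c}{\delta_0}$, and near $\theta=\pm\theta_0$, $R=\cos2\theta/\cos3\theta$ it really is of size $\frac{\sqrt c}{\delta_0\cos3\theta_0}$: the sharper constant $\frac{\sqrt c}{\delta_0}$ is recoverable only by letting the denominator grow with $R$ together with the numerator, i.e.\ by the ratio-monotonicity argument you never reconstruct. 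Consequently your argument proves the lemma with $\frac{\sqrt c}{\delta_0\cos3\theta_0}$ in place of both $\frac{\sqrt c}{\delta_0}$ and the $\max$; since $\frac{\sqrt c}{\delta_0\cos3\theta_0}>\frac{\sqrt c}{\delta_0}$, the resulting denominator is larger and the admissible range of $\tau$ strictly smaller (at least for $\delta_0\le 1$ every block is worse), so the statement as written --- with its specific constants --- is not established.
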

	\begin{proof}
		Writing \eqref{im-gamma-re k>1} for $\gamma=\gamma_3$ as in Lemma \ref{gamma123 n} (iii), we have
		\begin{equation}\label{case3 k>1}
			\begin{array}{ll}
				[\Re(\lambda^3-\lambda^2)+\gamma_3\Im(\lambda^3-\lambda^2)]\norm{y}^2+\tau G(P^*+\gamma_3 Q^*,P+\gamma_3 Q)-(1+\gamma_3^2)\tau G(Q^*,Q)&\\
				+\tau\left([\Re(\lambda-1)+\gamma_3\Im(\lambda-1)]L_1+[\gamma_3\Re(\lambda-1)-\Im(\lambda-1)]L_2\right)  &=0.
			\end{array}
		\end{equation}
		Since $G(P^*+\gamma_3 Q^*,P+\gamma_3 Q)\ge 0$, the left-hand side of \eqref{case3 k>1} will be strictly positive if $\tau$ satisfies:
		$$\begin{array}{ll}
			\tau<&\cfrac{1}{\norm{y}^2}\left[(1+\gamma_3^2)\cfrac{G(Q^*,Q)}{\Re(\lambda^3-\lambda^2)+\gamma_3\Im(\lambda^3-\lambda^2)}\right.\\
			&\left.+|L_1|\cfrac{|\Re(\lambda-1)+\gamma_3\Im(\lambda-1)|}{\Re(\lambda^3-\lambda^2)+\gamma_3\Im(\lambda^3-\lambda^2)}
			+|L_2|\cfrac{|\gamma_3\Re(\lambda-1)-\Im(\lambda-1)|}{\Re(\lambda^3-\lambda^2)+\gamma_3\Im(\lambda^3-\lambda^2)}\right]^{-1}.
		\end{array}
		$$
		By estimating
		\begin{itemize}
			\item $G(Q^*,Q)\le(\norm{H}\norm{T_k}\norm{M}q_2|\sin\theta|)^2 \norm{y}^2$
			\item $|L_1|\le \norm{X_k}\norm{M}^2(p^2+q_1^2)\norm{y}^2$;
			\item $|L_2|\le  2\norm{X_k}\norm{M}^2pq_1\norm{y}^2$
		\end{itemize}
		and using Lemma \ref{gamma123 n} (iii), it suffices to choose
		$$\begin{array}{ll}
			\left[(1+\gamma_3^2)\left(\norm{H}\norm{T_k}\norm{M}q_2\right)^2\frac{2|\sin\frac{\theta}{2}|\cos^2\frac{\theta}{2}}{\delta_0}+\norm{X_k}\norm{M}^2(p^2+q_1^2)\frac{\sqrt{1+\gamma_3^2}}{\delta_0}\right.&\\
			\left.
			+2\norm{X_k}\norm{M}^2pq_1\max\left(\frac{\sqrt{1+\gamma_3^2}}{\delta_0},\frac{\sqrt{1+\gamma_3^2}}{\cos3\theta_0}\right)\right]\tau&<1.
		\end{array}
		$$	
		Noting that $c=1+\gamma_3^2$, the final result is obtained by definitions \eqref{eq:pBk}, \eqref{eq:q1Bk}, \eqref{eq:q2Bk} of $p,q_1,q_2$. 
		Finally, the conclusion in the case $0<\norm{B}<1$ can be obtained by Lemma \ref{lem:boundsSTXk}.
	\end{proof}
	
	\begin{lemma}[Case 4]
		\label{k>1 n case4} For $k\ge 2$, equation \eqref{eq-eigen n} admits no solutions $\lambda$ in Case 4 if we take
		\begin{itemize}
			\item $\tau<\cfrac{\left[\sin\left(\frac{\pi}{2}-3\theta_0\right)+\cos2\theta_0\right]s(B^k)^{-4}}{\norm{H}^2\norm{M}^2\norm{T_k}^2(1+\norm{B^k})^2+2\norm{M}^2\norm{X_k}(1+2\norm{B^k})^2}$ if the denominator of the right-hand side is not $0$;
			\item any $\tau>0$ otherwise.
		\end{itemize}
		Moreover, if $\norm{B}<1$, we can take
		$$\tau<\frac{(1-\norm{B})^2}{\norm{H}^2\norm{M}^2}\cdot\frac{\left[\sin\left(\frac{\pi}{2}-3\theta_0\right)+\cos2\theta_0\right](1-\norm{B}^k)^2}{ (1-\norm{B}^k)^2+2(1-k\norm{B}^{k-1}+(k-1)\norm{B}^k)(1+\norm{B}^k)^2}.$$
	\end{lemma}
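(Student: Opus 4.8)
The plan is to treat Case 4 exactly as in its $k=1$ analogue (Lemma~\ref{k=1 n case4}), working only with the real-part equation \eqref{re-eq k>1}, but now carrying the extra terms $L_1,L_2$ that arise because $X_k\neq 0$ for $k\ge 2$. In this case $\Re(\lambda^3-\lambda^2)<0$, so the leading term of \eqref{re-eq k>1} is strictly negative; I would choose $\tau$ small enough that the two $\tau$-terms cannot compensate it, forcing the left-hand side to stay strictly negative and thereby excluding any root $\lambda$ of this type.

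First I would bound the $G$-contribution from above: using $G(Q^*,Q)\ge 0$ and $G(P^*,P)\le(\norm{H}\norm{T_k}\norm{M}p)^2\norm{y}^2$, one gets $G_1=G(P^*,P)-G(Q^*,Q)\le(\norm{H}\norm{T_k}\norm{M}p)^2\norm{y}^2$. For the $L$-contribution I would invoke the general estimate prepared in the proof of Proposition~\ref{tau n k>1 cpl}, specialized to $\gamma=0$, namely
\[
|\Re(\lambda-1)L_1-\Im(\lambda-1)L_2|\le|\lambda-1|\norm{X_k}\norm{M}^2(p+q_1)^2\norm{y}^2.
\]
Dividing \eqref{re-eq k>1} by $|\Re(\lambda^3-\lambda^2)|>0$, it then suffices to ensure
\[
\tau\left[\frac{(\norm{H}\norm{T_k}\norm{M}p)^2}{|\Re(\lambda^3-\lambda^2)|}+\frac{|\lambda-1|}{|\Re(\lambda^3-\lambda^2)|}\norm{X_k}\norm{M}^2(p+q_1)^2\right]<1.
\]

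The two ratios are controlled via Lemma~\ref{gamma123 n}~(iv), whose proof in fact yields the sharper $R$-dependent estimate $-\Re(\lambda^3-\lambda^2)\ge R^2\bigl[\sin(\tfrac{\pi}{2}-3\theta_0)+\cos2\theta_0\bigr]$ with $R=|\lambda|\ge 1$ (the $k=1$ proof only kept the weaker constant form after using $R^2\ge 1$). Writing $C_4=\sin(\tfrac{\pi}{2}-3\theta_0)+\cos2\theta_0$, this gives at once $|\Re(\lambda^3-\lambda^2)|^{-1}\le C_4^{-1}$ and, since $|\lambda-1|\le R+1$,
\[
\frac{|\lambda-1|}{|\Re(\lambda^3-\lambda^2)|}\le\frac{R+1}{R^2C_4}\le\frac{2}{C_4}.
\]
Substituting these two estimates reduces the sufficient condition to $\tau<\frac{C_4}{\norm{H}^2\norm{T_k}^2\norm{M}^2p^2+2\norm{X_k}\norm{M}^2(p+q_1)^2}$; recalling $p^2=(1+\norm{B^k})^2s(B^k)^4$ and $(p+q_1)^2=(1+2\norm{B^k})^2s(B^k)^4$ from \eqref{eq:pBk}--\eqref{eq:q2Bk} and clearing the common factor $s(B^k)^4$ produces the first stated bound. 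The case $\norm{B}<1$ then follows by inserting the estimates of Lemma~\ref{lem:boundsSTXk} together with $p+q_1=(1+\norm{B}^k)/(1-\norm{B}^k)$, and the degenerate ``otherwise'' case, in which $\norm{H}\norm{T_k}\norm{M}p$ and $\norm{X_k}\norm{M}(p+q_1)$ both vanish, is immediate since \eqref{re-eq k>1} then reduces to $\Re(\lambda^3-\lambda^2)\norm{y}^2=0$, impossible for $y\neq 0$.

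The main obstacle I anticipate is precisely the factor $|\lambda-1|$ multiplying the $X_k$-term: unlike the $k=1$ case (where $X_1=0$ and the real equation carries no $L$-term), this factor grows linearly in $R$, so a uniform constant lower bound on $-\Re(\lambda^3-\lambda^2)$ no longer suffices. The argument hinges on the cubic growth of $-\Re(\lambda^3-\lambda^2)$, which dominates the linear growth of $|\lambda-1|$ and keeps the ratio $|\lambda-1|/|\Re(\lambda^3-\lambda^2)|$ bounded by $2/C_4$ uniformly over $R\ge 1$; extracting the $R^2$ factor from Lemma~\ref{gamma123 n}~(iv), rather than its crude constant form, is the one point where care is required.
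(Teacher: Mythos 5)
Your proof is correct and follows essentially the same route as the paper: work only with the real-part equation \eqref{re-eq k>1}, keep $G(Q^*,Q)\ge 0$ and bound $G(P^*,P)\le(\norm{H}\norm{T_k}\norm{M}p)^2\norm{y}^2$, control the $L$-terms by $\norm{X_k}\norm{M}^2(p+q_1)^2$, and conclude via Lemma~\ref{gamma123 n}~(iv), ending with exactly the stated bounds. The only (harmless) bookkeeping difference is that the paper estimates $|\Re(\lambda-1)|\,|L_1|+|\Im(\lambda-1)|\,|L_2|$ termwise using the two ratio bounds stated in Lemma~\ref{gamma123 n}~(iv), whereas you bundle the $L$-terms into $|\lambda-1|\norm{X_k}\norm{M}^2(p+q_1)^2$ via the $\gamma=0$ Cauchy--Schwarz estimate and then recover the same factor $\frac{2}{\sin\left(\frac{\pi}{2}-3\theta_0\right)+\cos2\theta_0}$ by re-extracting the inequality $-\Re(\lambda^3-\lambda^2)\ge R^2\left[\sin\left(\frac{\pi}{2}-3\theta_0\right)+\cos2\theta_0\right]$ from inside the proof of that lemma.
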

	\begin{proof}
		Here it is enough to consider \eqref{re-eq k>1}. By the properties of $G$
		$$G(Q^*,Q)\ge 0,\quad G(P^*,P)\le(\norm{H}\norm{T_k}\norm{M}p)^2\norm{y}^2$$
		and Lemma \ref{gamma123 n} (iv), we see that	the left-hand side of \eqref{re-eq k>1} will be strictly negative if $\tau$ satisfies:
		$$\begin{array}{ll}
			\left[\left(\norm{H}\norm{T_k}\norm{M}p\right)^2\frac{1}{\sin\left(\frac{\pi}{2}-3\theta_0\right)+\cos 2\theta_0}
			+ \norm{X_k}\norm{M}^2(p+q_1)^2\frac{2}{\sin\left(\frac{\pi}{2}-3\theta_0\right)+\cos2\theta_0}\right]\tau&<1.
		\end{array}	
		$$		
		Definitions \eqref{eq:pBk}, \eqref{eq:q1Bk} of $p, q_1$ lead to the final result. 
		Finally, the conclusion in the case $0<\norm{B}<1$ can be obtained by Lemma \ref{lem:boundsSTXk}.
	\end{proof}
	
	Similarly, with the help of Lemma \ref{gamma123 n+1}, we prove for the $k$-step one-shot method the analogue of Proposition~\ref{tau n k>1 cpl}. In particular, note that here just three cases of $\lambda$ need to be considered, because the analogue of the fourth one is excluded by Lemma \ref{gamma123 n+1} (iv).
	
	\begin{proposition}[$k$-step one-shot method]\label{tau n+1 k>1 cpl}
		$\exists\tau>0$ sufficiently small such that equation \eqref{eq-eigen n+1} admits no solution $\lambda\in\C\backslash\R$, $|\lambda|\ge 1$.  
		In particular, if $\norm{B}<1$, given any $\delta_0>0$ and $0<\theta_0<\frac{\pi}{4}$, take
		\[
		\tau < \frac{\min\{\psi_1(k,b), \; \psi_2(k,b), \; \psi_3(k,b)\}}{\norm{H}^2\norm{M}^2}
		\]
		where
		\[
		\psi_1(k,b)=\frac{(1-b)^2(1-b^k)^2}{4b^{2k}+\sqrt{2}(1-kb^{k-1}+(k-1)b^k)(1+b^k)^2}
		\]
		\[
		\psi_2(k,b)=\frac{(1-b)^2(1-b^k)^2}{\Bigl[ \frac{1}{2\sin(\theta_0/2)}(1-b^k)^2+\sqrt{2}(1-kb^{k-1}+(k-1)b^k)\Bigr](1+b^k)^2}
		\]
		\[
		\psi_3(k,b)= \frac{(1-b)^2(1-b^k)^2}{ \frac{2c\sin(\theta_0/2)}{\delta_0}b^{2k} + (1-kb^{k-1}+(k-1)b^k) \Bigl[ \frac{\sqrt{c}}{\delta_0}(1+b^{2k}) +2\max\Bigl(\frac{\sqrt{c}}{\delta_0},\frac{\sqrt{c}}{\cos2\theta_0}\Bigr)b^k \Bigr] }
		\]
		and $c=\frac{1+2\delta_0\sin\frac{3\theta_0}{2}+\delta_0^2}{\cos^2\frac{3\theta_0}{2}}$. 
	\end{proposition}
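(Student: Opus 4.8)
The plan is to follow, essentially verbatim, the two-step structure of the proof of Proposition~\ref{tau n k>1 cpl}, the only structural change being that the leading polynomial factor in \eqref{eq-eigen n+1} is $\lambda(\lambda-1)$ rather than $\lambda^2(\lambda-1)$. In \textbf{Step~1} I would write $\lambda=R(\cos\theta+\ic\sin\theta)$ with $R=|\lambda|\ge 1$ and $\theta\in(-\pi,\pi)$, apply Lemma~\ref{decompq} to $T=B^k$ to obtain $(I-B^k/\lambda)^{-1}=P+\ic Q$ together with the bounds \eqref{eq:pBk}, \eqref{eq:q1Bk}, \eqref{eq:q2Bk} on $\norm{P}$ and $\norm{Q}$, and rewrite \eqref{eq-eigen n+1} exactly as in \eqref{ready-split n k>1} but with $\lambda^2(\lambda-1)$ replaced by $\lambda(\lambda-1)$, using the same bilinear forms $G$ and $L$. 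Taking real and imaginary parts then yields the analogues of \eqref{re-eq k>1} and \eqref{im-eq k>1} in which $\Re(\lambda^3-\lambda^2)$ and $\Im(\lambda^3-\lambda^2)$ are replaced by $\Re(\lambda^2-\lambda)$ and $\Im(\lambda^2-\lambda)$, while the terms $\tau G_1$, $\tau G_2$ and the $L_1,L_2$ contributions are left untouched.

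In \textbf{Step~2} I would introduce $\gamma=\gamma(\lambda)\in\R$ as supplied by Lemma~\ref{gamma123 n+1}, multiply the imaginary-part equation by $\gamma$ and add it to the real-part equation, obtaining the exact analogue of \eqref{im-gamma-re k>1} with $\lambda^3-\lambda^2$ replaced by $\lambda^2-\lambda$. All the auxiliary estimates prepared for Proposition~\ref{tau n k>1 cpl} carry over unchanged, because $G$, $L$ and the matrices $P,Q$ are the same objects: in particular $G(Q^*,Q)\le(\norm{H}\norm{T_k}\norm{M}q_2|\sin\theta|)^2\norm{y}^2$, $|L_1|\le\norm{X_k}\norm{M}^2(p^2+q_1^2)\norm{y}^2$, $|L_2|\le 2\norm{X_k}\norm{M}^2pq_1\norm{y}^2$, together with the Cauchy--Schwarz bounds $|\Re(\lambda-1)+\gamma\Im(\lambda-1)|,\,|\gamma\Re(\lambda-1)-\Im(\lambda-1)|\le\sqrt{1+\gamma^2}\,|\lambda-1|$.

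I would then split into the three cases dictated by Lemma~\ref{gamma123 n+1}, namely Case~1 ($\Re(\lambda^2-\lambda)\ge 0$), Case~2 ($\Re(\lambda^2-\lambda)<0$ with $\theta\in[\theta_0,\pi-\theta_0]\cup[-\pi+\theta_0,-\theta_0]$) and Case~3 ($\Re(\lambda^2-\lambda)<0$ with $\theta\in(-\theta_0,\theta_0)$), for fixed $0<\theta_0<\frac{\pi}{4}$. The key simplification relative to the shifted method is that the fourth region $\theta\in(\pi-\theta_0,\pi)\cup(-\pi,-\pi+\theta_0)$ disappears: for $|\lambda|\ge 1$ and $\theta$ near $\pm\pi$ one has $\Re(\lambda^2-\lambda)=R^2\cos2\theta-R\cos\theta\ge 0$, so such $\lambda$ already lie in Case~1, which is precisely the content of Lemma~\ref{gamma123 n+1}~(iv). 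In each of the three cases Lemma~\ref{gamma123 n+1} furnishes a lower bound for $\Re(\lambda^2-\lambda)+\gamma\Im(\lambda^2-\lambda)$, which lets me pick $\tau$ small enough (mirroring Lemmas~\ref{k>1 n case1}--\ref{k>1 n case3}) so that the combined left-hand side is strictly positive in Cases~1 and~3 and strictly negative in Case~2, contradicting that it vanishes; finally Lemma~\ref{lem:boundsSTXk} turns the resulting $s(B^k),\norm{T_k},\norm{X_k}$-dependent thresholds into explicit functions of $b=\norm{B}$, yielding $\psi_1,\psi_2,\psi_3$. A useful consistency check here is that the Case~1 and Case~2 bounds coincide with those of the shifted method, $\psi_1=\chi_1$ and $\psi_2=\chi_2$, since there $\gamma^2=1$ and the relevant angular lower bounds are identical; only the Case~3 threshold genuinely changes.

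The computations are routine once the case bounds of Lemma~\ref{gamma123 n+1} are in hand, so the main obstacle lies entirely in that lemma and its interface with the present proof: I must verify that replacing $\lambda^3-\lambda^2$ by $\lambda^2-\lambda$ still admits a choice of $\gamma(\lambda)$ producing a strictly signed combination on each region, and in particular that the fourth case genuinely vanishes for all $|\lambda|\ge 1$. This is where the geometry truly differs from the shifted case — the admissible half-angle enlarges from $\theta_0<\frac{\pi}{6}$ to $\theta_0<\frac{\pi}{4}$, and the constant $c=\frac{1+2\delta_0\sin\frac{3\theta_0}{2}+\delta_0^2}{\cos^2\frac{3\theta_0}{2}}$ together with the $\cos2\theta_0$ appearing in $\psi_3$ (in place of the $\frac{5\theta_0}{2}$ and $\cos3\theta_0$ of $\chi_3$) reflect the lower degree of the polynomial. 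Beyond confirming these angular estimates, all that remains is the bookkeeping of assembling the three thresholds into the stated $\psi_i(k,b)$.
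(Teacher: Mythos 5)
Your proposal is correct and follows exactly the route the paper intends: the paper itself proves Proposition~\ref{tau n+1 k>1 cpl} only by remarking that one repeats the proof of Proposition~\ref{tau n k>1 cpl} with $\lambda^2(\lambda-1)$ replaced by $\lambda(\lambda-1)$, Lemma~\ref{gamma123 n} replaced by Lemma~\ref{gamma123 n+1}, and Case~4 eliminated via Lemma~\ref{gamma123 n+1}~(iv), which is precisely what you spell out. Your consistency checks (that $\psi_1=\chi_1$ and $\psi_2=\chi_2$ because the Case~1 and Case~2 lower bounds $|\lambda-1|\ge 2|\sin(\theta/2)|$ and $|\lambda-1|\ge 2\sin(\theta_0/2)$ are unchanged, while only the Case~3 constants $c$ and $\cos 2\theta_0$ differ) are accurate and match the stated bounds.
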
	
	
	\subsection{Final result ($k\ge 2$)}
	Considering Remark~\ref{rmk:B=0,k>1}, and taking the minimum between the bound in Proposition~\ref{tau n k>1 real} for real eigenvalues and the bound in Proposition~\ref{tau n k>1 cpl} for complex eigenvalues, we finally obtain a sufficient condition on the descent step $\tau$ to ensure convergence of the shifted multi-step one-shot method. 
	\begin{theorem}
		[Convergence of shifted $k$-step one-shot, $k\ge 2$]
		\label{th:tau n k>1 all}
		Under assumption \eqref{hypo}, the shifted $k$-step one-shot method, $k\ge 2$, converges for sufficiently small $\tau$.  In particular, for $\norm{B}<1$, it is enough to take
		$$\tau<\frac{\chi(k,\norm{B})}{\norm{H}^2\norm{M}^2},$$ 
		where $\chi(k,\norm{B})$ is an explicit function of $k$ and $\norm{B}$. 
	\end{theorem}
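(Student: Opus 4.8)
The plan is to assemble the preceding propositions into a single bound on $\tau$. Recall that the shifted $k$-step one-shot method converges if and only if the spectral radius of the block iteration matrix in \eqref{itermat n} is strictly less than $1$, that is, if and only if that matrix has no eigenvalue $\lambda$ with $|\lambda|\ge 1$. Thus it suffices to show that, for $\tau$ small enough, no such $\lambda$ exists.

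First I would dispose of eigenvalues lying in $\mathrm{Spec}(B^k)$: since $\rho(B)<1$ we have $\rho(B^k)<1$, so every $\lambda\in\mathrm{Spec}(B^k)$ already satisfies $|\lambda|<1$ and poses no threat. Consequently any eigenvalue $\lambda$ of the block matrix with $|\lambda|\ge 1$ must satisfy $\lambda\notin\mathrm{Spec}(B^k)$, and therefore obeys the eigenvalue equation \eqref{eq-eigen n} of Proposition~\ref{prop:eq-eigen shift-k-shot} for some $y\neq 0$. It then remains to rule out solutions of \eqref{eq-eigen n} with $|\lambda|\ge 1$, by splitting into three exhaustive sub-cases: $\lambda=1$, $\lambda\in\R\setminus\{1\}$ with $|\lambda|\ge1$, and $\lambda\in\C\setminus\R$ with $|\lambda|\ge1$.

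The case $\lambda=1$ is excluded outright by Proposition~\ref{prop:eq-eigen shift-k-shot}(ii), independently of $\tau$. For the real eigenvalues with $\lambda\neq1$, Proposition~\ref{tau n k>1 real} gives an explicit upper bound on $\tau$ below which \eqref{eq-eigen n} has no such solution; for the genuinely complex eigenvalues, Proposition~\ref{tau n k>1 cpl} gives another explicit upper bound, namely the minimum of the four quantities $\chi_i(k,\norm{B})$, $i=1,\dots,4$. Taking $\tau$ strictly smaller than both of these bounds simultaneously excludes every eigenvalue with $|\lambda|\ge 1$, which yields $\rho<1$ and hence convergence. Setting $\chi(k,\norm{B})$ to be the minimum of the real-eigenvalue bound of Proposition~\ref{tau n k>1 real} and the complex-eigenvalue bound of Proposition~\ref{tau n k>1 cpl} produces the stated explicit function of $k$ and $\norm{B}$; the degenerate case $B=0$ is covered separately by Remark~\ref{rmk:B=0,k>1}, where the scheme reduces to shifted gradient descent with bound \eqref{best-tau-shifted-gd}.

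Since every analytical difficulty has already been absorbed into the component propositions, I do not expect a genuine obstacle here. The only point requiring care is the bookkeeping that verifies the three sub-cases, together with the $\mathrm{Spec}(B^k)$ observation, truly exhaust all possibilities for $|\lambda|\ge 1$, so that no eigenvalue escapes the argument.
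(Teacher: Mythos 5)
Your proposal is correct and follows essentially the same route as the paper: it combines Remark~\ref{rmk:B=0,k>1} (for $B=0$), Proposition~\ref{tau n k>1 real} (real eigenvalues) and Proposition~\ref{tau n k>1 cpl} (complex eigenvalues), taking the minimum of the bounds on $\tau$. The additional bookkeeping you spell out --- that eigenvalues in $\mathrm{Spec}(B^k)$ already satisfy $|\lambda|<1$, that $\lambda=1$ is excluded by Proposition~\ref{prop:eq-eigen shift-k-shot}(ii), and that the three sub-cases are exhaustive --- is exactly what the paper leaves implicit, so no gap remains.
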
	
	
	Similarly, by combining Remark \ref{rmk:B=0,k>1}, Propositions \ref{tau n+1 k>1 real} and \ref{tau n+1 k>1 cpl}, we obtain a sufficient condition on the descent step $\tau$ to ensure convergence of the multi-step one-shot method. 		
	\begin{theorem}[Convergence of $k$-step one-shot, $k\ge 2$]
		\label{th:tau n+1 k>1 all}
		Under assumption \eqref{hypo}, the $k$-step one-shot method, $k\ge 2$, converges for sufficiently small $\tau$. In particular, for $\norm{B}<1$, it is enough to take
		$$\tau<\frac{\psi(k,\norm{B})}{\norm{H}^2\norm{M}^2},$$
		where $\psi(k,\norm{B})$ is an explicit function of $k$ and $\norm{B}$. 
	\end{theorem}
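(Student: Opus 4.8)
The plan is to obtain the theorem purely by synthesis, since the analytical heavy lifting is already contained in the preceding propositions. First I would recall the convergence criterion stated at the start of Section~\ref{sec:eigen-eq}: the $k$-step one-shot iteration, whose error dynamics is governed by the block matrix in \eqref{itermat n+1}, converges for every initial guess if and only if the spectral radius of that matrix is strictly less than $1$, i.e.\ the matrix has no eigenvalue $\lambda$ with $|\lambda|\ge 1$. The whole task therefore reduces to ruling out such eigenvalues for $\tau$ in a suitable range.

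Next I would partition the candidate eigenvalues and dispatch each class. Any $\lambda\in\mathrm{Spec}(B^k)$ satisfies $|\lambda|<1$ because $\rho(B)<1$ (the Remark following Proposition~\ref{prop:eq-eigen k-shot}), so these never violate the bound. For $\lambda\notin\mathrm{Spec}(B^k)$ with $|\lambda|\ge 1$, Proposition~\ref{prop:eq-eigen k-shot}(ii) already excludes $\lambda=1$, and since $|\lambda|\ge 1$ forces $\lambda\neq 0$, the eigenvalue relation \eqref{ori-eq-eigen n+1} may be recast in the form \eqref{eq-eigen n+1}. It then only remains to show that \eqref{eq-eigen n+1} admits no root with $|\lambda|\ge 1$, $\lambda\neq 1$, which I would split into real roots and genuinely complex roots. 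For $\lambda\in\R$, $\lambda\neq 1$, $|\lambda|\ge 1$, Proposition~\ref{tau n+1 k>1 real} gives an explicit threshold on $\tau$ (its first bullet covering general $B$, its last display covering $\norm{B}<1$) below which no such root exists; for $\lambda\in\C\setminus\R$, $|\lambda|\ge 1$, Proposition~\ref{tau n+1 k>1 cpl} does the same, providing the three factors $\psi_1(k,\norm{B}),\psi_2(k,\norm{B}),\psi_3(k,\norm{B})$. The degenerate case $B=0$ is handled separately by Remark~\ref{rmk:B=0,k>1}: there the scheme coincides with usual gradient descent and the sharper bound \eqref{best-tau-usualgd} applies.

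Finally I would assemble the single admissible step size. Taking $\psi(k,\norm{B})$ to be the minimum of the real-eigenvalue factor from Proposition~\ref{tau n+1 k>1 real} and the three complex factors $\psi_1,\psi_2,\psi_3$ from Proposition~\ref{tau n+1 k>1 cpl}, any $\tau<\psi(k,\norm{B})/(\norm{H}^2\norm{M}^2)$ simultaneously excludes every real and complex eigenvalue with $|\lambda|\ge 1$; combined with the automatic control of $\mathrm{Spec}(B^k)$ and the exclusion of $\lambda=1$, this forces the spectral radius below $1$ and hence convergence. The general (not necessarily $\norm{B}<1$) assertion follows identically by using the ``denominator not zero'' bullets of the two propositions instead of the explicit $\norm{B}<1$ displays.

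I do not expect a genuine obstacle at the level of this theorem: the real difficulty resides in the input Proposition~\ref{tau n+1 k>1 cpl}, whose proof demands the delicate case-by-case choice of the real multiplier $\gamma$ together with the estimates \eqref{eq:pBk}--\eqref{eq:q2Bk} and Lemma~\ref{lem:boundsSTXk} on $P,Q,T_k,X_k$. Relative to that, the only care required here is bookkeeping, namely checking that the partition of $\{|\lambda|\ge 1\}$ is exhaustive so that no eigenvalue slips through, that the separate thresholds are mutually compatible, and that the minimum of the explicit factors does define a single positive function $\psi(k,\norm{B})$ valid for all $0\le\norm{B}<1$.
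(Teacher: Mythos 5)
Your proposal is correct and follows essentially the same route as the paper: Theorem~\ref{th:tau n+1 k>1 all} is obtained there precisely by combining Remark~\ref{rmk:B=0,k>1}, Proposition~\ref{tau n+1 k>1 real} (real eigenvalues) and Proposition~\ref{tau n+1 k>1 cpl} (complex eigenvalues), with $\psi(k,\norm{B})$ defined as the minimum of the resulting explicit bounds. Your additional bookkeeping — noting that eigenvalues lying in $\mathrm{Spec}(B^k)$ automatically have modulus below $1$, and that $\lambda=1$ and $\lambda=0$ are excluded before passing to \eqref{eq-eigen n+1} — only makes explicit what the paper leaves implicit.
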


	\section{Inverse problem with complex forward problem and real parameter}
\label{sec:complex_extension}

In this section we show that a linear inverse problem with associated complex forward problem and real parameter can be transformed into a linear inverse problem which matches with the real model at the beginning of Section~\ref{sec:intro-k-shot}, so that the previous theory applies.   
More precisely, here we study the state equation
$$u=Bu+M\sigma+F$$
where $u\in\C^{n_u}$, $\sigma\in\R^{n_\sigma}$, $B\in\C^{n_u\times n_u}, M\in\C^{n_u\times n_\sigma}$. We measure $Hu(\sigma)=f$ where $H\in\C^{n_f\times n_u}$ and we want to recover $\sigma$ from $f$. Using the method of least squares, we consider the cost functional
$$J(\sigma) \coloneqq \frac{1}{2}\norm{Hu(\sigma)-f}^2,$$
then by the Lagrangian technique with
$$\mathcal{L}(u,v,\sigma)=\frac{1}{2}\norm{Hu-f}^2+\Re\scalar{Bu+m\sigma+F-u,v},$$
we can define the adjoint state $p=p(\sigma)$ such that
$$p=B^*p+H^*(Hu(\sigma)-f),$$
which allows us to compute
$$\nabla J(\sigma)=\Re(M^*p).$$
By separating the real and imaginary parts of all vectors and matrices
$u=u_1+\ic u_2$, $p=p_1+\ic p_2$, $B=B_1+\ic B_2$, $M=M_1+\ic M_2$, $F=F_1+\ic F_2$, $H=H_1+\ic H_2$, $f=f_1+\ic f_2$, we can transform this inverse problem with complex forward problem into the inverse problem with real forward problem introduced at the beginning of Section \ref{sec:intro-k-shot}. Indeed, note that $B^*=B_1^*-\ic B_2^*$, $M^*=M_1^*-\ic M_2^*$, $H^*=H_1^*-\ic H_2^*$, so we have
$$\begin{cases}
	u_1+\ic u_2=(B_1+\ic B_2)(u_1+\ic u_2)+(M_1+\ic M_2)\sigma+(F_1+\ic F_2)\\
	p_1+\ic p_2=(B_1^*-\ic B_2^*)(p_1+\ic p_2)+ (H_1^*-\ic H_2^*)[(H_1+\ic H_2)(u_1+\ic u_2)-(f_1+\ic f_2)]\\
	\nabla J(\sigma)=\Re[(M_1^*-\ic M_2^*)(p_1+\ic p_2)],
\end{cases}$$
which implies
$$\begin{cases}
	u_1=B_1u_1-B_2u_2+M_1\sigma+F_1 \\
	u_2=B_2u_1+B_1u_2+M_2\sigma+F_2 \\
	p_1=B_1^*p_1+B_2^*p_2+(H_1^*H_1+H_2^*H_2)u_1-(H_2^*H_1-H_1^*H_2)u_2-(H_1^*f_1+H_2^*f_2) \\
	p_2=-B_2^*p_1+B_1^*p_2+(H_2^*H_1-H_1^*H_2)u_1+(H_1^*H_1+H_2^*H_2)u_2-(-H_2^*f_1+H_1^*f_2) \\
	\nabla J(\sigma)=M_1^*p_1+M_2^*p_2.
\end{cases}$$
By setting
$$
\tilde{u}=\begin{bmatrix}
	u_1 \\ u_2
\end{bmatrix},
\tilde{p}=\begin{bmatrix}
	p_1 \\ p_2
\end{bmatrix},
\tilde{B}=\begin{bmatrix}
	B_1 & -B_2 \\ B_2 & B_1
\end{bmatrix}, 
\tilde{M}=\begin{bmatrix}
	M_1 \\ M_2 
\end{bmatrix}, 
\tilde{F}=\begin{bmatrix}
	F_1 \\ F_2 
\end{bmatrix}, 
\tilde{H}=\begin{bmatrix}
	H_1 & -H_2\\ H_2 & H_1
\end{bmatrix},
\tilde{f}=\begin{bmatrix}
	f_1 \\ f_2
\end{bmatrix}
$$
we have 
$$\begin{cases}
	\tilde{u}=\tilde{B}\tilde{u}+\tilde{M}\sigma+\tilde{F}\\
	\tilde{p}=\tilde{B}^*\tilde{p}+\tilde{H}^*(\tilde{H}\tilde{u}-\tilde{f})\\
	\nabla J(\sigma)=\tilde{M}^*\tilde{p},
\end{cases}$$
that has the same structure as the inverse problem at the beginning of Section \ref{sec:intro-k-shot}.

Finally we finish this section by two lemmas that match the assumptions of the inverse problem with complex state variable with the assumptions of the transformed inverse problem with real state variable.

\begin{lemma}
	$\mathrm{Spec}(\tilde{B})=\mathrm{Spec}(B)\cup\overline{\mathrm{Spec(B)}}$.
\end{lemma}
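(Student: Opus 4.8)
The plan is to exhibit an explicit complex similarity transformation that block-diagonalizes $\tilde{B}$ into $\diag(B,\overline{B})$, after which the spectrum splits as the union of the spectra of the two diagonal blocks. Write $n=n_u$ and recall that $B=B_1+\ic B_2$ with $B_1,B_2$ real, so that $\overline{B}=B_1-\ic B_2$.

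First I would look for subspaces of $\C^{2n}$ invariant under $\tilde{B}$ (now viewed as a complex matrix). A short computation shows that for any $x\in\C^{n}$,
\[
\tilde{B}\begin{bmatrix} x \\ -\ic x\end{bmatrix}=\begin{bmatrix} Bx \\ -\ic Bx\end{bmatrix},
\qquad
\tilde{B}\begin{bmatrix} x \\ \ic x\end{bmatrix}=\begin{bmatrix} \overline{B}x \\ \ic\overline{B}x\end{bmatrix},
\]
so the subspaces $V_-=\{(x,-\ic x)^\intercal : x\in\C^n\}$ and $V_+=\{(x,\ic x)^\intercal : x\in\C^n\}$ are both $\tilde{B}$-invariant, with $\tilde{B}$ acting as $B$ on $V_-$ and as $\overline{B}$ on $V_+$. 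Since $\C^{2n}=V_-\oplus V_+$, collecting the corresponding columns into
\[
S=\begin{bmatrix} I & I \\ -\ic I & \ic I\end{bmatrix},
\qquad
S^{-1}=\tfrac12\begin{bmatrix} I & \ic I \\ I & -\ic I\end{bmatrix},
\]
the two identities above read $\tilde{B}S=S\,\diag(B,\overline{B})$, that is, $S^{-1}\tilde{B}S=\diag(B,\overline{B})$.

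Since similar matrices share the same spectrum, this gives $\mathrm{Spec}(\tilde{B})=\mathrm{Spec}(B)\cup\mathrm{Spec}(\overline{B})$. It then remains to identify $\mathrm{Spec}(\overline{B})=\overline{\mathrm{Spec}(B)}$, which follows from the elementary relation $\det(\overline{B}-\lambda I)=\overline{\det(B-\overline{\lambda}I)}$: thus $\lambda$ is an eigenvalue of $\overline{B}$ precisely when $\overline{\lambda}$ is an eigenvalue of $B$. Combining the two facts yields the claim. There is no genuine obstacle here: the only points requiring care are the verification of the two invariance identities (equivalently, that $S$ performs the block-diagonalization) and the conjugate-determinant fact, both of which are routine once the change of basis $S$ has been guessed correctly.
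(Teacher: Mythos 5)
Your proof is correct and follows essentially the same route as the paper: both exhibit the explicit similarity that block-diagonalizes $\tilde{B}$ into $\mathrm{diag}(B,\overline{B})$ (your $S$ is the paper's matrix $C^{-1}$ with the block columns swapped) and then invoke $\mathrm{Spec}(\overline{B})=\overline{\mathrm{Spec}(B)}$. The only differences are cosmetic: you motivate the change of basis via invariant subspaces and spell out the conjugate-determinant fact, which the paper simply states.
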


\begin{proof}
	By writing 
	\begin{equation}\label{eq:B1B2}
		\tilde{B}=\begin{bmatrix}
			B_1 & -B_2 \\
			B_2 & B_1
		\end{bmatrix}
		=\underbrace{
			\begin{bmatrix}
				I & I \\
				\ic I & -\ic I
		\end{bmatrix}}_{C^{-1}}
		\begin{bmatrix}
			\overline{B} & 0 \\
			0 & B 
		\end{bmatrix}
		\underbrace{
			\begin{bmatrix}
				\frac{1}{2}I & -\frac{\ic}{2}I \\
				\frac{1}{2}I & \frac{i}{2}I
		\end{bmatrix}}_{C},
	\end{equation}
	we find that $\det(\tilde{B}-\lambda I)=\det(\overline{B}-\lambda I)\det(B-\lambda I)$. The conclusion is then deduced thanks to the fact that $\mathrm{Spec}(\overline{B})=\overline{\mathrm{Spec(B)}}$.
\end{proof}

\begin{lemma}
	Assume that $\rho(B)<1$, and $H(I-B)^{-1}M$ is injective. Then $\rho(\tilde{B})<1$, and $\tilde{H}(\tilde{I}-\tilde{B})^{-1}\tilde{M}$ is injective where $\tilde{I}\in\R^{2n_u\times 2n_u}$ is the identity matrix.
\end{lemma}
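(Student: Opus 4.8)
The plan is to reduce everything to the block-diagonalization \eqref{eq:B1B2} established in the preceding lemma. The bound $\rho(\tilde{B})<1$ is immediate: by that lemma $\mathrm{Spec}(\tilde{B})=\mathrm{Spec}(B)\cup\overline{\mathrm{Spec}(B)}$, and since complex conjugation preserves modulus, every eigenvalue of $\tilde{B}$ shares its modulus with an eigenvalue of $B$, whence $\rho(\tilde{B})=\rho(B)<1$. In particular $1\notin\mathrm{Spec}(\tilde{B})$, so $\tilde{I}-\tilde{B}$ is invertible and the expression in the statement is well-defined.

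For the injectivity of $\tilde{H}(\tilde{I}-\tilde{B})^{-1}\tilde{M}$, the key observation is that $\tilde{B}$, $\tilde{H}$ and $\tilde{M}$ are all diagonalized by the \emph{same} family of change-of-basis matrices. Let $C$ be the matrix of \eqref{eq:B1B2} (with $n_u\times n_u$ identity blocks) and let $C_f$ denote the analogous matrix with $n_f\times n_f$ identity blocks. A direct computation, entirely parallel to \eqref{eq:B1B2}, gives
$$\tilde{H}=C_f^{-1}\begin{bmatrix}\overline{H} & 0\\ 0 & H\end{bmatrix}C,\qquad C\tilde{M}=\frac{1}{2}\begin{bmatrix}\overline{M}\\ M\end{bmatrix}.$$
Then I would simply multiply out the product. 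From \eqref{eq:B1B2} one has $(\tilde{I}-\tilde{B})^{-1}=C^{-1}\diag\bigl((I-\overline{B})^{-1},(I-B)^{-1}\bigr)C$, so the inner factors $C\,C^{-1}$ cancel and, after inserting $C\tilde{M}$, the whole expression collapses to
$$\tilde{H}(\tilde{I}-\tilde{B})^{-1}\tilde{M}=\frac{1}{2}C_f^{-1}\begin{bmatrix}\overline{H}(I-\overline{B})^{-1}\overline{M}\\ H(I-B)^{-1}M\end{bmatrix}=\begin{bmatrix}\Re K\\ \Im K\end{bmatrix},\qquad K\coloneqq H(I-B)^{-1}M,$$
where the last equality uses $\overline{H}(I-\overline{B})^{-1}\overline{M}=\overline{K}$ together with the explicit form $C_f^{-1}=\left[\begin{smallmatrix}I & I\\ \ic I & -\ic I\end{smallmatrix}\right]$, which sends $\tfrac12[\overline{K};K]$ to its real and imaginary parts.

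Finally I would read off injectivity. If $x\in\R^{n_\sigma}$ is annihilated by this real matrix, then $(\Re K)x=0$ and $(\Im K)x=0$; since $x$ is real this yields $Kx=(\Re K)x+\ic(\Im K)x=0$, and the assumed injectivity of $K=H(I-B)^{-1}M$ forces $x=0$. The only delicate point is the bookkeeping of the two distinct block sizes — $n_u$ for the state variable versus $n_f$ for the measurement — in the change-of-basis matrices $C$ and $C_f$, and verifying the conjugation identity $\overline{H}(I-\overline{B})^{-1}\overline{M}=\overline{K}$; this identity is precisely what makes the two block rows complex conjugates, so that $C_f^{-1}$ turns them into $\Re K$ and $\Im K$, and no genuine difficulty arises beyond these routine checks.
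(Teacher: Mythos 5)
Your proof is correct and follows essentially the same route as the paper: both arguments rest on the block diagonalization \eqref{eq:B1B2}, reduce $\tilde{H}(\tilde{I}-\tilde{B})^{-1}\tilde{M}$ to an expression in $K=H(I-B)^{-1}M$ and $\overline{K}$, and conclude from the injectivity of $K$ (the paper by summing the two block equations, you by writing $Kx=(\Re K)x+\ic(\Im K)x$). Your intertwining identities $\tilde{H}=C_f^{-1}\,\mathrm{diag}(\overline{H},H)\,C$ and $C\tilde{M}=\frac{1}{2}[\overline{M};M]$ merely reorganize the paper's direct block multiplication into a telescoping product, identifying the result as $[\Re K;\Im K]$ rather than the paper's equivalent $\frac{1}{2}[\overline{K}+K;\,\ic\overline{K}-\ic K]$ --- a tidier but not genuinely different computation.
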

\begin{proof}
	The previous lemma says that $\rho(\tilde{B})=\rho(B)<1$. Therefore $(\tilde{I}-\tilde{B})^{-1}$ is well-defined and thanks to \eqref{eq:B1B2},
	$$\begin{array}{ll}
		(\tilde{I}-\tilde{B})^{-1}&=\underbrace{
			\begin{bmatrix}
				I & I \\
				\ic I & -\ic I
		\end{bmatrix}}_{C^{-1}}\begin{bmatrix}
			(I-\overline{B})^{-1} & 0 \\
			0 & (I-B)^{-1} 
		\end{bmatrix}
		\underbrace{\begin{bmatrix}
				\frac{1}{2}I & -\frac{\ic}{2}I \\
				\frac{1}{2}I & \frac{i}{2}I
		\end{bmatrix}}_{C} \\
		&=\frac{1}{2}\begin{bmatrix}
			(I-\overline{B})^{-1}+(I-B)^{-1}  & -\ic(I-\overline{B})^{-1}+\ic (I-B)^{-1}  \\
			\ic(I-\overline{B})^{-1}-\ic(I-B)^{-1}  & (I-\overline{B})^{-1}+(I-B)^{-1} 
		\end{bmatrix}.
	\end{array}$$
	Now we have
	$$\begin{array}{ll}
		\tilde{H}(\tilde{I}-\tilde{B})^{-1}\tilde{M}
		&=\frac{1}{2}\begin{bmatrix}
			H_1 & -H_2\\ H_2 & H_1
		\end{bmatrix}
		\begin{bmatrix}
			(I-\overline{B})^{-1}+(I-B)^{-1}  & -\ic(I-\overline{B})^{-1}+\ic (I-B)^{-1}  \\
			\ic(I-\overline{B})^{-1}-\ic(I-B)^{-1}  & (I-\overline{B})^{-1}+(I-B)^{-1} 
		\end{bmatrix}
		\begin{bmatrix}
			M_1 \\ M_2
		\end{bmatrix} \\
		&=\frac{1}{2}\begin{bmatrix}
			\overline{H}(I-\overline{B})^{-1}+H(I-B)^{-1}  & -\ic\overline{H}(I-\overline{B})^{-1}+\ic H(I-B)^{-1}  \\
			\ic\overline{H}(I-\overline{B})^{-1}-\ic H(I-B)^{-1}  & \overline{H}(I-\overline{B})^{-1}+H(I-B)^{-1} 
		\end{bmatrix} 
		\begin{bmatrix}
			M_1 \\ M_2
		\end{bmatrix} \\
		&=\frac{1}{2}\begin{bmatrix}
			\overline{H}(I-\overline{B})^{-1}\overline{M}+H(I-B)^{-1}M \\
			\ic \overline{H}(I-\overline{B})^{-1}\overline{M}-\ic H(I-B)^{-1}M
		\end{bmatrix}.
	\end{array} 
	$$ 
	Now assume that there exists $x\in\C^{n_\sigma}$ such that $H(\tilde{I}-\tilde{B})^{-1}\tilde{M}x=0$, then 
	$$
	\begin{cases}
		[\overline{H}(I-\overline{B})^{-1}\overline{M}+H(I-B)^{-1}M]x=0\\
		[\ic \overline{H}(I-\overline{B})^{-1}\overline{M}-\ic H(I-B)^{-1}M]x=0
	\end{cases}
	$$ 
	or, equivalently,
	$$
	\begin{cases}
		[H(I-\overline{B})^{-1}\overline{M}+H(I-B)^{-1}M]x=0\\
		[-\overline{H}(I-\overline{B})^{-1}\overline{M}+H(I-B)^{-1}M]x=0.
	\end{cases}
	$$ 
	By summing up these two equations we deduce that $H(I-B)^{-1}Mx=0$, then $x=0$ thanks to the injectivity of $H(I-B)^{-1}M$.
\end{proof}

	\section{Numerical experiments}
\label{sec:num-exp}

Let us introduce a toy model to illustrate numerically the performance of the different methods. Given $\Omega\subset\R^n$ an open bounded Lipschitz domain, we consider the direct problem for the linearized scattered field $u \in\Hso^2(\Omega)$ given by the Helmholtz equation
\begin{equation}
	\label{u-toymodel}
	\left\{
	\begin{array}{ll}
		\dive(\sigmatz\nabla u)+\tilde{k}^2u=\dive(\sigma\nabla u_0), &\text{in }\Omega, \\
		u=0, &\text{on }\pa\Omega,
	\end{array}
	\right.
\end{equation}
where the incident field $u_0:\Omega\to\R$ satisfies
\begin{equation}
	\label{u0-toymodel}
	\left\{
	\begin{array}{ll}
		\dive(\sigmatz\nabla u_0)+\tilde{k}^2u=0, &\text{in }\Omega, \\
		u_0=f, &\text{on }\pa \Omega
	\end{array}
	\right.
\end{equation}
with the datum $f:\partial\Omega\to\R$. Here $\sigma:\overline{\Omega}\to\R$ such that $\sigma\big|_{\pa\Omega}=0$; $\sigmatz=\sigma_0+\delta\sigmar$ is a given function with $\delta\ge 0$ and random $\sigmar$. More precisely, given $\sigmatz$ and $f$, we solve for $u_0=u_0(f)$ in \eqref{u0-toymodel}, then insert $u_0$ into \eqref{u-toymodel} to solve for $u=u(\sigma)$. The variational formulations for $u$ and $u_0$ are respectively
\begin{equation}
	\label{vf-u}
	\int_{\Omega}\sigmatz\nabla u\cdot\nabla v-\int_\Omega \tilde{k}^2uv=\int_{\Omega}\sigma\nabla u_0\cdot\nabla v, \quad\forall v\in\Hso^1_0(\Omega)\quad\text{and }u=0 \text{ on }\pa\Omega,
\end{equation}
\begin{equation}
	\label{vf-u0}
	\int_{\Omega}\sigmatz\nabla u_{0}\cdot\nabla v-\int_\Omega \tilde{k}^2uv=0, \quad\forall v\in\Hso^1_0(\Omega)\quad\text{and }u_0=f\text{ on }\pa\Omega.
\end{equation}
We are interested in the inverse problem of finding $\sigma$ from the measurement $Hu(\sigma)$ where $Hu \coloneqq \sigmatz\frac{\pa u}{\pa\nu}\big|_{\pa\Omega}$. To solve this inverse problem we use the method of least squares. Denoting by $\sigma^\text{ex}$ the exact $\sigma$ and $g=\sigmatz\frac{\pa u(\sigma^\text{ex})}{\pa\nu}\big|_{\pa\Omega}$ the corresponding measurement, we consider the cost functional $J(\sigma)=\frac{1}{2}\norm{Hu(\sigma)-g}^2_{\mathcal{L}^2(\pa\Omega)}=\frac{1}{2}\int_{\pa\Omega}(\sigmatz\frac{\pa u(\sigma)}{\pa\nu}-g)^2$. 
The Lagrangian technique allows us to compute the gradient  $\nabla_\sigma J(\sigma)=-\nabla u_0\cdot\nabla p(\sigma)$, where the adjoint state $p=p(\sigma)$ satisfies
\begin{equation}
	\label{vf-p}
	\int_{\Omega}\sigmatz\nabla p\cdot\nabla v-\int_\Omega\tilde{k}^2pv=0, \quad\forall v\in\Hso^1(\Omega)\quad\text{and }p=\left(\sigmatz\frac{\pa u(\sigma)}{\pa\nu}\bigg|_{\pa\Omega}-g\right)\text{ on }\pa\Omega.
\end{equation}

By discretizing $u$ by $\mathbb{P}^1$ finite elements on a mesh $\mathcal{T}_h^u$ of $\Omega$, and $\sigma$ by $\mathbb{P}^0$ finite elements on a coarser mesh $\mathcal{T}_h^\sigma$ of $\Omega$, the discretization of \eqref{vf-u} can be written as the linear system $A_1\vec u=A_2\vec\sigma$, where $\vec u\in\R^{n_u}$, $\vec \sigma\in\R^{n_\sigma}$. More precisely, $A_1$ and $A_2$ are respectively issued from the discretization of $\int_{\Omega}\sigmatz\nabla u\cdot\nabla v-\int_\Omega \tilde{k}^2uv$ and $\int_{\Omega}\sigma\nabla u_0\cdot\nabla v$, where the Dirichlet boundary conditions are imposed by the penalty method. To rewrite the system in the form \eqref{pbdirect}, we consider the naive splitting $A_1=A_{11}+\delta A_{12}$, where $A_{11}$ and $A_{12}$ are respectively issued from the discretization of $\int_{\Omega}\sigma_0\nabla u\cdot\nabla v-\int_\Omega \tilde{k}^2uv$ and $\int_{\Omega}\sigmar\nabla u\cdot\nabla v$. Then we get
$$\vec{u}=A_{11}^{-1}(-\delta A_{12}\vec{u}+A_2\vec{\sigma})\quad\text{ and } \vec{u}=0\text{ on }\pa\Omega$$
and
$$\vec{p}=A_{11}^{-1}\left(-\delta A_{12}\vec{p}\right)\quad\text{ and } \vec{p}=H\vec{u}-\vec{g}\text{ on }\pa\Omega$$ 
where $H\in\R^{n_f\times n_u}$ is the discretization of the above operator $H$ by abuse of notation. Choosing $\delta$ such that $\delta\norm{A_{11}^{-1}A_{12}}_2<1$,  we consider \eqref{direct-inv-prb} with $B=-\delta A_{11}^{-1}A_{12}$, $M=A_{11}^{-1}A_2$, $F=0$. The application of $A_{11}^{-1}$, which has the same size as matrix $A_1$, is done by a direct solver; more practical fixed point iterations will be investigated in the future.  

\begin{figure}[htbp]
	\centering
	\includegraphics[width=0.7\linewidth]{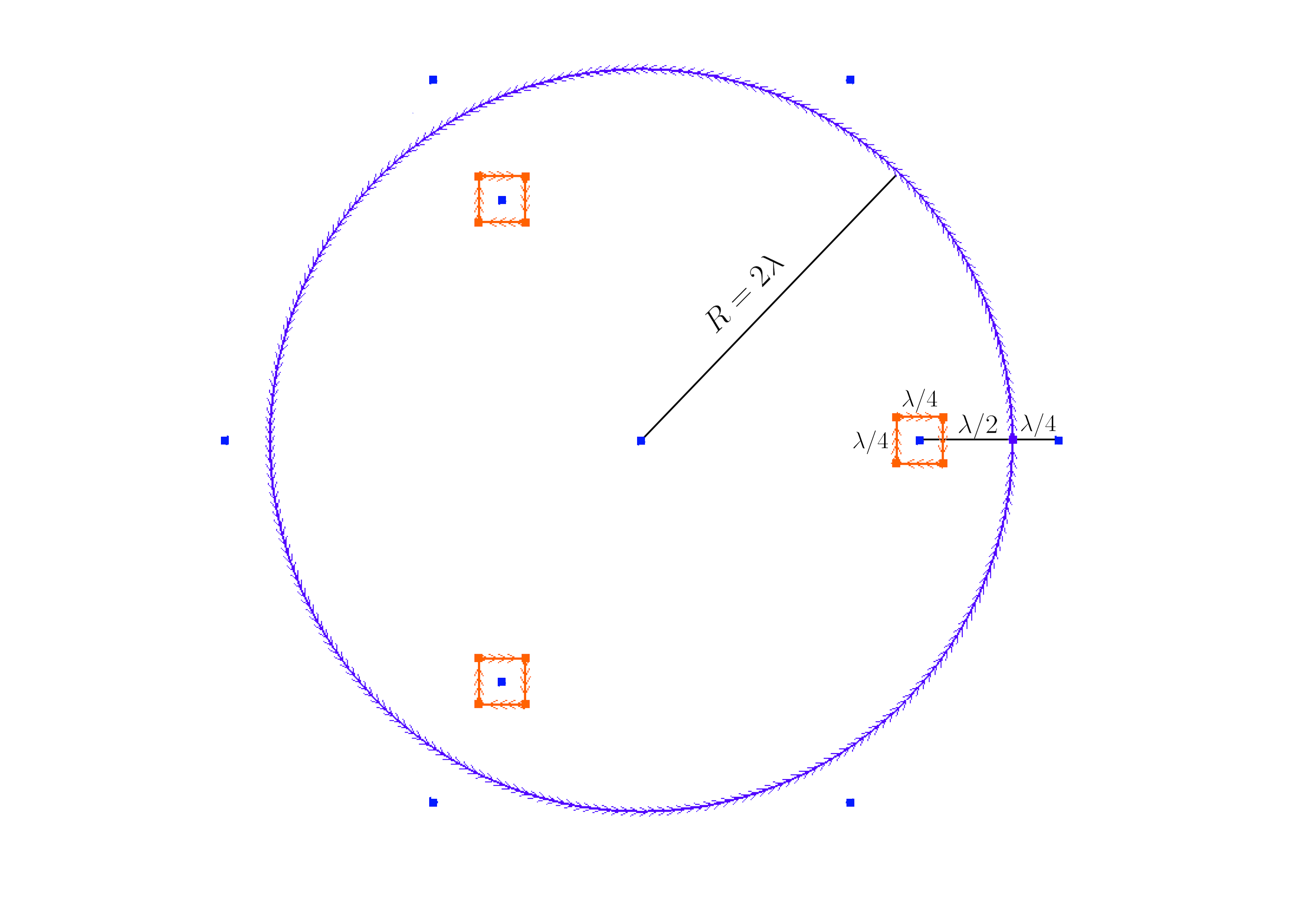}
	\caption{Domain with six source points for the numerical experiments. The unknown $\sigma$ is supported on the three squares.}
	\label{fig:domain}
\end{figure}

We then perform some numerical experiments in FreeFEM \cite{FreeFEM} with the following setting:
\begin{itemize}
	\item Wavenumber $\tilde{k}=2\pi$, $\sigma_0=1$, $ \delta=0.01$, $\sigmar$ is a random real function with range in the interval $[1,2]$.
	\item Wavelength $\lambda=\frac{2\pi}{\tilde{k}}\sqrt{\sigma_0}=1$, mesh size $h=\frac{\lambda}{20}=0.05$. The domain $\Omega$ is the disk shown in Figure \ref{fig:domain}, where the squares are the support of function $\sigma$. Here $n_u=5853$, $n_\sigma=6$. 
	
	\item We test with $6$ data $f$ given by zero-order Bessel function of the second kind centered at the points shown in Figure~\ref{fig:domain}, and the cost functional is the normalized sum of the contributions corresponding to different data.
	
	\item We take $\sigma^\text{ex}=10$ in every square and $0$ otherwise. The initial guess for the inverse problem is $12$ in every square and $0$ otherwise. 
	
	\item For the first iteration, we perform a line search to adapt the descent step $\tau$, using a direct solver for the forward and adjoint problems.  
	
	\item The stopping rule for the outer iteration is based on the relative value of the cost functional and on the relative norm of the gradient with a tolerance of $10^{-5}$.
\end{itemize}

\noindent Recall that $k$ is the number of inner iterations on the direct and adjoint problems. We are interested in two experiments. 

\begin{figure}[htbp]
	\centering
	\begin{subfigure}{0.49\columnwidth}
		\includegraphics[width=\linewidth]{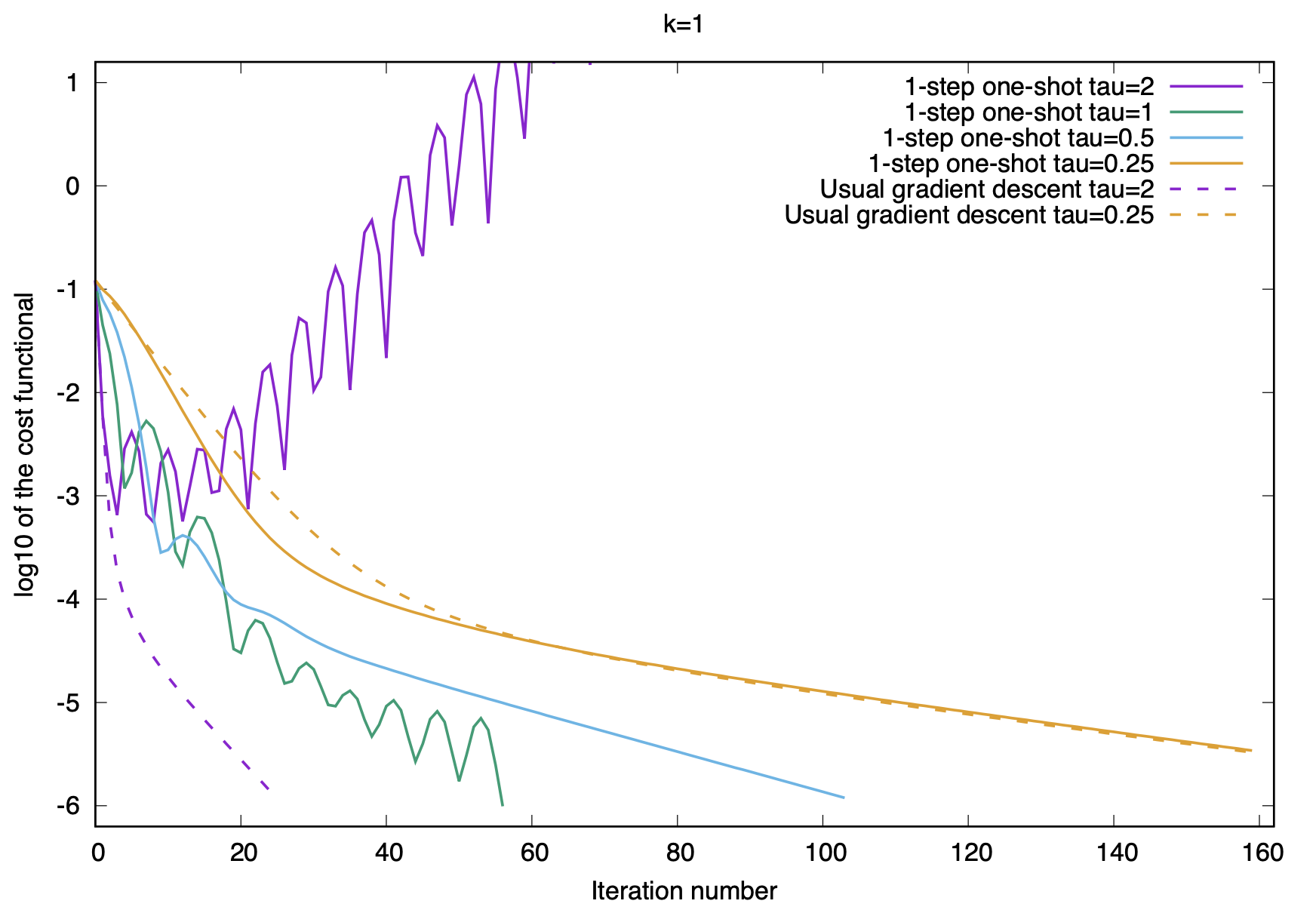}
		\caption{Convergence curves of usual gradient descent and $1$-step one-shot for different descent step $\tau$.}
		\label{fig:usual_fix_k_1}
	\end{subfigure} \hfill
	\begin{subfigure}{0.49\columnwidth}
		\includegraphics[width=\linewidth]{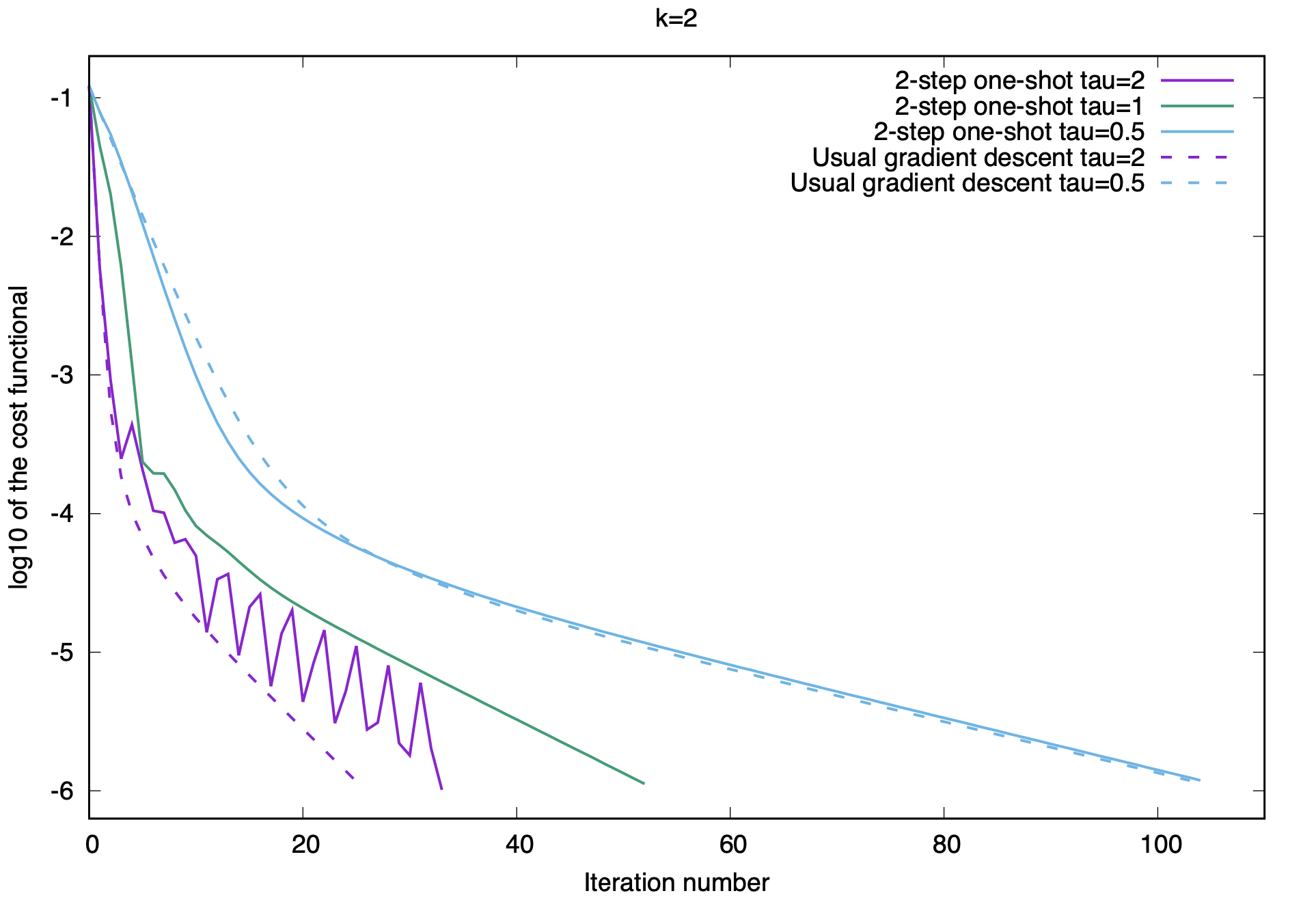}
		\caption{Convergence curves of usual gradient descent and $2$-step one-shot for different descent step $\tau$.}
		\label{fig:usual_fix_k_2}
	\end{subfigure}
	\begin{subfigure}{0.49\columnwidth}
		\includegraphics[width=\linewidth]{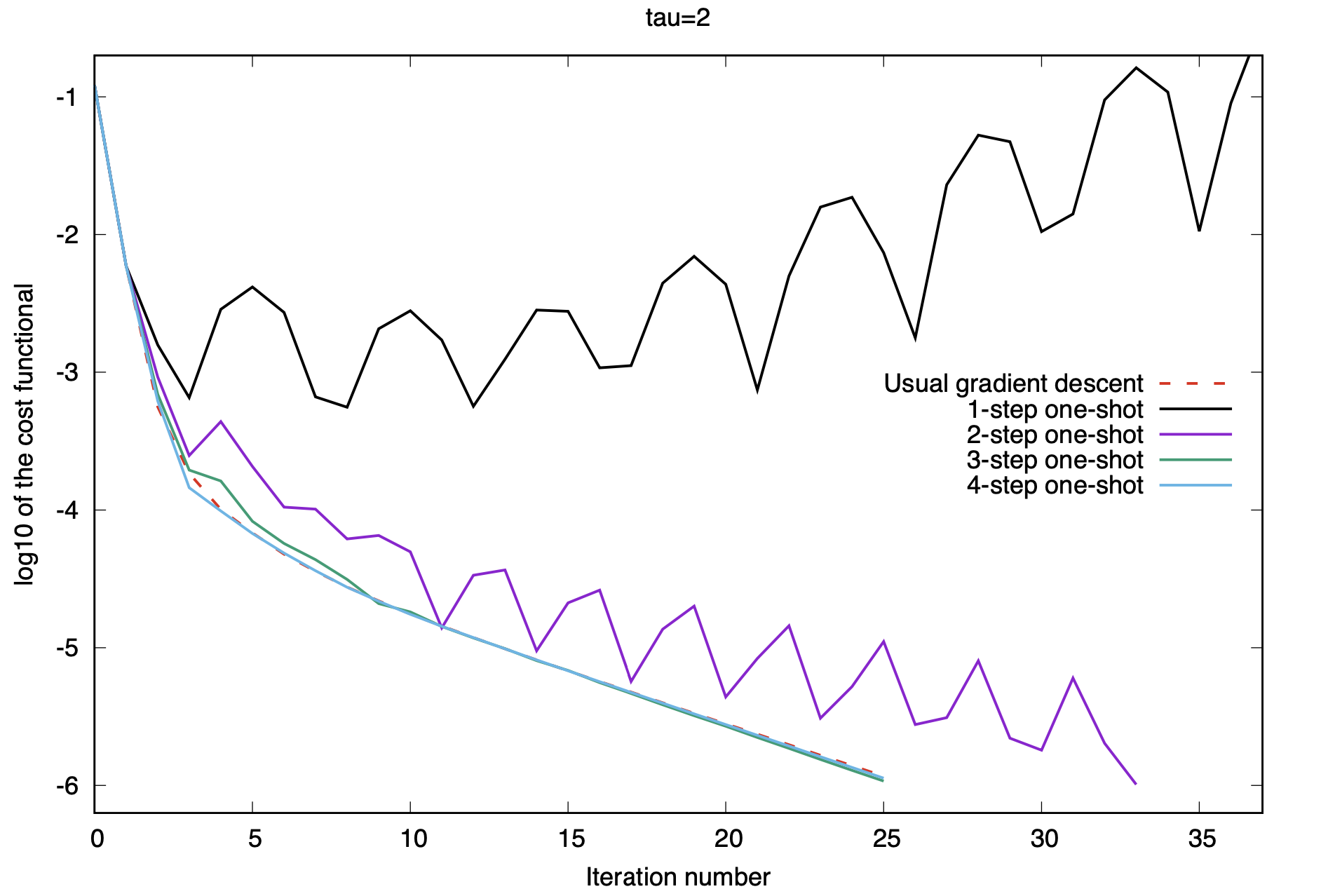}
		\caption{Convergence curves of usual gradient descent and $k$-step one-shot for different $k$ with $\tau=2$.}
		\label{fig:usual_fix_tau_2}
	\end{subfigure} \hfill
	\begin{subfigure}{0.49\columnwidth}
		\includegraphics[width=\linewidth]{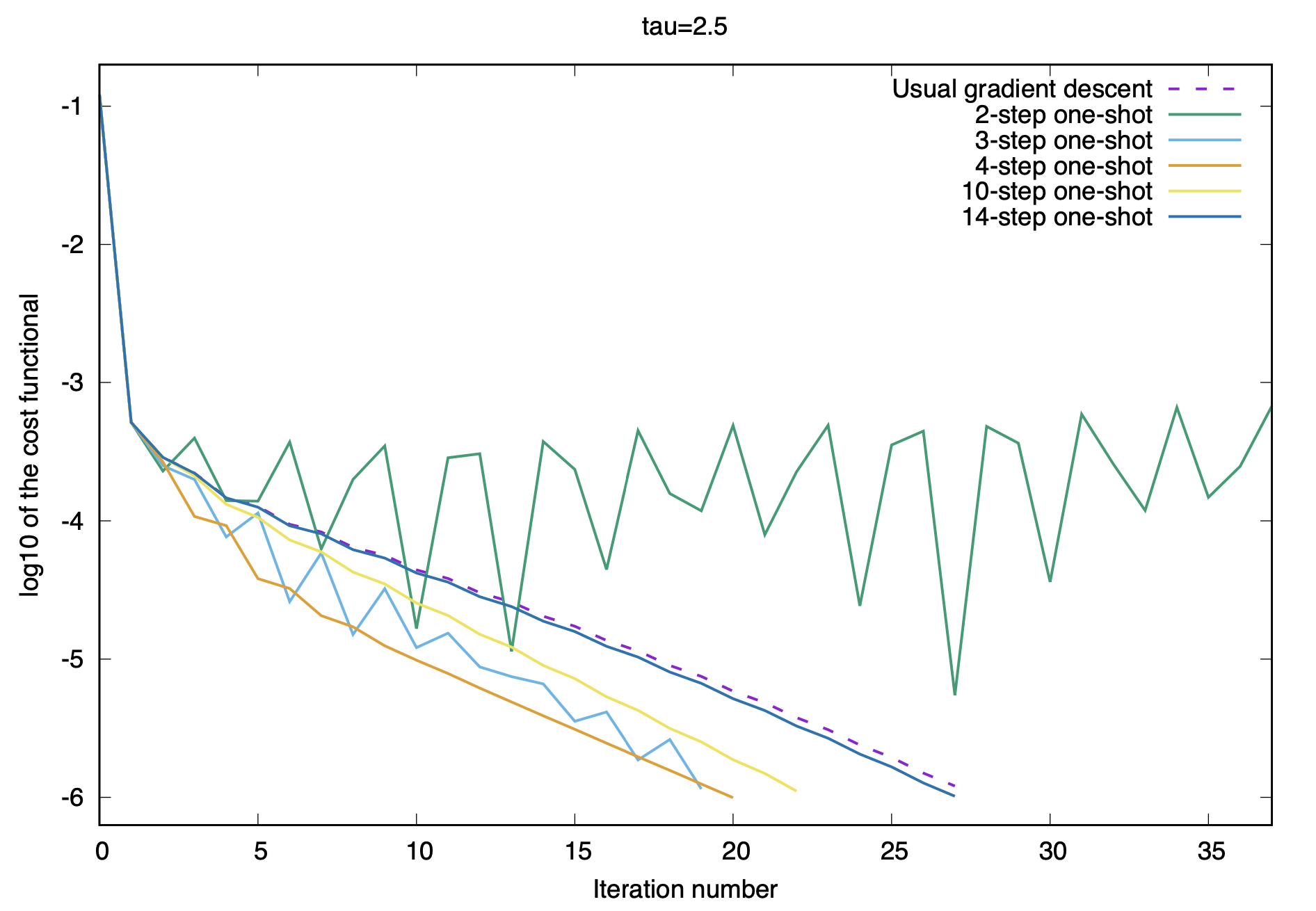}
		\caption{Convergence curves of usual gradient descent and $k$-step one-shot for different $k$ with $\tau=2.5$.}
		\label{fig:usual_fix_tau_2.5}
	\end{subfigure}
	\begin{subfigure}{0.49\columnwidth}
		\includegraphics[width=\linewidth]{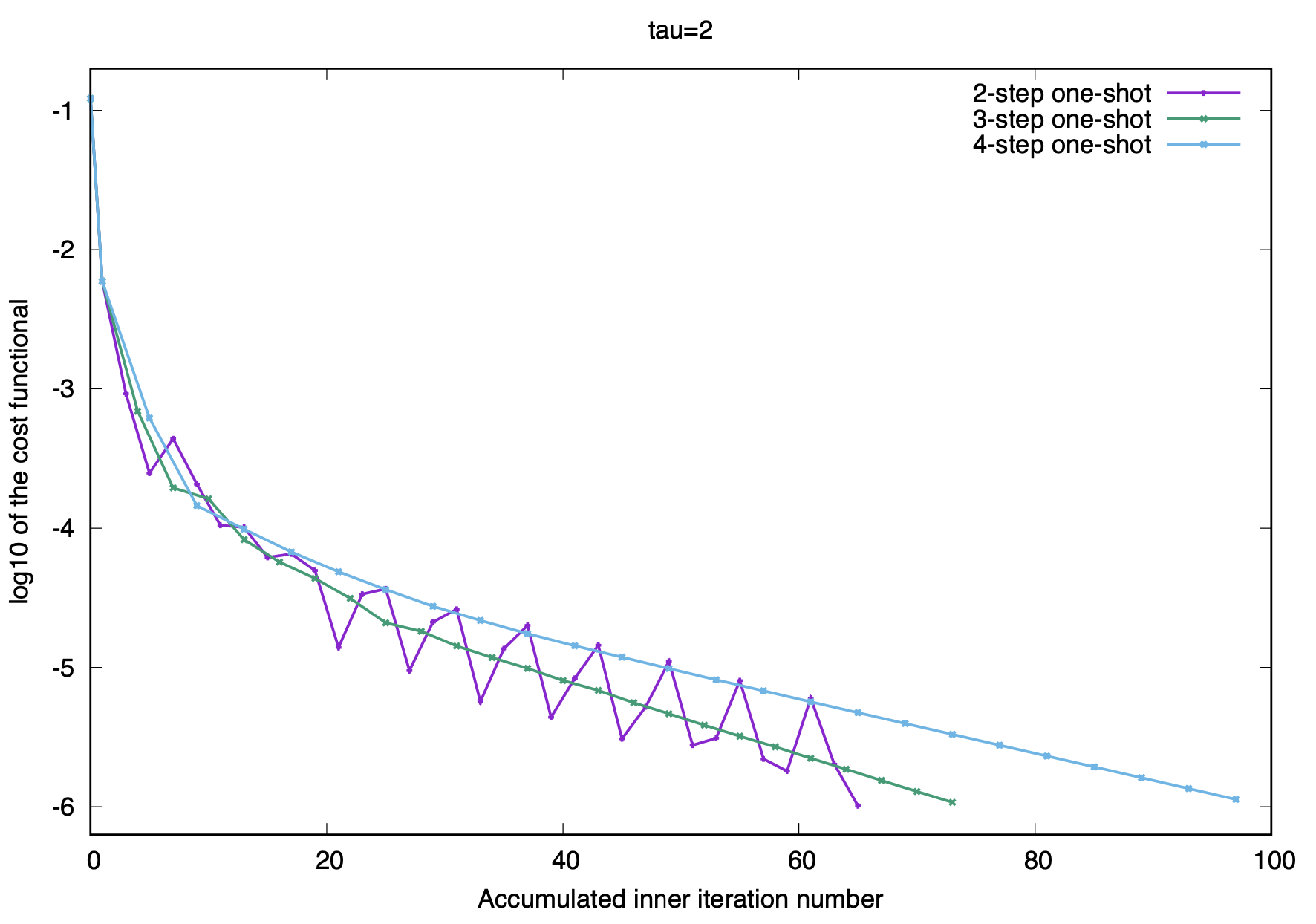}
		\caption{Convergence curves of $k$-step one-shot for different $k$ with $\tau=2$.}
		\label{fig:v2_usual_fix_tau_2}
	\end{subfigure} \hfill
	\begin{subfigure}{0.49\columnwidth}
		\includegraphics[width=\linewidth]{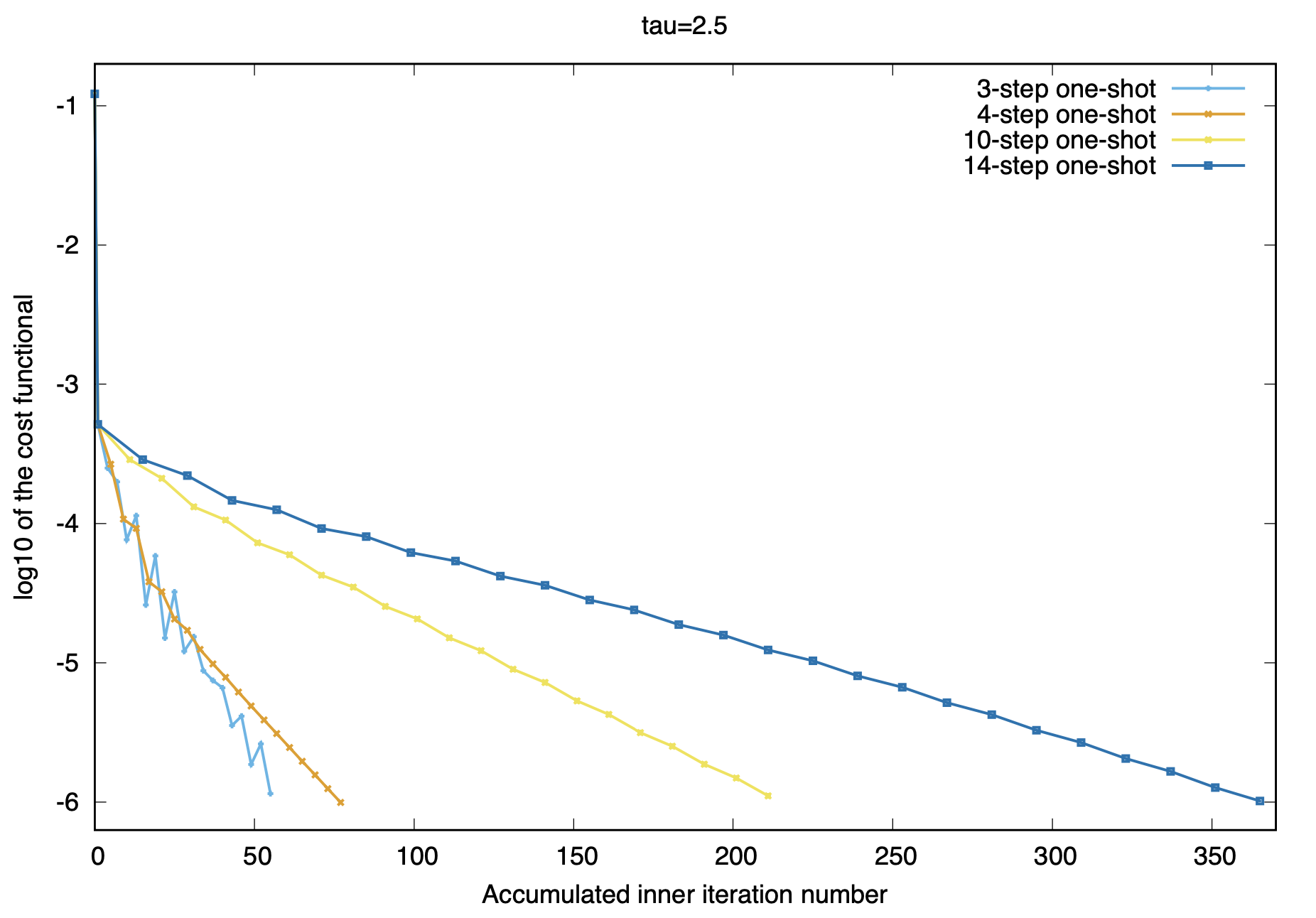}
		\caption{Convergence curves of $k$-step one-shot for different $k$ with $\tau=2.5$.}
		\label{fig:v2_usual_fix_tau_2.5}
	\end{subfigure}
	\caption{Convergence curves of usual gradient descent and $k$-step one-shot.}
	\label{fig:usual_fix_k,tau}
\end{figure}

In the first experiment, we study the dependence on the descent step $\tau$. In Figure \ref{fig:usual_fix_k_1} and \ref{fig:usual_fix_k_2} we respectively fix $k=1$ and $k=2$ and compare $k$-step one-shot methods with the usual gradient descent method. On the horizontal axis we indicate the (outer) iteration number $n$ in \eqref{usualgd} and \eqref{alg:k-shot n+1}. We can verify that for sufficiently small $\tau$, both one-shot methods converge. In particular, for $\tau=2$, while gradient descent and $2$-step one-shot converge, $1$-step one-shot diverges. Oscillations may appear on the convergence curve for certain values of $\tau$, but they gradually vanish when $\tau$ gets smaller. For sufficiently small $\tau$, the convergence curves of both one-shot methods are comparable to the one of gradient descent.

In the second experiment, we study the dependence on the number of inner iterations $k$, for fixed $\tau$. First (Figures~\ref{fig:usual_fix_tau_2}--\ref{fig:usual_fix_tau_2.5}), we investigate for which $k$ the convergence curve of $k$-step one-shot is comparable with the one of usual gradient descent. As in the previous pictures, on the horizontal axis we indicate the (outer) iteration number $n$ in \eqref{usualgd} and \eqref{alg:k-shot n+1}.
For $\tau=2$ (see Figure \ref{fig:usual_fix_tau_2}), we observe that for $k=3,4$ the convergence curves of $k$-step one-shot are close to the one of usual gradient descent. 
Note that with $3$ inner iterations the $\mathcal{L}^2$ error between $u^n$ and the exact solution to the forward problem ranges between $4.3\cdot10^{-6}$ and $0.0136$ for different $n$ in \eqref{alg:k-shot n+1}; in fact this error is rather significant at the beginning then it tends to reduce when we are closer to convergence for the parameter $\sigma$. 
Therefore incomplete inner iterations on the forward problem are enough to have good precision on the solution of the inverse problem.
In the very particular case $\tau =2.5$ (see Figure \ref{fig:usual_fix_tau_2.5}), we observe an interesting phenomenon: when $k=3,5,10$, with $k$-step one-shot the cost functional decreases even faster than with usual GD.
For bigger $k$, for example $k=14$, the convergence curve of one-shot is close to the one of usual gradient descent as expected. 
Next (Figures~\ref{fig:v2_usual_fix_tau_2}--\ref{fig:v2_usual_fix_tau_2.5}), since the overall cost of the $k$-step one-shot method increases with $k$, we indicate on the horizontal axis the accumulated inner iteration number, which sums up $k$ from an outer iteration to the next. More precisely, because at the first outer iteration we perform a step search by a direct solver, we set to $1$ the first accumulated inner iteration number; for the following outer iterations $n\ge 2$, the accumulated inner iteration number is set to $1+(n-1)k$. In Figures~\ref{fig:v2_usual_fix_tau_2}--\ref{fig:v2_usual_fix_tau_2.5} we replot the results for 
the converging $k$-step one-shot methods of Figures~\ref{fig:usual_fix_tau_2}--\ref{fig:usual_fix_tau_2.5} with respect to the accumulated inner iteration number. 
For $\tau=2$ (see Figure \ref{fig:v2_usual_fix_tau_2}), while $k=2$ presents some oscillations, quite interestingly it appears that $k=3$ gives a faster decrease of the cost functional with respect to $k=4$, at least after the first iterations. For $\tau=2.5$ (see Figure \ref{fig:v2_usual_fix_tau_2.5}) we observe that $k=3$ is enough for the decrease of the cost functional, but with some oscillations, and the considered higher $k$ appears again to give slower decrease. 
	
	\begin{figure}[htbp]
		\centering
		\begin{subfigure}{0.49\columnwidth}
			\includegraphics[width=\linewidth]{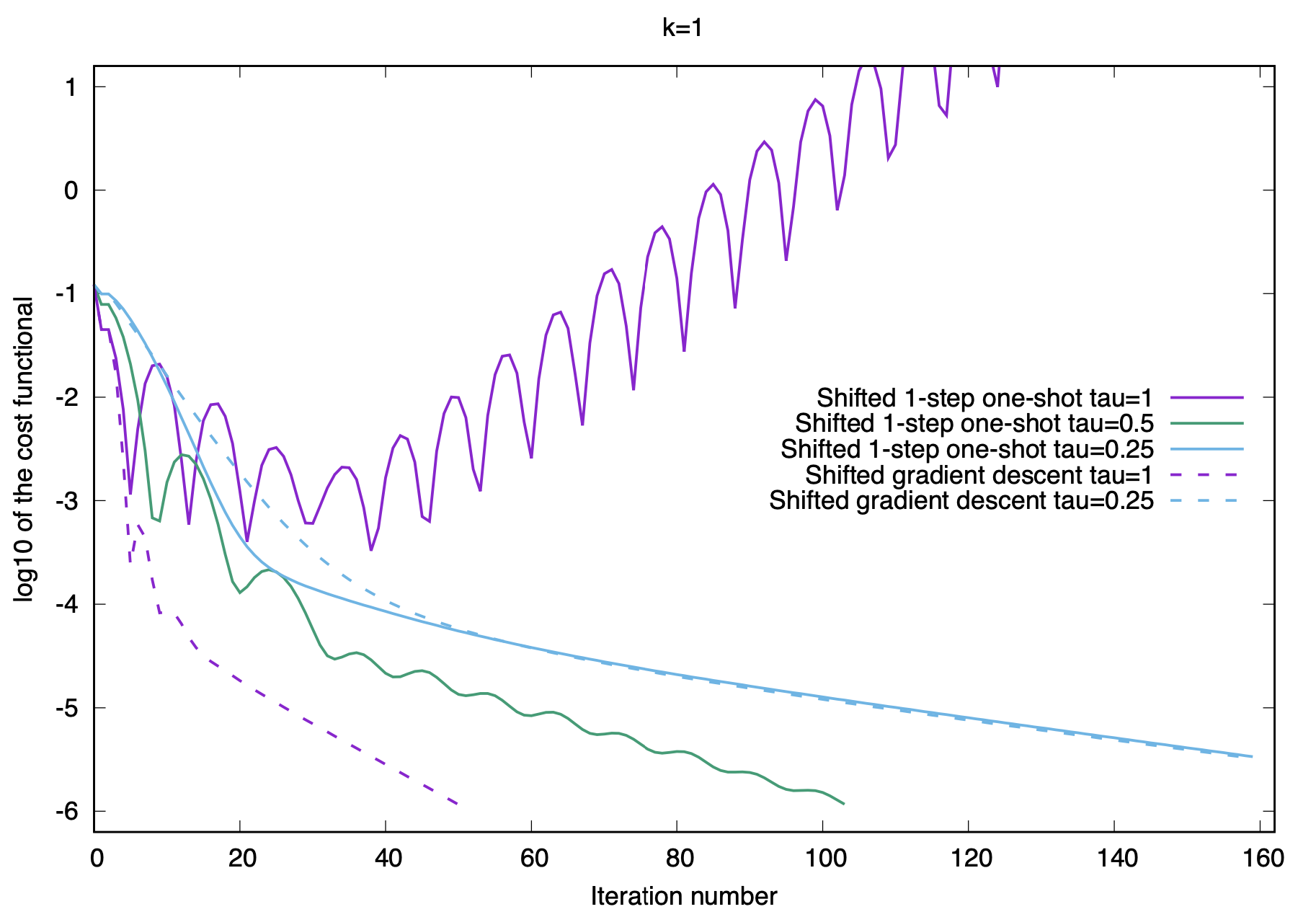}
			\caption{Convergence curves of shifted gradient descent and shifted $1$-step one-shot for different descent step $\tau$.}
			\label{fig:shift_fix_k_1}
		\end{subfigure} \hfill
		\begin{subfigure}{0.49\columnwidth}
			\includegraphics[width=\linewidth]{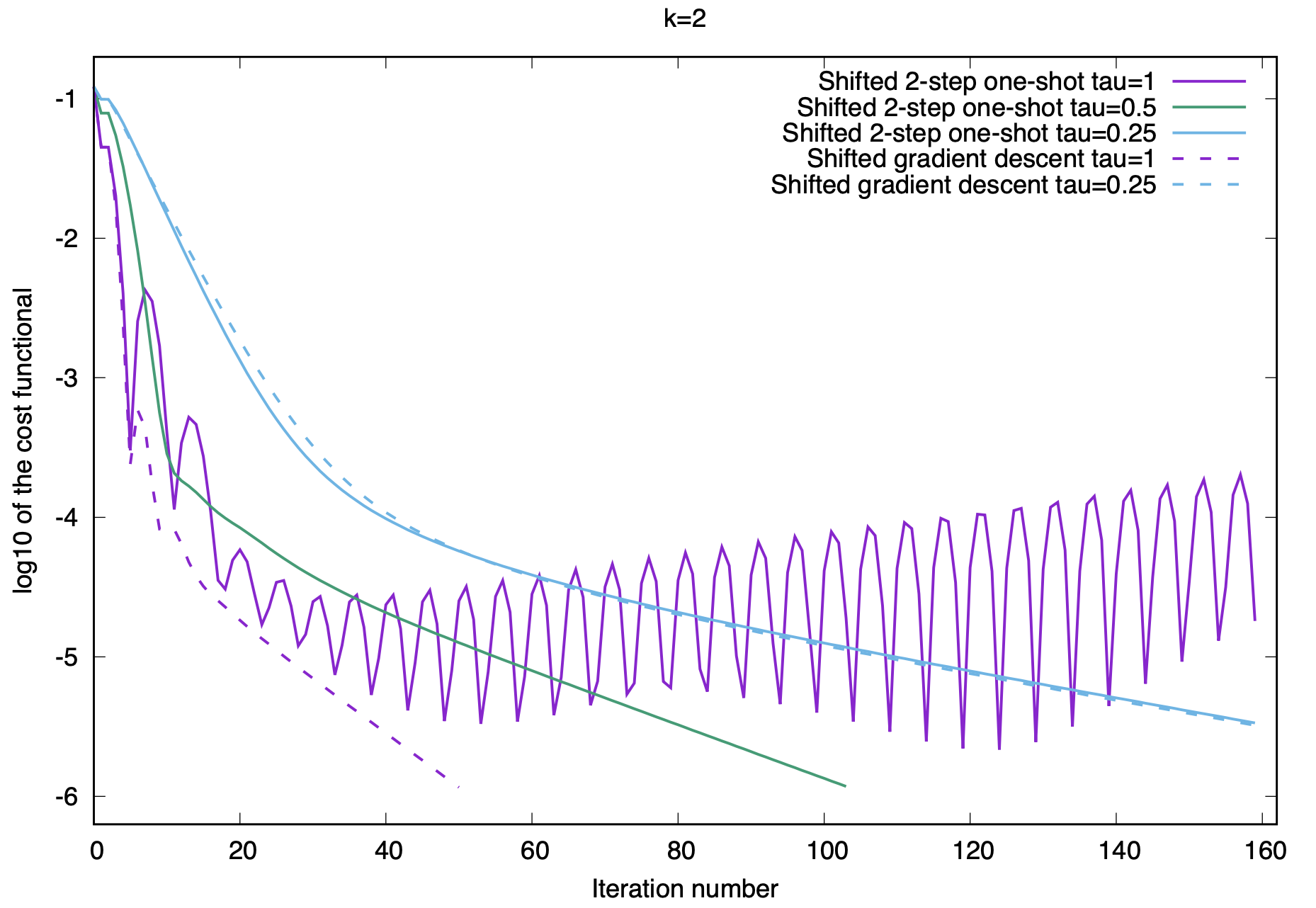}
			\caption{Convergence curves of shifted gradient descent and shifted $2$-step one-shot for different descent step $\tau$.}
			\label{fig:shift_fix_k_2}
		\end{subfigure}
		\begin{subfigure}{0.49\columnwidth}
			\includegraphics[width=\linewidth]{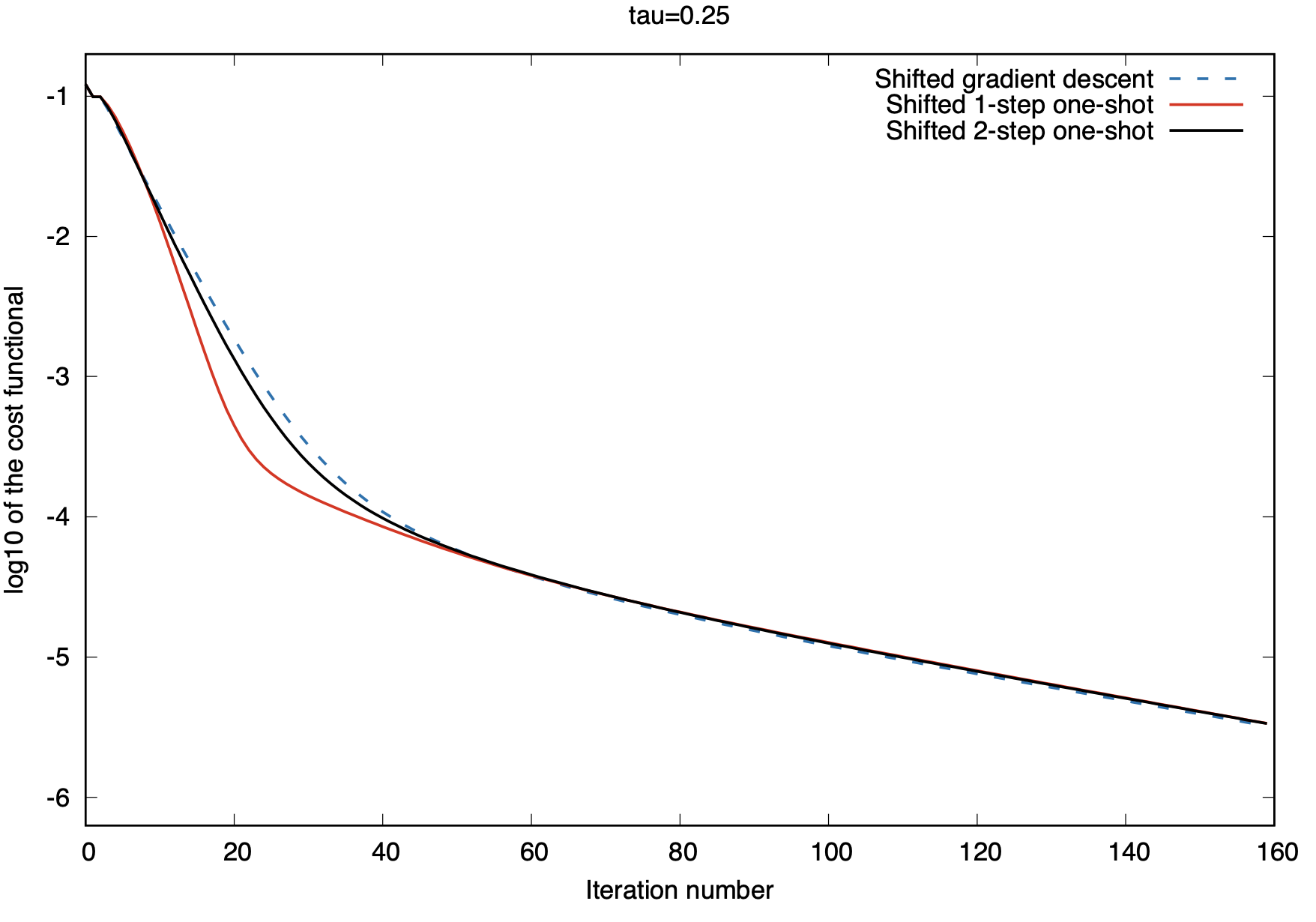}
			\caption{Convergence curves of shifted gradient descent and shifted $k$-step one-shot for different $k$ with $\tau=0.25$.}
			\label{fig:shift_fix_tau_0.25}
		\end{subfigure} \hfill
		\begin{subfigure}{0.49\columnwidth}
			\includegraphics[width=\linewidth]{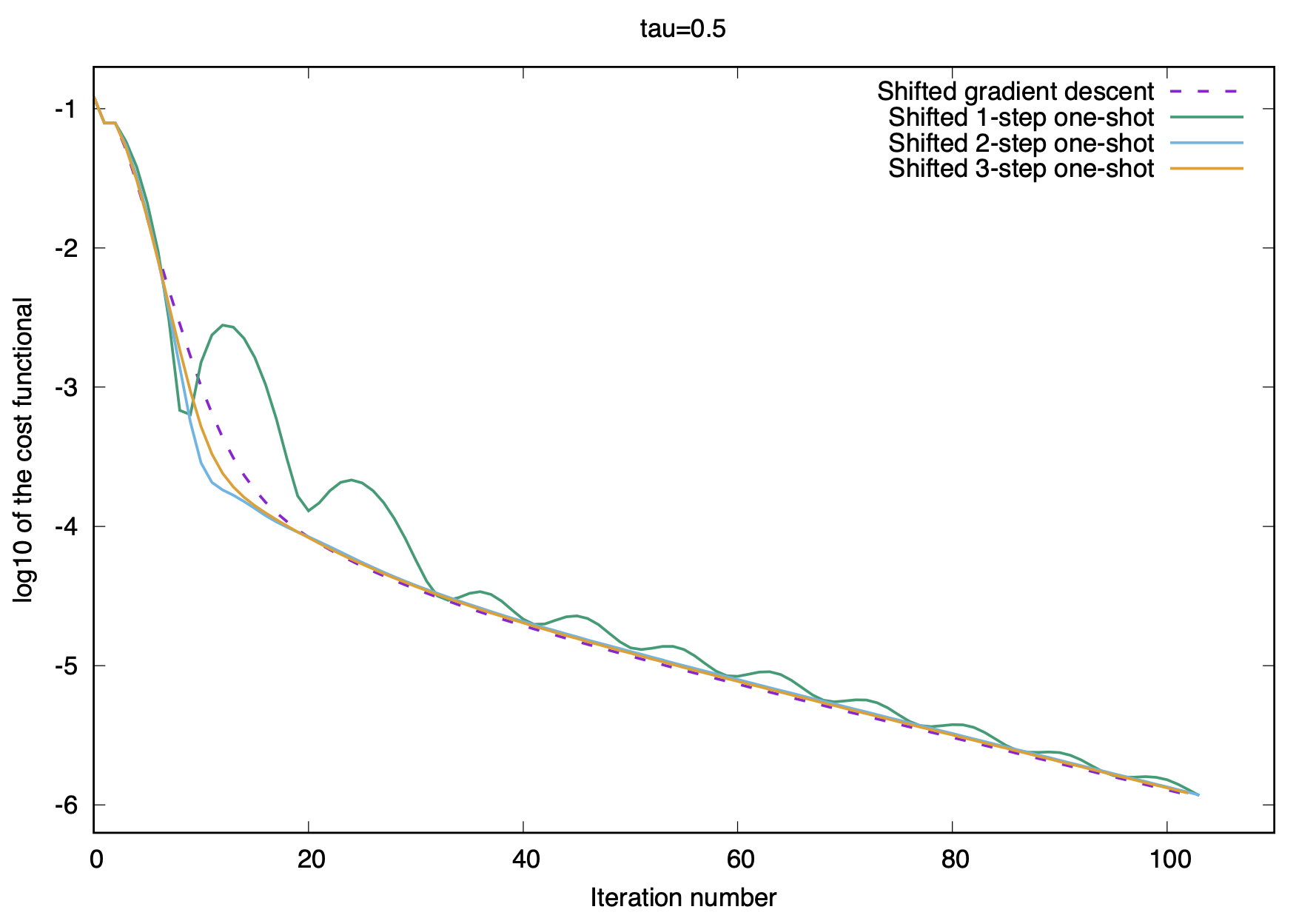}
			\caption{Convergence curves of shifted gradient descent and shifted $k$-step one-shot for different $k$ with $\tau=0.5$.}
			\label{fig:shift_fix_tau_0.5}
		\end{subfigure}
		\begin{subfigure}{0.49\columnwidth}
			\includegraphics[width=\linewidth]{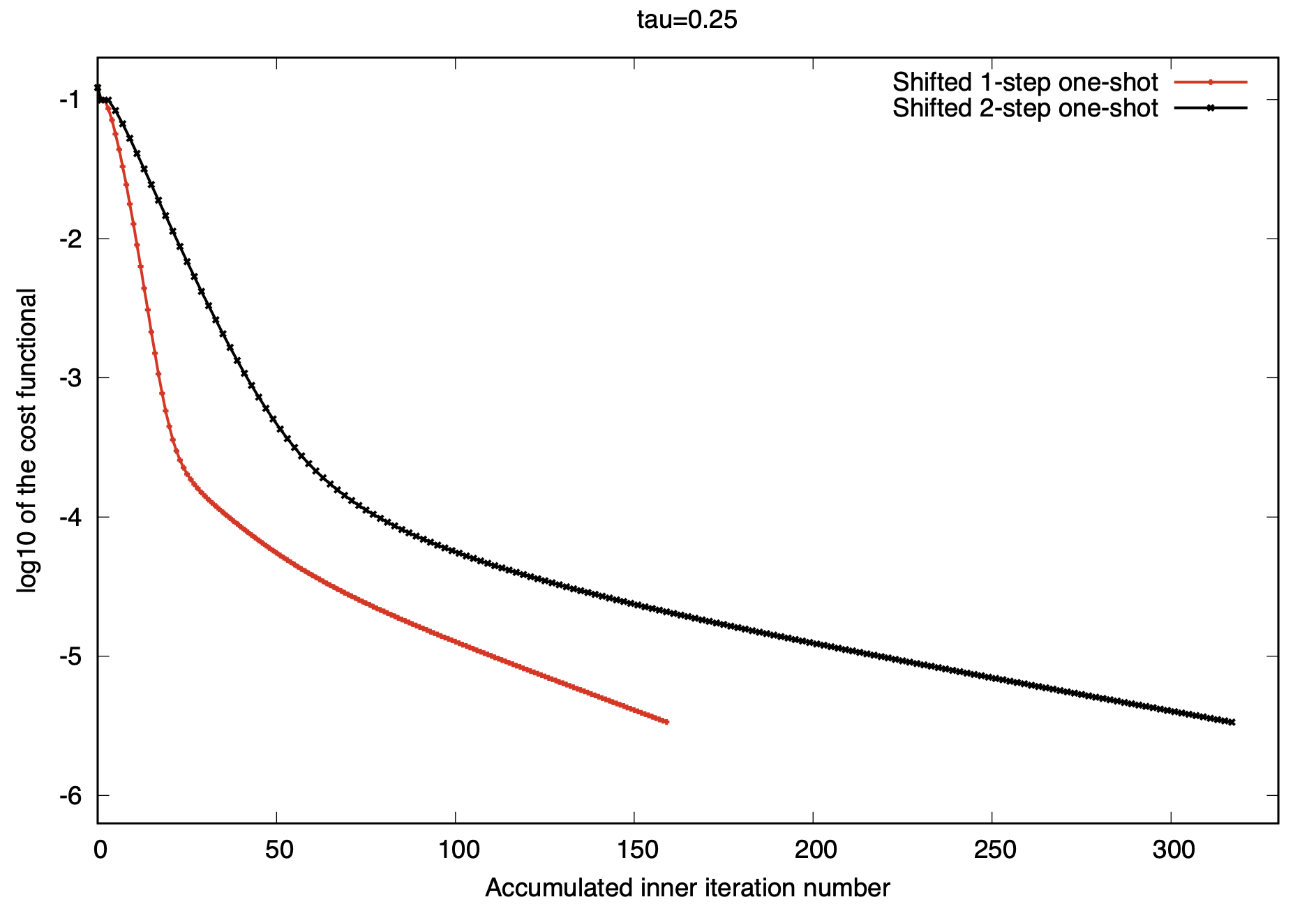}
			\caption{Convergence curves of shifted $k$-step one-shot for different $k$ with $\tau=0.25$.}
			\label{fig:v2_shift_fix_tau_0.25}
		\end{subfigure} \hfill
		\begin{subfigure}{0.49\columnwidth}
			\includegraphics[width=\linewidth]{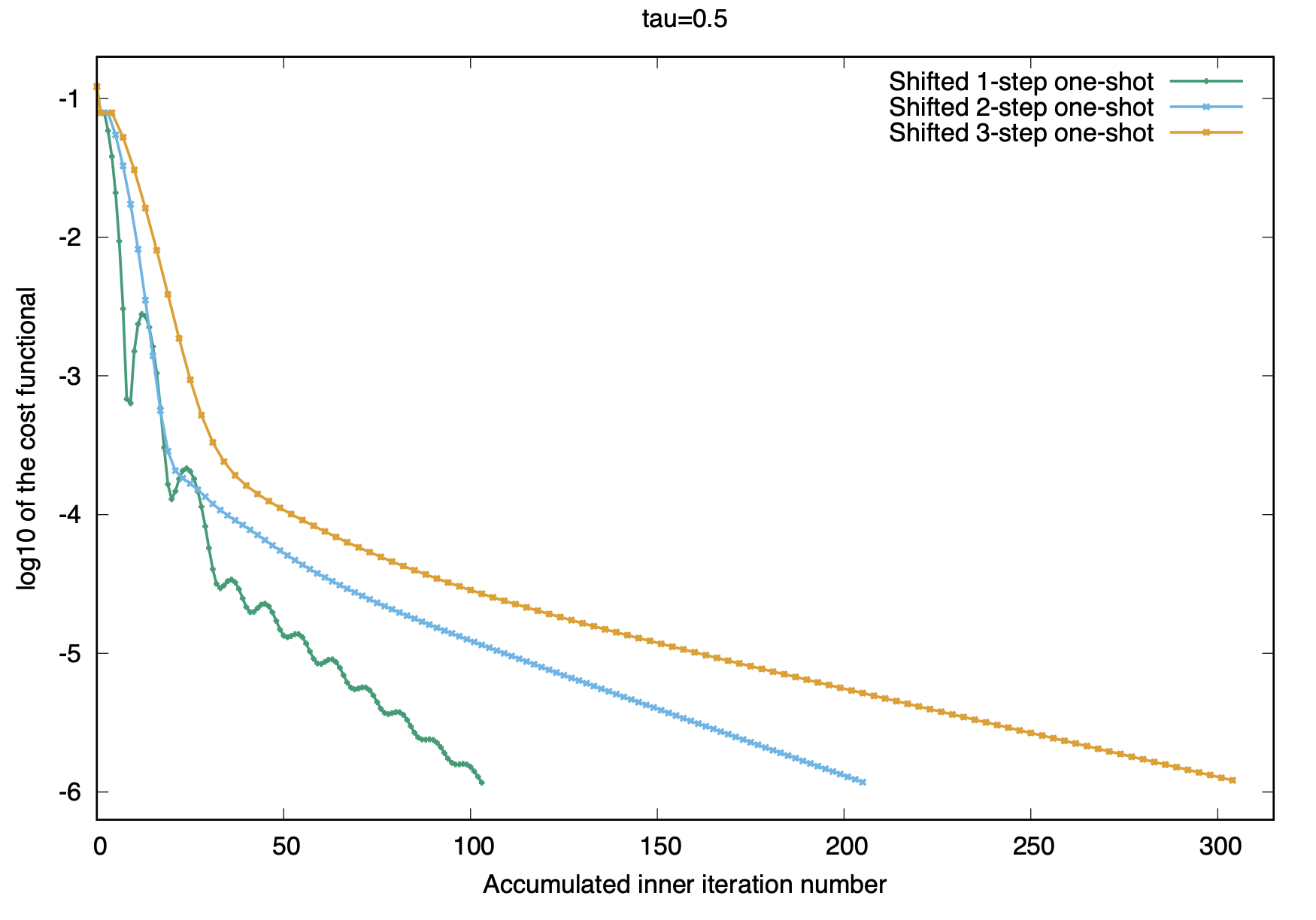}
			\caption{Convergence curves of shifted $k$-step one-shot for different $k$ with $\tau=0.5$.}
			\label{fig:v2_shift_fix_tau_0.5}
		\end{subfigure}
		\caption{Convergence curves of shifted gradient descent and shifted $k$-step one-shot.}
		\label{fig:shift_fix_k,tau}
	\end{figure}
	A similar behavior can be observed for the shifted methods in Figure \ref{fig:shift_fix_k,tau}.

	\begin{figure}[htbp]
		\centering
		\begin{subfigure}{0.49\columnwidth}
			\includegraphics[width=\linewidth]{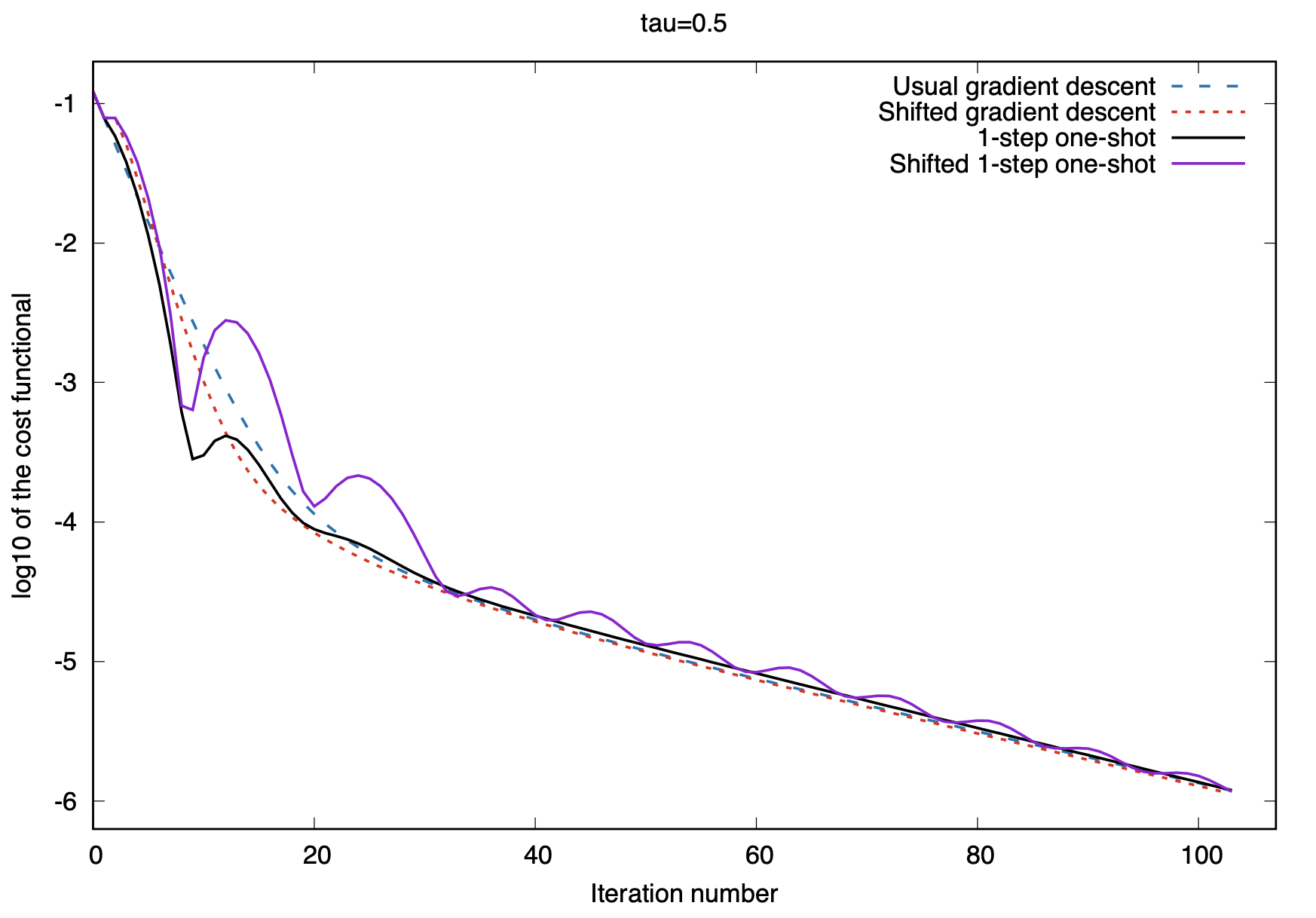}
			\caption{Convergence curves with $\tau=0.5$.}
			\label{fig:all_fix_tau_0.5}
		\end{subfigure} \hfill
		\begin{subfigure}{0.49\columnwidth}
			\includegraphics[width=\linewidth]{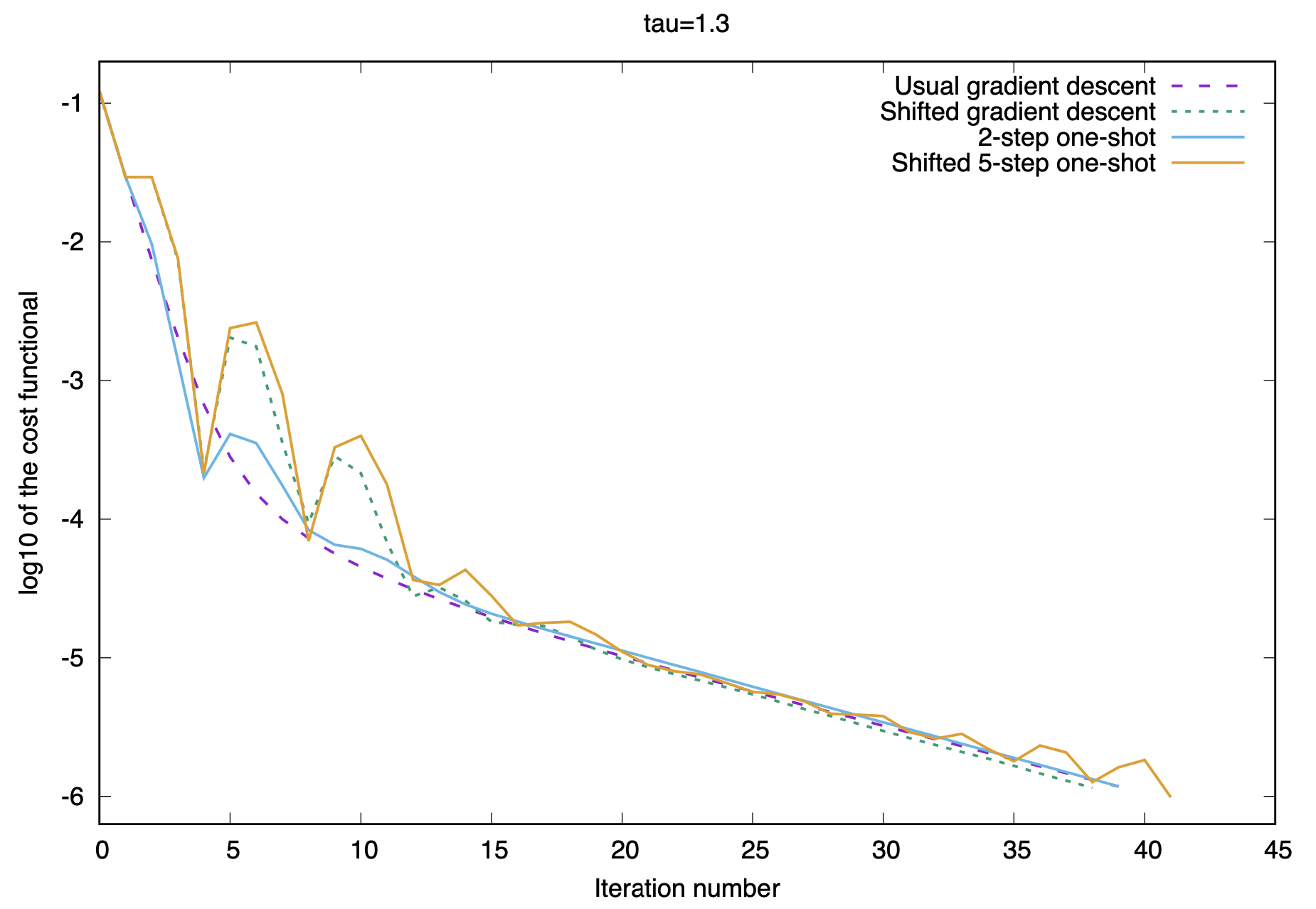}
			\caption{Convergence curves with $\tau=1.3$.}
			\label{fig:all_fix_tau_1.3}
		\end{subfigure}
		\caption{Comparison of usual gradient descent and $k$-step one-shot with shifted gradient descent and shifted $k$-step one-shot.}
		\label{fig:all_fix_tau}
	\end{figure}
	Finally we fix two particular values of $\tau$ and compare all considered methods in Figure \ref{fig:all_fix_tau}. We note that shifted methods present more oscillations with respect to non-shifted ones, especially for larger $\tau$. 

	\section{Conclusion}
We have proved sufficient conditions on the descent step for the convergence of two variants of multi-step one-shot methods. Although these bounds on the descent step are not optimal, to our knowledge no other bounds, explicit in the number of inner iterations, are available in literature for multi-step one-shot methods. Furthermore, we have shown in the numerical experiments that very few inner iterations on the forward and adjoint problems are enough to guarantee good convergence of the inversion algorithm. 

These encouraging numerical results are preliminary in the sense that the considered fixed point iteration is not a practical one, since it involves a direct solve of a problem of the same size as the original forward problem. We will investigate in the future iterative solvers based on domain decomposition methods (see e.g.~\cite{dolean-ddm15}), which are well adapted to large-scale problems. In addition, fixed point iterations could be replaced by more efficient Krylov subspace methods, such as conjugate gradient or GMRES.  

Another interesting issue is how to adapt the number of inner iterations in the course of the outer iterations. Moreover, based on this linear inverse problem study, we plan to tackle non-linear and time-dependent inverse problems.

\bibliographystyle{plain}	
\bibliography{k-shot}

\newpage
\appendix 
\section{Some useful lemmas}
\label{app:lems}
We state auxiliary results about matrices like those appearing in the eigenvalue equations \eqref{eq-eigen-1-shot n}, \eqref{eq-eigen-1-shot n+1}, \eqref{eq-eigen n}, \eqref{eq-eigen n+1}. 
\begin{lemma}\label{inv(I-T)}
	Let $(\C^{n\times n},\norm{\cdot})$ be a normed space and $T\in\C^{n\times n}$. If $\rho(T)<1$, then 
	$$\sum_{k=0}^{\infty}T^k \text{ converges and } \sum_{k=0}^{\infty}T^k=(I-T)^{-1}.$$
	Moreover, if $\norm{T}<1$, $\norm{(I-T)^{-1}}\le \frac{1}{1-\norm{T}}$.	
\end{lemma}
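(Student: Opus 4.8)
The plan is to treat the two claims separately: first the convergence of the Neumann series and its identification with $(I-T)^{-1}$ under the hypothesis $\rho(T)<1$, and then the quantitative norm bound under the stronger hypothesis $\norm{T}<1$.

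First I would observe that $I-T$ is invertible: since $\rho(T)<1$, the value $1$ is not an eigenvalue of $T$, so $\det(I-T)\neq 0$. The core of the first claim is to show that the partial sums $S_N=\sum_{k=0}^N T^k$ converge in the finite-dimensional space $\C^{n\times n}$. For this I would invoke Gelfand's spectral radius formula, $\rho(T)=\lim_{k\to\infty}\norm{T^k}^{1/k}$. Fixing any $r$ with $\rho(T)<r<1$, this gives $\norm{T^k}\le r^k$ for all $k$ large enough, so $\sum_k\norm{T^k}$ is dominated by a convergent geometric series; absolute convergence then yields convergence of $S_N$ to some limit $S\in\C^{n\times n}$.

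To identify $S=(I-T)^{-1}$, I would use the telescoping identity $(I-T)S_N=I-T^{N+1}$. Since $\rho(T)<1$ forces $\norm{T^{N+1}}\to 0$, passing to the limit gives $(I-T)S=I$, and likewise $S(I-T)=I$; combined with the invertibility of $I-T$ this proves $S=(I-T)^{-1}$, establishing the first assertion.

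Finally, for the norm bound under $\norm{T}<1$, I would use submultiplicativity of the induced matrix norm to estimate $\norm{T^k}\le\norm{T}^k$, so that $\norm{S}\le\sum_{k=0}^\infty\norm{T^k}\le\sum_{k=0}^\infty\norm{T}^k=\frac{1}{1-\norm{T}}$, and since $S=(I-T)^{-1}$ this is exactly the claimed inequality. The only genuinely non-elementary ingredient is Gelfand's formula in the first part; if one prefers to avoid it, the same conclusion follows from the Jordan canonical form of $T$, since each Jordan block has an eigenvalue of modulus $<1$ and hence powers decaying geometrically, but Gelfand's formula gives the cleanest route and I expect identifying the limit via telescoping to be the only delicate bookkeeping step.
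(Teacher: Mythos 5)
The paper states this lemma without any proof: it appears in Appendix~\ref{app:lems} as a quoted classical fact (the Neumann series), so there is no internal argument to compare yours against, and the relevant comparison is simply whether your proof is sound. It is. Gelfand's formula gives $\norm{T^k}\le r^k$ for any fixed $\rho(T)<r<1$ and all large $k$; absolute convergence in the finite-dimensional (hence complete) space $\C^{n\times n}$ produces a limit $S$; the telescoping identity $(I-T)S_N=I-T^{N+1}$ together with $T^{N+1}\to 0$ identifies $S=(I-T)^{-1}$ (in fact a one-sided inverse already suffices for square matrices); and the geometric-series estimate gives the bound. One caveat worth recording: although the lemma is phrased for an arbitrary norm on $\C^{n\times n}$, both your step $\norm{T^k}\le\norm{T}^k$ and the final inequality $\norm{(I-T)^{-1}}\le\frac{1}{1-\norm{T}}$ require submultiplicativity, and the bound can genuinely fail for non-submultiplicative norms (for instance the entrywise max norm on $2\times 2$ matrices, with $T$ upper triangular with entries $0.9$). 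This is harmless in context, because the paper fixes $\norm{\cdot}$ from the outset to be the operator norm induced by the Hermitian scalar product, which is submultiplicative; the convergence and identification part of your argument, by contrast, is valid for any norm by equivalence of norms, exactly as you note with the Jordan-form alternative.
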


\begin{lemma}\label{inv(I-T/z)}
	Let $T\in\C^{n\times n}$ such that $\rho(T)<1$. Set
	\begin{equation}
		\label{supbound}
		s(T) \coloneqq \sup_{z\in\C, |z|\ge 1}\norm{\left(I-T/z\right)^{-1}}
	\end{equation}
	then $0<s(T)<+\infty$. Moreover, if $\norm{T}<1$, $0<s(T)\le\cfrac{1}{1-\norm{T}}$.
\end{lemma}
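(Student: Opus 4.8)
The plan is to first verify that $s(T)$ is well-defined, then establish finiteness by a compactness argument (the main difficulty, because the domain $\{|z|\ge 1\}$ is unbounded), and finally dispose of positivity and the refined bound.

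First I would check that $I-T/z$ is invertible for every $z$ with $|z|\ge 1$. Its eigenvalues are $1-\mu/z$ as $\mu$ ranges over $\spec(T)$, and $1-\mu/z=0$ would force $z=\mu\in\spec(T)$, which is impossible since $|z|\ge 1>\rho(T)=\max_{\mu\in\spec(T)}|\mu|$. Hence $g(z) \coloneqq \norm{(I-T/z)^{-1}}$ is well-defined on the closed set $\{z\in\C:|z|\ge 1\}$, and $s(T)=\sup g$.

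The hard part is showing $s(T)<+\infty$. The key observation is that $g$ has a finite limit at infinity: as $|z|\to\infty$ we have $T/z\to 0$, so $(I-T/z)^{-1}\to I$ and thus $g(z)\to\norm{I}=1$. I would combine this with the continuity of $g$, which holds because $z\mapsto I-T/z$ is continuous for $z\neq 0$, matrix inversion is continuous on the invertible matrices, and $\norm{\cdot}$ is continuous. Concretely, the limit at infinity produces some $R>1$ with $g(z)\le 2$ for all $|z|\ge R$; on the compact annulus $\{1\le|z|\le R\}$ the continuous function $g$ attains a finite maximum $m$. Therefore $s(T)\le\max\{m,2\}<+\infty$, which settles finiteness without needing the supremum to be attained on the full unbounded domain.

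For positivity, $g(z)>0$ for any single admissible $z$, since $(I-T/z)^{-1}$ is invertible, hence nonzero, hence of positive norm; thus $s(T)>0$ (the limit computation above in fact gives $s(T)\ge 1$). Finally, under the stronger hypothesis $\norm{T}<1$, for every $|z|\ge 1$ we have $\norm{T/z}=\norm{T}/|z|\le\norm{T}<1$, so Lemma~\ref{inv(I-T)} applies and yields $g(z)\le\frac{1}{1-\norm{T}/|z|}\le\frac{1}{1-\norm{T}}$, where the last inequality uses $|z|\ge 1$. Taking the supremum over $z$ gives $s(T)\le\frac{1}{1-\norm{T}}$, completing the proof.
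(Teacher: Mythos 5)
Your proof is correct and follows essentially the same route as the paper's (very terse) proof: well-definedness from $\rho(T)<1$, continuity of $z\mapsto\norm{(I-T/z)^{-1}}$, and Lemma~\ref{inv(I-T)} for the bound when $\norm{T}<1$. Your explicit treatment of finiteness — the limit $\norm{(I-T/z)^{-1}}\to 1$ as $|z|\to\infty$ combined with compactness of the annulus $\{1\le|z|\le R\}$ — is exactly the detail the paper leaves implicit, since continuity alone on an unbounded domain would not suffice.
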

\begin{proof}
	The functional $z\mapsto \norm{\left(I-T/z\right)^{-1}}$, with $z\in\C, |z|\ge 1$, is well-defined and continuous, and we use Lemma~\ref{inv(I-T)}.
\end{proof}

The following lemma says that, for $T\in\C^{n\times n}$ and $\lambda\in\C, |\lambda|\ge 1$, we can decompose
$$\left(I-\cfrac{T}{\lambda}\right)^{-1}=P(\lambda)+\ic Q(\lambda)\quad\text{and}\quad\left(I-\cfrac{T^*}{\lambda}\right)^{-1}=P(\lambda)^*+\ic Q(\lambda)^*$$
and gives bounds for $P(\lambda)$ and $Q(\lambda)$. 
\begin{lemma}\label{decompq}
	Let $T\in\C^{n\times n}$ such that $\rho(T)<1$ and $\lambda\in\C, |\lambda|\ge 1$. Write $\frac{1}{\lambda}=r(\cos\phi+\ic\sin\phi)$ in polar form, where $0<r\le 1$ and $\phi\in[-\pi,\pi]$. Then
	$$\left(I-\cfrac{T}{\lambda}\right)^{-1}=P(\lambda)+\ic Q(\lambda)\quad\text{and}\quad\left(I-\cfrac{T^*}{\lambda}\right)^{-1}=P(\lambda)^*+\ic Q(\lambda)^*$$
	where
	$$P(\lambda)=(I-r\cos\phi \, T)(I-2r\cos\phi \, T +r^2 T^2)^{-1}, \quad Q(\lambda)=r\sin\phi \, T(I-2r\cos\phi \, T +r^2 T^2)^{-1}$$
	are $\C^{n\times n}$-valued functions. We also have the following properties:
	
	\begin{enumerate}[label=(\roman*)]
		\item $\norm{P(\lambda)}\le \left(1+\norm{T}\right)s(T)^2$ and $\norm{Q(\lambda)}\le|\sin\phi|\norm{T}s(T)^2\le\norm{T}s(T)^2.$
	
		\item Moreover if $\norm{T}<1$ then		
		$$\norm{P(\lambda)}\le\cfrac{1}{1-\norm{T}}\quad\mbox{and}\quad \norm{Q(\lambda)} \le\cfrac{\norm{T}}{1-\norm{T}}.$$	
	\end{enumerate} 
\end{lemma}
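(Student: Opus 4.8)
The plan is to reduce the entire statement to one algebraic factorization. Writing $z = 1/\lambda$, so that $I - T/\lambda = I - zT$ and $z = r(\cos\phi + \ic\sin\phi)$ with $0 < r \le 1$, I would first note that since $\rho(T) < 1$ and $r \le 1$ we have $\rho(zT) = r\rho(T) < 1$ and likewise $\rho(\overline{z}T) < 1$, so both $I - zT$ and $I - \overline{z}T$ are invertible. Because all powers of $T$ commute, these factors multiply to
$$(I - zT)(I - \overline{z}T) = I - (z + \overline{z})T + z\overline{z}T^2 = I - 2r\cos\phi\,T + r^2 T^2,$$
whose right-hand side is exactly the matrix $D \coloneqq I - 2r\cos\phi\,T + r^2 T^2$ inverted in the definitions of $P$ and $Q$. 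Since the two factors commute, $D^{-1} = (I - zT)^{-1}(I - \overline{z}T)^{-1}$, hence $(I - zT)^{-1} = (I - \overline{z}T)D^{-1}$. Expanding $I - \overline{z}T = (I - r\cos\phi\,T) + \ic(r\sin\phi\,T)$ and multiplying by $D^{-1}$ on the right yields precisely $(I - T/\lambda)^{-1} = P(\lambda) + \ic Q(\lambda)$ with $P,Q$ as stated.

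For the adjoint identity I would take adjoints of the closed forms. Using $(T^2)^* = (T^*)^2$ one gets $D^* = I - 2r\cos\phi\,T^* + r^2(T^*)^2$, the same denominator built from $T^*$, which therefore factors as $(I - zT^*)(I - \overline{z}T^*)$. A short computation then gives $P^* + \ic Q^* = (D^*)^{-1}(I - \overline{z}T^*) = (I - zT^*)^{-1} = (I - T^*/\lambda)^{-1}$, as required.

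For the bounds in (i) the key observation is that the two resolvent factors are evaluated at reciprocals of $\lambda$ and $\overline{\lambda}$, both of modulus $\ge 1$: indeed $(I - zT)^{-1} = (I - T/\lambda)^{-1}$ and $(I - \overline{z}T)^{-1} = (I - T/\overline{\lambda})^{-1}$, so by the very definition of $s(T)$ each has norm at most $s(T)$, whence $\norm{D^{-1}} \le s(T)^2$. The estimates $\norm{P} \le (1 + \norm{T})s(T)^2$ and $\norm{Q} \le |\sin\phi|\,\norm{T}\,s(T)^2$ then follow from $\norm{I - r\cos\phi\,T} \le 1 + \norm{T}$ and $\norm{r\sin\phi\,T} \le |\sin\phi|\,\norm{T}$ (both using $r \le 1$), together with submultiplicativity of the norm.

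For (ii), where $\norm{T} < 1$, I would instead use the Neumann series of Lemma~\ref{inv(I-T)}. From $P + \ic Q = (I - zT)^{-1}$ and the analogous $P - \ic Q = (I - zT)D^{-1} = (I - \overline{z}T)^{-1}$, averaging recovers $P = \tfrac12\bigl[(I - zT)^{-1} + (I - \overline{z}T)^{-1}\bigr] = \sum_{k\ge0} r^k\cos(k\phi)T^k$ and $Q = \tfrac{1}{2\ic}\bigl[(I - zT)^{-1} - (I - \overline{z}T)^{-1}\bigr] = \sum_{k\ge0} r^k\sin(k\phi)T^k$, both series converging since $\norm{zT} = r\norm{T} < 1$. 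Bounding termwise with $r \le 1$, $\norm{T^k}\le\norm{T}^k$ and $|\cos(k\phi)|,|\sin(k\phi)| \le 1$ gives $\norm{P} \le \sum_{k\ge0}\norm{T}^k = \frac{1}{1-\norm{T}}$. The one point of care — where a crude estimate fails — is the bound on $Q$: the triangle inequality applied to the averaged form only yields $\frac{1}{1-\norm{T}}$, and the sharper factor $\norm{T}$ in $\frac{\norm{T}}{1-\norm{T}}$ appears solely because the $k=0$ term of the sine series vanishes ($\sin 0 = 0$), so the sum effectively starts at $k=1$. Recognizing that this vanishing term is exactly what supplies the extra factor $\norm{T}$ is the main (and really the only) subtlety in an otherwise routine computation.
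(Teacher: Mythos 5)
Your proof is correct and follows essentially the same route as the paper's: the same factorization $(I-zT)(I-\bar{z}T)=I-2r\cos\phi\,T+r^2T^2$ yields the decomposition, the same $s(T)$-bounds on the two commuting resolvent factors give (i), and the same series $P=\sum_{k\ge 0}r^k\cos(k\phi)T^k$, $Q=\sum_{k\ge 1}r^k\sin(k\phi)T^k$ give (ii). The only cosmetic difference is in (ii): you obtain the series by averaging the Neumann expansions of $(I-zT)^{-1}$ and $(I-\bar{z}T)^{-1}$, whereas the paper verifies directly that these series multiplied by $I-2r\cos\phi\,T+r^2T^2$ reproduce $I-r\cos\phi\,T$ and $r\sin\phi\,T$; both arguments correctly exploit the vanishing $k=0$ sine term to obtain the extra factor $\norm{T}$ in the bound for $Q$.
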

\begin{proof} The first part of the lemma is verified by direct computation, using
	$$\left(I-T/\lambda\right)^{-1}=\left(I-T/\lambda^*\right)\left[\left(I-T/\lambda\right)\left(I-T/\lambda^*\right)\right]^{-1},$$
	$$\left(I-T^*/\lambda\right)^{-1}=\left[\left(I-T^*/\lambda^*\right)\left(I-T^*/\lambda\right)\right]^{-1}\left(I-T^*/\lambda^*\right)$$
	and
	$$\left(I-T/\lambda\right)\left(I-T/\lambda^*\right)=I-2r\cos\phi \, T +r^2 T^2.$$
	After that, with the help of Lemma \ref{inv(I-T/z)}, it is not difficult to show the inequalities in (i). To prove (ii), first observe that the two series 
	$$\sum_{k=0}^\infty r^k\cos(k\phi) T^k\quad\mbox{and}\quad\sum_{k=1}^\infty r^k\sin(k\phi) T^k$$
	converge. Then, by expanding and simplifying the left-hand sides, we can show that 
	$$\left[\displaystyle\sum_{k=0}^\infty r^k\cos(k\phi) T^k\right](I-2r\cos\phi \, T +r^2 T^2)=I-r\cos\phi \, T$$
	and
	$$\left[\displaystyle\sum_{k=1}^\infty r^k\sin(k\phi) T^k\right](I-2r\cos\phi \, T +r^2 T^2)= r\sin\phi \, T$$
	so $P(\lambda)$ and $Q(\lambda)$ can be expressed as the series above, and the inequalities in (ii) follow.
\end{proof}

In Sections \ref{subsec:complex-1-step} and \ref{subsec:complex-k-step} we identify different cases of $\lambda\in\C$ and we need corresponding estimations, given in the two following lemmas.
Lemma~\ref{gamma123 n} is used for the shifted $k$-step one-shot method and Lemma~\ref{gamma123 n+1} is used for the $k$-step one-shot method. 
\begin{lemma}
	\label{gamma123 n}
	For $\lambda\in\C\setminus{\R}, |\lambda|\ge 1$ we write $\lambda=R(\cos\theta+\ic\sin\theta)$ in polar form where $R\ge 1$, $\theta\in(-\pi,\pi)$, $\theta \ne 0$. 
	\begin{enumerate}[label=(\roman*)]
		\item For $\lambda$ satisfying $\Re(\lambda^3-\lambda^2)\ge 0$, 
		let $\gamma_1=\gamma_1(\lambda)=\left\{\begin{array}{cc}
			1, & \text{if }\Im (\lambda^3-\lambda^2)\ge 0,\\
			-1, & \text{if }\Im (\lambda^3-\lambda^2)<0
		\end{array}\right.$ then
		$$\Re(\lambda^3-\lambda^2)+\gamma_1\Im(\lambda^3-\lambda^2)\ge |\lambda-1|\ge 2|\sin(\theta/2)|.$$
		\item Let $0<\theta_0\le\frac{\pi}{6}$. For $\lambda$ satisfying $\Re(\lambda^3-\lambda^2)<0$ and $\theta\in[\theta_0,\pi-\theta_0]\cup[-\pi+\theta_0,-\theta_0]$, 
		
		let $\gamma_2=\left\{\begin{array}{cc}
			-1, & \text{if }\Im (\lambda^3-\lambda^2)\ge 0,\\
			1, & \text{if }\Im (\lambda^3-\lambda^2)<0
		\end{array}\right.$ then $$-\Re(\lambda^3-\lambda^2)-\gamma_2\Im(\lambda^3-\lambda^2)\ge |\lambda-1| \ge 2\sin({\theta_0}/{2}).$$
		\item Let $0<\theta_0\le\frac{\pi}{6}$ and $\delta_0>0$. For $\lambda$ satisfying $\Re(\lambda^3-\lambda^2)<0$ and $\theta\in(-\theta_0,\theta_0)\backslash\{0\}$, let $\gamma_3=\gamma_3(\mathrm{sign}(\theta))=\left\{\begin{array}{cc}
			\left(\delta_0+\sin\frac{5\theta_0}{2}\right)/\cos\frac{5\theta_0}{2} & \text{if }\theta>0,\\
			-\left(\delta_0+\sin\frac{5\theta_0}{2}\right)/\cos\frac{5\theta_0}{2} & \text{if }\theta<0
		\end{array}\right.$ then
		$$\Re(\lambda^3-\lambda^2)+\gamma_3\Im(\lambda^3-\lambda^2)\ge 2\delta_0|\sin(\theta/2)|.$$
		Moreover, if $0<\theta_0<\frac{\pi}{6}$, we have
		\begin{center}
			$\frac{|\Re(\lambda-1)+\gamma_3\Im(\lambda-1)|}{\Re(\lambda^3-\lambda^2)+\gamma_3\Im(\lambda^3-\lambda^2)}\le\frac{\sqrt{1+\gamma_3^2}}{\delta_0}
			\quad\mbox{and}\quad
			\frac{|\gamma_3\Re(\lambda-1)-\Im(\lambda-1)|}{\Re(\lambda^3-\lambda^2)+\gamma_3\Im(\lambda^3-\lambda^2)}\le\max\left(\frac{\sqrt{1+\gamma_3^2}}{\delta_0},\frac{\sqrt{1+\gamma_3^2}}{\cos3\theta_0}\right).$
		\end{center}
		\item Let $0<\theta_0\le\frac{\pi}{6}$. For $\lambda$ satisfying $\Re(\lambda^3-\lambda^2)<0$ and $\theta \in (\pi-\theta_0,\pi)\cup(-\pi,-\pi+\theta_0)$, we have
		$$-\Re(\lambda^3-\lambda^2)\ge \sin\left(\frac{\pi}{2}-3\theta_0\right)+\cos 2\theta_0,$$
		$$ \cfrac{|\Re(\lambda-1)|}{-\Re(\lambda^3-\lambda^2)}\le \frac{2}{\sin\left(\frac{\pi}{2}-3\theta_0\right)+\cos2\theta_0}\quad\mbox{ and }\quad \cfrac{|\Im(\lambda-1)|}{-\Re(\lambda^3-\lambda^2)}\le\frac{2}{\sin\left(\frac{\pi}{2}-3\theta_0\right)+\cos2\theta_0}.$$
	\end{enumerate}
\end{lemma}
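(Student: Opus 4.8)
The plan is to reduce every claim to the single number $w:=\lambda^{3}-\lambda^{2}=\lambda^{2}(\lambda-1)$, written in the polar form $\lambda=Re^{\ic\theta}$ with $R\ge 1$. First I would record two elementary facts used everywhere: since $|w|=R^{2}|\lambda-1|\ge|\lambda-1|$ and $|\lambda-1|^{2}=R^{2}-2R\cos\theta+1\ge 2-2\cos\theta=4\sin^{2}(\theta/2)$ for $R\ge1$, one has $|w|\ge|\lambda-1|\ge 2|\sin(\theta/2)|$; on the $\theta$-range of (ii) the monotonicity of $\sin$ on $[0,\pi/2]$ further gives $|\lambda-1|\ge 2\sin(\theta_{0}/2)$.

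For (i) and (ii) the signs $\gamma_{1},\gamma_{2}=\pm1$ are chosen exactly so that the combination in question becomes $\Re(w)+|\Im(w)|$ in case (i) and $|\Re(w)|+|\Im(w)|$ in case (ii). Using $(a+|b|)^{2}\ge a^{2}+b^{2}$ whenever $a\ge0$, each such sum is $\ge|w|$, and the two facts above close both statements immediately.

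Part (iii) is the crux. I would set $\alpha\in(0,\pi/2)$ with $\tan\alpha=\gamma_{3}$, so that $\Re(w)+\gamma_{3}\Im(w)=\sqrt{1+\gamma_{3}^{2}}\,\Re(e^{-\ic\alpha}w)=\frac{R^{2}}{\cos\alpha}\bigl[R\cos(3\theta-\alpha)-\cos(2\theta-\alpha)\bigr]$, treating $\theta>0$ (the case $\theta<0$ follows by symmetry, as $\gamma_{3}$ flips sign). On the given ranges $|3\theta-\alpha|<\pi/2$, so the bracket is affine and increasing in $R$, hence minimized at $R=1$, where a product-to-sum identity turns it into $2\sin(\alpha-\tfrac{5\theta}{2})\sin\tfrac{\theta}{2}$. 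The definition of $\gamma_{3}$ is precisely the identity $\sin(\alpha-\tfrac{5\theta_{0}}{2})=\delta_{0}\cos\alpha$; together with $\tfrac{5\theta}{2}<\tfrac{5\theta_{0}}{2}$ and $R^{2}\ge1$ this yields $\Re(w)+\gamma_{3}\Im(w)\ge 2\delta_{0}\sin(\theta/2)$ for all $R\ge1$. For the two ratio estimates I would bound the numerators by Cauchy--Schwarz, e.g. $|\Re(\lambda-1)+\gamma_{3}\Im(\lambda-1)|\le\sqrt{1+\gamma_{3}^{2}}\,|\lambda-1|$, and use $\Re(w)+\gamma_{3}\Im(w)=\sqrt{1+\gamma_{3}^{2}}\,R^{2}|\lambda-1|\cos(\arg w-\alpha)$ to reduce each ratio to controlling $1/\cos(\arg w-\alpha)$. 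This is where the hypothesis $\Re(w)<0$ finally enters: combined with the elementary bound $\arg(\lambda-1)\le\tfrac{\pi}{2}+\tfrac{\theta}{2}$ (attained at $R=1$), it confines $\arg w\in(\tfrac{\pi}{2},\tfrac{\pi}{2}+\tfrac{5\theta}{2}]$, whence $0<\arg w-\alpha\le\tfrac{\pi}{2}+\tfrac{5\theta_{0}}{2}-\alpha$ and $\cos(\arg w-\alpha)\ge\sin(\alpha-\tfrac{5\theta_{0}}{2})=\delta_{0}\cos\alpha$, which is the first ratio bound; the second is identical, the $\max$ absorbing the coarser of the two cosines produced. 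I expect this confinement of $\arg w$, matched to the exact value of $\alpha$, to be the main obstacle.

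Finally, (iv) is a direct sign computation near $\theta=\pm\pi$: there $\cos 2\theta>\cos 2\theta_{0}>0$ and $-\cos 3\theta>\cos 3\theta_{0}>0$, so $-\Re(w)=R^{2}\cos 2\theta+R^{3}(-\cos 3\theta)\ge R^{2}(\cos 2\theta_{0}+\cos 3\theta_{0})\ge\cos 2\theta_{0}+\sin(\tfrac{\pi}{2}-3\theta_{0})$, using $R\ge1$ and $\cos 3\theta_{0}=\sin(\tfrac{\pi}{2}-3\theta_{0})$. The two ratio bounds then follow by pitting $|\Re(\lambda-1)|,|\Im(\lambda-1)|\le R+1\le 2R^{2}$ against $-\Re(w)\ge R^{2}(\cos 2\theta_{0}+\cos 3\theta_{0})$.
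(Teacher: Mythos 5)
Your proposal is correct. Parts (i), (ii) and (iv), as well as the first inequality of (iii), follow essentially the paper's own route: the same sign choices turning the combinations into $\Re+|\Im|$ (resp.\ $|\Re|+|\Im|$), the same monotonicity of $R\mapsto R^2+1-2R\cos\theta$, the same reduction of the bracket to $R=1$ followed by a product-to-sum identity (your identity $\sin(\alpha-\tfrac{5\theta_0}{2})=\delta_0\cos\alpha$ is exactly the paper's $\gamma_3\cos\tfrac{5\theta_0}{2}-\sin\tfrac{5\theta_0}{2}=\delta_0$ after dividing by $\cos\alpha$), and in (iv) the same bounds $\cos2\theta>\cos2\theta_0$, $-\cos3\theta>\sin\bigl(\tfrac{\pi}{2}-3\theta_0\bigr)$ and $R+1\le 2R^2$.

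The ratio bounds in (iii) are where you genuinely diverge, and your argument is sound. The paper fixes $\theta$ and studies the two ratios as rational functions of $R$, namely $f_1(R)=\bigl(\tfrac{aR-1}{bR-c}\bigr)^2$ and $f_2(R)=\bigl(\tfrac{dR-\gamma_3}{bR-c}\bigr)^2$ with $a=\cos\theta+\gamma_3\sin\theta$, etc., over \emph{all} $R\ge1$ with positive denominator; this requires sign analysis, differentiation, and evaluation of $f_1(1)$, $f_2(1)$ and $\lim_{R\to\infty}f_2(R)$ --- that last limit is what produces the term $\tfrac{\sqrt{1+\gamma_3^2}}{\cos3\theta_0}$ and the max in the statement, because $f_2$ eventually increases. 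You instead bound both numerators at once by Cauchy--Schwarz, rewrite the common denominator as $\sqrt{1+\gamma_3^2}\,R^2|\lambda-1|\cos\bigl(\arg(\lambda^3-\lambda^2)-\alpha\bigr)$, and exploit the hypothesis $\Re(\lambda^3-\lambda^2)<0$ --- which the paper never uses in this part --- together with $\arg(\lambda-1)\le\tfrac{\pi}{2}+\tfrac{\theta}{2}$ to confine $\arg(\lambda^3-\lambda^2)\in\bigl(\tfrac{\pi}{2},\tfrac{\pi}{2}+\tfrac{5\theta}{2}\bigr]$, whence $\cos\bigl(\arg(\lambda^3-\lambda^2)-\alpha\bigr)\ge\sin\bigl(\alpha-\tfrac{5\theta_0}{2}\bigr)=\delta_0\cos\alpha$. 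This is calculus-free and in fact slightly sharper: both ratios are bounded by $\tfrac{\sqrt{1+\gamma_3^2}}{\delta_0}$ alone, so the max in the paper's second bound becomes superfluous (your closing remark about the max "absorbing a coarser cosine" is unnecessary --- your argument never produces one). The two routes differ precisely because the paper's bound must hold uniformly in $R\ge1$, including the regime $R\to\infty$ where $\Re(\lambda^3-\lambda^2)<0$ actually fails (there $R\cos3\theta>\cos2\theta$); restricting to the constraint set, as you do, eliminates that regime. The one step you should write out explicitly is the monotonicity behind your "elementary bound": $\frac{\da}{\da R}\arg(Re^{\ic\theta}-1)=-\sin\theta/|Re^{\ic\theta}-1|^2<0$ for $\theta\in(0,\pi)$, so $\arg(\lambda-1)\le\arg(e^{\ic\theta}-1)=\tfrac{\pi}{2}+\tfrac{\theta}{2}$; combined with $\arg(\lambda^3-\lambda^2)>\tfrac{\pi}{2}>\alpha$ this also guarantees that $\arg(\lambda^3-\lambda^2)-\alpha$ stays in $(0,\pi)$, where cosine is decreasing, which your endpoint evaluation tacitly assumes.
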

\begin{proof}
	(i) From the definition of $\gamma_1$ we see that $\gamma_1^2=1$, $\gamma_1\Im(\lambda^3-\lambda^2)\ge 0$ and
	$$\begin{array}{lll}
		\left[\Re(\lambda^3-\lambda^2)+\gamma_1\Im(\lambda^3-\lambda^2)\right]^2&=&\left[\Re(\lambda^3-\lambda^2)\right]^2+\left[\Im(\lambda^3-\lambda^2)\right]^2 +2\gamma_1 \Re(\lambda^3-\lambda^2)\Im(\lambda^3-\lambda^2)\\
		&\ge& \left[\Re(\lambda^3-\lambda^2)\right]^2+\left[\Im(\lambda^3-\lambda^2)\right]^2=|\lambda^3-\lambda^2|^2,\\
	\end{array}
	$$
	which yields $\Re(\lambda^3-\lambda^2)+\gamma_1\Im(\lambda^3-\lambda^2)\ge R^2|\lambda-1|$. Finally,
	$$|\lambda-1|=|R\cos\theta-1+\ic R\sin\theta|=\sqrt{R^2+1-2R\cos\theta}\ge\sqrt{2-2\cos\theta}=2|\sin(\theta/2)|$$
	since the function $R\mapsto R^2+1-2R\cos\theta$, for $R\ge 1$, is increasing.
	
	\noindent (ii) In this case we have $\frac{\theta}{2}\in \left[\frac{\theta_0}{2},\frac{\pi}{2}-\frac{\theta_0}{2}\right]\cup\left[-\frac{\pi}{2}+\frac{\theta_0}{2},-\frac{\theta_0}{2}\right]$ so $\left|\sin\frac{\theta}{2}\right|\ge\sin\frac{\theta_0}{2}$. From the definition of $\gamma_2$ we see that $\gamma_2^2=1$ and $\gamma_2\Im(\lambda^3-\lambda^2)\le 0$. Similar to (i), we have
	$-\Re(\lambda^2-\lambda)-\gamma_2\Im(\lambda^2-\lambda)\ge |\lambda-1|\ge 2|\sin(\theta/2)|$, that implies the conclusion.
	
	\noindent (iii) Note that $\cos3\theta>0, -\frac{\pi}{2}<3\theta<\frac{\pi}{2}$, and $\sin 3\theta$ has the same sign as $\theta$ and $\gamma_3$, so we have 
	$$\begin{array}{ll}
		\Re(\lambda^3-\lambda^2)+\gamma_3\Im(\lambda^3-\lambda^2)&=R^2(R\cos 3\theta-\cos 2\theta+\gamma_3 R\sin 3\theta-\gamma_3\sin 2\theta)\\
		&\ge \cos 3\theta-\cos 2\theta+\gamma_3\sin 3\theta-\gamma_3\sin2\theta\\
		&= -2\sin\frac{5\theta}{2}\sin\frac{\theta}{2}+2\gamma_3\cos\frac{5\theta}{2}\sin\frac{\theta}{2} \\
		&= 2\sin\frac{\theta}{2}\left(\gamma_3\cos\frac{5\theta}{2}-\sin\frac{5\theta}{2}\right).
	\end{array}
	$$
	Then we consider two cases: if $0<\theta<\theta_0$ then $\gamma_3>0$, $\left|\sin\frac{\theta}{2}\right|=\sin\frac{\theta}{2}>0$, 
	$0<\frac{5\theta}{2}<\frac{5\theta_0}{2}<\frac{\pi}{2}$ and $\gamma_3\cos\frac{5\theta}{2}-\sin\frac{5\theta}{2}>\gamma_3\cos\frac{5\theta_0}{2}-\sin\frac{5\theta_0}{2}=\delta_0$;	if $-\theta_0<\theta<0$ then $-\gamma_3>0$, $\left|\sin\frac{\theta}{2}\right|=-\sin\frac{\theta}{2}>0$,
	$-\frac{\pi}{2}<-\frac{5\theta_0}{2}<\frac{5\theta}{2}<0$ and $-\gamma_3\cos\frac{5\theta}{2}+\sin\frac{5\theta}{2}>-\gamma_3\cos\frac{5\theta_0}{2}-\sin\frac{5\theta_0}{2}=\delta_0$.
	
	Next, if $0<\theta_0<\frac{\pi}{6}$, we will show that $\frac{|\Re(\lambda-1)+\gamma_3\Im(\lambda-1)|}{\Re(\lambda^3-\lambda^2)+\gamma_3\Im(\lambda^3-\lambda^2)}$ and $\frac{|\gamma_3\Re(\lambda-1)-\Im(\lambda-1)|}{\Re(\lambda^3-\lambda^2)+\gamma_3\Im(\lambda^3-\lambda^2)}$	are both bounded. First,
	$$\begin{array}{ll}
		\cfrac{|\Re(\lambda-1)+\gamma_3\Im(\lambda-1)|}{\Re(\lambda^3-\lambda^2)+\gamma_3\Im(\lambda^3-\lambda^2)}&=\cfrac{|(\cos\theta+\gamma_3\sin\theta)R-1|}{R^2[(\cos 3\theta+\gamma_3\sin 3\theta)R-(\cos2\theta+\gamma_3\sin2\theta)]}\\&\le \cfrac{|(\cos\theta+\gamma_3\sin\theta)R-1|}{(\cos 3\theta+\gamma_3\sin3\theta)R-(\cos2\theta+\gamma_3\sin2\theta)}.
	\end{array}
	$$
	Since $\gamma_3$ does not depend on $R$, let us study $f_1(R)=\left(\frac{aR-1}{bR-c}\right)^2$	where
	$a=\cos\theta+\gamma_3\sin\theta$, $b=\cos 3\theta+\gamma_3\sin 3\theta$ and  $c=\cos2\theta+\gamma_3\sin2\theta$. We observe that:
	\begin{itemize}
		\item $a,b,c>0$. Indeed, $\cos\theta,\cos 2\theta,\cos3\theta>0$, and $\theta$ and $\gamma_3$ have the same sign.
		\item $bR-c>0$ since $\Re(\lambda^3-\lambda^2)+\gamma_3\Im(\lambda^3-\lambda^2)>0$, thus $R>\frac{c}{b}$.
		\item $ac>b$ (equivalently $\frac{c}{b}>\frac{1}{a}$), since $$ac=\cos\theta\cos2\theta+\gamma_3^2\sin\theta\sin2\theta+\gamma_3\sin 3\theta>\cos\theta\cos2\theta-\sin\theta\sin2\theta+\gamma_3\sin 3\theta=b.$$
	\end{itemize}
	Now, $f_1'(R)=2\cdot\frac{aR-1}{bR-c}\cdot\frac{b-ac}{(bR-c)^2}<0$
	for $R>\frac{c}{b}>\frac{1}{a}$ and we would like to have $\frac{c}{b}<1$ so that $f_1(R)\le f_1(1), \forall R\ge 1$. Indeed $\frac{c}{b}<1$ is equivalent to
	$$\cos2\theta+\gamma_3\sin2\theta < \cos3\theta+\gamma_3\sin3\theta  \Leftrightarrow|\gamma_3|>\frac{\left|\sin\frac{5\theta}{2}\right|}{\cos\frac{5\theta}{2}},$$
	which is true since
	$$|\gamma_3|=\frac{\delta_0+\sin\frac{5\theta_0}{2}}{\cos\frac{5\theta_0}{2}}>\frac{\left|\sin\frac{5\theta}{2}\right|}{\cos\frac{5\theta}{2}}+\varepsilon_0 \quad\mbox{where}\quad\varepsilon_0=\frac{\delta_0}{\cos\frac{5\theta_0}{2}}.$$
	Then we study
	$$f_1(1)=\left[ \frac{\cos\theta-1+\gamma_3\sin\theta}{\cos3\theta-\cos2\theta+\gamma_3(\sin3\theta-\sin2\theta)}\right]^2=\left(\cfrac{-\sin\frac{\theta}{2}+\gamma_3\cos\frac{\theta}{2}}{-\gamma_3\sin\frac{5\theta}{2}+\gamma_3^2\cos\frac{5\theta}{2}}\right)^2\gamma_3^2.
	$$
	We have:
	\begin{itemize}
		\item $(-\sin\frac{\theta}{2}+\gamma_3\cos\frac{\theta}{2})^2\le 1+\gamma_3^2$ by Cauchy-Schwarz inequality;
		\item $\gamma_3^2=|\gamma_3|^2>\frac{\gamma_3\sin\frac{5\theta}{2}}{\cos\frac{5\theta}{2}}+\varepsilon_0|\gamma_3|$ that leads to $-\gamma_3\sin\frac{5\theta}{2}+\gamma_3^2\cos\frac{5\theta}{2}>\varepsilon_0\cos\frac{5\theta_0}{2}|\gamma_3|=\delta_0|\gamma_3|;$
	\end{itemize}
	hence $f_1(1)\le\frac{1+\gamma_3^2}{\delta_0^2}$ and finally $\frac{|\Re(\lambda-1)+\gamma_3\Im(\lambda-1)|}{\Re(\lambda^3-\lambda^2)+\gamma_3\Im(\lambda^3-\lambda^2)}\le\frac{\sqrt{1+\gamma_3^2}}{\delta_0}$. Next, we have
	$$\begin{array}{ll}
		\cfrac{|\gamma_3\Re(\lambda-1)-\Im(\lambda-1)|}{\Re(\lambda^2-\lambda)+\gamma_3\Im(\lambda^2-\lambda)}&=\cfrac{|(\gamma_3\cos\theta-\sin\theta)R-\gamma_3|}{R^2[(\cos 3\theta+\gamma_3\sin 3\theta)R-(\cos2\theta+\gamma_3\sin2\theta)]}\\&\le \cfrac{|(\gamma_3\cos\theta-\sin\theta)R-\gamma_3|}{(\cos 3\theta+\gamma_3\sin3\theta)R-(\cos2\theta+\gamma_3\sin2\theta)}.
	\end{array}$$
	Since $\gamma_3$ does not depend on $R$, let us study $f_2(R)=\left(\frac{dR-\gamma_3}{bR-c}\right)^2$
	where $d=\gamma_3\cos\theta-\sin\theta$ and $b, c$ as above. We observe that:
	\begin{itemize}
		\item $\gamma_3b-cd$ and $\theta$ have the same sign. Indeed, $\gamma_3b-cd=(\gamma_3^2+1)\sin\theta\cos2\theta$. Consequently, we always have $(\gamma_3b-cd)\gamma_3>0$.
		
		\item We always have $\frac{\gamma_3}{d}>1$. Indeed, if $\theta>0$ then $d>0$ since $\gamma_3=\frac{\delta_0+\sin\frac{5\theta_0}{2}}{\cos\frac{5\theta_0}{2}}>\frac{\sin\theta}{\cos\theta}$, also 
		$\frac{\gamma_3}{d}=\frac{\gamma_3}{\gamma_3\cos\theta-\sin\theta}>1$; if $\theta<0$ then $d<0$ since 
		$-\gamma_3=\frac{\delta_0+\sin\frac{5\theta_0}{2}}{\cos\frac{5\theta_0}{2}}>-\frac{\sin\theta}{\cos\theta}$,	also $\frac{\gamma_3}{d}=\frac{-\gamma_3}{-\gamma_3\cos\theta+\sin\theta}>1$.
	\end{itemize}
	Now, $f_2'(R)=2\cdot\frac{\frac{d}{\gamma_3}R-1}{bR-c}\cdot\frac{(\gamma_3b-cd)\gamma_3}{(bR-c)^2}$, so, thanks to the above results, $f_2(R)$ decreases for $1\le R < \frac{\gamma_3}{d}$ and increases for $R > \frac{\gamma_3}{d}$.  
	Moreover, like for $f_1(1)$, we can estimate
	$$f_2(1)=\left(\cfrac{-\cos\frac{\theta}{2}-\gamma_3\sin\frac{\theta}{2}}{-\gamma_3\sin\frac{5\theta}{2}+\gamma_3^2\cos\frac{5\theta}{2}}\right)^2\gamma_3^2\le\cfrac{1+\gamma_3^2}{\delta_0^2},
	$$
	and $\lim_{R\to+\infty}f_2(R)=\left(\frac{\gamma_3\cos\theta-\sin\theta}{\cos3\theta+\gamma_3\sin3\theta}\right)^2\le \frac{1+\gamma_3^2}{\cos^2 3\theta_0}$. 
	Therefore
	$$\frac{|\gamma_3\Re(\lambda-1)-\Im(\lambda-1)|}{\Re(\lambda^2-\lambda)+\gamma_3\Im(\lambda^2-\lambda)}\le\max\left(\frac{\sqrt{1+\gamma_3^2}}{\delta_0},\frac{\sqrt{1+\gamma_3^2}}{\cos3\theta_0}\right).$$
	
	\noindent (iv)  Since $\theta \in (\pi-\theta_0,\pi)\cup(-\pi,-\pi+\theta_0)$, we have
	\begin{itemize}
		\item $2\theta \in\left(2\pi-2\theta_0,2\pi\right)\cup\left(-2\pi,-2\pi+2\theta_0\right)\subseteq\left(2\pi-\frac{\pi}{3},2\pi\right)\cup\left(-2\pi,-2\pi+\frac{\pi}{3}\right)$ thus $\cos 2\theta>\cos2\theta_0>0$;
		\item $3\theta \in\left(3\pi-3\theta_0,3\pi\right)\cup\left(-3\pi,-3\pi+3\theta_0\right)\subseteq\left(3\pi-\frac{\pi}{2},3\pi\right)\cup\left(-3\pi,-3\pi+\frac{\pi}{2}\right)$, thus $-\cos 3\theta>-\cos(3\pi-3\theta_0)=\sin\left(\frac{\pi}{2}-3\theta_0\right)\ge 0$;
	\end{itemize}
	So we have
	$$-\Re(\lambda^3-\lambda^2)=R^2(-R\cos 3\theta+\cos2\theta)>\left[\sin\left(\frac{\pi}{2}-3\theta_0\right)+\cos2\theta_0\right]R^2>0.$$
	Finally, $\frac{|\Re(\lambda-1)|}{-\Re(\lambda^3-\lambda^2)}\le\frac{R+1}{\left[\sin\left(\frac{\pi}{2}-3\theta_0\right)+\cos2\theta_0\right]R^2}\le\frac{2}{\sin\left(\frac{\pi}{2}-3\theta_0\right)+\cos2\theta_0}$
	and similarly for $\frac{|\Im(\lambda-1)|}{-\Re(\lambda^3-\lambda^2)}$.
\end{proof}	

\begin{lemma}
	\label{gamma123 n+1}
	For $\lambda\in\C\setminus{\R}, |\lambda|\ge 1$ we write $\lambda=R(\cos\theta+\ic\sin\theta)$ in polar form where $R\ge 1$, $\theta\in(-\pi,\pi)$, $\theta \ne 0$.  
	\begin{enumerate}[label=(\roman*)]
		\item For $\lambda$ satisfying $\Re(\lambda^2-\lambda)\ge 0$, let $\gamma_1=\gamma_1(\lambda)=\left\{\begin{array}{cc}
			1, &\text{if } \Im (\lambda^2-\lambda)\ge 0,\\
			-1, &\text{if } \Im (\lambda^2-\lambda)<0
		\end{array}\right.$ then
		$$\Re(\lambda^2-\lambda)+\gamma_1\Im(\lambda^2-\lambda)\ge |\lambda(\lambda-1)|\ge 2|\sin(\theta/2)|.$$
		\item Let $0<\theta_0\le\frac{\pi}{4}$. For $\lambda$ satisfying $\Re(\lambda^2-\lambda)<0$ and $\theta\in[\theta_0,\pi-\theta_0]\cup[-\pi+\theta_0,-\theta_0]$,  let $\gamma_2=\gamma_2(\lambda)=\left\{\begin{array}{cc}
			-1, &\text{if } \Im (\lambda^2-\lambda)\ge 0,\\
			1, &\text{if } \Im (\lambda^2-\lambda)<0
		\end{array}\right.$ then 
		$$-\Re(\lambda^2-\lambda)-\gamma_2\Im(\lambda^2-\lambda) \ge |\lambda(\lambda-1)| \ge 2\sin({\theta_0}/{2}).$$
		\item Let $0<\theta_0\le\frac{\pi}{4}$ and $\delta_0>0$ . 
		For $\lambda$ satisfying $\Re(\lambda^2-\lambda)<0$ and $\theta\in(-\theta_0,\theta_0)\backslash\{0\}$, let $\gamma_3=\gamma_3(\mathrm{sign}(\theta))=\left\{\begin{array}{cc}
			\left(\delta_0+\sin\frac{3\theta_0}{2}\right)/\cos\frac{3\theta_0}{2} & \text{if }\theta>0,\\
			-\left(\delta_0+\sin\frac{3\theta_0}{2}\right)/\cos\frac{3\theta_0}{2} & \text{if }\theta<0
		\end{array}\right.$ then
		$$\Re(\lambda^2-\lambda)+\gamma_3\Im(\lambda^2-\lambda)\ge 2\delta_0|\sin(\theta/2)|.$$
		Moreover, if $0<\theta_0<\frac{\pi}{4}$ then 
		\begin{center}
			$\frac{|\Re(\lambda-1)+\gamma_3\Im(\lambda-1)|}{\Re(\lambda^2-\lambda)+\gamma_3\Im(\lambda^2-\lambda)}\le\frac{\sqrt{1+\gamma_3^2}}{\delta_0}$ and $\frac{|\gamma_3\Re(\lambda-1)-\Im(\lambda-1)|}{\Re(\lambda^2-\lambda)+\gamma_3\Im(\lambda^2-\lambda)}\le\max\left(\frac{\sqrt{1+\gamma_3^2}}{\delta_0},\frac{\sqrt{1+\gamma_3^2}}{\cos2\theta_0}\right)$.
		\end{center}
		\item Let $0<\theta_0\le\frac{\pi}{4}$. 
		There exists no $\lambda$ satisfying $\Re(\lambda^2-\lambda)<0$ and $\theta \in (\pi-\theta_0,\pi)\cup(-\pi,-\pi+\theta_0)$.
	\end{enumerate}
\end{lemma}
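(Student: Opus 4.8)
The plan is to mirror the proof of Lemma~\ref{gamma123 n}, replacing the cubic quantity $\lambda^3-\lambda^2$ by the quadratic $\lambda^2-\lambda=\lambda(\lambda-1)$ throughout, and the half-angle $5\theta/2$ by $3\theta/2$ (which is what pushes the admissible range from $\theta_0\le\pi/6$ up to $\theta_0\le\pi/4$). Writing $\lambda=R(\cos\theta+\ic\sin\theta)$, the starting point is
\[
\Re(\lambda^2-\lambda)=R^2\cos2\theta-R\cos\theta,\qquad \Im(\lambda^2-\lambda)=R^2\sin2\theta-R\sin\theta,
\]
together with the identity $|\lambda^2-\lambda|=R\,|\lambda-1|=|\lambda(\lambda-1)|$, which is the only structural change from the cubic case.

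For (i) and (ii) I would argue by squaring. By the sign conventions defining $\gamma_1$ (resp. $\gamma_2$), the cross term $2\gamma_i\,\Re(\lambda^2-\lambda)\,\Im(\lambda^2-\lambda)$ is nonnegative, so
\[
\bigl[\pm\Re(\lambda^2-\lambda)\pm\gamma_i\Im(\lambda^2-\lambda)\bigr]^2\ge [\Re(\lambda^2-\lambda)]^2+[\Im(\lambda^2-\lambda)]^2=R^2|\lambda-1|^2,
\]
giving the desired combination $\ge R|\lambda-1|=|\lambda(\lambda-1)|$. The tail estimate $|\lambda-1|\ge 2|\sin(\theta/2)|$ follows exactly as in Lemma~\ref{gamma123 n}(i) from the monotonicity of $R\mapsto R^2+1-2R\cos\theta$ on $R\ge1$, and $|\sin(\theta/2)|\ge\sin(\theta_0/2)$ on the angular range of (ii).

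The substantive work is (iii). For the main inequality I would factor out $R$, writing $\Re(\lambda^2-\lambda)+\gamma_3\Im(\lambda^2-\lambda)=R\bigl[R(\cos2\theta+\gamma_3\sin2\theta)-(\cos\theta+\gamma_3\sin\theta)\bigr]$; since $\cos2\theta+\gamma_3\sin2\theta>0$ ($\theta$ and $\gamma_3$ share a sign and $|2\theta|<\pi/2$), the bracket is increasing in $R$, so the expression is bounded below by its value at $R=1$, which a product-to-sum simplification turns into $2\sin(\theta/2)\bigl[\gamma_3\cos(3\theta/2)-\sin(3\theta/2)\bigr]$. The choice $\gamma_3=(\delta_0+\sin(3\theta_0/2))/\cos(3\theta_0/2)$ is precisely what forces the bracket past $\delta_0$ on $0<|\theta|<\theta_0$, yielding $\ge 2\delta_0|\sin(\theta/2)|$. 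For the two ratio bounds I would set $a=\cos\theta+\gamma_3\sin\theta$, $b=\cos2\theta+\gamma_3\sin2\theta$, $d=\gamma_3\cos\theta-\sin\theta$, use $R\ge1$ to drop the outer factor $R$ in the denominator, and study $f_1(R)=\bigl((Ra-1)/(Rb-a)\bigr)^2$ and $f_2(R)=\bigl((Rd-\gamma_3)/(Rb-a)\bigr)^2$. The two key computations are $b-a^2=-(1+\gamma_3^2)\sin^2\theta<0$ and $\gamma_3 b-ad=\tfrac12(1+\gamma_3^2)\sin2\theta$ (same sign as $\gamma_3$), which together with $a/b<1$ and $\gamma_3/d>1$ (both read off from the definition of $\gamma_3$) fix the monotonicity: $f_1$ is decreasing on $[1,\infty)$, hence $\le f_1(1)$, while $f_2$ decreases then increases, hence $\le\max(f_2(1),\lim_{R\to\infty}f_2(R))$. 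Evaluating these extremes, using Cauchy--Schwarz in the numerators and the lower bounds $|\gamma_3\cos(3\theta/2)-\sin(3\theta/2)|>\delta_0$ and $b>\cos2\theta_0$, gives $f_1(1),f_2(1)\le(1+\gamma_3^2)/\delta_0^2$ and $\lim_{R\to\infty}f_2(R)\le(1+\gamma_3^2)/\cos^2 2\theta_0$; this is where the strict requirement $\theta_0<\pi/4$ enters, to keep $\cos2\theta_0>0$.

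Finally (iv) is immediate and in fact cleaner than its Lemma~\ref{gamma123 n} counterpart: for $\theta\in(\pi-\theta_0,\pi)\cup(-\pi,-\pi+\theta_0)$ with $\theta_0\le\pi/4$ one has $\cos2\theta>0$ (since $2\theta$ lies within $2\theta_0\le\pi/2$ of a multiple of $2\pi$) and $\cos\theta<0$, so $\Re(\lambda^2-\lambda)=R^2\cos2\theta-R\cos\theta>0$, contradicting the standing hypothesis $\Re(\lambda^2-\lambda)<0$; hence no such $\lambda$ exists. I expect the monotonicity bookkeeping in (iii) --- tracking the signs of $b-a^2$, $\gamma_3 b-ad$, $a/b-1$ and $\gamma_3/d-1$ consistently across the two regimes $\theta>0$ and $\theta<0$ --- to be the only delicate point; everything else is routine trigonometric simplification.
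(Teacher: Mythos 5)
Your proposal is correct and follows essentially the same route as the paper's proof: the same reduction of (i)--(ii) to the squaring argument of Lemma~\ref{gamma123 n}, the same factor-out-$R$ and product-to-sum evaluation at $R=1$ for the main bound in (iii), the same auxiliary functions $f_1,f_2$ with the same monotonicity analysis (your exact identities $b-a^2=-(1+\gamma_3^2)\sin^2\theta$ and $\gamma_3 b-ad=\tfrac12(1+\gamma_3^2)\sin2\theta$ are just sharper forms of the paper's sign computations), and the same two-line argument for (iv). Incidentally, your bound $\lim_{R\to\infty}f_2(R)\le(1+\gamma_3^2)/\cos^2 2\theta_0$ is the correct one matching the statement; the paper's proof writes $\cos2\theta_0$ without the square, a typo.
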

\begin{proof}
	The proofs for (i) and (ii) are similar to those in Lemma \ref{gamma123 n}. 
	
	\noindent (iii) Note that $\cos2\theta>0, -\frac{\pi}{2}<2\theta<\frac{\pi}{2}$, and $\sin 2\theta$ has the same sign as $\theta$ and $\gamma_3$, so we have 
	$$\begin{array}{ll}
		\Re(\lambda^2-\lambda)+\gamma_3\Im(\lambda^2-\lambda)&=R(R\cos 2\theta-\cos\theta+\gamma_3 R\sin 2\theta-\gamma_3\sin \theta)\\
		&\ge \cos 2\theta-\cos \theta+\gamma_3\sin 2\theta-\gamma_3\sin\theta\\
		&= -2\sin\frac{3\theta}{2}\sin\frac{\theta}{2}+2\gamma_3\cos\frac{3\theta}{2}\sin\frac{\theta}{2} \\
		&= 2\sin\frac{\theta}{2}\left(\gamma_3\cos\frac{3\theta}{2}-\sin\frac{3\theta}{2}\right).
	\end{array}
	$$
	Then we consider two cases: if $0<\theta<\theta_0$ then $\gamma_3>0$, $\left|\sin\frac{\theta}{2}\right|=\sin\frac{\theta}{2}>0$, 
	$0<\frac{3\theta}{2}<\frac{3\theta_0}{2}<\frac{\pi}{2}$ and $\gamma_3\cos\frac{3\theta}{2}-\sin\frac{3\theta}{2}>\gamma_3\cos\frac{3\theta_0}{2}-\sin\frac{3\theta_0}{2}=\delta_0$;	if $-\theta_0<\theta<0$ then $-\gamma_3>0$, $\left|\sin\frac{\theta}{2}\right|=-\sin\frac{\theta}{2}>0$, 
	$-\frac{\pi}{2}<-\frac{3\theta_0}{2}<\frac{3\theta}{2}<0$ and $-\gamma_3\cos\frac{3\theta}{2}+\sin\frac{3\theta}{2}>-\gamma_3\cos\frac{3\theta_0}{2}-\sin\frac{3\theta_0}{2}=\delta_0$.
	
	Next, if $0<\theta_0<\frac{\pi}{4}$, we will show that $\frac{|\Re(\lambda-1)+\gamma_3\Im(\lambda-1)|}{\Re(\lambda^2-\lambda)+\gamma_3\Im(\lambda^2-\lambda)}$ and $\frac{|\gamma_3\Re(\lambda-1)-\Im(\lambda-1)|}{\Re(\lambda^2-\lambda)+\gamma_3\Im(\lambda^2-\lambda)}$	are both bounded. First,
	$$\begin{array}{ll}
		\cfrac{|\Re(\lambda-1)+\gamma_3\Im(\lambda-1)|}{\Re(\lambda^2-\lambda)+\gamma_3\Im(\lambda^2-\lambda)}&=\cfrac{|(\cos\theta+\gamma_3\sin\theta)R-1|}{R[(\cos 2\theta+\gamma_3\sin 2\theta)R-(\cos\theta+\gamma_3\sin\theta)]}\\&\le \cfrac{|(\cos\theta+\gamma_3\sin\theta)R-1|}{(\cos 2\theta+\gamma_3\sin2\theta)R-(\cos\theta+\gamma_3\sin\theta)}.
	\end{array}
	$$
	Since $\gamma_3$ does not depend on $R$, let us study $f_1(R)=\left(\frac{aR-1}{bR-a}\right)^2$	where
	$a=\cos\theta+\gamma_3\sin\theta$, $b=\cos 2\theta+\gamma_3\sin 2\theta$. We observe that:
	\begin{itemize}
		\item $a>0$ and $b>0$. Indeed, $\cos\theta>0$, $\cos 2\theta>0$, and $\theta$ and $\gamma_3$ have the same sign.
		\item $bR-a>0$ since $\Re(\lambda^2-\lambda)+\gamma_3\Im(\lambda^2-\lambda)>0$, thus  $R>\frac{a}{b}$.
		\item $a^2>b$ (equivalently $\frac{a}{b}>\frac{1}{a}$), since $a^2=\cos^2\theta+\gamma_3^2\sin^2\theta+\gamma_3\sin 2\theta>\cos^2\theta-\sin^2\theta+\gamma_3\sin 2\theta=b$. 
	\end{itemize}
	Now, $f_1'(R)=2\cdot\frac{aR-1}{bR-a}\cdot\frac{b-a^2}{(bR-a)^2}<0$
	for $R>\frac{a}{b}>\frac{1}{a}$ and we would like to have $\frac{a}{b}<1$ so that $f_1(R)\le f_1(1), \forall R\ge 1$. Indeed $\frac{a}{b}< 1$ is equivalent to
	$$\cos\theta+\gamma_3\sin\theta < \cos2\theta+\gamma_3\sin2\theta  \Leftrightarrow|\gamma_3|>\frac{\left|\sin\frac{3\theta}{2}\right|}{\cos\frac{3\theta}{2}},$$
	which is true since
	$$|\gamma_3|=\frac{\delta_0+\sin\frac{3\theta_0}{2}}{\cos\frac{3\theta_0}{2}}>\frac{\left|\sin\frac{3\theta}{2}\right|}{\cos\frac{3\theta}{2}}+\varepsilon_0 \quad\mbox{where}\quad\varepsilon_0=\frac{\delta_0}{\cos\frac{3\theta_0}{2}}.$$
	Then we study
	$$f_1(1)=\left[ \frac{\cos\theta-1+\gamma_3\sin\theta}{\cos2\theta-\cos\theta+\gamma_3(\sin2\theta-\sin\theta)}\right]^2=\left(\cfrac{-\sin\frac{\theta}{2}+\gamma_3\cos\frac{\theta}{2}}{-\gamma_3\sin\frac{3\theta}{2}+\gamma_3^2\cos\frac{3\theta}{2}}\right)^2\gamma_3^2.
	$$
	We have:
	\begin{itemize}
		\item $(-\sin\frac{\theta}{2}+\gamma_3\cos\frac{\theta}{2})^2\le 1+\gamma_3^2$ by Cauchy-Schwarz inequality;
		\item $\gamma_3^2=|\gamma_3|^2>\frac{\gamma_3\sin\frac{3\theta}{2}}{\cos\frac{3\theta}{2}}+\varepsilon_0|\gamma_3|$ that leads to $-\gamma_3\sin\frac{3\theta}{2}+\gamma_3^2\cos\frac{3\theta}{2}>\varepsilon_0\cos\frac{3\theta}{2}|\gamma_3|=\delta_0|\gamma_3|$;
	\end{itemize}
	hence $f_1(1)\le\frac{1+\gamma_3^2}{\delta_0^2}$ and finally $\frac{|\Re(\lambda-1)+\gamma_3\Im(\lambda-1)|}{\Re(\lambda^2-\lambda)+\gamma_3\Im(\lambda^2-\lambda)}\le\frac{\sqrt{1+\gamma_3^2}}{\delta_0}$. Next, we have
	$$\begin{array}{ll}
		\cfrac{|\gamma_3\Re(\lambda-1)-\Im(\lambda-1)|}{\Re(\lambda^2-\lambda)+\gamma_3\Im(\lambda^2-\lambda)}&=\cfrac{|(\gamma_3\cos\theta-\sin\theta)R-\gamma_3|}{R[(\cos 2\theta+\gamma_3\sin 2\theta)R-(\cos\theta+\gamma_3\sin\theta)]}\\&\le \cfrac{|(\gamma_3\cos\theta-\sin\theta)R-\gamma_3|}{(\cos 2\theta+\gamma_3\sin2\theta)R-(\cos\theta+\gamma_3\sin\theta)}.
	\end{array}$$
	Since $\gamma_3$ does not depend on $R$, let us study $f_2(R)=\left(\frac{cR-\gamma_3}{bR-a}\right)^2$
	where $c=\gamma_3\cos\theta-\sin\theta$ and $a, b$ as above. We observe that:
	\begin{itemize}
		\item $\gamma_3b-ca$ and $\theta$ have the same sign. Indeed, $\gamma_3b-ca=(\gamma_3^2+1)\sin\theta\cos\theta$. Consequently, we always have $(\gamma_3b-ca)\gamma_3>0$.
		
		\item We always have $\frac{\gamma_3}{c}>1$. 	Indeed, if $\theta>0$ then $c>0$ since $\gamma_3=\frac{\delta_0+\sin\frac{3\theta_0}{2}}{\cos\frac{3\theta_0}{2}}>\frac{\sin\theta}{\cos\theta}$, also 
		$\frac{\gamma_3}{c}=\frac{\gamma_3}{\gamma_3\cos\theta-\sin\theta}>1$; if $\theta<0$ then $c<0$ since 
		$-\gamma_3=\frac{\delta_0+\sin\frac{3\theta_0}{2}}{\cos\frac{3\theta_0}{2}}>-\frac{\sin\theta}{\cos\theta}$,	also $\frac{\gamma_3}{c}=\frac{-\gamma_3}{-\gamma_3\cos\theta+\sin\theta}>1$.
	\end{itemize}
	Now, $f_2'(R)=2\cdot\frac{\frac{c}{\gamma_3}R-1}{bR-a}\cdot\frac{(\gamma_3b-ca)\gamma_3}{(bR-a)^2}$, so, thanks to the above results, $f_2(R)$ decreases for $1\le R < \frac{\gamma_3}{c}$ and increases for $R > \frac{\gamma_3}{c}$.  
	Moreover, like for $f_1(1)$, we can estimate
	$$f_2(1)=\left(\cfrac{-\cos\frac{\theta}{2}-\gamma_3\sin\frac{\theta}{2}}{-\gamma_3\sin\frac{3\theta}{2}+\gamma_3^2\cos\frac{3\theta}{2}}\right)^2\gamma_3^2\le\cfrac{1+\gamma_3^2}{\delta_0^2}
	$$
	and $\lim_{R\to+\infty}f_2(R)=\left(\frac{\gamma_3\cos\theta-\sin\theta}{\cos2\theta+\gamma_3\sin2\theta}\right)^2\le \frac{1+\gamma_3^2}{\cos2\theta_0}$. Therefore
	$$\frac{|\gamma_3\Re(\lambda-1)-\Im(\lambda-1)|}{\Re(\lambda^2-\lambda)+\gamma_3\Im(\lambda^2-\lambda)}\le\max\left(\frac{\sqrt{1+\gamma_3^2}}{\delta_0},\frac{\sqrt{1+\gamma_3^2}}{\cos2\theta_0}\right).$$
	
	\noindent (iv)  For $\theta \in (\pi-\theta_0,\pi)\cup(-\pi,-\pi+\theta_0)$, we have $\cos 2\theta>0$ since $2\theta \in\left(\frac{3\pi}{2},2\pi\right)\cup\left(-2\pi,-\frac{3\pi}{2}\right)$, while $\cos \theta<0$. 
	Hence $\Re(\lambda^2-\lambda)=R(R\cos 2\theta-\cos\theta)>0$.
\end{proof}

\section{Descent step for usual and shifted gradient descent}
\label{app:convGD}

\begin{proposition}[Descent step for the usual gradient descent]\label{tau usualgd}
	The usual gradient descent algorithm \eqref{usualgd} converges if
	$$0<\tau<\cfrac{2}{\norm{H(I-B)^{-1}M}^2}.$$
\end{proposition}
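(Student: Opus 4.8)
The plan is to reduce the coupled iteration \eqref{usualgd} to a single linear recursion for the parameter error and then read off convergence from the spectrum of a Hermitian matrix. First I would note that in the usual gradient descent the state and adjoint equations are solved exactly, so $u^n=(I-B)^{-1}(M\sigma^n+F)$ and $p^n=(I-B^*)^{-1}H^*(Hu^n-f)$. Since $f=Hu(\sigma^\text{ex})=H(I-B)^{-1}(M\sigma^\text{ex}+F)$, the residual telescopes to $Hu^n-f=H(I-B)^{-1}M(\sigma^n-\sigma^\text{ex})$, and hence $M^*p^n=K^*K(\sigma^n-\sigma^\text{ex})$ where $K\coloneqq H(I-B)^{-1}M$ and $K^*=M^*(I-B^*)^{-1}H^*$.

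Substituting into the update $\sigma^{n+1}=\sigma^n-\tau M^*p^n$ and subtracting $\sigma^\text{ex}$, the error $e^n\coloneqq\sigma^n-\sigma^\text{ex}$ obeys the plain linear recursion $e^{n+1}=(I-\tau K^*K)e^n$. Convergence for every initial guess is therefore equivalent to $\rho(I-\tau K^*K)<1$.

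Next I would exploit that $K^*K$ is Hermitian and positive semidefinite, so its eigenvalues $\mu_i$ are real and nonnegative; by the injectivity assumption \eqref{hypo} on $K=H(I-B)^{-1}M$ they are in fact strictly positive, and the largest one equals $\norm{K^*K}=\norm{K}^2=\norm{H(I-B)^{-1}M}^2$. The eigenvalues of $I-\tau K^*K$ are $1-\tau\mu_i$, so $\rho(I-\tau K^*K)=\max_i|1-\tau\mu_i|$. For $\tau>0$ each $1-\tau\mu_i<1$ automatically (here positivity of $\mu_i$ is what rules out the boundary value $1$), while $1-\tau\mu_i>-1$ for all $i$ amounts to $\tau<2/\mu_{\max}=2/\norm{K}^2$. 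Combining gives $\rho(I-\tau K^*K)<1$ exactly on $0<\tau<2/\norm{H(I-B)^{-1}M}^2$, which is the claim.

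I expect no serious obstacle here: the whole argument is essentially the classical convergence analysis of gradient descent for a quadratic functional, the functional being $J(\sigma)=\tfrac12\norm{K(\sigma-\sigma^\text{ex})}^2$ whose Hessian is the fixed positive definite matrix $K^*K$. The only points requiring a little care are the telescoping identity for the residual (which uses that the exact state satisfies the same forward equation) and the identification of the top eigenvalue of $K^*K$ with $\norm{K}^2$; the injectivity hypothesis is used solely to keep all eigenvalues away from zero so that the lower end $\tau>0$ of the interval is not itself obstructed.
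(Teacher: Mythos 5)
Your proof is correct, and it takes a cleaner route than the paper's. The paper stays within the block-iteration framework it uses for the one-shot methods: it writes the coupled error system for $(p^n,u^n,\sigma^n)$ as a $3\times 3$ block matrix \eqref{itermat usualgd}, derives the eigenvalue equation $\lambda^2(\lambda-1)+\tau\frac{\norm{H(I-B)^{-1}My}^2}{\norm{y}^2}\lambda^2=0$ (so $\lambda=1-\tau\norm{Ky}^2/\norm{y}^2$ for $\lambda\neq 0$), and separately rules out $\lambda=1$ by a rearrangement of the block matrix. You instead exploit the special structure of usual gradient descent --- the state and adjoint are \emph{exact} functions of $\sigma^n$ --- to decouple the parameter error, obtaining $e^{n+1}=(I-\tau K^*K)e^n$ with $K=H(I-B)^{-1}M$, and then apply the classical Hermitian spectral argument. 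The two proofs rest on the same spectral content (your eigenvalues $1-\tau\mu_i$ are exactly the nonzero/non-unit eigenvalues the paper extracts via its $y$-equation, by the Rayleigh quotient), but your reduction is more elementary, makes the positive-definiteness role of the injectivity hypothesis transparent, and in fact shows the bound is sharp ($\rho(I-\tau K^*K)<1$ if and only if $0<\tau<2/\norm{K}^2$). What the paper's heavier formalism buys is uniformity: for the one-shot methods the inner iterations are incomplete, $u^n,p^n$ are genuine independent variables, no such decoupling exists, and the block-matrix eigenvalue machinery is then unavoidable; the appendix proof simply reuses that machinery. Your argument, by contrast, is specific to the exact-solve case and would not extend to the one-shot setting.
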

\begin{proof}
	The error system for \eqref{usualgd} can be rewritten as
	\begin{equation}
		\label{itermat usualgd}
		\begin{bmatrix}
			p^{n+1}\\ u^{n+1}\\ \sigma^{n+1}
		\end{bmatrix}=\begin{bmatrix}
			-\tau (I-B^*)^{-1}H^*H(I-B)^{-1}MM^* & 0 & (I-B^*)^{-1}H^*H(I-B)^{-1}M \\
			-\tau (I-B)^{-1}MM^* & 0 & (I-B)^{-1}M \\
			-\tau M^* & 0 & I
		\end{bmatrix}
		\begin{bmatrix}
			p^{n}\\ u^{n}\\ \sigma^{n}
		\end{bmatrix}
	\end{equation}
	Recall that a fixed point iteration converges if and only if the spectral radius of its iteration matrix is strictly less than $1$. We can show that:
	\begin{enumerate}[label=(\roman*)]
		\item If $\lambda\in\C \setminus \{0,1\}$ is an eigenvalue of the iteration matrix, then, proceeding as in Proposition \ref{prop:eq-eigen k-shot}, there exists $y\in\C^{n_\sigma}, y\neq 0$ such that
		\begin{equation}\label{eq-eigen usualgd}
			\lambda^2(\lambda-1)+\tau\frac{\norm{H(I-B)^{-1}My}^2}{\norm{y}^2}\lambda^2=0
		\end{equation}
		hence $\lambda=1-\tau\frac{\norm{H(I-B)^{-1}My}^2}{\norm{y}^2}$. If we take $\tau<\frac{2}{\norm{H(I-B)^{-1}M}^2}$ then equation \eqref{eq-eigen usualgd} admits no solution $\lambda$ with $|\lambda|\ge 1$.
		\item $\lambda=1$ is not an eigenvalue of the iteration matrix. To show this, we rewrite iteration \eqref{itermat usualgd} as 
		$$\begin{bmatrix}
			\sigma^{n+1}\\ p^{n+1}\\ u^{n+1}
		\end{bmatrix}=\begin{bmatrix}
			I  & -\tau M^* & 0 \\
			(I-B^*)^{-1}H^*H(I-B)^{-1}M & -\tau (I-B^*)^{-1}H^*H(I-B)^{-1}MM^* & 0 \\
			(I-B)^{-1}M & -\tau (I-B)^{-1}MM^* & 0
		\end{bmatrix}
		\begin{bmatrix}
			\sigma^{n}\\ p^{n}\\ u^{n}
		\end{bmatrix}.$$
	\end{enumerate}
\end{proof}

\begin{proposition}[Convergence of the shifted gradient descent]\label{tau shiftgd}
	The shifted gradient descent algorithm \eqref{shifted-gd}
	converges if
	$$0<\tau<\cfrac{1}{\norm{H(I-B)^{-1}M}^2}.$$
\end{proposition}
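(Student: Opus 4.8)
The plan is to mirror the eigenvalue analysis used for the usual gradient descent in Proposition~\ref{tau usualgd}, adapting it to the structure of the shifted scheme. First I would write down the error system for \eqref{shifted-gd}. Because the state and adjoint equations are solved exactly by a direct solver, in error variables $u^{n+1}=(I-B)^{-1}M\sigma^n$ and $p^{n+1}=(I-B^*)^{-1}H^*H(I-B)^{-1}M\sigma^n$, so both depend only on $\sigma^n$ (and not on $u^n,p^n$). Putting the errors in evidence, the iteration matrix in the ordering $(p,u,\sigma)$ is
\[
\begin{bmatrix} 0 & 0 & (I-B^*)^{-1}H^*H(I-B)^{-1}M \\ 0 & 0 & (I-B)^{-1}M \\ -\tau M^* & 0 & I \end{bmatrix}.
\]
Proceeding exactly as in Proposition~\ref{prop:eq-eigen k-shot}, for an eigenvalue $\lambda\neq 0$ with eigenvector $(\tilde p,\tilde u,y)$ I would eliminate $\tilde p$ and $\tilde u$ and take the scalar product with $y$ to obtain the eigenvalue equation
\[
\lambda(\lambda-1)\norm{y}^2+\tau\norm{H(I-B)^{-1}My}^2=0,\qquad y\neq 0.
\]

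The key point is then that, setting $\alpha\coloneqq\tau\norm{H(I-B)^{-1}My}^2/\norm{y}^2$, this is the quadratic $\lambda^2-\lambda+\alpha=0$. This is the essential difference from the usual gradient descent, where the analogous equation was \emph{linear} in $\lambda$ and forced $\lambda$ to be real; here $\lambda$ may be genuinely complex, which is precisely what will require a modulus estimate (and explains the oscillations observed numerically, as well as the more restrictive factor $1$ instead of $2$, since the real eigenvalue $1-\alpha$ of the usual scheme only needs $\alpha<2$). By the injectivity of $H(I-B)^{-1}M$ in \eqref{hypo}, $\alpha>0$ for every $y\neq 0$. I would split into two cases. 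If $\alpha\le\frac14$, the two roots $\frac{1\pm\sqrt{1-4\alpha}}{2}$ are real, positive, and (since $\alpha>0$) both lie in $(0,1)$. If $\alpha>\frac14$, the roots are complex conjugates and, since their product equals the constant term $\alpha$, one has $|\lambda|^2=\alpha$; hence $|\lambda|<1$ precisely when $\alpha<1$. In both cases $|\lambda|<1$ as soon as $\alpha<1$.

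To finish, I would bound $\alpha\le\tau\norm{H(I-B)^{-1}M}^2$, so that the hypothesis $\tau<1/\norm{H(I-B)^{-1}M}^2$ forces $\alpha<1$ for every eigendirection $y$, giving $|\lambda|<1$ for all nonzero eigenvalues. It remains to dispose of $\lambda=0$ and $\lambda=1$: the value $0$ may be an eigenvalue (the middle column of the matrix vanishes) but $|0|<1$ is harmless, and $\lambda=1$ is excluded because substituting it in the eigenvalue equation yields $H(I-B)^{-1}My=0$, hence $y=0$ by injectivity, forcing the whole eigenvector to vanish. Thus $\rho<1$ and the scheme converges. The only genuinely new ingredient relative to the usual gradient descent is the complex-root case; the main (though mild) obstacle is simply recognizing that the product-of-roots identity $|\lambda|^2=\alpha$ turns the spectral condition into the clean scalar requirement $\alpha<1$, from which the stated bound on $\tau$ follows immediately.
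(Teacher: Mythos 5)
Your proof is correct, and its skeleton is the paper's: write the error system, read off the block iteration matrix (yours is identical to \eqref{itermat shiftgd}), eliminate $\tilde p,\tilde u$ from the eigenvector equations to get a scalar polynomial equation with $y\neq 0$, then choose $\tau$ so that no root has modulus $\ge 1$, treating $\lambda=0$ and $\lambda=1$ separately. The genuine difference is the last step. The paper keeps the cubic form $\lambda^2(\lambda-1)+\tau\frac{\norm{H(I-B)^{-1}My}^2}{\norm{y}^2}\lambda=0$ and invokes Lemma~\ref{marden-ord-3} (the Jury--Marden criterion, proved in Appendix~\ref{app:marden}) with $a_0=0$, $a_1=\tau\norm{H(I-B)^{-1}My}^2/\norm{y}^2$, $a_2=-1$; its three conditions collapse to $a_1<1$, exactly your inequality. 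You instead divide out $\lambda$ and analyze the quadratic $\lambda^2-\lambda+\alpha=0$ by hand: for $0<\alpha\le\frac{1}{4}$ both roots are real and lie in $(0,1)$, and for $\alpha>\frac{1}{4}$ Vieta's product identity gives a conjugate pair with $|\lambda|^2=\alpha$, so $\alpha<1$ suffices. Your route is more elementary and self-contained, and it exposes the structural reason the shifted scheme needs the constant $1$ where the usual gradient descent tolerates $2$: in Proposition~\ref{tau usualgd} the relevant eigenvalue $1-\alpha$ is forced to be real, whereas here a complex pair of modulus $\sqrt{\alpha}$ appears once $\alpha>\frac{1}{4}$. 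What the paper's choice buys is uniformity of machinery: the same Lemma~\ref{marden-ord-3} is reused for the $B=0$ one-shot cases (Propositions~\ref{tau-1-shot-B=0 n} and \ref{tau-1-shot-B=0 n+1}) and throughout the scalar-case study of Appendix~\ref{app:1D}, where the cubics do not factor as conveniently and a bare-hands analysis would be unwieldy. One small remark: your separate exclusion of $\lambda=1$ is redundant (your case analysis already shows no root equals $1$ when $\alpha<1$), but the injectivity assumption is still essential in your argument, since it guarantees $\alpha>0$, without which the quadratic would have the root $\lambda=1$.
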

\begin{proof}
	The error system for \eqref{shifted-gd} can be rewritten as
	\begin{equation}
		\label{itermat shiftgd}
		\begin{bmatrix}
			p^{n+1}\\ u^{n+1}\\ \sigma^{n+1}
		\end{bmatrix}=\begin{bmatrix}
			0 & 0 & (I-B^*)^{-1}H^*H(I-B)^{-1}M \\
			0 & 0 & (I-B)^{-1}M \\
			-\tau M^* & 0 & I
		\end{bmatrix}
		\begin{bmatrix}
			p^{n}\\ u^{n}\\ \sigma^{n}
		\end{bmatrix}.
	\end{equation}
	Recall that a fixed point iteration converges if and only if the spectral radius of its iteration matrix is strictly less than $1$. We can show that:
	\begin{enumerate}[label=(\roman*)]
		\item If $\lambda\in\C \setminus \{0,1\}$ is an eigenvalue of the iteration matrix, then, proceeding as in Proposition \ref{prop:eq-eigen shift-k-shot}, there exists $y\in\C^{n_\sigma}, y\neq 0$ such that
		\begin{equation}\label{eq-eigen shiftgd}
			\lambda^2(\lambda-1)+\tau\frac{\norm{H(I-B)^{-1}My}^2}{\norm{y}^2}\lambda=0.
		\end{equation}
		By applying Lemma~\ref{marden-ord-3} for
		\[
		a_0=0, \quad a_1=\tau\frac{\norm{H(I-B)^{-1}My}^2}{\norm{y}^2},\quad a_2=-1,
		\]
		we see that equation \eqref{eq-eigen shiftgd} admits no solution $\lambda$ with $|\lambda|\ge 1$ if we take $\tau<\frac{\norm{y}^2}{\norm{H(I-B)^{-1}My}^2}$. Then it is enough to take  $\tau<\frac{1}{\norm{H(I-B)^{-1}M}^2}$.
		\item $\lambda=1$ is not an eigenvalue of the iteration matrix. To show this, we rewrite iteration \eqref{itermat shiftgd} as 
		$$\begin{bmatrix}
			\sigma^{n+1}\\ p^{n+1}\\ u^{n+1}
		\end{bmatrix}=\begin{bmatrix}
			I  & -\tau M^* & 0 \\
			(I-B^*)^{-1}H^*H(I-B)^{-1}M & 0 & 0 \\
			(I-B)^{-1}M & 0 & 0
		\end{bmatrix}
		\begin{bmatrix}
			\sigma^{n}\\ p^{n}\\ u^{n}
		\end{bmatrix}.$$
		and proceed as in Proposition \ref{prop:eq-eigen shift-k-shot}.
	\end{enumerate}
\end{proof}

\section{Convergence study for the scalar case}
\label{app:1D}

\subsection{Notations and preliminary calculation}
In the scalar case, that is when $n_u, n_\sigma, n_f =1$, we change the notation from capital to lower case letters: 
$$B\leftarrow b\in\R, b<1,\quad M\leftarrow m\in\R, m\neq 0, \quad H\leftarrow h\in\R, h\neq 0,$$
\begin{equation}
	\label{t-u-1d}
	T_k\leftarrow t_k=1+b+...+b^{k-1}=\cfrac{1-b^k}{1-b}, \quad U_k\leftarrow u_k=kh^2b^{k-1}
\end{equation}
$$
X_k\leftarrow x_k=\left\{\begin{array}{cl}
	0, & k=1,\\
	h^2[1+2b+3b^2+...+(k-1)b^{k-2}], & k\ge 2. 
\end{array}\right.
$$
The identity $
1+2x+3x^2+...+nx^{n-1}=\left(\frac{1-x^{n+1}}{1-x}\right)'=\frac{1-(n+1)x^n+nx^{n+1}}{(1-x)^2}$ says that 
\begin{equation}
	\label{x-1d}
	x_k=h^2\frac{1-kb^{k-1}+(k-1)b^k}{(1-b)^2}, \quad k\ge 1,
\end{equation}
where we set $b^{k-1}=1$ when $k=1$ and $b=0$. Now for each of algorithms \eqref{usualgd}, \eqref{shifted-gd}, \eqref{alg:k-shot n}, \eqref{alg:k-shot n+1}, we write the iterations for the errors in the scalar case and the corresponding iteration matrix $\mathcal{M}$ such that
$[p^{n+1}, u^{n+1}, \sigma^{n+1}]^\intercal=\mathcal{M}
[p^{n}, u^{n}, \sigma^{n}]^\intercal$.
\begin{itemize}
	\item Usual gradient descent (usual GD):
	\begin{equation}
		\label{usualgd-err-1d}
		\begin{cases}
			\sigma^{n+1}=\sigma^n-\tau mp^n\\
			u^{n}=bu^n+m\sigma^n\\
			p^{n}=bp^n+h^2u^n
		\end{cases}\quad
		\mathcal{M}=\begin{bmatrix}
			-h^2m^2(1-b)^{-2}\tau & 0 & h^2m(1-b)^{-2} \\
			-m^2(1-b)^{-1}\tau & 0 & m(1-b)^{-1} \\
			-m\tau & 0 & 1
		\end{bmatrix}
	\end{equation}
	\item Shifted gradient descent (shifted GD):
	\begin{equation}
		\label{shifted-gd-err-1d}
		\begin{cases}
			\sigma^{n+1}=\sigma^n-\tau mp^n\\
			u^{n+1}=bu^{n+1}+m\sigma^{n}\\
			p^{n+1}=bp^{n+1}+h^2u^{n+1}\\
		\end{cases}\quad
		\mathcal{M}=\begin{bmatrix}
			0 & 0 & h^2m(1-b)^{-2} \\
			0 & 0 & m(1-b)^{-1} \\
			-m\tau & 0 & 1
		\end{bmatrix}
	\end{equation}
	\item $k$-step one-shot:
	\begin{equation}
		\label{k-shot-err-1d n+1}
		\begin{cases}
			\sigma^{n+1}=\sigma^n-\tau mp^n\\
			p^{n+1}=(b^k-\tau m^2x_k)p^n+u_ku^n+mx_k\sigma^n\\
			u^{n+1}=b^ku^n+mt_k\sigma^n-\tau m^2t_kp^n
		\end{cases} \quad
		\mathcal{M}=\begin{bmatrix}
			b^k-m^2x_k\tau & u_k & mx_k \\
			-m^2t_k\tau  & b^k & mt_k \\
			-m\tau  & 0 & 1
		\end{bmatrix}
	\end{equation}
	\item Shifted $k$-step one-shot:
	\begin{equation}
		\label{k-shot-err-1d n}
		\begin{cases}
			\sigma^{n+1}=\sigma^n-\tau mp^n\\
			p^{n+1}=b^kp^n+u_ku^n+mx_k\sigma^n\\
			u^{n+1}=b^ku^n+mt_k\sigma^n
		\end{cases} \quad	
		\mathcal{M}=\begin{bmatrix}
			b^k & u_k & mx_k \\
			0 & b^k & mt_k \\
			-m\tau & 0 & 1
		\end{bmatrix}.
	\end{equation}
\end{itemize}

\subsection{Necessary and sufficient conditions for convergence}

In this simpler scalar case, we will be able to prove sufficient and also necessary conditions on the descent step $\tau$ for convergence. Our strategy to study the spectral radius $\rho(\mathcal{M})$ is as follows:
\begin{enumerate}
	\item Compute $\det(\mathcal{M}-\lambda I)$ to write the eigenvalue equation $\mathcal{P}(\lambda)=0$. For the considered methods, $\mathcal{P}$ turns out to be a polynomial of degree $3$, 
	$
	\mathcal{P}(\lambda)=a_0+a_1\lambda+a_2\lambda^2+\lambda^3
	$, 
	where $a_0,a_1,a_2\in\R$ depend on $h,m,b,\tau$. For the computations, the identity $u_kt_k- b^k x_k+x_k=h^2t_k^2$, which is the scalar version of \eqref{uxt}, can be helpful.
	\item Apply to $\mathcal{P}$ Lemma~\ref{marden-ord-3}, which states a necessary and sufficient condition for a real coefficient polynomial of degree 3 to have all roots inside the unit circle of the complex plane. Then deduce conditions on $\tau$. 
\end{enumerate}
\begin{lemma}\label{marden-ord-3}
	Let $a_0,a_1,a_2\in\R$, then all roots of $\mathcal{P}(z)=a_0+a_1z+a_2z^2+z^3$ stay (strictly) inside the unit circle of the complex plane if and only if
	\begin{gather}
		(a_0-1)(a_0+1)<0, \label{cond1} \\
		(a_0^2-a_2a_0+a_1-1)(a_0^2+a_2a_0-a_1-1)>0, \label{cond2}  \\
		(a_0+a_2-a_1-1)(a_0+a_2+a_1+1)<0. \label{cond3} 
	\end{gather}
\end{lemma}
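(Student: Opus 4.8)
The plan is to recognize this as the Schur--Cohn--Jury stability criterion specialized to a real cubic (the classical statement found in Marden's \emph{Geometry of Polynomials}), and to derive it by transporting the unit-disk root-location problem to a half-plane problem where the Routh--Hurwitz criterion applies. The transport is done by the Cayley--Möbius map $z=\frac{w+1}{w-1}$, which is a bijection of the open unit disk $\{|z|<1\}$ onto the open left half-plane $\{\Re w<0\}$: it sends $z=0\mapsto w=-1$, carries the unit circle onto the imaginary axis, and sends the boundary points $z=1\mapsto w=\infty$, $z=-1\mapsto w=0$. Since $\mathcal{P}$ has real coefficients the transformed object is again real, so everything reduces to a real computation.

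First I would form the associated cubic
\[
Q(w)\coloneqq (w-1)^3\,\mathcal{P}\!\left(\tfrac{w+1}{w-1}\right)=c_3w^3+c_2w^2+c_1w+c_0,
\]
and expand to obtain the real coefficients
\[
c_3=1+a_2+a_1+a_0,\quad c_2=3-3a_0-a_1+a_2,\quad c_1=3+3a_0-a_1-a_2,\quad c_0=1-a_0+a_1-a_2.
\]
Because $\mathcal{P}$ is an honest cubic, $z=1$ is a root if and only if $c_3=\mathcal{P}(1)=0$; away from that degenerate case $\deg Q=3$ and the roots of $\mathcal{P}$ in the open disk are in bijection with the roots of $Q$ in the open left half-plane (the boundary values $z=\pm1$ are excluded below). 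Applying the Routh--Hurwitz criterion to the real cubic $Q$, all of its roots lie in the open left half-plane if and only if
\[
c_2c_3>0,\qquad c_0c_3>0,\qquad c_1c_2-c_0c_3>0,
\]
these being the sign-symmetric forms of the monic conditions $\alpha>0,\ \gamma>0,\ \alpha\beta>\gamma$ (which in turn force $c_1c_3>0$).

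Next I would translate these three inequalities into \eqref{cond1}--\eqref{cond3} by elementary identities. Setting $F_1\coloneqq a_0^2-a_0a_2+a_1-1$ and $F_2\coloneqq a_0^2+a_0a_2-a_1-1$ (the two factors appearing in \eqref{cond2}), one checks directly that
\[
\mathcal{P}(1)=c_3,\quad \mathcal{P}(-1)=-c_0,\quad c_1c_2-c_0c_3=-8F_1,\quad F_1+F_2=2(a_0^2-1),\quad F_1F_2=(a_0^2-1)^2-(a_0a_2-a_1)^2.
\]
The first two give $c_0c_3>0\Leftrightarrow \mathcal{P}(1)\mathcal{P}(-1)<0$, i.e.\ \eqref{cond3}; the third gives $c_1c_2-c_0c_3>0\Leftrightarrow F_1<0$; and the last two show that the pair $\{F_1<0,\,F_2<0\}$ is equivalent to $\{a_0^2<1,\ F_1F_2>0\}$, that is, to \eqref{cond1} together with \eqref{cond2} (using that $F_1+F_2<0$ plus $F_1F_2>0$ forces both $F_i<0$, and conversely). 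So the stated conditions amount exactly to $\{F_1<0,\ F_2<0,\ c_0c_3>0\}$, while Routh--Hurwitz delivers $\{c_2c_3>0,\ c_0c_3>0,\ F_1<0\}$.

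The hard part is precisely the remaining sign bookkeeping: on the locus where $c_0c_3>0$ and $F_1<0$ already hold, I must show that $c_2c_3>0$ is equivalent to $F_2<0$. Equivalently, in the Schur--Cohn--Jury formulation this is the step of upgrading the product condition \eqref{cond3} to the two individual sign conditions $\mathcal{P}(1)>0$ and $\mathcal{P}(-1)<0$. I would handle it by excluding the spurious branch $\mathcal{P}(1)<0,\ \mathcal{P}(-1)>0$: in that branch one gets $a_1<-1$ and $|a_0+a_2|<-1-a_1$, and, rewriting \eqref{cond1}--\eqref{cond2} as $|a_0|<1$ and $|a_1-a_0a_2|<1-a_0^2$, a short interval argument on the admissible range of $a_1$ forces $|a_0|\ge1$, contradicting \eqref{cond1}. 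Once the correct signs $c_3,c_0>0$ are pinned down, the equivalence $c_2>0\Leftrightarrow F_2<0$ is a finite elementary computation (e.g.\ via the identity $F_2=(a_0+a_2)(a_0+1)-c_3$), completing the match in both directions; combined with the boundary bookkeeping at $z=\pm1$ this yields the lemma.
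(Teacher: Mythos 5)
Your proof is correct, but it follows a genuinely different route from the paper's. The paper obtains the lemma from the Jury--Marden criterion (Theorem~\ref{Marden criterion}, proved in Appendix~\ref{app:marden} by induction using Marden's Theorem~\ref{Marden theorem} and a lemma of Schur): one fills in Marden's table for the cubic, and the entries $a_0^{(1)},a_0^{(2)},a_0^{(3)}$ factor \emph{directly} into the three products in \eqref{cond1}--\eqref{cond3}, so no further sign analysis is needed. You instead map the disk to the half-plane by the Cayley transform and apply the cubic Routh--Hurwitz test. I checked your computations: the coefficients $c_0,\dots,c_3$ and all stated identities ($\mathcal{P}(1)=c_3$, $\mathcal{P}(-1)=-c_0$, $c_1c_2-c_0c_3=-8F_1$, $F_1+F_2=2(a_0^2-1)$, $F_1F_2=(a_0^2-1)^2-(a_0a_2-a_1)^2$, $F_2=(a_0+a_2)(a_0+1)-c_3$) are right, and your branch-exclusion argument is sound: the interval $a_0^2+a_0a_2-1<a_1<-1-|a_0+a_2|$ is nonempty only if $a_0(a_0+a_2)+|a_0+a_2|<0$, which forces $|a_0|>1$. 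The two steps you compress do go through: from $c_3>0$ and $F_1<0$ one gets $(1+a_0)(2-a_0+a_2)>0$, hence $c_2>(1-a_0)(2-a_0+a_2)>0$, proving $F_2<0\Rightarrow c_2>0$; conversely, $c_2>0$ with $c_1c_2>c_0c_3>0$ gives $c_1>0$, and then $F_2<0$ follows because $8F_2=(c_3-c_0)(c_1+c_3)-8c_3$ together with $c_0+c_1+c_2+c_3=Q(1)=8$ yields $F_2<0\Leftrightarrow c_0c_1+2c_0c_3+c_2c_3>0$. The trade-off between the two approaches: yours is self-contained modulo the classical Routh--Hurwitz test and elementary algebra, but pays with this sign bookkeeping, since the Hurwitz conditions $\{c_2c_3>0,\,c_0c_3>0,\,F_1<0\}$ do not match \eqref{cond1}--\eqref{cond3} term by term; the paper's route rests on the heavier Jury--Marden machinery (which it must develop anyway), but the cubic case then reduces to a mechanical table computation whose entries factor into exactly the stated form, and it scales to arbitrary degree.
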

\noindent The proof of Lemma \ref{marden-ord-3} is in Appendix~\ref{app:marden} and is mainly based on Marden's works \cite{marden66}.

\subsubsection{Descent step for the usual gradient descent}
Here, the coefficients of $\mathcal{P}$ are 
$$a_0=0, \quad a_1=0,\quad  a_2=h^2m^2(1-b)^{-2}\tau-1.$$
Conditions \eqref{cond1} and \eqref{cond2} of Lemma \ref{marden-ord-3} are automatically satisfied. Condition \eqref{cond3} gives
$$0<\tau<\frac{2(1-b)^2}{h^2m^2},$$
that is \eqref{best-tau-usualgd} in the scalar case.

\subsubsection{Descent step for the shifted gradient descent}
Here, the coefficients of $\mathcal{P}$ are 
$$a_0=0, \quad a_1=h^2m^2(1-b)^{-2}\tau,\quad  a_2=-1.$$
Condition \eqref{cond1} of Lemma \ref{marden-ord-3} is automatically satisfied, condition \eqref{cond3} is automatically satisfied for $\tau>0$, and condition \eqref{cond2} gives us 
\[
\tau<\frac{(1-b)^2}{h^2m^2}, 
\]
that is \eqref{best-tau-shifted-gd} in the scalar case.

\subsubsection{Descent step for $k$-step one-shot} 

Here, the coefficients of $\mathcal{P}$ are 
$$
a_0=-s^2,\quad a_1=m^2(h^2t_k^2-x_k)\tau+(s^2+2s),\quad a_2=m^2x_k\tau-(2s+1)
$$
where $s=b^k$. Condition \eqref{cond1} of Lemma \ref{marden-ord-3} is obviously satisfied since $|b|<1$. Next we deal with condition \eqref{cond2}. The computation shows that
\begin{equation}
	\label{cond2-term1 n+1}
	a_0^2-a_2a_0+a_1-1=m^2(h^2t_k^2-x_k+x_ks^2)\tau+\underbrace{(s-1)^3(s+1)}_{<0},
\end{equation}
\begin{equation}
	\label{cond2-term2 n+1}
	a_0^2+a_2a_0-a_1-1=-m^2(h^2t_k^2-x_k+x_ks^2)\tau+\underbrace{(s-1)(s+1)^3}_{<0}
\end{equation}
and
\begin{equation}
	\label{cond2-core n+1}
	h^2t_k^2-x_k+x_ks^2=\frac{h^2b^{k-1}(1-b^k)[k-(k+1)b+kb^{k}-(k-1)b^{k+1}]}{(1-b)^2}.
\end{equation}

\begin{lemma}
	$k-(k+1)b+kb^{k}-(k-1)b^{k+1}>0$, $\forall |b|<1$, $\forall k\ge 1$.
\end{lemma}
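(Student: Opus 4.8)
The plan is to prove the inequality by first extracting the obvious root at $b=1$ from the polynomial, which exposes a favorable sign structure. Writing $g(b) \coloneqq k-(k+1)b+kb^{k}-(k-1)b^{k+1}$, I would group the terms as $g(b)=[k(1-b)-b]+b^{k}[k(1-b)+b]$ and simplify to
\[
g(b) = k(1-b)(1+b^{k}) - b(1-b^{k}).
\]
Using $1-b^{k}=(1-b)\sum_{j=0}^{k-1}b^{j}$ and dividing out the common factor $1-b$, this yields the factorization
\[
g(b) = (1-b)\,P(b), \qquad P(b) \coloneqq k(1+b^{k})-\sum_{j=1}^{k}b^{j}.
\]
I would verify this identity by expanding $(1-b)P(b)$: since $(1-b)\sum_{j=1}^{k}b^{j}=b-b^{k+1}$, the sum contributes exactly $-b+b^{k+1}$, which together with $k(1-b)(1+b^{k})$ reproduces $g(b)$. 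As $|b|<1$ gives $1-b>0$, it then suffices to show $P(b)>0$ on $(-1,1)$.

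For the factor $P(b)$, I would observe first that $|b|<1$ forces $b^{k}\in(-1,1)$, so $1+b^{k}>0$ and hence $k(1+b^{k})>0$. It remains to bound $\sum_{j=1}^{k}b^{j}$, and here I would split into two cases according to the sign of $b$. For $b\in[0,1)$ each term satisfies $b^{j}\le b<1$, so $\sum_{j=1}^{k}b^{j}\le kb<k\le k(1+b^{k})$, giving $P(b)>0$. For $b\in(-1,0)$ I would instead use the closed form $\sum_{j=1}^{k}b^{j}=\frac{b(1-b^{k})}{1-b}$: since $b<0$, $1-b>0$ and $1-b^{k}>0$, this sum is strictly negative, whereas $k(1+b^{k})>0$, so again $P(b)>0$. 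The remaining case $b=0$ is immediate, as $P(0)=k>0$. For $k=1$ everything degenerates correctly, $g(b)=1-b>0$.

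The only genuinely delicate point is the sign analysis for negative $b$: a term-by-term estimate $\sum b^{j}\le\sum|b|^{j}$ does \emph{not} suffice, because $k(1+b^{k})$ can be smaller than $k$ when $b^{k}<0$ (precisely when $b<0$ and $k$ is odd), so the crude bound fails exactly where it is needed. Exploiting instead the exact geometric sum, whose sign is transparent once $1-b^{k}>0$ is noted, circumvents this, and combined with the factorization the argument reduces to elementary sign considerations. I expect no further obstacle; the verification of the factorization identity is the main bookkeeping step.
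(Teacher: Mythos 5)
Your proof is correct, and it rests on the same key step as the paper's: factoring out $(1-b)$ and proving the cofactor positive. Indeed your $P(b)=k(1+b^{k})-\sum_{j=1}^{k}b^{j}$ is literally the paper's cofactor $A=k+1-\frac{1-b^{k}}{1-b}+(k-1)b^{k}$ once the geometric sum is expanded. Where the two arguments part ways is in how positivity of this cofactor is established. The paper splits according to the parity of $k$: for $k$ even, or $k$ odd with $0\le b<1$, it combines $(k-1)b^{k}\ge 0$ with the triangle-inequality bound $\bigl|\frac{1-b^{k}}{1-b}\bigr|<k$; for $k\ge 3$ odd with $-1<b<0$ it needs a separate regrouping $A=1+\bigl(1-\frac{1-b^{k}}{1-b}\bigr)+(k-1)(1+b^{k})$. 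You avoid the parity split entirely: for $b\in[0,1)$ the termwise bound $b^{j}\le b$ gives $\sum_{j=1}^{k}b^{j}\le kb<k\le k(1+b^{k})$, and for $b\in(-1,0)$ you observe that $\sum_{j=1}^{k}b^{j}=\frac{b(1-b^{k})}{1-b}$ is strictly negative while $k(1+b^{k})>0$, which treats odd and even $k$ uniformly. Your closing remark is also accurate: the naive bound $\sum_{j}b^{j}\le\sum_{j}|b|^{j}$ fails precisely for odd $k$ and $b$ close to $-1$ (e.g. $k=3$, $b=-0.9$ gives $k(1+b^{k})\approx 0.81$ against $\sum_{j}|b|^{j}\approx 2.44$), which is exactly the case the paper is forced to handle separately; exploiting the sign of the exact geometric sum is the cleaner way around it.
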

\begin{proof}
	We write $k-(k+1)b+kb^{k}-(k-1)b^{k+1}=(1-b)A$ where $A=k+1-\frac{1-b^k}{1-b}+(k-1)b^k$. It suffices to show $A>0$. If $k=1$ then $A=1>0$. If either $k$ is even, or $k\ge 3$ is odd and $0\le b<1$, then $(k-1)b^k\ge 0$ and
	$\bigl|\frac{1-b^k}{1-b}\bigr|=|b^{k-1}+b^{k-2}+...+b+1|\le |b^{k-1}|+|b^{k-2}|+...+|b|+1<k$
	give us the conclusion.	If $k\ge 3$ is odd and $-1<b<0$ then $(k-1)(1+b^k+1)>0$ and $\frac{1-b^k}{1-b}<1$ therefore $A=1+\left(1-\frac{1-b^k}{1-b}\right)+(k-1)(1+b^k)>0$. 	
\end{proof}
\noindent Then, condition \eqref{cond2} imposes 
\begin{itemize}
	\item $\tau<\frac{(1-b)^2(1+b^k)(1-b^k)^2}{h^2m^2b^{k-1}[k-(k+1)b+kb^k-(k-1)b^{k+1}]}$ if $b^{k-1}>0$;
	\item $\tau<\frac{(1-b)^2(1+b^k)^3}{h^2m^2b^{k-1}[-k+(k+1)b-kb^k+(k-1)b^{k+1}]}$ if $b^{k-1}<0$;
	\item no condition on $\tau$ if $k\ge 2$ and $b=0$.
\end{itemize}
Finally we check condition \eqref{cond3}. We have $a_0+a_2+a_1+1= h^2m^2t_k^2\tau>0$ and
$$
a_0+a_2-a_1-1=\frac{h^2m^2(1-2kb^{k-1}+2kb^k-b^{2k})}{(1-b)^2}\tau-2(1+s)^2,
$$
therefore, condition \eqref{cond3} gives
\begin{itemize}
	\item $\tau<\frac{2(1-b)^2(1+b^k)^2}{h^2m^2(1-2kb^{k-1}+2kb^k-b^{2k})}$ if $1-2kb^{k-1}+2kb^k-b^{2k}>0$;
	\item no condition on $\tau$ if $1-2kb^{k-1}+2kb^k-b^{2k}\le 0$.
\end{itemize}
In the following lemma we study the quantity $1-2kb^{k-1}+2kb^k-b^{2k}$ that appears above. 
\begin{lemma}\label{corepoly-cond3}
	Let $f_k(b)=1-2kb^{k-1}+2kb^k-b^{2k}$ for $k\in\N^*$ and $-1\le b\le 1$. 
	\begin{enumerate}[label=(\roman*)]
		\item $f_1(b)=-(1-b)^2<0, \forall -1<b<1$.
		\item $f_2(b)=1-4b+4b^2-b^4$ has a unique solution $b=-1+\sqrt{2}$ in $(-1,1)$; and $f_2(b)>0$ if $-1<b<-1+\sqrt{2}$, $f_2(b)<0$ if $-1+\sqrt{2}<b<1$.
		\item If $k\ge 3$ is odd then $f_k(b)$ has exactly two solutions $b_1(k)<b_2(k)$ in $(-1,1)$; if $k\ge 2$ is even then $f_k(b)$ has a unique solution $b_3(k)$ in $(-1,1)$. Moreover, for every odd $k\ge 3$:
		\begin{itemize}
			\item $-1<b_1(k)<0<b_2(k)<1$;
			\item $f_k(b)>0\Leftrightarrow b_1(k)<b<b_2(k)$;
			\item $f_k(b)<0\Leftrightarrow -1<b<b_1(k)\vee b_2(k)<b<1$.
		\end{itemize}
		and for every even $k\ge 2$:
		\begin{itemize}
			\item $0<b_3(k)<1$;
			\item $f_k(b)>0\Leftrightarrow -1<b<b_3(k)$;
			\item $f_k(b)<0\Leftrightarrow b_3(k)<b<1$.
		\end{itemize}
		\item $\lim\limits_{\substack{k\text{ odd}\\ k\to\infty}}b_1(k)=-1$ and $\lim\limits_{\substack{k\text{ odd}\\ k\to\infty}}b_2(k)=1=\lim\limits_{\substack{k\text{ even}\\ k\to\infty}}b_3(k)=1$.
	\end{enumerate}	
\end{lemma}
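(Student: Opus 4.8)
The plan is to reduce everything to the sign analysis of $f_k$ and its derivative. Parts (i) and (ii) are direct computations: for (i) one expands $f_1(b)=1-2+2b-b^2=-(1-b)^2$; for (ii) one factors $f_2(b)=-(b-1)^2(b^2+2b-1)$, noting that $b=1$ is a double root and that the remaining quadratic $b^2+2b-1$ has its only root in $(-1,1)$ at $-1+\sqrt2$, from which both the root and the sign pattern follow. It is convenient to record throughout that $b=1$ is always a double root, since $f_k(b)=(1-b)\,h_k(b)$ with $h_k(b)=\sum_{j=0}^{2k-1}b^j-2kb^{k-1}$ and $h_k(1)=0$; together with $f_k(0)=1$ and the boundary values $f_k(-1)=4k$ for even $k$, $f_k(-1)=-4k$ for odd $k$, these anchor the root count.

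For the heart of the statement, part (iii), I would differentiate and exploit the factorization
\[
f_k'(b)=2k\,b^{k-2}\,g(b),\qquad g(b)=-(k-1)+kb-b^{k+1},
\]
studying $g$ separately on $(0,1)$ and $(-1,0)$. Since $g'(b)=k-(k+1)b^k$, on $(0,1)$ the function $g$ increases then decreases with $g(0)=-(k-1)<0$ and $g(1)=0$, so it changes sign exactly once, from negative to positive. On $(-1,0)$ one checks $g<0$ throughout: for odd $k$, $g'>0$ while the endpoint values $g(-1)=-2k$, $g(0)=-(k-1)$ are both negative; for even $k$, $g$ dips and returns with both endpoints $g(-1)=-2(k-1)$, $g(0)=-(k-1)$ negative.

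Combining these facts with the parity-dependent sign of $b^{k-2}$ yields the monotonicity of $f_k$. On $(0,1)$, where $b^{k-2}>0$, the function $f_k$ decreases then increases to $f_k(1)=0$, forcing its minimum to be negative and producing exactly one root $b_2(k)$ (resp. $b_3(k)$), with $f_k>0$ to its left and $f_k<0$ to its right. On $(-1,0)$, for even $k$ we have $b^{k-2}>0$ and $g<0$, so $f_k$ strictly decreases from $4k$ to $1$ and stays positive (no root); for odd $k$ we have $b^{k-2}<0$ and $g<0$, so $f_k$ strictly increases from $-4k$ to $1$, giving exactly one root $b_1(k)\in(-1,0)$. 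Reading off the signs on each subinterval gives precisely the claimed sign tables, and since $f_k(0)=1\neq0$ the roots are strictly separated by $0$.

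Finally, for part (iv) I would use a pointwise limit: writing $f_k(b)=1-b^{2k}-2kb^{k-1}(1-b)$, for any fixed $b\in(-1,1)$ the terms $b^{2k}$ and $2kb^{k-1}$ tend to $0$ as $k\to\infty$, so $f_k(b)\to1>0$. Hence for any $\varepsilon>0$ and all large $k$ we have $f_k(1-\varepsilon)>0$, which by the sign table forces the root in $(0,1)$ to exceed $1-\varepsilon$; thus $b_2(k),b_3(k)\to1$. Likewise $f_k(-1+\varepsilon)>0$ for large odd $k$ pushes $b_1(k)$ below $-1+\varepsilon$, giving $b_1(k)\to-1$. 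The main obstacle is the parity bookkeeping in part (iii): one must track the sign of $b^{k-2}$ together with the sign of $g$ on $(-1,0)$, and verify that $g$ has exactly one sign change on $(0,1)$ and none on $(-1,0)$. Once this is in place, the root counts, sign tables, and limits follow without further difficulty.
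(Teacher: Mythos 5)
Your proof is correct. For the core part (iii) you follow essentially the same route as the paper: you factor $f_k'(b)=2k\,b^{k-2}g(b)$ with $g(b)=-(k-1)+kb-b^{k+1}$ (the paper's auxiliary function $g_k$ is exactly $2k\,g$), determine the sign of $g$ via $g'(b)=k-(k+1)b^k$ — one sign change on $(0,1)$, none on $(-1,0)$ — and then combine this with the parity-dependent sign of $b^{k-2}$ and the anchor values $f_k(0)=1$, $f_k(\pm 1)$ to obtain the monotonicity pattern, the root counts and the sign tables; all your endpoint evaluations ($g(-1)=-2k$ for odd $k$, $g(-1)=-2(k-1)$ for even $k$, $f_k(-1)=\mp 4k$) agree with the paper's, and your observation that a decrease-then-increase profile with negative endpoint values keeps $g<0$ on $(-1,0)$ for even $k$ is the correct justification. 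Where you genuinely diverge is part (iv): the paper argues quantitatively, first checking $f_k(\pm 1/2)>0$ for large $k$ to place the roots beyond $\pm 1/2$, then manipulating the root equations $f_k(b_i(k))=0$ to extract explicit bounds such as $b_1(k)<-(7k)^{-1/k}$ and $b_2(k)>(3k)^{-1/k}$, which converge to $\mp 1$. You instead use a soft argument: for each fixed $b\in(-1,1)$, $f_k(b)=1-b^{2k}-2k b^{k-1}(1-b)\to 1>0$, so for any $\varepsilon>0$ the sign tables force the roots beyond $\pm(1-\varepsilon)$ for all large $k$. Your version is shorter and avoids the algebraic estimates entirely; the paper's version yields explicit rates at which the roots approach $\pm 1$, which is additional information but not required by the statement of the lemma.
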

\begin{proof}
	(i) and (ii) are easy to verify. 
	\noindent(iii) It remains to consider $k\ge 3$. We have
	$$f'_k(b)=b^{k-2}\left[-2k(k-1)+2k^2b-2kb^{k+1}\right], -1<b<1.$$
	Set
	$$g_k(b)=-2k(k-1)+2k^2b-2kb^{k+1}, \quad -1\le b\le 1, k\ge 3.$$
	
	\noindent\textit{Case 1.} [$k\ge 3$ is odd] 
	By studying the sign of $g'_k(b)$, we find that 
	\begin{itemize}
		\item $g_k$ has a unique solution $v_1(k)$ in $(-1,1)$ and $0<v_1(k)<\sqrt[k]{\frac{k}{k+1}}<1$;
		\item $g_k(b)>0\Leftrightarrow v_1(k)<b<1$;
		\item $g_k(b)<0\Leftrightarrow -1<b<v_1(k)$.
	\end{itemize}
	\noindent Next, 
	by studying the sign of $f'_k(b)$, we find that 
	\begin{itemize}
		\item $f_k(b)$ has exactly two solutions $b_1(k)<b_2(k)$ in $(-1,1)$ and $-1<b_1(k)<0<b_2(k)<1$;
		\item $f_k(b)>0\Leftrightarrow b_1(k)<b<b_2(k)$;
		\item $f_k(b)<0\Leftrightarrow -1<b<b_1(k)\vee b_2(k)<b<1$.
	\end{itemize}
	\noindent\textit{Case 2.} [$k\ge 4$ is even] 
	By studying the sign of $g'_k(b)$, we find that 
	\begin{itemize}
		\item $g_k$ has a unique solution $v_2(k)$ in $(-1,1)$ and $0<v_2(k)<\sqrt[k]{\frac{k}{k+1}}<1$; 
		\item $g_k(b)>0\Leftrightarrow v_2(k)<b<1$;
		\item $g_k(b)<0 \Leftrightarrow 0<b<v_2(k)$.
	\end{itemize}
	\noindent Next, 
	by studying the sign of $f'_k(b)$, we find that 
	\begin{itemize}
		\item $f_k(b)$ has a unique solution $b_3(k)$ in $(-1,1)$ and $0<b_3(k)<1$;
		\item $f_k(b)>0\Leftrightarrow -1<b<b_3(k)$;
		\item $f_k(b)<0\Leftrightarrow b_3(k)<b<1$.
	\end{itemize}
	
	\noindent (iv) We have
	$$f_k\left(\frac{1}{2}\right)=1-\frac{k}{2^{k-1}}-\frac{1}{2k}, \quad\forall k\ge 3\quad\mbox{ and }\quad f_k\left(-\frac{1}{2}\right)=1-\cfrac{3k}{2^{k-1}}-\cfrac{1}{2k}, \quad\forall \text{ odd } k\ge 3,$$
	hence for sufficiently large $k$ we have $f_k\left(\frac{1}{2}\right)>0$ and for sufficiently large odd $k$ we have $f_k\left(-\frac{1}{2}\right)>0$. By the table of signs of $f_k$, we conclude that $b_1(k)<-\frac{1}{2}$ for large odd $k$,  $b_2(k)>\frac{1}{2}$ for large odd $k$ and $b_3(k)>\frac{1}{2}$ for large even $k$.
	
	\noindent\textit{Case 1.} [$k\ge 3$ is odd and sufficiently large] First we work with $b_1(k)$. We have 
	$$1-2kb_1(k)^{k-1}+2kb_1(k)^k-b_1(k)^{2k}=0$$
	and $b_1(k)<-\frac{1}{2}$ so
	$$-b_1(k)^{2k}+2kb_1(k)^k+1=2kb_1(k)^{k-1}=\underbrace{[-2kb_1(k)^k]}_{>0}\cdot\frac{1}{-b_1(k)}<[-2kb_1(k)^k].2=-4kb_1(k)^k,$$
	which leads to
	$$b_1(k)^{2k}-6kb_1(k)^k-1>0\Leftrightarrow[b_1(k)^k-3k]^2>1+9k^2.$$
	Since $-1<b_1(k)<0$ and $k$ is odd, this tells us that
	$$-1<b_1(k)<-(-3k+\sqrt{1+9k^2})^{1/k}=\cfrac{-1}{(3k+\sqrt{1+9k^2})^{1/k}}<\cfrac{-1}{(7k)^{1/k}},$$
	which yields $\lim\limits_{\substack{k\text{ odd}\\ k\to\infty}}b_1(k)=-1$. Next, we have
	$$1-2kb_2(k)^{k-1}+2kb_2(k)^k-b_2(k)^{2k}=0$$ 
	and $b_2(k)>\frac{1}{2}$ so
	$$-b_2(k)^{2k}+2kb_2(k)^k+1=2kb_2(k)^{k-1}=2kb_2(k)^k\cdot\frac{1}{b_2(k)}<4kb_2(k)^k,$$
	which leads to
	$$b_2(k)^{2k}+2kb_2(k)^k-1>0\Leftrightarrow[b_2(k)^k+k]^2>1+k^2.$$
	Since $0<b_2(k)<1$, this tells us that
	$$1>b_2(k)>(-k+\sqrt{1+k^2})^{1/k}=\cfrac{1}{(k+\sqrt{1+k^2})^{1/k}}>\cfrac{1}{(3k)^{1/k}},$$
	which yields $\lim\limits_{\substack{k\text{ even}\\ k\to\infty}}b_2(k)=1$.
	
	\noindent\textit{Case 2.} [$k\ge 4$ is even and sufficiently large] We repeat the same arguments as $b_2(k)$ for $b_3(k)$.
\end{proof}

\noindent In summary, we have the following proposition.

\begin{proposition}[Convergence of $k$-step one-shot]\label{prop:tau-k-one-shot}
	Let $\eta_1(k,b) \coloneqq +\infty$ and
	$$\eta_{21}(k,b) \coloneqq \frac{(1-b)^2(1+b^k)(1-b^k)^2}{b^{k-1}[k-(k+1)b+kb^k-(k-1)b^{k+1}]};$$
	$$\eta_{22}(k,b) \coloneqq \frac{-(1-b)^2(1+b^k)^3}{b^{k-1}[k-(k+1)b+kb^k-(k-1)b^{k+1}]};$$
	$$\eta_3(k,b) \coloneqq \frac{2(1-b)^2(1+b^k)^2}{1-2kb^{k-1}+2kb^k-b^{2k}}$$
	then the necessary and sufficient condition for the convergence of $k$-step one-shot in the scalar case is of the form
	$\tau<\frac{\eta(k,b)}{h^2m^2}$ where $\eta(k,b)$ is defined as follows: 
	\begin{enumerate}[label=(\roman*)]
		\item $\eta(1,b)=\eta_{21}(1,b)=(1-b)^3(1+b),-1<b<1$;
		\item for odd $k\ge 3$,
		$$\eta(k,b)=
		\left\{\begin{array}{cl}
			\eta_{21}(k,b), & -1<b\le b_1(k)\vee b_2(k)\le b<1,\\
			\min\left\{\eta_{21}(k,b),\eta_3(k,b)\right\}, & b_1(k)<b<b_2(k)\wedge b\neq 0,\\
			2, & b=0
		\end{array}\right.$$
		where $-1<b_1(k)<0<b_2(k)<1$ are the two solutions of $$1-2kb^{k-1}+2kb^k-b^{2k}=0, \quad -1<b<1;$$
		\item for even $k\ge 2$,
		$$\eta(k,b)=
		\left\{\begin{array}{cl}
			\eta_{21}(k,b), & b_3(k)\le b<1,\\
			\min\left\{\eta_{21}(k,b),\eta_3(k,b)\right\}, & 0<b<b_3(k),\\
			2, & b=0,\\
			\min\left\{\eta_{22}(k,b),\eta_3(k,b)\right\}, & -1<b<0
		\end{array}\right.$$
		where $0<b_3(k)<1$ is the unique solution of $$1-2kb^{k-1}+2kb^k-b^{2k}=0, \quad -1<b<1.$$
	\end{enumerate}
\end{proposition}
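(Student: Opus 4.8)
The plan is to assemble the three inequalities of Marden's criterion (Lemma~\ref{marden-ord-3}) applied to the characteristic polynomial $\mathcal{P}(\lambda)=a_0+a_1\lambda+a_2\lambda^2+\lambda^3$ with the coefficients $a_0=-s^2$, $a_1=m^2(h^2t_k^2-x_k)\tau+(s^2+2s)$, $a_2=m^2x_k\tau-(2s+1)$, where $s=b^k$, already computed above. First I would note that condition \eqref{cond1} holds automatically, since $a_0=-b^{2k}\in(-1,0]$ for $|b|<1$, so it imposes no constraint on $\tau$. The whole proof then reduces to translating \eqref{cond2} and \eqref{cond3} into upper bounds on $\tau$ and taking their common intersection.

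Next I would handle condition \eqref{cond2}. Using the identities \eqref{cond2-term1 n+1}--\eqref{cond2-core n+1} together with the positivity of the bracket $k-(k+1)b+kb^k-(k-1)b^{k+1}$ (established in the lemma immediately preceding the proposition), I would write the two factors of \eqref{cond2} as $W+C_1$ and $-W+C_2$, where $W=m^2(h^2t_k^2-x_k+x_ks^2)\tau$ carries the sign of $b^{k-1}$ and $C_1=(s-1)^3(s+1)<0$, $C_2=(s-1)(s+1)^3<0$. I would then split on the sign of $b^{k-1}$: if $b^{k-1}>0$ then $-W+C_2<0$, so a positive product forces $W+C_1<0$, which rearranges (cancelling one factor $(1-b^k)$ via \eqref{cond2-core n+1}) into $\tau<\eta_{21}(k,b)/(h^2m^2)$; if $b^{k-1}<0$ then $W+C_1<0$ forces $-W+C_2<0$, giving $\tau<\eta_{22}(k,b)/(h^2m^2)$; and if $b^{k-1}=0$, i.e.\ $b=0$ with $k\ge 2$, then $W=0$ and the product $C_1C_2>0$ is automatic, so \eqref{cond2} imposes nothing.

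Then I would treat condition \eqref{cond3}. Since $a_0+a_2+a_1+1=h^2m^2t_k^2\tau>0$, the product in \eqref{cond3} is negative exactly when $a_0+a_2-a_1-1<0$, that is $\tfrac{h^2m^2 f_k(b)}{(1-b)^2}\tau<2(1+b^k)^2$ with $f_k(b)=1-2kb^{k-1}+2kb^k-b^{2k}$. This yields $\tau<\eta_3(k,b)/(h^2m^2)$ when $f_k(b)>0$ and no constraint when $f_k(b)\le 0$.

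Finally I would merge the two constraints by invoking Lemma~\ref{corepoly-cond3}, which pins down the sign of $f_k$ and hence which of \eqref{cond2}/\eqref{cond3} is active. For each parity of $k$ the sign of $b^{k-1}$ selects $\eta_{21}$ versus $\eta_{22}$, while the root structure of $f_k$ ($f_1<0$ throughout; the two roots $b_1(k)<b_2(k)$ for odd $k\ge 3$; the single root $b_3(k)$ for even $k\ge 2$) selects whether $\eta_3$ is active; taking the minimum of the active bounds in each sign region then reproduces the three cases of the proposition, with the degenerate simplifications $\eta(1,b)=\eta_{21}(1,b)=(1-b)^3(1+b)$ and $\eta(k,0)=\eta_3(k,0)=2$. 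I expect the main difficulty to be purely organizational rather than analytical: one must track the sign of $b^{k-1}$ (parity of $k$) and the sign of $f_k(b)$ (through $b_1,b_2,b_3$) simultaneously and consistently, and in particular isolate the point $b=0$ where \eqref{cond2} drops out yet \eqref{cond3} still delivers the bound $2$.
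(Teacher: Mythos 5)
Your proposal is correct and follows essentially the same route as the paper's own proof: apply Lemma~\ref{marden-ord-3} to the cubic with coefficients $a_0=-s^2$, $a_1=m^2(h^2t_k^2-x_k)\tau+(s^2+2s)$, $a_2=m^2x_k\tau-(2s+1)$, observe that \eqref{cond1} is automatic, reduce \eqref{cond2} to the sign of $b^{k-1}$ via the factorizations \eqref{cond2-term1 n+1}--\eqref{cond2-core n+1} and the positivity of $k-(k+1)b+kb^k-(k-1)b^{k+1}$, reduce \eqref{cond3} to the sign of $f_k(b)=1-2kb^{k-1}+2kb^k-b^{2k}$, and assemble the cases using Lemma~\ref{corepoly-cond3}. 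The case bookkeeping you outline (parity of $k$ selecting $\eta_{21}$ versus $\eta_{22}$, the roots $b_1,b_2,b_3$ selecting whether $\eta_3$ is active, and the special points $k=1$ and $b=0$) matches the paper exactly.
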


\noindent Note that $\lim\limits_{\substack{k\text{ odd}\\ k\to\infty}}b_1(k)=-1$ and $\lim\limits_{\substack{k\text{ odd}\\ k\to\infty}}b_2(k)=1=\lim\limits_{\substack{k\text{ even}\\ k\to\infty}}b_3(k)$, so the behavior of $\tau$ when $k\rightarrow\infty$ is consistent with the result $\tau<\frac{2(1-b)^2}{h^2m^2},-1<b<1$ for the usual gradient descent. 
For illustrations of the function $\eta(k,b)$ for different $k$ see section~\ref{ssec:plots}. 

\subsubsection{Descent step for shifted $k$-step one-shot}

Here, the coefficients of the polynomial $\mathcal{P}$ of the eigenvalue equation are 
\begin{equation}
	\label{a012 n}
	a_0=h^2m^2v_k\tau-s^2,\quad a_1=h^2m^2y_k\tau+s^2+2s,\quad a_2=-2s-1,
\end{equation}
where $s=b^k$, $y_k=\frac{x_k}{h^2}=\frac{1-kb^{k-1}+(k-1)b^k}{(1-b)^2}$ and $v_k=t_k^2-y_k=\frac{b^{k-1}[k-(k+1)b+b^{k+1}]}{(1-b)^2}$. Note that $v_k$ and $b^{k-1}$ have the same sign, also $v_k=0$ if and only if $k\ge 2$ and $b=0$, since it is easy to show that $k-(k+1)b+b^{k+1}>0$, $\forall |b|<1,\forall k\ge 1$. Then, condition \eqref{cond1} of Lemma \ref{marden-ord-3} imposes
\begin{itemize}
	\item $\tau<\frac{1+s^2}{h^2m^2v_k}=\frac{(1-b)^2(1+b^{2k})}{h^2m^2b^{k-1}[k-(k+1)b+b^{k+1}]}$ if $b^{k-1}>0$;
	\item $\tau<\frac{-1+s^2}{h^2m^2v_k}=\frac{(1-b)^2(-1+b^{2k})}{h^2m^2b^{k-1}[k-(k+1)b+b^{k+1}]}$ if $b^{k-1}<0$;
	\item no condition on $\tau$ if $k\ge 2$ and $b=0$.
\end{itemize}
\noindent Next we study condition \eqref{cond2}. We have
$$a_0^2-a_2a_0+a_1-1=v_k^2(h^2m^2\tau)^2+[(-2s^2+2s+1)v_k+y_k]h^2m^2\tau+\underbrace{(s-1)^3(s+1)}_{<0}$$
and
$$a_0^2+a_2a_0-a_1-1=v_k^2(h^2m^2\tau)^2-[(2s^2+2s+1)v_k+y_k]h^2m^2\tau+\underbrace{(s-1)(s+1)^3}_{<0},$$
each of which, considered as a second order polynomial of $h^2m^2\tau$ if $v_k\neq 0$, has exactly two roots of opposite signs. Therefore if $v_k\neq 0$, condition \eqref{cond2} is equivalent to $(h^2m^2\tau-r_1)(h^2m^2\tau-r_2)>0$ where
$$r_1 \coloneqq \cfrac{(2s^2-2s-1)v_k-y_k+\sqrt{(-4s+5)v_k^2+y_k^2+2(-2s^2+2s+1)v_ky_k}}{2v_k^2}>0$$
and
$$r_2 \coloneqq \frac{(2s^2+2s+1)v_k+y_k+\sqrt{(8s^2+12s+5)v_k^2+y_k^2+2(2s^2+2s+1)v_ky_k}}{2v_k^2}>0.$$
\begin{lemma}
	$r_1$ and $r_2$ cannot be both strictly less than $\frac{1+s^2}{v_k}$. $r_1$ and $r_2$ cannot be both strictly less than $\frac{-1+s^2}{v_k}$.
\end{lemma}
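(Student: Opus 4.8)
The plan is to read off $r_1$ and $r_2$ as the unique positive roots of the two quadratics in the variable $X\coloneqq h^2m^2\tau$ that make up condition \eqref{cond2}, namely
\[
Q_1(X) = v_k^2 X^2 + \bigl[(-2s^2+2s+1)v_k + y_k\bigr]X + (s-1)^3(s+1),
\]
\[
Q_2(X) = v_k^2 X^2 - \bigl[(2s^2+2s+1)v_k + y_k\bigr]X + (s-1)(s+1)^3,
\]
each of which, as already observed (its constant-to-leading ratio is negative for $|s|<1$), has one negative and one positive root, the latter being $r_1$, respectively $r_2$. First I would dispose of the trivial regime: since $v_k$ has the sign of $b^{k-1}$ and $|s|<1$, the threshold $\frac{1+s^2}{v_k}$ is positive exactly when $v_k>0$, while $\frac{-1+s^2}{v_k}$ is positive exactly when $v_k<0$; in the opposite regime the relevant threshold is negative, hence strictly smaller than the positive numbers $r_1,r_2$, so the corresponding assertion holds vacuously. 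It therefore suffices to treat a threshold $c>0$.

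For $c>0$ I would use the elementary fact that a quadratic $Q$ with positive leading coefficient and roots $\alpha_-<0<r$ satisfies $r<c\iff Q(c)>0$, since $c-\alpha_->0$ forces $\operatorname{sign}Q(c)=\operatorname{sign}(c-r)$. Consequently, ``$r_1$ and $r_2$ are both strictly less than $c$'' is equivalent to ``$Q_1(c)>0$ and $Q_2(c)>0$'', and the whole statement reduces to showing that $Q_1(c)$ and $Q_2(c)$ cannot be simultaneously positive at each of the two thresholds.

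The crux is a single polynomial identity. Upon adding the two quadratics the $y_k$-terms cancel and one is left with
\[
Q_1(X)+Q_2(X) = 2v_k^2 X^2 - 4s^2 v_k X + 2(s^4-1) = 2v_k^2\Bigl(X-\tfrac{1+s^2}{v_k}\Bigr)\Bigl(X-\tfrac{s^2-1}{v_k}\Bigr),
\]
the factorization holding because the roots of the inner expression are $v_kX=s^2\pm 1$. Thus at either threshold $c\in\{\frac{1+s^2}{v_k},\,\frac{-1+s^2}{v_k}\}$ we obtain $Q_1(c)+Q_2(c)=0$, i.e.\ $Q_1(c)=-Q_2(c)$, so at least one of them is $\le 0$; in particular they cannot both be positive, which is precisely what was needed. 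The only genuine work is verifying the displayed identity: it is a finite computation in $s=b^k$ once the $y_k$ contributions are seen to cancel in the sum, and the expected (mild) obstacle is merely organizing that algebra and confirming that the two convergence thresholds coming from \eqref{cond1} are exactly the roots of $Q_1+Q_2$.
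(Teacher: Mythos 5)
Your proof is correct, and it is organized differently from --- and in fact more robustly than --- the paper's own argument, although both exploit the same underlying mechanism. The paper also works with the sign of the two quadratics at the thresholds, but it does so by recording one explicit quantity $E\coloneqq(s^2+4s+1)v_k^2+(s^2+1)v_ky_k$: it asserts that $r_1<c$ (for either threshold $c$) forces $E>0$ while $r_2<c$ forces $E<0$, a contradiction. That quantity equals $v_k^2Q_1(c)$ (and $-v_k^2Q_2(c)$) only at the first threshold $c=\frac{1+s^2}{v_k}$; at the second threshold one instead has $v_k^2Q_1\bigl(\tfrac{s^2-1}{v_k}\bigr)=(s^2-1)v_kt_k^2$, so the implication as printed is not literally true: for $k=2$, $b=-0.1$ one gets $s=0.01$, $y_k=1$, $v_k=-0.19$, hence $r_1\approx 1.16<\frac{-1+s^2}{v_k}\approx 5.26$ holds automatically, while $E\approx -0.15<0$ (the lemma itself still holds there, since $r_2\approx 23.5$ exceeds the threshold). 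Your sum identity $Q_1+Q_2=2v_k^2\bigl(X-\tfrac{1+s^2}{v_k}\bigr)\bigl(X-\tfrac{s^2-1}{v_k}\bigr)$ --- immediate because the $y_k$-terms cancel and the constants add to $(s^2-1)\bigl[(s-1)^2+(s+1)^2\bigr]=2(s^4-1)$ --- shows that $Q_1(c)=-Q_2(c)$ at both thresholds simultaneously, so $Q_1(c)>0$ and $Q_2(c)>0$ are incompatible; combined with your correctly justified equivalence $r_i<c\iff Q_i(c)>0$ for $c>0$, and the vacuousness of whichever threshold is negative, this proves both claims uniformly. What your route buys is precisely this uniformity and transparency: the thresholds coming from condition \eqref{cond1} are the roots of $Q_1+Q_2$, so the sign opposition is structural rather than the outcome of two separate evaluations, and your write-up would serve as a correct replacement for the paper's two-line justification.
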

\begin{proof}
	Either $r_1<\frac{1+s^2}{v_k}$ or $r_1<\frac{-1+s^2}{v_k}$ implies $(s^2+4s+1)v_k^2+(s^2+1)v_ky_k>0$. Either $r_2<\frac{1+s^2}{v_k}$ or $r_2<\frac{-1+s^2}{v_k}$ implies $(s^2+4s+1)v_k^2+(s^2+1)v_ky_k<0$.
\end{proof}
\noindent Thanks to this lemma we see that condition \eqref{cond2}, in combination with condition \eqref{cond1}, gives
\begin{itemize}
	\item $\tau<\frac{1}{h^2m^2}\min\{r_1,r_2\}$ if $b^{k-1}\neq 0$;
	\item $\tau<\frac{1}{h^2m^2}$ if $k\ge 2$ and $b=0$.
\end{itemize}
Finally, we have $a_0+a_2+a_1+1=h^2m^2t_k^2\tau>0$ and
$$a_0+a_2-a_1-1=\frac{h^2m^2}{(1-b)^2}[-1+2kb^{k-1}-2kb^k+b^{2k}]\tau-2(1-b^k)^2,$$
thus condition \eqref{cond3} is equivalent to
\begin{itemize}
	\item $\tau<\frac{2(1-b)^2(1-b^k)^2}{h^2m^2(-1+2kb^{k-1}-2kb^k+b^{2k})}$ if $1-2kb^{k-1}+2kb^k-b^{2k}<0$;
	\item no condition on $\tau$ if $1-2kb^{k-1}+2kb^k-b^{2k}\ge 0$.
\end{itemize}
One can look again at Lemma \ref{corepoly-cond3} for the analysis of $1-2kb^{k-1}+2kb^k-b^{2k}$. In summary, we have the following proposition.

\begin{proposition}[Convergence of shifted $k$-step one-shot]\label{prop:tau-shifted-k-one-shot}
	Let
	$$\kappa_{11}(k,b) \coloneqq \cfrac{(1-b)^2(1+b^{2k})}{b^{k-1}[k-(k+1)b+b^{k+1}]};$$
	$$\kappa_{12}(k,b) \coloneqq \cfrac{(1-b)^2(-1+b^{2k})}{b^{k-1}[k-(k+1)b+b^{k+1}]};$$
	$$t_k \coloneqq \cfrac{1-b^k}{1-b},\quad y_k \coloneqq \frac{1-kb^{k-1}+(k-1)b^k}{(1-b)^2},\quad s \coloneqq b^k,\quad v_k \coloneqq t_k^2-y_k,$$
	$$
	\kappa_{21}(k,b) \coloneqq \cfrac{(2s^2-2s-1)v_k-y_k+\sqrt{(-4s+5)v_k^2+y_k^2+2(-2s^2+2s+1)v_ky_k}}{2v_k^2},$$
	$$\kappa_{22}(k,b) \coloneqq \frac{(2s^2+2s+1)v_k+y_k+\sqrt{(8s^2+12s+5)v_k^2+y_k^2+2(2s^2+2s+1)v_ky_k}}{2v_k^2},$$
	$$\kappa_2(k,b) \coloneqq \min\{\kappa_{21}(k,b),\kappa_{22}(k,b)\};$$
	$$\kappa_3(k,b) \coloneqq \frac{2(1-b)^2(1-b^k)^2}{-1+2kb^{k-1}-2kb^k+b^{2k}}$$
	then the necessary and sufficient condition for the convergence of shifted $k$-step one-shot in the scalar case is of the form $\tau<\frac{\kappa(k,b)}{h^2m^2}$ where $\kappa(k,b)$ is defined as follows:
	\begin{enumerate}[label=(\roman*)]
		\item $\kappa(1,b)=\min\left\{\kappa_{11}(1,b),\kappa_2(1,b),\kappa_3(1,b)\right\}$, also note that
		$$\kappa_{11}(1,b)=1+b^2,\quad\kappa_{21}(1,b)=\frac{2b^2-2b-1+\sqrt{-4b+5}}{2},$$ $$\kappa_{22}(1,b)=\frac{2b^2+2b+1+\sqrt{8b^2+12b+5}}{2},\quad\kappa_3(1,b)=2(1-b)^2;$$
		\item for odd $k\ge 3$, 
		$$ \kappa(k,b)=\left\{
		\begin{array}{cl}
			\min\left\{\kappa_{11}(k,b),\kappa_2(k,b),\kappa_3(k,b)\right\},& -1<b<b_1(k)\vee b_2(k)<b<1,\\
			\min\left\{\kappa_{11}(k,b),\kappa_2(k,b)\right\},& b_1(k)\le b\le b_2(k)\wedge b\neq 0,\\
			1, & b=0
		\end{array}
		\right.$$
		where $-1<b_1(k)<0<b_2(k)<1$ are the two solutions of
		$$1-2kb^{k-1}+2kb^k-b^{2k}=0,\quad -1<b<1;$$
		\item for even $k\ge 2$,
		$$\kappa(k,b)=\left\{
		\begin{array}{cl}
			\min\left\{\kappa_{11}(k,b),\kappa_2(k,b),\kappa_3(k,b)\right\},& b_3(k)<b<1,\\
			\min\left\{\kappa_{11}(k,b),\kappa_2(k,b)\right\},& 0<b\le b_3(k),\\
			1, & b=0,\\
			\min\left\{\kappa_{12}(k,b),\kappa_2(k,b)\right\},& -1<b<0
		\end{array}
		\right.$$
		where $0<b_3(k)<1$ is the unique solution of
		$$1-2kb^{k-1}+2kb^k-b^{2k}=0,\quad -1<b<1.$$
	\end{enumerate}			
\end{proposition}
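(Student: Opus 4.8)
The plan is to follow the two-step strategy announced just before the proposition: first write down the characteristic polynomial $\mathcal{P}(\lambda)=\det(\mathcal{M}-\lambda I)=a_0+a_1\lambda+a_2\lambda^2+\lambda^3$ of the iteration matrix in \eqref{k-shot-err-1d n}, then apply the Marden criterion of Lemma~\ref{marden-ord-3} to translate ``$\rho(\mathcal{M})<1$'' into the three polynomial inequalities \eqref{cond1}--\eqref{cond3}, each of which I would rewrite as a condition on $\tau$. Expanding the $3\times 3$ determinant and simplifying with the scalar identity $u_kt_k-b^kx_k+x_k=h^2t_k^2$ (the scalar version of \eqref{uxt}) yields the coefficients \eqref{a012 n}, in which the sign of $v_k=t_k^2-y_k$ coincides with that of $b^{k-1}$ and vanishes exactly when $k\ge 2$ and $b=0$; this trichotomy in $\mathrm{sign}(v_k)$ organizes the whole argument.

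Next I would treat the three Marden conditions separately. Condition \eqref{cond1} reduces to $|a_0|<1$, i.e. $|h^2m^2v_k\tau-s^2|<1$, giving the bounds $\kappa_{11}$ or $\kappa_{12}$ according to whether $b^{k-1}>0$ or $<0$ (and no constraint when $v_k=0$). For condition \eqref{cond2} I would use its factored form $(a_0^2-a_2a_0+a_1-1)(a_0^2+a_2a_0-a_1-1)$; substituting \eqref{a012 n} shows each factor is a quadratic in $h^2m^2\tau$ with constant term $(s-1)^3(s+1)<0$, respectively $(s-1)(s+1)^3<0$, so (when $v_k\ne 0$) each has two real roots of opposite sign, whence \eqref{cond2} becomes $(h^2m^2\tau-r_1)(h^2m^2\tau-r_2)>0$ with the explicit positive roots $r_1,r_2$. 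A short lemma---stating that $r_1$ and $r_2$ cannot both be smaller than $(1+s^2)/v_k$ nor both smaller than $(-1+s^2)/v_k$---then shows that \eqref{cond2}, in combination with \eqref{cond1}, is exactly $\tau<\min\{r_1,r_2\}/(h^2m^2)=\kappa_2/(h^2m^2)$. Finally, condition \eqref{cond3} simplifies using $a_0+a_2+a_1+1=h^2m^2t_k^2\tau>0$ and an explicit expression for $a_0+a_2-a_1-1$ whose $\tau$-coefficient is proportional to $-f_k(b)$, where $f_k(b)\coloneqq 1-2kb^{k-1}+2kb^k-b^{2k}$; it therefore produces the bound $\kappa_3$ precisely when $f_k(b)<0$ and imposes no constraint otherwise.

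To assemble the final statement I would invoke Lemma~\ref{corepoly-cond3}, which records the sign of $f_k(b)$ on $(-1,1)$: for $k=1$ it is always negative, for odd $k\ge 3$ positive exactly on $(b_1(k),b_2(k))$, and for even $k\ge 2$ positive exactly on $(-1,b_3(k))$. Intersecting this sign information with $\mathrm{sign}(b^{k-1})$ (which selects $\kappa_{11}$ versus $\kappa_{12}$) and with the activation of $\kappa_3$ gives the three cases (i)--(iii) and the case-split in $b$ within each; the degenerate value $b=0$ for $k\ge 2$, where $v_k=0$ and only \eqref{cond2} is binding, contributes the constant bound $\kappa(k,0)=1$.

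I expect the main obstacle to be the bookkeeping of the second Marden condition: verifying that both quadratic factors really have the claimed negative constant terms, that $r_1,r_2>0$, and above all proving the auxiliary lemma that rules out $r_1,r_2$ being simultaneously too small---this is what guarantees that \eqref{cond2} collapses to a single clean upper bound $\min\{r_1,r_2\}$ rather than an awkward union of intervals. The sign analysis of the auxiliary polynomials ($v_k$, the positivity of $k-(k+1)b+b^{k+1}$, and $f_k$ via Lemma~\ref{corepoly-cond3}) is the other delicate piece, but it is largely delegated to the preceding lemmas.
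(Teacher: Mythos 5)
Your proposal is correct and follows essentially the same route as the paper's own proof: the same coefficients \eqref{a012 n}, the same translation of the three Marden conditions of Lemma~\ref{marden-ord-3} into the bounds $\kappa_{11}/\kappa_{12}$, $\kappa_2$ and $\kappa_3$, including the key auxiliary lemma that $r_1,r_2$ cannot both lie below the condition-\eqref{cond1} bound (which collapses condition \eqref{cond2} to $\tau<\min\{r_1,r_2\}/(h^2m^2)$), the sign analysis of $f_k$ via Lemma~\ref{corepoly-cond3}, and the degenerate case $v_k=0$ (i.e.\ $k\ge 2$, $b=0$) giving $\kappa(k,0)=1$. No gaps.
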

\begin{remark}
	In implementation, we rewrite $\kappa_{21}(k,b)$ as
	$$\cfrac{b(1-b)^2(b^k-1)}{k-(k+1)b+b^{k+1}}+\cfrac{2\cdot\left[-b^k+1+\frac{b(1-b)^2(1-b^k)y_k}{k-(k+1)b+b^{k+1}}\right]}{y_k+v_k+\sqrt{(-4s+5)v_k^2+y_k^2+2(-2s^2+2s+1)v_ky_k}}$$
	to avoid numerical errors. Also in this formula, we see that $\kappa_{21}(k,b)\overset{k\to\infty}{\longrightarrow}(1-b)^2$ (note that $y_k=\frac{1-kb^{k-1}+(k-1)b^k}{(1-b)^2}\overset{k\to\infty}{\longrightarrow}\frac{1}{(1-b)^2}$ and $v_k=t_k^2-y_k\overset{k\to\infty}{\longrightarrow}0$).
\end{remark}
For illustrations of the function $\kappa(k,b)$ for different $k$ see section~\ref{ssec:plots}. 

\subsection{Comparison of the bounds for the descent step}
\label{ssec:plots}

\begin{figure}
	\centering
	\begin{subfigure}{0.4\columnwidth}
		\includegraphics[width=\linewidth]{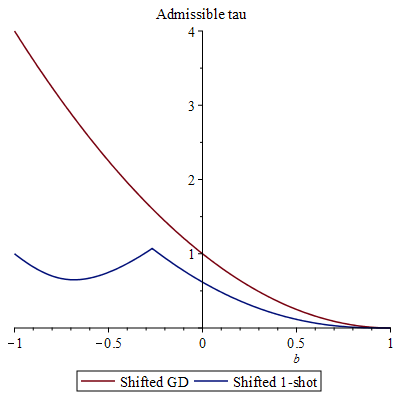}
		\caption{Shifted $1$-step one-shot}
	\end{subfigure} \hfill
	\begin{subfigure}{0.4\columnwidth}
		\includegraphics[width=\linewidth]{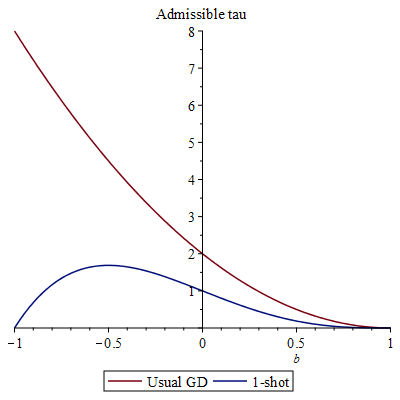}
		\caption{$1$-step one-shot}
	\end{subfigure}
	\begin{subfigure}{0.4\columnwidth}
		\includegraphics[width=\linewidth]{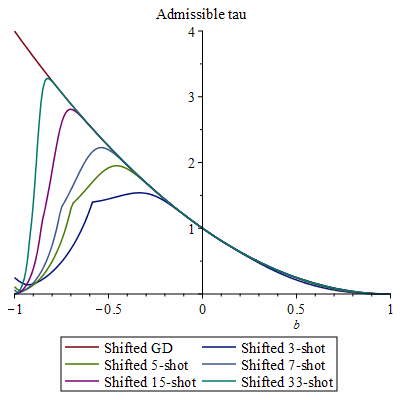}
		\caption{Shifted $k$-step one-shot, odd $k\ge 3$}
	\end{subfigure} \hfill
	\begin{subfigure}{0.4\columnwidth}
		\includegraphics[width=\linewidth]{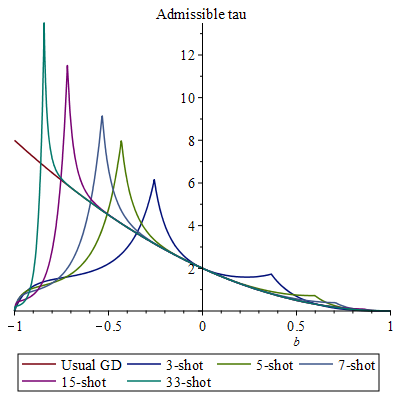}
		\caption{$k$-step one-shot, odd $k\ge 3$}
	\end{subfigure}
	\begin{subfigure}{0.4\columnwidth}
		\includegraphics[width=\linewidth]{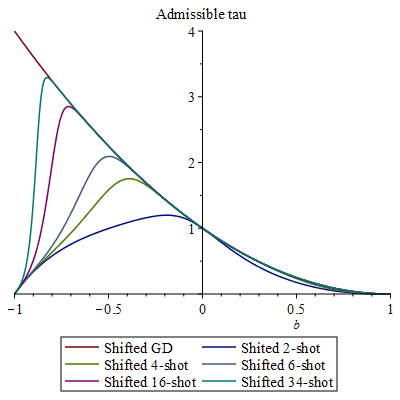}
		\caption{Shifted $k$-step one-shot, even $k\ge 2$}
	\end{subfigure} \hfill
	\begin{subfigure}{0.4\columnwidth}
		\includegraphics[width=\linewidth]{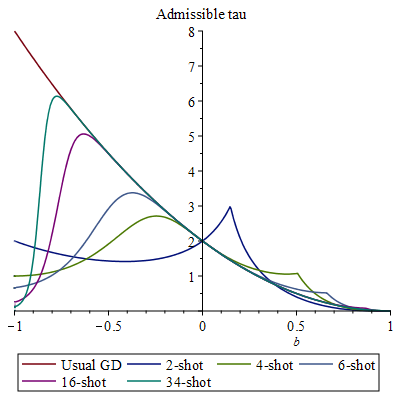}
		\caption{$k$-step one-shot, even $k\ge 2$}
	\end{subfigure}
	\caption{Admissible $\tau$ in the scalar case as a function of $b$.}
	\label{fig:plots1D}
\end{figure}
In summary, in the scalar case, the necessary and sufficient convergence conditions on the descent step $\tau > 0$ are: 
\[
\tau < \frac{2(1-b)^2}{h^2m^2}, \qquad  
\tau < \frac{(1-b)^2}{h^2m^2}, \qquad  
\tau < \frac{\eta(k,b)}{h^2m^2}, \qquad  
\tau < \frac{\kappa(k,b)}{h^2m^2}, 
\]
respectively for usual GD, shifted GD, $k$-step one-shot (with $\eta(k,b)$ given in Proposition~\ref{prop:tau-k-one-shot}), shifted $k$-step one-shot (with $\kappa(k,b)$ given in Proposition~\ref{prop:tau-shifted-k-one-shot}). 
By taking $m=h=1$, in Figure~\ref{fig:plots1D} we plot for different $k$ the functions: $b\mapsto 2(1-b)^2$ (usual GD), $b\mapsto(1-b)^2$ (shifted GD), $b\mapsto\eta(k,b)$ ($k$-step one-shot) and $b\mapsto\kappa(k,b)$ (shifted $k$-step one-shot).

From these plots we can draw two important conclusions. 
First, when $k$ increases the visualized curves for $k$-step one-shot and shifted $k$-step one-shot tend to the corresponding curves for usual and shifted gradient descent, as expected. 
Second, even in this scalar case, it appears difficult to establish a simplified expression for $\eta(k,b)$ in Proposition~\ref{prop:tau-k-one-shot} and $\kappa(k,b)$ in Proposition~\ref{prop:tau-shifted-k-one-shot} to find a practical upper bound for the descent step $\tau$.

\begin{remark}
	For $k\ge 2$, we observe that for some $b$ the admissible range of $\tau$ of $k$-step one-shot is larger than the one of usual GD, that is not intuitive. This is indeed verified numerically using FreeFEM: when $b=0.2$ and $\tau=2.08$, $2$-step one-shot converges while the usual GD does not.
\end{remark}

\section{A proof of Lemma \ref{marden-ord-3} based on Marden's works}
\label{app:marden}

\begin{definition}
	We say that a complex coefficient polynomial has property $\mathscr{P}$ if all its zeros lie (strictly) inside the unit circle $|z|=1$. 
\end{definition}
We recall some definitions from Marden's works \cite{marden66}.

\begin{definition}
	Let $P(z)=a_0+a_1z+...+a_nz^n$ where $a_k\in\R, k=0,...,n$ (we do not require $a_n\neq 0$ here). We define
	$$\tilde{P}(z) \coloneqq a_n+a_{n-1}z+...+a_0z^n$$
	and call it the reverse polynomial of $P$. One can also see that
	$\tilde{P}(z)=z^nP\left(1/z\right)$.
\end{definition}

\begin{definition}
	Let $P(z)=a_0+a_1z+...+a_nz^n$ where $a_k\in\R, k=0,...,n$. We define a polynomial sequence $\{P_k\}_{0\le k\le n}$ where
	$$P_k(z)=a_0^{(k)}+a_1^{(k)}z+...+a_{n-k}^{(k)}z^{n-k}$$
	as follows:
	\begin{itemize}
		\item $P_0=P$;
		\item $P_{k+1}=a_0^{(k)}P_k-a_{n-k}^{(k)}\tilde{P_k}$ for $0\le k\le n-1$.
	\end{itemize}
	Then we define
	$$m_k(P)=a_0^{(1)}a_0^{(2)} \cdots a_0^{(k)}, \quad 1\le k\le n.$$
\end{definition}
The coefficients of these polynomials can be gathered in the following table, that we call Marden's table: 
\begin{center}
	\begin{tabular}[htbp]{c|c|c|c|c|c|c}
		& 1 & $x$ & $x^2$ & $...$ & $x^{n-1}$ & $x^n$\\
		\hline
		$P_0$ & $a_0$ & $a_1$ & $a_2$ & $...$ & $a_{n-1}$ & $a_n$ \\
		$\tilde{P_0}$& $a_n$ & $a_{n-1}$ & $a_{n-2}$ & $...$ & $a_1$ & $a_0$ \\
		\hline
		$P_1$ & $a_0^{(1)}$ & $a_1^{(1)}$ & $a_2^{(1)}$ & $...$ & $a_{n-1}^{(1)}$ & \\
		$\tilde{P}_1$& $a_{n-1}^{(1)}$ & $a_{n-2}^{(1)}$ & $a_{n-3}^{(1)}$ & $...$ & $a_0^{(1)}$ & \\
		\hline
		$\vdots$ & $\vdots$ & $\vdots$ & $\vdots$ & $\vdots$ & $\vdots$ & $\vdots$ \\
		\hline
		$P_{n-1}$ & $a_0^{(n-1)}$ & $a_1^{(n-1)}$ &&&&\\ 
		$\tilde{P}_{n-1}$ & $a_1^{(n-1)}$ & $a_0^{(n-1)}$ &&&&\\ 
		\hline
		$P_n$ & $a_0^{(n)}$ &&&&&\\ 
	\end{tabular}
\end{center}

We have a nice and simple criterion mainly based on the works of Marden \cite{marden49, marden66} and Jury \cite{jury63, jury64}, known as Jury-Marden Criterion:

\begin{theorem}[Jury-Marden Criterion]\label{Marden criterion}
	The polynomial $P$ has property $\mathscr{P}$ if and only if 
	$$a_0^{(1)}<0;\quad a_0^{(k)}>0, \forall \, 2\le k\le n.$$
\end{theorem}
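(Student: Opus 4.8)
The plan is to prove the criterion by combining a single \emph{reduction lemma} (Cohn's rule), which tracks how the number of zeros inside the unit disk changes under the degree-lowering step $P_k \mapsto P_{k+1}$, with a two-directional induction on the degree. Throughout, for a polynomial $f$ of exact degree $m$ with no zero on $|z|=1$, write $N(f)$ for the number of its zeros (with multiplicity) in $|z|<1$; property $\mathscr{P}$ for a degree-$n$ polynomial is exactly $N(P)=n$. The key input, which I would state and cite from Marden's works (it is proved by a winding-number / Rouch\'e argument comparing $f$ and its reverse $\tilde f$ on the unit circle), is: if $f=\sum_{j=0}^m a_j z^j$ has no zero on $|z|=1$, then the reduced polynomial $f_1 := a_0 f - a_m \tilde f$ satisfies $\deg f_1 \le m-1$, $f_1(0)=a_0^2-a_m^2=:\delta$, and, in the regular case $\delta\ne 0$ and $\deg f_1 = m-1$,
\[
N(f_1)=\begin{cases} N(f), & \delta>0,\\ m-N(f), & \delta<0.\end{cases}
\]
Applied to $f=P_k$ (degree $n-k$, constant term $a_0^{(k)}$, leading coefficient $a_{n-k}^{(k)}$), this reads $N(P_{k+1})=N(P_k)$ if $a_0^{(k+1)}>0$ and $N(P_{k+1})=(n-k)-N(P_k)$ if $a_0^{(k+1)}<0$, since $a_0^{(k+1)}=(a_0^{(k)})^2-(a_{n-k}^{(k)})^2=\delta$ by the definition of $P_{k+1}$.

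For the \emph{forward} implication I would assume $N(P)=n$ and propagate the counts down the table. Because a Schur polynomial has the modulus of the product of its roots strictly less than $1$, its constant term is smaller in modulus than its leading term, giving $a_0^{(1)}=a_0^2-a_n^2<0$; the reduction lemma with $\delta<0$ then yields $N(P_1)=n-n=0$, i.e.\ $P_1$ has \emph{all} its zeros outside the disk. The induction hypothesis to carry is precisely that $P_k$ (for $k\ge 1$) has exact degree $n-k$ and $N(P_k)=0$. For such an ``anti-Schur'' polynomial the modulus of the product of its roots exceeds $1$, so its constant term dominates its leading term, forcing $a_0^{(k+1)}=(a_0^{(k)})^2-(a_{n-k}^{(k)})^2>0$; the reduction lemma with $\delta>0$ then preserves the count, $N(P_{k+1})=N(P_k)=0$. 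This simultaneously establishes $a_0^{(k)}>0$ for all $2\le k\le n$ and keeps the induction running to the bottom of the table.

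For the \emph{converse} I would run the same lemma upward, anchored at the constant $P_n=a_0^{(n)}\ne 0$ (a degree-$0$ polynomial with $N(P_n)=0$). Reading the reduction lemma backwards, $N(P_k)=N(P_{k+1})$ whenever $a_0^{(k+1)}>0$ and $N(P_k)=(n-k)-N(P_{k+1})$ whenever $a_0^{(k+1)}<0$. Using the hypotheses $a_0^{(k)}>0$ for $2\le k\le n$, every step from $P_n$ up to $P_1$ preserves the count, so $N(P_1)=\cdots=N(P_n)=0$; the final step uses $a_0^{(1)}<0$ to flip, giving $N(P_0)=n-N(P_1)=n$, i.e.\ $P$ has property $\mathscr{P}$. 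In terms of Marden's quantities this is the statement that $P$ is Schur iff $m_k(P)=a_0^{(1)}\cdots a_0^{(k)}<0$ for every $1\le k\le n$, which is the specialization of Marden's general count (number of zeros inside equals the number of negative $m_k$).

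The principal obstacle is the reduction lemma itself together with the regularity bookkeeping it presupposes. First, establishing Cohn's rule rigorously requires the argument-principle comparison of $f$ and $\tilde f$ on $|z|=1$ and a careful treatment of multiplicities; since the criterion is advertised as based on Marden and Jury, I would cite it rather than reprove it. Second, I must justify that in the relevant Schur / anti-Schur situations the degree drops by exactly one at each step and that no $a_0^{(k)}$ vanishes, so that the table is well defined and the lemma applies in its regular form. The sign information produced along the way is exactly what rules out these degeneracies, but the argument must be phrased to avoid circularity, treating separately the degenerate cases where some $a_0^{(k)}=0$ or a zero lies on $|z|=1$; these are excluded by the strict inequalities in the hypothesis and, in the forward direction, are incompatible with property $\mathscr{P}$.
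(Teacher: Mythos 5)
Your overall strategy---one Rouch\'e-based reduction lemma (Cohn's rule) run downward for the forward implication and upward for the converse---is a legitimate classical route, and it is genuinely different from the paper's proof: the paper gets sufficiency by citing Marden's counting theorem (Theorem~\ref{Marden theorem}) and proves necessity by induction via Schur's lemma (Lemma~\ref{Schur2}), which passes from $P$ to the \emph{reversed} reduced polynomial $\tilde{P}_1$ and identifies Marden's table of $\tilde{P}_1$ inside that of $P$ (with a sign flip in its first entry). However, your write-up contains a concrete false step. You state the reduction lemma only ``in the regular case $\delta\neq 0$ and $\deg f_1=m-1$'', you carry ``exact degree $n-k$'' in your induction hypothesis, and you claim that ``the sign information produced along the way is exactly what rules out these degeneracies''. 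That claim is wrong: the degree can collapse by more than one even for a polynomial with property $\mathscr{P}$. Take $P(z)=z^2+c$ with $0<|c|<1$ (both roots have modulus $\sqrt{|c|}<1$); then $P_1=a_0P-a_2\tilde{P}_0=c(c+z^2)-(1+cz^2)=c^2-1$ is a nonzero constant, so $\deg P_1=0\neq n-1$. No sign condition can prevent this, so the regular form of your lemma is not applicable at every step, and the induction as you phrased it breaks at the first collapse.

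The gap is fixable, because the regularity hypothesis in your lemma is superfluous: the Rouch\'e comparison of $a_0f$ and $a_m\tilde{f}$ on $|z|=1$ counts the zeros of $f_1$ in the open disk whatever $\deg f_1$ turns out to be, provided the reversal $\tilde{f}$ is taken with respect to the \emph{nominal} degree $m$ (then $|\tilde{f}|=|f|$ on the circle and $N(\tilde{f})=m-N(f)$ still hold, zeros of $\tilde{f}$ at the origin absorbing any degree deficiency of $f$). With the lemma restated this way and the induction hypothesis weakened to ``$P_k\not\equiv 0$ and $P_k$ has no zeros in the closed unit disk'', your forward induction goes through ($a_0^{(k)}=P_k(0)\neq 0$, and either the product-of-roots bound or $a_{n-k}^{(k)}=0$ gives $\delta_k>0$). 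The closed-disk formulation also repairs your other loose end: both directions need ``no zeros on $|z|=1$'' for every $P_k$ before the lemma can be invoked, and in the converse direction you only assert this. It follows from the observation that a unimodular zero of $P_k$ is (for real coefficients) also a zero of $\tilde{P}_k$, hence of $P_{k+1},\dots,P_n$, contradicting $a_0^{(n)}\neq 0$; this propagation argument should be stated explicitly. Note finally that the paper's device of inducting on the reversed polynomial $\tilde{P}_1$ avoids the degree issue altogether, since $\tilde{P}_1$ has leading coefficient $a_0^{(1)}\neq 0$ and therefore exact degree $n-1$.
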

This necessary and sufficient condition is mentioned several times in the literature (see e.g. \cite[Theorem 3.10]{barnett83}), but it is not easy to find an explicit proof, so we provide a proof for the reader's convenience. Before proving this result, we apply Jury-Marden Criterion to a polynomial of degree $3$ and obtain precisely Lemma \ref{marden-ord-3}, that is the following proposition.
\begin{proposition}
	Let
	$P(z)=a_0+a_1z+a_2z^2+z^3, z\in\C$
	where $a_0,a_1,a_2\in\R$. Then $P$ has property $\mathscr{P}$ if and only if
	$$\begin{cases}
		(a_0-1)(a_0+1)<0\\
		(a_0^2-a_2a_0+a_1-1)(a_0^2+a_2a_0-a_1-1)>0\\
		(a_0+a_2-a_1-1)(a_0+a_2+a_1+1)<0.
	\end{cases}$$	
\end{proposition}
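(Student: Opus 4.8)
The plan is to invoke the Jury--Marden Criterion (Theorem~\ref{Marden criterion}) for the monic real cubic $P$, which reduces the claim to computing the three leading entries $a_0^{(1)},a_0^{(2)},a_0^{(3)}$ of Marden's table and checking that $P$ has property $\mathscr{P}$ iff $a_0^{(1)}<0$, $a_0^{(2)}>0$ and $a_0^{(3)}>0$. So the whole argument becomes a finite, explicit unfolding of the recursion $P_{k+1}=a_0^{(k)}P_k-a_{n-k}^{(k)}\tilde{P}_k$ starting from $P_0=P$ (with $a_3^{(0)}=1$), followed by matching each sign condition to one of the three displayed inequalities.

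First I would compute $P_1=a_0P_0-\tilde{P}_0$ and read off $a_0^{(1)}=a_0^2-1=(a_0-1)(a_0+1)$; hence $a_0^{(1)}<0$ is exactly the first inequality. Next, writing $u=a_0^2-1$, $A=a_0a_1-a_2$, $C=a_0a_2-a_1$ for the coefficients of $P_1$, I would form $P_2=uP_1-C\tilde{P}_1$ and obtain $a_0^{(2)}=u^2-C^2=(u-C)(u+C)$ together with $a_1^{(2)}=A(u-C)$. Recognizing $u-C=a_0^2-a_2a_0+a_1-1$ and $u+C=a_0^2+a_2a_0-a_1-1$, the difference-of-squares factorization identifies $a_0^{(2)}>0$ with the second displayed inequality.

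The crux is the last entry $a_0^{(3)}=(a_0^{(2)})^2-(a_1^{(2)})^2$. Substituting the previous expressions gives $a_0^{(3)}=(u-C)^2\big[(u+C)^2-A^2\big]=(u-C)^2(u+C-A)(u+C+A)$, and the key algebraic step is the pair of factorizations $u+C+A=(a_0-1)\,P(1)$ and $u+C-A=(a_0+1)\,P(-1)$, where $P(1)=a_0+a_1+a_2+1$ and $P(-1)=a_0-a_1+a_2-1$. These combine to
\[
a_0^{(3)}=(u-C)^2\,(a_0^2-1)\,P(1)\,P(-1)=(u-C)^2\,a_0^{(1)}\,(a_0+a_2+a_1+1)(a_0+a_2-a_1-1),
\]
so that $P(1)P(-1)$ is precisely the product in the third displayed inequality. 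Verifying these two identities by direct expansion is the main (purely computational) obstacle; once they are in hand the conclusion follows immediately in both directions.

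Indeed, assuming the first two inequalities one has $a_0^{(1)}<0$ and $(u-C)(u+C)>0$, the latter forcing $(u-C)^2>0$; the factorization then shows $a_0^{(3)}>0$ iff $(a_0+a_2+a_1+1)(a_0+a_2-a_1-1)<0$, i.e. the third inequality. Conversely, property $\mathscr{P}$ forces all three Jury--Marden signs, which through the same identities yield all three displayed inequalities. Thus the sign conditions of Theorem~\ref{Marden criterion} are equivalent, as a package, to the three displayed inequalities, completing the proof.
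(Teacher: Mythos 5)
Your proposal is correct and follows essentially the same route as the paper: both apply the Jury--Marden Criterion, unfold Marden's table to get $a_0^{(1)},a_0^{(2)},a_0^{(3)}$, and use the factorization $a_0^{(3)}=(u-C)^2(u+C-A)(u+C+A)$ together with $a_0^{(2)}>0$ forcing $(u-C)^2>0$ to convert the third sign condition into the third displayed inequality. Your identification of the factors as $(a_0-1)P(1)$ and $(a_0+1)P(-1)$ is a nice way to organize the same algebra the paper carries out by direct expansion, but it is not a different argument.
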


\begin{proof}
	By directly applying Jury-Marden Criterion to $P$, we obtain Marden's table as follows:
	\begin{center}
		\begin{tabular}[htbp]{c|c|c|c|c}
			& 1 & $x$ & $x^2$ & $x^3$\\
			\hline
			$P_0=P$ & $a_0$ & $a_1$ & $a_2$ & $1$ \\
			$\tilde{P}_0$& $1$ & $a_2$ & $a_1$ & $a_0$ \\
			\hline
			$P_1$ & $a_0^2-1$ & $a_1a_0-a_2$ & $a_2a_0-a_1$ & \\
			$\tilde{P}_1$ & $a_2a_0-a_1$ & $a_1a_0-a_2$ & $a_0^2-1$ & \\
			\hline
			$P_2$ & $(a_0^2-1)^2-(a_2a_0-a_1)^2$ & $(a_1a_0-a_2)(a_0^2-a_2a_0+a_1-1)$ & & \\
			$\tilde{P}_2$ & $(a_1a_0-a_2)(a_0^2-a_2a_0+a_1-1)$ & $(a_0^2-1)^2-(a_2a_0-a_1)^2$ &&\\
		\end{tabular}
	\end{center}
	and
	$$P_3(x)=\left[(a_0^2-1)^2-(a_2a_0-a_1)^2\right]^2-(a_1a_0-a_2)^2(a_0^2-a_2a_0+a_1-1)^2.\\
	$$
	Hence
	$$\begin{array}{lcl}
		a_0^{(1)}&=&a_0^2-1=(a_0-1)(a_0+1),\\
		a_0^{(2)}&=&(a_0^2-1)^2-(a_2a_0-a_1)^2=(a_0^2-a_2a_0+a_1-1)(a_0^2+a_2a_0-a_1-1),\\
		a_0^{(3)}&=&\left[(a_0^2-1)^2-(a_2a_0-a_1)^2\right]^2-(a_1a_0-a_2)^2(a_0^2-a_2a_0+a_1-1)^2\\
		&=& \left[(a_0^2+a_2a_0-a_1-1)^2-(a_1a_0-a_2)^2\right](a_0^2-a_2a_0+a_1-1)^2\\
		&=&[a_0^2+(a_2-a_1)a_0+a_2-a_1-1][a_0^2+(a_2+a_1)a_0-a_2-a_1-1]\\
		&&(a_0^2-1-a_2a_0+a_1)^2\\
		&=& (a_0+1)(a_0+a_2-a_1-1)(a_0-1)(a_0+a_2+a_1+1)(a_0^2-1-a_2a_0+a_1)^2.
	\end{array}$$
	Then the condition $a_0^{(1)}<0, a_0^{(2)}>0, a_0^{(3)}>0$, after being simplified, is equivalent to three inequalities of the statement. 
\end{proof}

Now, to prove Jury-Marden Criterion, we need the following two results. 

\begin{theorem}[Marden, \cite{marden66}, Theorem 42.1]\label{Marden theorem}
	Let $P$ be a real coefficient polynomial of $n$-th degree. If the sequence
	$$m_1(P),m_2(P),...,m_n(P)$$
	has exactly $p$ negative elements and $n-p$ positive elements (hence no null elements), then $P$ has $p$ complex roots (including multiplicities) inside the unit circle $|z|=1$, no roots on this circle and $n-p$ complex roots (including multiplicities) outside this circle.
\end{theorem}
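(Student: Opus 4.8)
The plan is to prove this inertia statement by running the Schur--Cohn reduction $P_0=P,\ P_{k+1}=a_0^{(k)}P_k-a_{n-k}^{(k)}\tilde P_k$ one step at a time and tracking, via Rouché's theorem on the unit circle, how many zeros lie inside $|z|=1$. The one observation that drives everything is that for a real polynomial $Q$ one has $|\tilde Q(z)|=|Q(z)|$ on $|z|=1$: indeed $\tilde Q(z)=z^{\deg Q}Q(1/z)=z^{\deg Q}\overline{Q(z)}$ there, so the moduli coincide. Writing $Z(Q)$ for the number of zeros of $Q$ strictly inside the disk, the goal is to show $Z(P)=p$. First I would dispose of the boundary. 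Since all $m_k\neq0$, every $a_0^{(k)}\neq0$ (as $a_0^{(k)}=m_k/m_{k-1}$ with the convention $m_0=1$), and I claim no $P_k$ vanishes on $|z|=1$: if $P_k(z_0)=0$ with $|z_0|=1$, then $\tilde P_k(z_0)=z_0^{\,n-k}\overline{P_k(z_0)}=0$ as well, so $P_{k+1}(z_0)=a_0^{(k)}P_k(z_0)-a_{n-k}^{(k)}\tilde P_k(z_0)=0$; iterating gives $P_n(z_0)=0$, contradicting that $P_n$ is the nonzero constant $a_0^{(n)}$. Thus each reduction occurs with no zeros on the circle, exactly what Rouché requires.

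Next comes the reduction lemma, the engine of the argument. The constant term of $P_{k+1}$ is $a_0^{(k+1)}=(a_0^{(k)})^2-(a_{n-k}^{(k)})^2$, so its sign records whether $|a_0^{(k)}|>|a_{n-k}^{(k)}|$ or the reverse. On $|z|=1$ we have $|a_0^{(k)}P_k|=|a_0^{(k)}|\,|P_k|$ and $|a_{n-k}^{(k)}\tilde P_k|=|a_{n-k}^{(k)}|\,|P_k|$. Hence by Rouché: if $a_0^{(k+1)}>0$ the term $a_0^{(k)}P_k$ dominates and $Z(P_{k+1})=Z(P_k)$; if $a_0^{(k+1)}<0$ the term $a_{n-k}^{(k)}\tilde P_k$ dominates, so $Z(P_{k+1})$ equals the number of zeros of $\tilde P_k$ inside the disk, which—because $P_k(0)=a_0^{(k)}\neq0$ and no zeros lie on the circle—is $\deg P_k-Z(P_k)=(n-k)-Z(P_k)$.

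Finally I would run the bookkeeping by induction on the degree. Applying the induction hypothesis to $P_1$ (whose associated sequence is $a_0^{(2)}\cdots a_0^{(k)}$, all nonzero) gives $Z(P_1)=p'$, the number of $k\in\{2,\dots,n\}$ with $a_0^{(2)}\cdots a_0^{(k)}<0$. If $a_0^{(1)}>0$ then $m_1>0$ and $\mathrm{sign}(m_k)=\mathrm{sign}(a_0^{(2)}\cdots a_0^{(k)})$ for $k\geq2$, so $p=p'$, while the lemma gives $Z(P)=Z(P_1)=p'$. If $a_0^{(1)}<0$ then $m_1<0$ contributes one, the remaining signs are reversed so the count over $k\geq2$ becomes $(n-1)-p'$, whence $p=n-p'$, while the lemma gives $Z(P)=n-Z(P_1)=n-p'$. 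In both cases $Z(P)=p$, and the base case $n=0$ is trivial.

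The hard part will be the degenerate reductions rather than the clean Rouché count. The argument tacitly assumes each $P_k$ has exact degree $n-k$, but a vanishing leading coefficient $a_{n-k}^{(k)}=0$—which $m_k\neq0$ does not by itself forbid, since then $a_0^{(k+1)}=(a_0^{(k)})^2>0$—makes $\tilde P_k$ drop degree and gain a spurious zero at the origin, so both the identity $Z(\tilde P_k)=\deg P_k-Z(P_k)$ and the Rouché balance must be reexamined. Showing that the nonvanishing of the \emph{whole} sequence $m_1,\dots,m_n$ nonetheless forces the stated zero count, in line with Marden's determinantal treatment of the $m_k$, is the delicate point where I would concentrate the effort.
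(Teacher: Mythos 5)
First, a point of comparison: the paper does \emph{not} prove this statement. It is imported verbatim from Marden's book (\cite{marden66}, Theorem 42.1) and used as a black box, together with Lemma~\ref{Schur2}, in the paper's proof of Theorem~\ref{Marden criterion}. So your attempt cannot be matched against a proof in the paper; it has to stand on its own as a proof of a result the authors delegate to the literature.

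Judged on its own, your Rouch\'e-plus-induction strategy is the right kind of argument, and most of the steps are correct: the boundary exclusion (a zero of $P_k$ on $|z|=1$ propagates to $P_n\equiv a_0^{(n)}\neq 0$), the identity $a_0^{(k+1)}=(a_0^{(k)})^2-(a_{n-k}^{(k)})^2$, the Rouch\'e dichotomy, and the sign bookkeeping. However, the difficulty you flag in your last paragraph is a genuine gap in the proof as written, not a pedantic one. Your induction hypothesis is stated for polynomials of exact degree $n-1$, but $P_1$ can drop degree while the whole sequence $m_1,\dots,m_n$ stays nonzero. Concretely, take $P(z)=z^2+\tfrac12$: then $P_1(z)\equiv -\tfrac34$ is a constant (formal degree $1$, actual degree $0$), while $m_1=-\tfrac34\neq 0$ and $m_2=\left(-\tfrac34\right)\cdot\tfrac{9}{16}\neq 0$. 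So the hypotheses of your induction are met, yet $M(1)$ cannot be invoked for $P_1$; worse, the conclusion of the theorem applied formally to $P_1$ would assert one root outside the circle, which is false since $P_1$ has no roots at all. Hence the induction, as formulated, does not close.

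The repair is to strengthen the statement being proved by induction: for $P$ of \emph{formal} degree $n$ (leading coefficient allowed to vanish) with $m_1,\dots,m_n$ all nonzero and $p$ of them negative, $P$ has no roots on $|z|=1$, exactly $p$ roots inside, and \emph{all its remaining roots} outside --- dropping the claim that the outside count equals $n-p$, which is precisely what fails under degree degeneration (roots are lost ``at infinity''). Your two Rouch\'e cases survive this reformulation: if $a_0^{(k+1)}>0$ and $a_{n-k}^{(k)}=0$, then $P_{k+1}=a_0^{(k)}P_k$ and the zero count is trivially unchanged; if $a_0^{(k+1)}<0$, then automatically $|a_{n-k}^{(k)}|>|a_0^{(k)}|>0$, so $P_k$ has exact degree $n-k$ and the reciprocal count $Z(\tilde P_k)=(n-k)-Z(P_k)$ is legitimate. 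The exact-degree assumption on $P$ itself is then used only at the very end, to convert ``all remaining roots outside'' into ``$n-p$ roots outside.'' With this adjustment your argument becomes a complete, self-contained proof of the statement the paper only cites.
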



\begin{lemma}[Schur, \cite{schur17}]\label{Schur2}
	Let $P(z)=a_0+a_1z+...+a_nz^n$ where $a_k\in\R,\forall 1\le k\le n$. 
	Assume that $|a_0|<|a_n|$. Then $\deg\tilde{P}_1=n-1$, and $P$ has property $\mathscr{P}$ if and only if $\tilde{P}_1$ has property $\mathscr{P}$.
\end{lemma}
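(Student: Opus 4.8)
The plan is to exploit the reverse-polynomial algebra linking $P$, $\tilde{P}$, $P_1$ and $\tilde{P}_1$, combined with Rouch\'e's theorem on the unit circle $|z|=1$. First I would record the elementary facts. Since $|a_0|<|a_n|$ we have $a_n\neq 0$, hence $\deg P=n$; the constant coefficient of $P_1$ is $a_0^{(1)}=a_0^2-a_n^2\neq 0$, and as this is exactly the leading coefficient of $\tilde{P}_1$, we obtain $\deg\tilde{P}_1=n-1$, which is the first assertion. I would also recall the identity $P_1=a_0P-a_n\tilde{P}$ that comes straight from the definition, and derive its companion by substituting $z\mapsto 1/z$ and using $z^nP(1/z)=\tilde{P}(z)$ and $z^n\tilde{P}(1/z)=P(z)$, namely $z\,\tilde{P}_1(z)=a_0\tilde{P}(z)-a_nP(z)$.

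Viewing these two relations as a $2\times 2$ linear system in $(P,\tilde{P})$ with determinant $a_0^2-a_n^2\neq 0$ and solving for $P$ yields the key bridge identity
\[
(a_0^2-a_n^2)\,P(z)=a_0P_1(z)+a_n\,z\,\tilde{P}_1(z),
\]
which expresses $P$ directly through $P_1$ and $\tilde{P}_1$. The second basic ingredient is that, because all coefficients are real, $|\tilde{Q}(z)|=|Q(z)|$ for every real polynomial $Q$ and every $z$ with $|z|=1$ (indeed $\tilde{Q}(z)=z^{\deg Q}\,\overline{Q(z)}$ there); in particular $|\tilde{P}|=|P|$ and $|\tilde{P}_1|=|P_1|$ on the circle.

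Next I would settle the behaviour on the boundary. Combining the bridge identity with $|\tilde{P}|=|P|$ and $|\tilde{P}_1|=|P_1|$ on $|z|=1$, I would show that $P$, $P_1$ and $\tilde{P}_1$ vanish at precisely the same points of the unit circle; hence $P$ has a zero on $|z|=1$ if and only if $\tilde{P}_1$ does. If either has such a zero, neither enjoys property $\mathscr{P}$ and the equivalence holds trivially. Assuming instead that none of them vanishes on the circle, the strict inequality $|a_0P_1|=|a_0|\,|P_1|<|a_n|\,|\tilde{P}_1|=|a_n\,z\,\tilde{P}_1|$ holds throughout $|z|=1$, so Rouch\'e's theorem applied to the bridge identity shows that $P$ and $a_n z\tilde{P}_1$ have the same number of zeros inside the disk; that is, the number of zeros of $P$ in $|z|<1$ equals one plus the number of zeros of $\tilde{P}_1$ in $|z|<1$. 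Since $\deg P=n$ and $\deg\tilde{P}_1=n-1$, this count equals $n$ exactly when all $n-1$ zeros of $\tilde{P}_1$ lie inside, which is precisely the claimed equivalence.

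The main obstacle I anticipate is the careful bookkeeping of the boundary case and of the polynomial degrees: one must pin down the formal degree of $\tilde{P}_1$ so that the factor $z$ in the bridge identity is correctly accounted for (it contributes the single zero at the origin in the Rouch\'e count), and one must exclude zeros on $|z|=1$ before invoking Rouch\'e, since property $\mathscr{P}$ concerns the \emph{open} disk. Deriving the companion identity $z\,\tilde{P}_1=a_0\tilde{P}-a_nP$ with the right power of $z$ is the one spot where an off-by-one slip is easy to make.
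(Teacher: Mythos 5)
Your proof is correct, but there is nothing in the paper to compare it against: the paper states this lemma as Schur's result with the citation \cite{schur17} and gives no argument at all, using it as a black box inside the induction that establishes the Jury--Marden criterion (Theorem~\ref{Marden criterion}). Your argument is the classical Schur--Cohn/Rouch\'e proof, and every step checks out. Indeed, $P_1=a_0P-a_n\tilde P$ is exactly Marden's definition; reversing with the correct formal degree $n-1$ of $P_1$ gives the companion identity $z\tilde P_1(z)=a_0\tilde P(z)-a_nP(z)$; the $2\times2$ system has determinant $a_0^2-a_n^2\neq 0$ (which also makes the leading coefficient $a_0^{(1)}=a_0^2-a_n^2$ of $\tilde P_1$ nonzero, settling $\deg\tilde P_1=n-1$), and solving yields the bridge identity $(a_0^2-a_n^2)P(z)=a_0P_1(z)+a_nz\tilde P_1(z)$; the modulus identity $|\tilde Q(z)|=|Q(z)|$ on $|z|=1$ for real-coefficient $Q$ (taken with its formal degree) is valid; and when no unit-circle zeros occur, $|a_0P_1|<|a_n z\tilde P_1|$ on $|z|=1$ lets Rouch\'e's theorem convert the bridge identity into the count
\[
\#\{z:\ P(z)=0,\ |z|<1\}\;=\;1+\#\{z:\ \tilde P_1(z)=0,\ |z|<1\},
\]
which together with $\deg P=n$ and $\deg\tilde P_1=n-1$ gives the equivalence of the two instances of property $\mathscr{P}$. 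One presentational point: the assertion that $P$, $P_1$ and $\tilde P_1$ vanish at exactly the same points of the unit circle is announced rather than verified in your write-up; it is the one place where both identities and the modulus equality interact, so spell it out (a circle zero of $P$ forces $\tilde P(z)=0$ by the modulus identity, hence $P_1(z)=0$ and $z\tilde P_1(z)=0$; conversely a circle zero of $\tilde P_1$ forces $P_1(z)=0$ by the modulus identity, hence $P(z)=0$ from the bridge identity and $a_0^2-a_n^2\neq0$). With that two-line verification included, your proof is a complete and self-contained replacement for the external citation.
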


\begin{proof}[Proof of Jury-Marden Criterion \ref{Marden criterion}]
	The sufficient condition for $P$ having property $\mathscr{P}$ is a direct consequence of Marden's Theorem \ref{Marden theorem}. It remains to prove the necessary one.
	
	For that, we will prove the following statement $M(n)$ by induction: ``For every real-coefficient polynomial $P$ of $n$-th degree having property $\mathscr{P}$, the sequence $a_0^{(1)},...,a_0^{(n)}$ obtained by Marden's algorithm must satisfy
	$$a_0^{(1)}<0,\quad  a_0^{(k)}>0, \forall \, 2\le k\le n."$$
	
	To check $M(1)$, let $P(z)=a_0+a_1z$ where $a_0,a_1\in\R, a_1\neq 0$. Then $P(z)=0\Leftrightarrow z=-a_0/a_1$ and $|-a_0/a_1|<1\Leftrightarrow |a_0|<|a_1|\Leftrightarrow a_0^{(1)}=a_0^2-a_1^2<0$.
	
	Now supposing that $M(n-1)$ is true for some $n\in\N, n\ge 2$, we show that $M(n)$ is true. Let $P(z)=a_0+a_1z+...+a_nz^n$ where $a_k\in\R, k=0,...,n$ and $a_n\neq 0$. Assume that $P$ has property $\mathscr{P}$. First, $a_0^{(1)}=a_0^2-a_n^2<0$. Indeed, let $z_1,z_2,...,z_n$ be the $n$ zeros including multiplicities of $P$, then by Vi\`{e}te's formulas 
	$z_1z_2 \cdots z_n=(-1)^n(a_0/a_n)$.
	Taking the module of both sides of this identity and noting that $P$ has property $\mathscr{P}$, we have $|a_0/a_n|<1$, thus  $a_0^{(1)}=a_0^2-a_n^2<0$. Next, by Lemma \ref{Schur2}, $\tilde{P}_1$ is of $(n-1)$-th degree and it also has property $\mathscr{P}$. Marden's table for $\tilde{P}_1$ can be easily found:
	\begin{center}
		\begin{tabular}[htbp]{c|c|c|c|c|c|c|c}
			& 1 & $x$ & $x^2$ & $...$ & $x^{n-3}$ & $x^{n-2}$ & $x^{n-1}$\\
			\hline
			$\tilde{P}_1$ & $a_{n-1}^{(1)}$ & $a_{n-2}^{(1)}$ & $a_{n-3}^{(1)}$ & $...$ & $a_{2}^{(1)}$ & $a_1^{(1)}$ & $a_0^{(1)}$\\
			$P_1$ & $a_0^{(1)}$ & $a_1^{(1)}$ & $a_2^{(1)}$ & $...$ & $a_{n-3}^{(1)}$ & $a_{n-2}^{(1)}$ & $a_{n-1}^{(1)}$ \\
			\hline
			$-P_2$ & $-a_0^{(2)}$ & $-a_1^{(2)}$ & $-a_2^{(2)}$ & $...$ & $-a_{n-3}^{(2)}$ & $-a_{n-2}^{(2)}$ & \\
			$-\tilde{P}_2$& $-a_{n-2}^{(2)}$ & $-a_{n-3}^{(2)}$ & $a_{n-4}^{(1)}$ & $...$  & $-a_1^{(2)}$ & $-a_0^{(2)}$ & \\
			\hline
			$P_3$ & $a_0^{(3)}$ & $a_1^{(3)}$ & $a_2^{(3)}$ & $...$ & $a_{n-3}^{(3)}$ && \\
			$\tilde{P}_3$ & $a_{n-3}^{(3)}$ & $a_{n-4}^{(3)}$ & $a_{n-5}^{(3)}$ & $...$ & $a_0^{(3)}$ &&  \\
			\hline
			$\vdots$ & $\vdots$ & $\vdots$ & $\vdots$ & $\vdots$ & $\vdots$ & $\vdots$ & $\vdots$  \\
			\hline
			$P_{n-1}$ & $a_0^{(n-1)}$ & $a_1^{(n-1)}$ &&&&&\\ 
			$\tilde{P}_{n-1}$ & $a_1^{(n-1)}$ & $a_0^{(n-1)}$ &&&&&\\ 
			\hline
			$P_n$ & $a_0^{(n)}$ &&&&&&\\ 
		\end{tabular}
	\end{center}
	By $M(n-1)$, we must then have
	$-a_0^{(2)}<0,\; a_0^{(k)}>0, \, \forall 3\le k\le n$.
\end{proof}

\end{document}